\documentclass[a4paper]{article}
\title{Statistical Learning Theory for Neural Operators}
\author{Jakob Zech, Sven Wang, Niklas Reinhardt}

\date{\today}
\usepackage[ngerman,english]{babel} 
\setlength{\marginparwidth}{2cm} 
\usepackage{textgreek}
\usepackage{leftidx}
\usepackage{enumitem}
\usepackage{amssymb}
\usepackage{amsmath}
\usepackage{csquotes}
\usepackage{eucal}
\usepackage{authblk}
\usepackage{bbold} 
\usepackage[toc,page]{appendix}
\usepackage{a4wide}
\setlength {\marginparwidth }{2cm}
\usepackage[textsize=footnotesize]{todonotes}
\usepackage[normalem]{ulem}

\usepackage{comment}

\usepackage{epigraph}
\setlength{\epigraphwidth}{.8\textwidth}
\setlength{\epigraphrule}{0pt}
\usepackage{biblatex}
\usepackage{xcolor}

\usepackage[colorlinks,        
   linkcolor=black,  
   filecolor=black,
   citecolor=black]{hyperref}

\usepackage{xurl}
\hypersetup{breaklinks=true}

\usepackage{pdflscape}
\usepackage{afterpage}
\usepackage{subcaption}
\usepackage{graphicx}
\usepackage{verbatim}

\usepackage{stackengine} 
\usepackage{mathtools}
\mathtoolsset{showonlyrefs=true}
\usepackage{amsthm}

\newtheorem{assumption}{Assumption}
\newtheorem{theorem}{Theorem}[section]
\newtheorem{proposition}[theorem]{Proposition}
\newtheorem{corollary}[theorem]{Corollary}
\newtheorem{lemma}[theorem]{Lemma}
\newtheorem{definition}[theorem]{Definition}
\newtheorem{remark}[theorem]{Remark}
\newtheorem{example}[theorem]{Example}

\setcounter{secnumdepth}{5}
\def\biggg#1{{\hbox{$\left#1\vbox to20.5pt{}\right.$}}}
\def\bigggl{\mathopen\biggg}
\def\bigggr{\mathclose\biggg}

\DeclareMathOperator*{\argmin}{arg\,min}
\DeclareMathOperator\supp{supp}
\DeclareMathOperator\Id{Id}

\DeclareMathOperator\range{range}
\numberwithin{equation}{section}
\addbibresource{bib/StasticalOperaotrLearningbib.bib}
\addbibresource{bib/mcmc.bib}
\addbibresource{bib/statsbib.bib}

\newcommand{\GNNSP}[1][\sigma_q]{\pmb{G}_{{\rm FN}}^{{\rm sp}}(#1,N)}

\newcommand{\GNNFC}[1][\sigma_q]{\pmb{G}_{{\rm FN}}^{{\rm full}}(#1,N)}
\newcommand{\CCh}{C_{\rm{Ch}}}

\newcommand{\ltg}{{L^2(\gamma)}}
\newcommand{\Fnorm}{\lVert F\rVert}
\newcommand{\Fsjnorm}{\lVert F_{s,j}\rVert}
\newcommand{\Fsjxinorm}{\lVert F_{s,j}(x_i)\rVert}
\newcommand{\Akp}{A_k^+}
\newcommand{\Akm}{A_k^-}
\newcommand{\sigmabk}{\sigma^{B_k}}
\newcommand{\sumin}{\sum\limits_{i=1}^n}
\newcommand{\hatgn}{\hat{G}_n}
\newcommand{\pgnstar}{\pG_n^*}
\newcommand{\gstar}{G^*}

\newcommand{\tildedeltan}{\tilde{\delta}_n}

\newcommand{\X}{\mathcal{X}}
\newcommand{\Y}{\mathcal{Y}}
\newcommand{\cZ}{\mathcal{Z}}
\newcommand{\HH}{\mathcal{H}}
\newcommand{\G}{G}
\newcommand{\dd}{\;\mathrm{d}}
\newcommand{\E}{\mathcal{E}}
\newcommand{\D}{\mathcal{D}}
\newcommand{\F}{\mathcal{F}}
\newcommand{\NN}{\mathcal{N}}
\newcommand{\N}{\mathbb{N}}

\newcommand{\B}{\mathcal{B}}

\renewcommand{\O}{\mathcal{O}}

\newcommand{\IR}{\mathbb{R}}
\newcommand{\IN}{\mathbb{N}}

\newcommand{\IE}{\mathbb{E}}
\newcommand{\IP}{\mathbb{P}}
\newcommand{\IC}{\mathbb{C}}

\newcommand{\cX}{\mathcal{X}}

\newcommand{\cY}{\mathcal{Y}}

\newcommand{\cE}{\mathcal{E}}
\newcommand{\cD}{\mathcal{D}}

\newcommand{\norm}[2][]{\|#2\|_{#1}}

\newcommand{\set}[2]{\{#1 : #2\}}

\renewcommand{\O}{\mathcal{O}}

\newcommand{\eps}{\varepsilon}

\newcommand{\pnu}{\pmb{\nu}}
\newcommand{\pmu}{\pmb{\mu}}

\newcommand{\py}{\pmb{y}}
\newcommand{\px}{\pmb{x}}
\newcommand{\pg}{\pmb{g}}
\newcommand{\pG}{\pmb{G}}
\newcommand{\pF}{\pmb{F}}

\newcommand{\Finf}{F_{\infty}}

\newcommand{\bsj}{{\boldsymbol{j}}}

\newcommand{\bsPsi}{{\boldsymbol{\Psi}}}

\newcommand{\bbE}{{\mathbb{E}}}

\newcommand{\bbP}{{\mathbb{P}}}

\newcommand{\bbT}{{\mathbb{T}}}

\newcommand{\R}{{\mathbb{R}}} %

\DeclareSymbolFont{bbold}{U}{bbold}{m}{n}
\DeclareSymbolFontAlphabet{\mathbbold}{bbold}

\newcommand{\trm}[1]{\textrm{#1}}

\newcommand\sbullet[1][.5]{\mathbin{\vcenter{\hbox{\scalebox{#1}{$\bullet$}}}}}

\DeclareMathOperator*{\essinf}{ess\,inf}
\usepackage{enumitem}

\begin{document}
\author[1]{Niklas Reinhardt}
\author[2]{Sven Wang}
\author[1]{Jakob Zech}

\affil[1]{\small Interdisziplin\"ares Zentrum f\"ur wissenschaftliches Rechnen, Universit\"at Heidelberg, Im Neuenheimer Feld 205, 69120 Heidelberg, Germany
  
  \texttt{niklas.reinhardt@iwr.uni-heidelberg.de}\qquad
    \texttt{jakob.zech@uni-heidelberg.de}}

\affil[2]{\small Institut für Mathematik,
Humboldt-Universit\"at zu Berlin, Rudower Chaussee 25, 12489 Berlin, Germany
  
  \texttt{sven.wang@hu-berlin.de}}

\maketitle
\begin{abstract}
  We present statistical convergence results for the learning of (possibly) non-linear mappings in infinite-dimen\-sional spaces. Specifically, given a 
  map $G_0:\mathcal X\to\mathcal Y$ between two separable Hilbert spaces, we analyze the problem of recovering $G_0$ from $n\in\mathbb N$ noisy input-output pairs $(x_i, y_i)_{i=1}^n$ with $y_i = G_0 (x_i)+\varepsilon_i$; here the $x_i\in\mathcal X$ represent randomly drawn ``design'' points, and the $\varepsilon_i$ are assumed to be either i.i.d.\ white noise processes or subgaussian random variables in $\mathcal{Y}$.
  We provide general convergence results for least-squares-type empirical risk minimizers over compact regression classes $\mathbf G\subseteq L^\infty(X,Y)$, in terms of their approximation properties and metric entropy bounds, which are derived using empirical process techniques. This generalizes classical results from finite-dimensional nonparametric regression to an infinite-dimensional setting.
  As a concrete application, we study an encoder-decoder based neural operator architecture termed FrameNet.
  Assuming $G_0$ to be holomorphic, we prove algebraic (in the sample size $n$) convergence rates in this setting, thereby overcoming the curse of dimensionality.
  To illustrate the wide applicability, as a prototypical example we discuss the learning of the non-linear solution operator to a parametric elliptic partial differential equation.
\end{abstract}

\newcommand{\By}{{\boldsymbol{y}}}
\newcommand{\Bnu}{{\boldsymbol{\nu}}}
\tableofcontents

\section{Introduction}
Learning non-linear relationships of high- and infinite-dimensional data is a fundamental problem in modern statistics and machine learning. In recent years, ``Operator Learning'' has emerged as a powerful tool for analyzing and approximating mappings $G_0$ between \emph{infinite-dimensional} spaces \cite{li2020fourier,HESTHAVEN201855,BhattModRedNNPDE,Lu2021,raonic2023convolutional,anandkumar2019neural,owhadi2019kernel,nelsen2024operator,kovachki2024operator}.
The primary motivation for considering truly infinite-dimensional data stems from applications in the natural sciences, where inputs and outputs of operators are elements in function spaces. For instance, $G_0$ could be the 
operator relating an initial condition $x$ of a dynamical system to the state $G_0(x)$ of the system after a certain time, or a coefficient-to-solution map of a parametric partial differential equation (PDE).

For finite-dimensional inputs and outputs,
nonparametric regression is the standard framework for inferring general, non-linear relationships. There, one aims to reconstruct some ``ground truth''
$G_0:\IR^d\to\IR^m$, $d$, $m\in\IN$, from noisy data $(x_i,y_i)\in \IR^d\times\IR^m$, $i=1,\dots, n$, generated via $y_i=G_0(x_i)+\eps_i$, where $x_i$ are called the ``design points'' and $\eps_i$ are typically independent and identically distributed (i.i.d.) noise variables. In the framework of empirical risk minimization (ERM), one chooses a suitable function class $\pG$ of mappings from $\IR^d$ to $\IR^m$ and some loss function $L:\R^m\times\R^m\to\R$ measuring the discrepancy between predictions $G(x_i)$ and the data $y_i$. Statistical estimation is achieved by minimizing 
\begin{equation}\label{eq:finiteregression}
\hat{G}_n\in \argmin_{G\in\pG}\,J_n(G),\quad J_n(G):=\frac{1}{n}\sum_{i=1}^n L(G(x_i),y_i),
\end{equation}
assuming that minimizers exist. In the finite-dimensional setting, statistically optimal convergence rates for such estimators were established for least-squares, maximum likelihood, and more generally ``minimum contrast'' estimators, in \cite{VDG00, BBM99, BM93}; see also \cite{SH20} where such results are 
shown for ERMs over neural network classes.
However, as is well-known, both---approximation rates \cite{DeVore1989,devore1993constructive} as well as statistical convergence rates \cite{VDG00,Gine.op.2016} over 
classical smoothness classes---deterioriate exponentially in terms of the dimension $d$. 
This renders computations practically infeasible for large $d$. This phenomenon is referred to as the \emph{curse of dimensionality}, see also Section \ref{sec:finitedimensional} ahead.

The framework for operator learning 
considered in this paper can be viewed as a direct extension of \eqref{eq:finiteregression} to the infinite-dimensional case. Given Hilbert spaces $\X$ and $\Y$, and a mapping $G_0:\X\to\Y$, the goal is to reconstruct $G_0$ from ``training data'' $(x_i,y_i)\in\X\times\Y$ with 
\[y_i=G_0(x_i)+\eps_i,\]
where the regression class $\pG$ is a suitable set of measurable mappings between $\X$ and $\Y$ and $\eps_i$ are centered noise variables,
see Section \ref{sec:framework} for details. This ``supervised learning'' setting underlies popular 
methods such as the PCA-Net \cite{HESTHAVEN201855,BhattModRedNNPDE}.

We also mention the framework of ``physics-informed learning'' which is common in operator learning (relevant e.g.~for the DeepONet \cite{Lu2021}), but which is not considered in the present manuscript.
Here, information on the ground truth $G_0$ is not 
known in the form of input-output pairs, but instead is implicitly
described via
\begin{equation*}
  \mathcal{N}(G_0(x),y)=0,
\end{equation*}
where $\mathcal{N}:\X\times\Y\to \cZ$ for a third vector space $\cZ$.
Typically, $\mathcal{N}(x,\cdot)$ encodes a family of differential operators parametrized by $x$ which represent the underlying physical model. In this case, the loss to minimize 
is a residual of the form $\sum_{i=1}^n \|\mathcal{N}(x_i,G(x_i))\|_{\cZ}^2$, thus leading to an ``unsupervised learning problem''. The two cases, supervised and unsupervised learning, can also be combined. Various different (neural network based) architectures (i.e.\ regression classes $\pG$) have been proposed in recent years for the purpose of supervised or unsupervised operator learning. 

\subsection{Outline and Contributions}
In this paper we provide statistical convergence results for operator learning which do not suffer from the curse of dimensionality, and which can be applied to prototypical problems in the PDE literature. We first develop our theory in an abstract setting for ERMs over classes of 
mappings between separable Hilbert spaces, and later apply our theory to concrete examples. In doing so, we build upon and synthesize influential proof techniques from nonparametric statistics, in particular M-estimation \cite{VDG01, NVW18}, approximation theory for parametric PDEs \cite{CDS11,CohenDeVore,Herrmann.2024}, and empirical process theory \cite{Talagrand.2005, Dirksen.2015}.

To illustrate the scope of our contributions,
we
start by stating a convergence result for the elliptic ``Darcy flow'' problem on the $d$-dimensional torus.
This is a standard example in PDE driven forward  and inverse problems, e.g.\ 
\cite{MR2646806,MR3298364,MR2835612,MR2903278,S10,N23,NVW18}.
We aim for an informal exposition here, with full details given in
Section \ref{sec:applications}: Denote by $\bbT^d$ the $d$-dimensional torus, fix a smooth source function 
$f:\bbT^d\to (0,\infty)$ and let $a_{\min}>0$. 
For a sufficiently smooth and uniformly positive conductivity $a: \bbT^d\to \mathbb R$, denote by $G_0(a)$ the unique solution of the elliptic PDE
\begin{equation}\label{darcy-intro}
    -\nabla\cdot (a\nabla u) = f~~\text{on}~\bbT^d ~~~ \text{and}~~~ \int_{\bbT^d}u(x)dx =0.
\end{equation}
Now let $\gamma$ be some probability distribution on $L^2(\bbT^d)$ 
such that
\begin{equation}
    \text{supp}(\gamma)\subseteq \big\{ a \in H^\mathfrak{s}(\bbT^d): \inf_{x\in \bbT^d} a(x)\ge a_{\min},~\|a\|_{H^\mathfrak{s}(\bbT^d)}\le R\big\},\label{supp-gamma}
\end{equation}
for some $R>0$, $\mathfrak{s}>2d+1$. Suppose we observe noisy input-output pairs
$(a_i,y_i)_{i=1}^n$ given by $y_i=G_0(a_i)+\eps_i$, where the $\eps_i$ are  
independent $L^2(\bbT^d)$-Gaussian white noise processes 
(Section \ref{sec:framework}).
The operator $G_0$ can then be learned from this data as stated in the next theorem (Section \ref{sec:DiffEqTorus});
it regards empirical risk minimizers $\hat G_n$ over the so-called FrameNet $\pG_{\rm FN}$,
which corresponds to a neural network based class of measurable mappings from $\X\to \Y$
(Section \ref{sec:networkarchitecture}).
Formally, $\hat G_n$ is defined as 
a minimizer of the least squares 
objective
\begin{equation}\label{eq:hatGnIntro}
  \hat G_n \in \argmin_{G\in \pG_{\rm FN}} \frac 1n\sum_{i=1}^n \|y_i-G(a_i)\|^2_{L^2(\bbT^d)},
\end{equation}
although a suitable modification
is required to make this mathematically rigorous
(Section \ref{sec:setting}).
\begin{theorem}[Informal]\label{thm:intro}
  Consider the operator $G_0$ from the Darcy problem on the $d$-dimensional torus $\bbT^d$ ($d\ge 2$), and suppose that $\gamma$ satisfies \eqref{supp-gamma} for some $\mathfrak{s} > 3d/2+1 $ and $a_{min}>0$. Fix $\tau>0$ (arbitrarily small).

  Then there exists a constant $C$ such that for each $n\in\N$
    there exists a FrameNet class $\pG_{\rm FN}(n)$
    and any empirical risk minimizer $\hat G_n$ 
    in \eqref{eq:hatGnIntro} satisfies\footnote{Here and in the following $\bbE_{G_0}$ denotes the expectation w.r.t.\ the random data $(x_i,y_i)_i$ generated by the ground truth $G_0$. Similarly, we write $\bbP_{G_0}$ for corresponding probabilities.}
    \begin{equation}\label{intro-rate}
      \mathbb E_{G_0} \left[ \int \|\hat G_n(a)-G_0(a)\|_{L^2(\bbT^d)}^2 d\gamma(a) \right]\le Cn^{-\frac{2\mathfrak{s}+2-3d}{2\mathfrak{s}+2-d}+\tau}.
    \end{equation}
  \end{theorem}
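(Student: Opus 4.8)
The plan is to derive \eqref{intro-rate} by combining the abstract oracle inequality for least-squares ERMs developed in the general part of the paper with the FrameNet approximation theory and with classical holomorphy and elliptic-regularity properties of the Darcy map. First I would invoke the abstract convergence theorem: for a (suitably modified) empirical risk minimizer $\hat G_n$ over a compact regression class $\pG_{\rm FN}(n)\subseteq L^\infty(X,Y)$ one obtains
\[
  \bbE_{G_0}\Big[\int\|\hat G_n(a)-G_0(a)\|_{L^2(\bbT^d)}^2\,d\gamma(a)\Big]\ \lesssim\ \inf_{G\in\pG_{\rm FN}(n)}\int\|G(a)-G_0(a)\|_{L^2(\bbT^d)}^2\,d\gamma(a)\ +\ \frac{\calC(\pG_{\rm FN}(n))}{n},
\]
where $\calC(\cdot)$ denotes the metric-entropy/complexity functional from the abstract theory and logarithmic factors (later absorbed into the arbitrarily small $\tau$) are suppressed. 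The ``suitable modification'' of \eqref{eq:hatGnIntro} is required because under $L^2(\bbT^d)$-white noise $\eps_i\notin\Y$, so \eqref{eq:hatGnIntro} is ill-defined; one instead minimizes the well-defined objective $\frac1n\sum_i\big(\|G(a_i)\|_{L^2(\bbT^d)}^2-2\langle y_i,G(a_i)\rangle\big)$ (the white noise pairs against the $L^2$-function $G(a_i)$ to yield a genuine Gaussian), or equivalently works with the projection onto the finite-dimensional decoder range, and verifies the oracle inequality still holds.

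Second, I would control the approximation (bias) term. The solution operator $a\mapsto u$ of \eqref{darcy-intro} extends holomorphically to a complex neighborhood of $\{a:\inf a\ge a_{\min}\}$ in $L^\infty(\bbT^d)$ via the standard Neumann-series / implicit-function-theorem argument \cite{CDS11,CohenDeVore}; moreover elliptic regularity bootstrapping gives $G_0(a)\in H^{\mathfrak{s}+1}(\bbT^d)$ with norm uniformly bounded on $\supp(\gamma)$. Expanding $a$ in the (normalized) Fourier basis of $H^{\mathfrak{s}}(\bbT^d)$, the basis functions $\psi_k$ satisfy $\|\psi_k\|_{L^\infty(\bbT^d)}\sim\langle k\rangle^{-\mathfrak{s}}$ and hence lie in $\ell^p$ for every $p>d/\mathfrak{s}$ — exactly the summability hypothesis under which the FrameNet approximation theorem yields algebraic rates with constants independent of the truncation dimension. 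Feeding this into that theorem produces, for each $n$, a FrameNet class $\pG_{\rm FN}(n)$ whose encoder width, network size and decoder width are coupled so that the squared bias is $\lesssim m^{-2\kappa}$ while $\calC(\pG_{\rm FN}(n))\lesssim m$ (up to logs), where $m$ is the total parameter count and $\kappa=\kappa(\mathfrak{s},d)$ is the effective rate obtained by balancing the encoder-truncation error (governed by $H^{\mathfrak{s}}\hookrightarrow L^\infty$), the network/polynomial-chaos error (governed by the $\ell^p$ summability above, with the usual $L^2(\gamma)$ gain of $1/2$ in the exponent), and the decoder-truncation error (governed by $G_0(a)\in H^{\mathfrak{s}+1}$).

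Third, I would optimize the architecture: choosing $m\asymp n^{1/(2\kappa+1)}$ balances the two terms in the oracle inequality and yields $\bbE_{G_0}[\cdots]\lesssim n^{-2\kappa/(2\kappa+1)}$ up to logarithmic factors; tracking the three exponents through $\kappa$ gives $\tfrac{2\kappa}{2\kappa+1}=\tfrac{2\mathfrak{s}+2-3d}{2\mathfrak{s}+2-d}$, and the logs are absorbed into $\tau$, proving \eqref{intro-rate}. The hypothesis $\mathfrak{s}>3d/2+1$ enters here to keep this exponent positive (so that the rate is genuine) and to guarantee the Sobolev embeddings and Sobolev-multiplication estimates used both in the encoder analysis and in the elliptic-regularity bootstrap; it also ensures $\supp(\gamma)$ is a compact subset of a space of continuous, uniformly elliptic coefficients, so that $G_0$ is bounded on $\supp(\gamma)$ and the whole setup fits the abstract framework.

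I expect the main obstacle to be the quantitative approximation step, i.e. pinning down the sharp FrameNet rate $\kappa$. This requires (a) quantifying how far $a$ may be perturbed into the complex domain while remaining uniformly elliptic, in order to control the domain of holomorphy and hence the decay of the polynomial-chaos coefficients of $G_0$ jointly in all active variables; (b) correctly matching the $H^{\mathfrak{s}}$-regularity of inputs and the $H^{\mathfrak{s}+1}$-regularity of outputs against the finite encoder/decoder dimensions, and then solving the resulting three-way balance of truncation errors against network size so that all contributions are of the same order; and (c) converting the parametric description of the FrameNet class into a metric-entropy bound of the right order, uniformly over the architecture parameters, so that $\calC(\pG_{\rm FN}(n))\lesssim m$. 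By contrast, the probabilistic part — verifying the oracle inequality for the modified estimator under white or subgaussian noise — should be essentially a citation of the abstract theorem, although the interface between the white-noise modification and the (in principle unbounded) output frame needs some care.
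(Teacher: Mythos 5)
Your high-level architecture is the same as the paper's: combine the abstract oracle inequality for least-squares ERMs (Theorem \ref{th:samplecomplexitywhitnoise}/Corollary \ref{cor:explicitentropy}) with the FrameNet approximation theorem (Theorem \ref{th:approximationNN}), verify the holomorphy hypothesis for the Darcy map, and balance $N(n)$ against $n$. Your description of the white-noise modification of the least-squares objective is also correct and matches (\ref{eq:empiricalrisminimizer}). However, there is a substantive flaw in your approximation step.

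You claim the decoder truncation error is ``governed by $G_0(a)\in H^{\mathfrak{s}+1}$'', obtained from real elliptic-regularity bootstrapping on $\supp(\gamma)$. This is not the correct constraint. What the FrameNet theory needs (Assumption \ref{assump:holomorphicextension}(a)) is that $G_0$ extend holomorphically to a full complex neighborhood $O_\IC$ of the parameterized cube $\sigma_R^r(U)$, with $\sup_{a\in O_\IC}\|G_0(a)\|_{\Y^t_\IC}\le C_{G_0}$. Pushing $a$ into the complex domain — in particular perturbing its high-frequency modes — degrades the available output regularity below the real-coefficient bootstrap value, and the resulting admissible range is only $t_0+td<1+r_0-\tfrac{d}{2}-t_0$ (see the proof of Theorem \ref{thm:torus} and \cite[Proof of Prop.~3]{Herrmann.2024}). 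After optimizing $r_0$, this gives $\Y^t$ roughly $H^{\mathfrak{s}/2+1/2-3d/4}$, which for $\mathfrak{s}>3d/2+1$ is strictly smaller than $H^{\mathfrak{s}+1}$. If one plugged your $H^{\mathfrak{s}+1}$ bound in place of the genuine constraint, the decoder truncation would never bind and you would obtain $\kappa=2\mathfrak{s}/d-3$, i.e., a rate $n^{-\frac{2\mathfrak{s}-3d}{2\mathfrak{s}-2d}}$ — which is not the claimed $n^{-\frac{2\mathfrak{s}+2-3d}{2\mathfrak{s}+2-d}}$ and is in fact strictly faster for $\mathfrak{s}>3d/2+1$. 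So your mechanism would ``prove'' a sharper theorem than the one stated, which signals that it omits a binding constraint. The ``main obstacle'' you flag (quantifying the admissible complex perturbation) is indeed where the missing exponent lives; it is not a technicality that can be replaced by the real regularity estimate.

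Two smaller misalignments are worth noting. First, you invoke ``the usual $L^2(\gamma)$ gain of $1/2$ in the exponent''; that gain (Theorem \ref{th:approximationNN}\ref{item:Ltwoapprox}) is available only when $\gamma=(\sigma_R^r)_\#\pi$ with a Riesz basis. The stated theorem allows an arbitrary $\gamma$ supported in a ball of $H^{\mathfrak{s}}$, so the paper has to use the weaker uniform bound (Theorem \ref{th:approximationNN}\ref{item:Linfapprox}, exponent $r-1$ rather than $r-\tfrac12$), as done in Remark \ref{rmk:torus}; the apparent ``loss'' of the $1/2$ is compensated when choosing $r_0$. Second, the role of $\mathfrak{s}>3d/2+1$ is not to make the rate exponent positive (positivity already holds for $\mathfrak{s}>3d/2-1$); rather it guarantees that the optimizing choice $r_0=\tfrac{\mathfrak{s}-d/2-1}{2}$ stays above the Sobolev-embedding threshold $d/2$, so the second branch of the case split in \eqref{eq:torusr0}/Remark \ref{rmk:torus} applies.
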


  The most significant feature of the above statement is that the convergence rate in \eqref{intro-rate} is algebraic in $n$, and thus circumvents the curse of dimensionality. The classes $\pG_{\rm FN}$, whose existence is postulated by the theorem, can be precisely 
characterized in terms of the sparsity, depth, width and other network class parameters, which are chosen in terms of the statistical sample size $n$. We also note that the regularity assumption $\mathfrak{s}>3d/2+1$ was made here for convenience and can be weakened to $\mathfrak{s} >3d/2$, see Theorem \ref{thm:torus} and Remark \ref{rmk:torus} below. 

To achieve Theorem \ref{thm:intro} and several other related results, we build our theory in multiple steps. In Section \ref{sec:framework}, a general regression framework for mappings between Hilbert spaces is considered.
Our first main result, Theorem \ref{th:empiricalerror}, gives a non-asyptotic concentration upper bound on the empirical risk between $\hat G_n$ and $G_0$,
with respect to the design points $x_i$. The upper bound is quantified in terms of the metric entropy of the ``regression class'' $\pG$ and the best approximation of $G_0$ from $\pG$. Theorem \ref{th:samplecomplexitywhitnoise} strengthens this statement to $L^2(\gamma)$-loss, for the case of random design points $x_i\sim \gamma$. These results provide an operator learning analogue to classical convergence rates in nonparametric regression. The proofs rely on probabilistic generic chaining techniques \cite{T14,Dirksen.2015} and ``slicing'' arguments as introduced in \cite{VDG01}, which we generalize to the current setting. We also note our proofs contrast existing nonparametric statistical analyses of neural networks \cite{SH20} for real-valued regression, where generic chaining techniques were not required for obtaining optimal rates (up to $\log$-factors).

In the second part of this work, we apply our statistical results to
the specific deep operator network class $\pG_{\trm{FN}}$ termed ``FrameNet'' 
and introduced in
\cite{Herrmann.2024}.
Together with their underlying decoder-encoder structure and feedforward neural network structure, {FrameNet} classes are defined in Section \ref{sec:networkarchitecture}.
These classes are known to satisfy good approximation properties for holomorphic operators, a property which is fulfilled for the Darcy problem \eqref{darcy-intro} and more broadly a wide range of PDE based problems
\cite{CDS10,CDS11,CohenDeVore,JSZ17,harbrecht2016analysis,henriquez2021shape,Hiptmair2018,spence2023wavenumber,CSZ18_2319}.
In Section \ref{sec:networkarchitecture}, we identify such operator holomorphy as a key regularity property which allows to derive ``dimension-free'' statistical convergence rates.
By extending approximation theoretic results from \cite{Herrmann.2024}, as well as establishing metric entropy bounds for $\pG_{\rm FN}$ based on \cite{SH20}, we obtain algebraic convergence results for ERMs over {FrameNet} classes,  
for reconstructing holomorphic operators $G_0$.
Specifically, Theorem \ref{th:RgnG0boundWN} bounds the $L^2$-risk 
$\IE[\lVert\hatgn-G_0\rVert_{L^2(\gamma)}^2]\lesssim n^{-\kappa/(\kappa+1)}$, where $\kappa>0$ denotes the \emph{approximation} rate established in \cite{Herrmann.2024}.
We treat the case of ReLU and RePU \cite{Li.2020} activation functions for both sparse and fully-connected architectures.

In Section \ref{sec:applications}, we illustrate the usefulness of our general theory in two concrete settings. First, we show how our theory recovers well-known minimax-optimal convergence rates for real-valued regression (i.e., $\Y=\R$) on $d$-dimensional domains. This proves that our abstract results from Section \ref{sec:framework} cannot be improved \textit{in general}, although matching lower bounds are yet unknown in the infinite-dimensional setting. Thereafter, Section \ref{sec:DiffEqTorus} demonstrates how our theory can be used to yield the first algebraic convergence rates for a non-linear operator arising from PDEs -- see in particular Theorem \ref{thm:torus} and Remark \ref{rmk:torus}, which underlie Theorem \ref{thm:intro}.

\subsection{Existing Results}
The approximation of mappings between infinite-dimensional spaces has been studied extensively in the context of Uncertainty Quantification, where $G_0$ corresponds to the solution operator of a parameter dependent PDE. Various methodologies have been proposed and analyzed for this task, including for example compressed sensing \cite{DOOSTAN20113015,RS16_1344}, sparse-grid interpolation \cite{MR3800593,NobilerandomPDE}, least-squares \cite{leastsquaresI,leastsquaresII}, and reduced basis methods \cite{MR3379913,MR3408061}. Recently, neural network approaches have become increasingly popular for this task as they provide a highly expressive and fast to evaluate parametrization of high-dimensional functions. These attributes make them particularly useful for learning surrogates in scientific applications, e.g.\ \cite{HESTHAVEN201855,Lu2021,li2020fourier,BhattModRedNNPDE,anandkumar2019neural,OLEARYROSEBERRY2022114199,OLEARYROSEBERRY2024112555,becker2024learning,cicci2022deep,dal2020data,kropfl2022operator}.

\paragraph*{Approximation Theory for Neural Operators}
First theoretical results on operator learning focused on the approximation error, establishing the existence of neural network architectures capable of approximating $G_0$ up to a certain accuracy, with the error decreasing algebraic in terms of the number of learnable network parameters. For example, \cite{Schwab2019Deep,MR4376560,Schwab2023Deep} showed that neural networks have sufficient expressivity to efficiently approximate certain (holomorphic) mappings $G_0$. Such results are based on the observation that the smoothness of $G_0$ implies the image of this operator to have moderate $n$-widths,
i.e.\ to be well approximated in moderate-dimensional linear subspaces. See for example \cite{Dung2023Gaussian,CDS10,CDS11,CohenDeVore,Hoang2014,MR3601011}.
Specifically for DeepONets \cite{Lu2021}, such a result was obtained in \cite{Lanthaler.2022}.
\paragraph*{Statistical Theory for Neural Operators} The analysis of sample complexity has received less attention so far. In \cite{Lanthaler2023PCA}, the authors analysed in particular the error of PCA encoders and decoders used for PCA-Net, but did not analyse the statistical error for the full operator.

The paper \cite{deHoopLinOp} provides such a result for the estimation of linear and diagonalizable mappings from noisy data; for lower bounds see, e.g., \cite{chagny2022adaptive}. For other work on ``functional regression'', see, e.g., \cite{greven2017general,morris2006wavelet}. An analysis for nonparametric regression of nonlinear mappings from noisy data in infinite dimensions was provided in \cite{Liu.20220101}. 
There, the authors considered Lipschitz continuous mappings $G_0$, and proved consistency in the large data limit. Additionally they give convergence results, which in general suffer from the curse of dimension however. This is due to their very general assumption on the smoothness of $G_0$: Mhaskar and Hahm \cite{Mhaskar.1997} showed very early, that the nonlinear $n$-width of Lipschitz operators in $L^2$ decays only logarithmically, i.e.\ the number of (exact) data points needed for the reconstruction of the functional is exponential in the desired accuracy. Recently, \cite{Kovachki.2024} generalized these results and showed a generic curse of dimensionality for the reconstruction of Lipschitz operators and $C^k$-operators from exact data. Moreover, the authors show that under the existence of some intrinsic low-dimensionality allowing for fast approximation, also the dependence on the data complexity improves. Concerning the case of noisy and holomorphic operators, we also refer to the recent works \cite{ADCOCK2025106761,Adcock.2024b} who consider a setup similar to ours, but contrary to us also treat the more general case of Banach space valued functions. The authors derive upper bounds and concentration inequalities for the $L^2$-error, and lower bounds for the approximation error, in terms of a neural network based architecture. Key differences to our work include in particular that \cite{ADCOCK2025106761,Adcock.2024b} consider network architectures that are linear in the trainable parameters, in the noisy case their analysis does in general not imply convergence in the large data limit $n\to\infty$, and they do not provide convergence rates for concrete PDE models.

\paragraph*{M-Estimation in Nonparametric Regression}
Convergence theory of M-estimators and (penalized) empirical risk minimizers was investigated around the 2000s in foundational works by van de Geer \cite{VDG00, VDG01}, Birge, Massart and co-authors \cite{BM93, BBM99}. These works build on concentration inequalities for empirical processes using ``chaining'' techniques which date back to seminal contributions of Talagrand and others, see, e.g.,~\cite{T14,GN16} and references therein. These techniques are known to produce minimax-optimal rates $n^{-{2\mathfrak{s}/(2\mathfrak{s}+d)}}$ for ERMs over $\mathfrak{s}$-smooth Sobolev, H\"older and more generally, Besov smoothness classes of real-valued functions 
on bounded $d$-dimensional Euclidean domains. The analysis of neural-network based ERMs was initiated by the work \cite{SH20}, which considered regression over (compositional) H\"older classes on finite-dimensional domains, and was followed by several other works such as 
\cite{S19}. We also mention \cite{NVW18, NW20, AW21} which analyse ERMs in non-linear elliptic PDE-based inverse problems such as the ``Darcy'' flow problem studied here. The present setting falls outside the scope of such classical theory for real-valued functions. However, the derivation of our concentration inequalities for ERMs does build upon the same probabilistic empirical process machinery laid out above \cite{T14, Dirksen.2015}.
\subsection{Notation}
We write $\IN=\{1,2\dots\}$ and $\IN_0=\{0,1,2,\dots\}$. 
We write $a_n\lesssim b_n, a_n\gtrsim b_n$ for real sequences $(a_n)_{n\in\IN}$, $(b_n)_{n\in\IN}$ if $a_n$ is respectively upper or lower bounded by a positive multiplicative constant which does not depend on $n$ (but may well depend on other ambient parameters which we make explicit whenever confusion may arise). By $a_n\simeq b_n$, we mean that both $a_n\lesssim b_n$ and $a_n\gtrsim b_n$.

For a pseudometric space $(T,d)$ and any $\delta>0$, let $N(T,d,\delta)$ be the $\delta$-covering number of $T$, i.e.~the minimal number of open $\delta$-balls in $d$ needed to cover $T$. We denote the metric entropy of $T$ by
\begin{equation}\label{eq:entropy}
  H(T,d,\delta)= \log N(T,d,\delta).
\end{equation}
Given a Borel probability measure $\gamma$ on $\X$ and a subset $D\subseteq\X$, we define the norms
\begin{align*}
    &\|G\|_{L^2(\X,\gamma;\Y)}^2\coloneqq\int_{\X} \|G(x)\|_{\Y}^2\dd\gamma(x),\\
    &\|G\|_{\infty,D}\coloneq \sup_{x\in D}\lVert G(x)\rVert_{\Y}
\end{align*}
and also write $\|\cdot\|_{L^2(\gamma)}$ and $\|\cdot\|_{\infty}$ if the underlying spaces are clear from context. The space of real-valued, square summable sequences indexed over $\IN$ is denoted by $\ell^2(\IN)$. The complexification of a real Hilbert space $H$ is denoted by $H_{\IC}$, see \cite{padraig,munoz99}.

\section{Regression in Hilbert Spaces}\label{sec:framework}
\subsection{Problem Formulation}\label{sec:setting}
Throughout, let $\X$ and $\Y$ denote two separable (real) Hilbert spaces with respective inner products $\langle \cdot,\cdot\rangle_{\X}$, $\langle \cdot,\cdot\rangle_{\Y}$ and suppose
\[\G_0:\mathcal \X\to\Y\] is some non-linear (Borel measurable) operator which we aim to reconstruct. The observed data are assumed to be noisy ``input-output pairs''
$(x_i,y_i)_{i=1}^{n}\in (\X\times\Y)^n$ given by
 
\begin{subequations}\label{eq:dataxy}
 \noeqref{eq:datax,eq:datay}
\begin{equation}\label{eq:datax}
    x_i\overset{\rm iid}{\sim}\gamma\qquad i=1,\dots,n,
\end{equation}
and
\begin{equation}\label{eq:datay}
  y_i=\G_0(x_i)+\sigma\eps_i\qquad i=1,\dots,n,
\end{equation}
\end{subequations}
where $\sigma>0$ denotes a scalar ``noise level'',
$\eps_i$ are independent random noise variables and $\gamma$ is a probability distribution on $\X$. The
$x_i\in \mathcal X$ are also referred to as the ``design points'',
and we write $\px = (x_1,..., x_n)\in\X^n$. We will both derive results which are \emph{conditional} on the design $\px$, as well as results for \textit{random design}.
To avoid confusions, we will use the notations $P_{G_0}^{\px}$, $\mathbb E_{G_0}^{\px}$ to denote probabilities and expectations under the distribution (\ref{eq:dataxy}) with fixed design $\px$, and we use $\mathbb P_{G_0}$, $\mathbb E_{G_0}$ to denote probabilities and expectations with random design $x_i\sim \gamma$.
\begin{remark}\label{rmk:subsetV}
In practice, we will often deal with scenarios in which $\G_0$ is only defined on some measurable subset $\mathcal V\subset\X$, see e.g.~the solution operator in the Darcy flow example in Section \ref{sec:torusG}. In this case, our results can be applied to any measurable extension of $G_0$ on $\X$.
\end{remark}

\paragraph*{\hypertarget{WN}{White Noise Model}}
In this article we consider two assumptions on the noise, the first being that the $(\eps_i)_{i=1}^n$ in \eqref{eq:dataxy} are independent copies of a $\Y$-white noise process. Recall that for any given separable Hilbert space $\Y$, the $\Y$-Gaussian white noise process is defined as the mean-zero Gaussian process $\mathbb W_{\Y}=(\mathbb W_{\Y}(y):y\in\Y)$ 
indexed by $\Y$ with ``iso-normal'' covariance structure
\[ \mathbb W_{\Y}(y)\sim \mathcal N(0,\|y\|_{\Y}^2),~~~ \text{Cov}(\mathbb W_{\Y}(y),\mathbb W_{\Y}(y'))= \langle y,y'\rangle_{\mathcal Y},~~~ \text{for all}~y,y'\in\Y.\]
It is well-known that $\mathbb{W}_\Y$ does not take values in $\Y$ unless $\dim(\Y)<\infty$, but is interpreted as a stochastic process indexed by $\Y$, see \cite[p.19]{Gine.op.2016} for details. Nevertheless, we slightly abuse notation and use the common notation $\langle \mathbb W_{\Y}, y \rangle_{\mathcal Y}:=\mathbb W_{\Y}(y)$.

Under this assumption, conditionally on $x_i$ we interpret each observation $y_i$ in (\ref{eq:dataxy}) as a realisation of a Gaussian process $(y_i(f):f\in\mathcal Y)$ with
\[ \mathbb E[y_i(f)] = \langle G_0(x_i), f\rangle_{\mathcal Y},~~~ \text{Cov}(y_i(f),y_i(f'))= \langle v,v'\rangle_{\mathcal Y},\]
and we shall again use the notation $\langle y_i, f\rangle_{\Y}$ to denote $y_i(f)$ (see also \cite{T09, GN16, NVW18} where this common viewpoint is explained in detail).

\begin{example}
Let $\mathcal O\subseteq \R^d$ be a bounded, smooth domain. Then, for $\Y=L^2(\O)$, one can show that draws of an $L^2(\mathcal O)$-white noise process a.s.~take values in negative Sobolev spaces $H^{-\kappa}$ for $\kappa>d/2$, see, e.g., \cite{Nickl.2020, Castillo.2013}.
\end{example}

\paragraph*{\hypertarget{SGN}{Sub-Gaussian Noise Model}}
The second setting we consider is that of sub-Gaussian noise. We say that a random vector $X$ taking values in $\Y$ is sub-Gaussian with parameter $\eta>0$ if $\IE[X]=0$ and 
\[\IP(\lVert X\rVert_{\Y} \geq t)\leq2\exp\left(-\frac{t^2}{2\eta^2}\right),~~\text{for all} ~t\geq 0.
\]
In the sub-Gaussian noise model, we assume that $(\eps_i)_{i=1}^n$ in \eqref{eq:dataxy} are independent sub-Gaussian variables in $\mathcal Y$ with parameter $\eta =1$.

\subsubsection{Empirical Risk Minimization}
Let $\pG$ be a class of (measurable) operators $\pG \ni G:\X\to\Y$. We would like to study classical empirical risk minimizers of least-squares type over $\pG$. Specifically, given regression data $(x_i,y_i)_{i=1}^n$, consider the empirical risk
\begin{equation}
\label{eq:empiricalrisminimizersubGaussian}
    \tilde I_n(G) := \frac{1}{n}\sum_{i=1}^n\bigl\lVert y_i- G(x_i)\bigr\rVert_{\Y}^2, ~~~ \tilde I_n: \pG \to [0,\infty].
\end{equation}
However, this functional takes finite values almost surely only in the \hyperlink{SGN}{sub-Gaussian noise} model. In the \hyperlink{WN}{white noise model}, since $y_i\notin \Y$, it holds $\tilde{I}_n(G)=\infty$ almost surely -- we thus consider a modified definition of least-squares type estimators which is common in the literature on regression with white noise \cite{NVW18, GN16}. Instead of (\ref{eq:empiricalrisminimizersubGaussian}), we consider
\begin{equation}
    \label{eq:empiricalrisminimizer}
    I_n(G)=\frac{1}{n}\sum_{i=1}^n-2\langle G(x_i),y_i\rangle_{\Y}+\bigl\lVert G(x_i)\bigr\rVert_{\Y}^2, ~~~ 
    I_n:\pG\to \R,
\end{equation}
which takes finite values a.s.~also in the \hyperlink{WN}{white noise model}. Note that the latter objective function can be obtained from (\ref{eq:empiricalrisminimizersubGaussian}) by formally subtracting  the term $n^{-1}\sum_{i=1}^n\|y_i\|_{\mathcal Y}^2$ which exhibits no dependency on $G$. Therefore in the \hyperlink{SGN}{sub-Gaussian noise model} the minimization of $\tilde I_n$ and $I_n$ are equivalent which is why we consider (\ref{eq:empiricalrisminimizer}) in the following. We will denote minimizers of $I_n(G)$ by $\hat G_n$.

Our assumptions on the class $\pG$ in the ensuing theorems will ensure that a \textit{measurable choice} of minimizers $\hat G_n$ of $I_n$ exists, see Theorem \ref{th:empiricalerror} \ref{item:minimizer}. However, the ERM $\hatgn$ will in general not be unique, since we do not impose convexity on $\pG$. The reason is that our main application, the NN-based FrameNet class $\pG_{\rm FN}$, is non-convex.

\begin{remark}[Connection to maximum likelihood]
    In the \hyperlink{WN}{white noise model}, it follows from the Cameron-Martin theorem (see, e.g., Theorem 2.6.13 in \cite{GN16}) that $-nI_n(G)/(2\sigma^2)$ constitutes the negative log-likelihood of the (dominated) statistical model arising from (\ref{eq:dataxy}) with white noise. In this case $\hat G_n$ can also be interpreted as a (nonparametric) maximum likelihood estimator over the class $\pG$.
\end{remark}

\begin{remark}
Consider nonparametric regression of an unknown function $f:\mathcal O\to\R$ for some bounded, smooth domain $\mathcal O$. Here, it is well-known that the observation white noise error model, where data is given by $Y=f+\sigma \mathbb W$ (with $\mathbb W$ a $L^2(\mathcal O)$-white noise process) is asymptotically equivalent in a Le Cam-sense to an observation model with $m$ ``equally spaced'' (random or deterministic) observation points throughout $\mathcal O$,
\[ Y_i= f(z_i)+\eta_i,~~~~i=1,...,m, \]
with i.i.d.~$N(0,1)$ errors, where the equivalence holds for $\sigma\asymp1/\sqrt{m}$, see \cite{R08}. Therefore, our observation model (\ref{eq:dataxy}) may be viewed as a simplified proxy.
\end{remark}

\subsection{Main Results}\label{sec:secmainresult}
Let $\pG$ be a class of operators mapping from $\X$ to $\Y$.
For any fixed $\px=(x_1,...,x_n)\in \mathcal X^n$ and (Borel) measurable map $G:\X\to\Y$, we denote the empirical seminorm induced by $\px$ with
\begin{equation}\label{eq:empiricalnorm}
    \|G\|_{n}^2=\frac 1n\sum_{i=1}^n \lVert G(x_i)\rVert_{\Y}^2. 
\end{equation}
For any element $G^*\in\pG$ and $\delta>0$, define the localized classes\[\pG_n^*(\delta)=\bigl\{G\in\pG:\,\lVert G-G^*\rVert_n\leq\delta\bigr\},\] and denote its metric entropy integral by
\begin{equation}\label{eq:Jintegral} 
  J(\delta) =  J(\pG_n^*(\delta),\|\cdot\|_n):= \int_0^\delta H^{\frac{1}{2}}(\pG_n^*(\delta),\|\cdot\|_n,\rho) d\rho.
\end{equation}

The following result provides a general convergence theorem for empirical risk minimizers with high probability, which relates the empirical risk of ERMs over some operator class $\pG$ to the metric entropy of $\pG$. It can be viewed as a generalisation of classical convergence results for sieved M-estimators \cite{vanGeer.2009} to Hilbert space valued functions. The proof can be found in Appendix \ref{app:proofempiricaltot}.

\begin{theorem}\label{th:empiricalerror}
For some measurable $G_0:\mathcal X\to\mathcal Y$, let the data $(x_i,y_i)_{i=1}^n$ arise from (\ref{eq:dataxy}) either with \hyperlink{WN}{white noise} or with \hyperlink{SGN}{sub-Gaussian noise}.
Let $\pG$ be a class of measurable maps from $\X\to\Y$, let $G^*\in \pG$, and let $\px=(x_1,...,x_n)\in \X^n$ be such that the following holds.
  \begin{enumerate}[label=(\alph*)]
  \item\label{item:Gassump1} There exists a constant $C>0$ s.t.~$\pG$ is compact with respect to some norm $\|\cdot\|$
    satisfying $\|\cdot\|_{n}\le C \|\cdot\|$.
  \item\label{item:Gassump2}
There exists $\Psi_n:(0,\infty)\to [0,\infty)$ s.t.\
    $\Psi_n(\delta)\ge J(\pG_n^*(\delta),\|\cdot\|_{n})$ for all $\delta>0$ and
  \begin{equation*}
    \delta \mapsto \frac{\Psi_n(\delta)}{\delta^2} \qquad\text{is non-increasing for }\delta\in (0,\infty).
  \end{equation*}
\end{enumerate}
 Then the following holds.
  \begin{enumerate}
    \item\label{item:minimizer}
Minimizers $\hat G_n$ of the empirical risk 
(\ref{eq:empiricalrisminimizer}) exist, and there is a measurable selection (with respect to the data $(x_i,y_i)_{i=1}^n$) of such a minimizer.
 
\item\label{item:conditionaltail} Fix any measurable selection $\hat G_n$ from part (i). Then there exists a 
  universal constant $C_{\rm Ch}>0$ (see Lemma \ref{lem:chaining1}) such that for any $G^*\in\pG$ as above, any positive sequence $(\delta_n)_{n\in\IN}$ satisfying
\begin{align}\label{eq:deltan}
    \sqrt{n}\delta_n^2\geq 32 C_{ch} \sigma\Psi_n(\delta_n),
\end{align}
and for any 
\begin{equation}\label{R-def}
    R\geq\max\Big\{\delta_n,\frac{4C_{ch}\sigma}{\sqrt{n}},\sqrt{2}\lVert G^*-G_0\rVert_n\Big\},
\end{equation}it holds that
\begin{align}
\label{eq:empiricalconcentration}
\IP_{G_0}^{\px}\bigl(\lVert\hat{G}_n-\G_0\rVert_n\geq R\bigr)\leq 2\exp\biggl(-\frac{nR^2}{16C_{ch}^2\sigma^2}\biggr).
\end{align}
\end{enumerate}
\end{theorem}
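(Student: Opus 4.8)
The plan is to prove this via a peeling/slicing argument over dyadic shells in the empirical seminorm $\|\cdot\|_n$, combined with a uniform concentration bound for the relevant empirical process obtained by generic chaining. First I would establish the basic optimality inequality: since $\hat G_n$ minimizes $I_n$ and $G^*\in\pG$, we have $I_n(\hat G_n)\le I_n(G^*)$, and expanding the squares (using $y_i=G_0(x_i)+\sigma\eps_i$ and the definition of $I_n$ in \eqref{eq:empiricalrisminimizer}) yields, after rearrangement,
\begin{equation*}
  \tfrac12\|\hat G_n-G_0\|_n^2 \;\le\; \tfrac12\|G^*-G_0\|_n^2 \;+\; \frac{\sigma}{n}\sum_{i=1}^n \langle \eps_i, \hat G_n(x_i)-G^*(x_i)\rangle_{\Y}.
\end{equation*}
(The constant $\tfrac12$ and the precise form can be adjusted; the key is the linear-in-noise ``empirical process term'' on the right, and the fact that in both noise models $\langle \eps_i,\cdot\rangle_\Y$ makes sense — in the white noise case as the Gaussian process indexed by $\Y$.) Writing $Z(G) := \frac{\sigma}{n}\sum_{i=1}^n \langle \eps_i, G(x_i)-G^*(x_i)\rangle_{\Y}$, the task reduces to controlling $\sup_{G\in\pG:\,\|G-G^*\|_n\le t} Z(G)$ for each dyadic level $t$.

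Next I would analyze this supremum. Conditionally on $\px$, the process $G\mapsto Z(G)$ is (sub-)Gaussian with respect to the metric $\frac{\sigma}{\sqrt n}\|G-G'\|_n$: in the white noise model the increments are exactly Gaussian with this variance proxy, and in the sub-Gaussian model one gets the same up to a universal constant. Hence by the generic chaining / Dudley-type bound (this is where the cited Lemma~\ref{lem:chaining1} with its universal constant $C_{\rm Ch}$ enters), for any $G_0^{*}\in \pG_n^*(t)$,
\begin{equation*}
  \IE\Big[\sup_{G\in\pG_n^*(t)} \big(Z(G)-Z(G_0^*)\big)\,\Big|\,\px\Big] \;\le\; \frac{C_{\rm Ch}\sigma}{\sqrt n}\, J(\pG_n^*(t),\|\cdot\|_n) \;\le\; \frac{C_{\rm Ch}\sigma}{\sqrt n}\,\Psi_n(t),
\end{equation*}
together with a Gaussian-type deviation inequality (Borell–TIS in the white noise case, or a Talagrand/Dirksen-type bound \cite{Dirksen.2015} in the sub-Gaussian case) giving exponential tails with variance proxy $\sim \sigma^2 t^2/n$. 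Part~\ref{item:minimizer} (existence of a measurable minimizer) I would obtain from compactness of $\pG$ in $\|\cdot\|$ (Assumption~\ref{item:Gassump1}), continuity of $I_n$ on $\pG$ — which follows since $\|\cdot\|_n\le C\|\cdot\|$ so evaluation functionals are continuous — and a standard measurable-selection theorem (e.g.\ Kuratowski–Ryll-Nardzewski) applied to the argmin multifunction, using separability of $\X,\Y$.

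For the peeling argument I would split the event $\{\|\hat G_n-G_0\|_n\ge R\}$ according to which dyadic shell $\|\hat G_n - G^*\|_n \in (2^{j-1}R, 2^j R]$ the estimator falls in (note $\|\hat G_n-G^*\|_n\lesssim \|\hat G_n-G_0\|_n + \|G^*-G_0\|_n$, and the condition on $R$ in \eqref{R-def} controls the second term). On each shell, the basic inequality forces $Z(\hat G_n)-Z(G^*)$ to be at least of order $2^{2j}R^2$, while the chaining bound says its expectation is at most $\tfrac{C_{\rm Ch}\sigma}{\sqrt n}\Psi_n(2^j R)$; the crucial structural input \eqref{eq:deltan} together with the monotonicity of $\delta\mapsto \Psi_n(\delta)/\delta^2$ from Assumption~\ref{item:Gassump2} guarantees that $\tfrac{C_{\rm Ch}\sigma}{\sqrt n}\Psi_n(2^j R)\le \tfrac{1}{32}(2^j R)^2$ for all $j\ge 0$ once $R\ge\delta_n$, so the required deviation is a constant fraction of $2^{2j}R^2$ above the mean. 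The Gaussian deviation inequality then bounds the probability of shell $j$ by $2\exp(-c\, n\,2^{2j}R^2/\sigma^2)$; summing the geometric series over $j\ge 0$ (and checking the condition $R\ge 4C_{\rm Ch}\sigma/\sqrt n$ ensures the $j=0$ term already dominates and absorbs constants) yields the claimed bound \eqref{eq:empiricalconcentration} with $C_{\rm Ch}^2$ in the exponent.

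The main obstacle I expect is the rigorous handling of the empirical process term in the white noise model, where $y_i\notin\Y$ and $\langle \eps_i, G(x_i)\rangle_\Y$ is only defined as a Gaussian process indexed by $\Y$ rather than a genuine inner product: one must verify that $\sup_{G\in\pG_n^*(t)}Z(G)$ admits a measurable, a.s.-finite version and that the chaining bound applies to it — this requires separability of $\pG_n^*(t)$ in $\|\cdot\|_n$ (inherited from compactness in $\|\cdot\|$) so the supremum is an essential supremum realized over a countable dense subset. A secondary technical point is making the two noise models fit a single argument: the sub-Gaussian case needs a vector-valued chaining/deviation inequality (as in \cite{Dirksen.2015}) rather than the classical Gaussian one, and one has to track that the universal constant $C_{\rm Ch}$ can be chosen to work for both. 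Everything else — the algebra of the basic inequality, the dyadic peeling, and the geometric summation — is routine once these measurability and concentration inputs are in place.
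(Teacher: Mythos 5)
Your proposal is correct and follows essentially the same route as the paper: the basic inequality obtained from $I_n(\hat G_n)\le I_n(G^*)$, a dyadic peeling over localized classes, a generic-chaining tail bound for the noise process (Lemma \ref{lem:chaining1}, built on \cite{Dirksen.2015}) whose entropy-integral condition is verified at every shell via \eqref{eq:deltan} and the monotonicity of $\Psi_n(\delta)/\delta^2$, and a measurable-selection argument from compactness for part (i). The only cosmetic differences are that the paper slices on the quantity $|\lVert\hat G_n-G_0\rVert_n^2-\lVert G^*-G_0\rVert_n^2|$ rather than on $\lVert\hat G_n-G^*\rVert_n$ directly, and uses a one-shot tail inequality in place of your expectation-plus-deviation (Borell--TIS) decomposition; neither changes the substance.
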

The lower bound for $R$ in \ref{R-def}, which determines the convergence rate of $\|\hat G_n-G_0\|_n$, is typically optimized by balancing the ``stochastic term'' $\delta_n$ and the empirical approximation error $\|G^*-G_0\|_n$, see, e.g., Theorem \ref{th:RgnG0boundWN} below. 
Note that Theorem \ref{th:empiricalerror} gives a convergence rate with respect to the \textit{empirical norm} $\|\cdot\|_n$. Therefore, when the design points $\px$ are random, the norm itself is also random, and the assumptions \ref{item:Gassump1} and \ref{item:Gassump2} in Theorem \ref{th:empiricalerror} have to be understood conditional on possible realizations of $\px$. Theorem \ref{th:samplecomplexitywhitnoise} below will give a corresponding concentration inequality on the $\|\cdot\|_{L^2(\gamma)}$-error under the assumption of i.i.d.~random design $x_i\sim \gamma$. In this setting, a sufficient condition for \ref{item:Gassump2} to be satisfied almost surely is to take $\Psi_n$ as an upper bound for the entropy integral with uniform entropy $H(\pG,\|\cdot\|_{\infty,\supp(\gamma)},\delta)$. 

The compactness of $\pG$ in part \ref{item:Gassump1} is needed for the existence of the ERM $\hatgn$ in \eqref{eq:empiricalrisminimizer}. The existence result for $\hatgn$ (see e.g.~\cite[Proposition 5]{Nickl.2007}) requires compact \emph{metric} spaces, which
is why we assume copmactness w.r.t.\ a \emph{norm}
$\|\cdot\|$ stronger than the empirical \emph{seminorm} $\|\cdot\|_n$.

 In particular, compactness implies separability of $\pG$ with respect to~$\|\cdot\|_{n}$, which in turn is needed to guarantee the measurability of certain suprema of empirical processes ranging over $\pG$. See the proof of Theorem \ref{th:empiricalerror} below, in particular \eqref{eq:proofempirical3} and \eqref{eq:proofempirical4}. The technical growth restriction in \ref{item:Gassump2} on the function $\Psi_n(\delta)$ (i.e., our upper bound for the entropy integral) is required for the ``peeling device'' in \eqref{eq:proofempirical3}. In particular, this assumption is satisfied in case that $H(\pG,\|\cdot\|_{\infty},\delta)\lesssim \delta^{-\mathfrak{s}}$ for any $0<\alpha<2$, see Corollary \ref{cor:MSE} below for details.

Theorem \ref{th:empiricalerror} provides a concentration inequality for the empirical norm $\|\hat G_n-G_0\|_{n}$. Under the assumption of randomly chosen design points $x_i\sim \gamma$, $x_i\in\mathcal X$, this statement can be extended to a convergence result for the $L^2(\gamma)$-norm. To this end, we also need slightly stronger technical assumptions on the class $\pG$ with respect to the $\|\cdot\|_{\infty,\supp(\gamma)}$-norm.

\begin{assumption}\label{assump:g0assump}
 For some probability measure $\gamma$ on $\X$, assume $\px=(x_1,\dots, x_n)$ arises from i.i.d.~draws $x_i\sim \gamma$. Let $\pG$ be a class of measurable maps $\X\to\Y$, $G^*\in \pG$ and $\|G_0\|_{\infty,\supp(\gamma)}<\infty$. Suppose
  \begin{enumerate}[label=(\alph*)]
  \item\label{item:Gassump11} $\pG$ is compact with respect to $\|\cdot\|_{{\infty,\supp(\gamma)}}$,
  \item\label{item:Gassump21}
There exists a (deterministic) upper bound $\Psi_n:(0,\infty)\to [0,\infty)$ such that for a.e.~$\px\sim \gamma^n$, it holds 
    $\Psi_n(\delta)\ge J(\pG_n^*(\delta),\|\cdot\|_{n})$ for all $\delta>0$ and
  \begin{equation}\label{eq:independentofx}
    \delta \mapsto \frac{\Psi_n(\delta)}{\delta^2} \qquad\text{is non-increasing for }\delta\in (0,\infty).
  \end{equation}
\end{enumerate}
\end{assumption}
Note that Assumption \ref{assump:g0assump} is strictly stronger than assumptions \ref{item:Gassump1} and \ref{item:Gassump2} in Theorem \ref{th:empiricalerror}, since $\|\cdot\|_{{\infty,\supp(\gamma)}}$ is stronger than $\|\cdot\|_n$ and 
  $\Psi_n$
  in \eqref{eq:independentofx} does not depend on $\px$.
  In particular, Assumption \ref{assump:g0assump} implies the existence of a measurable ERM $\hatgn$ by Theorem \ref{th:empiricalerror} \ref{item:concrete1}.

The uniform $\|\cdot\|_{\infty,\supp(\gamma)}$ assumptions are used to control the concentration of the empirical norm $\|\cdot\|_n$ around the ``population norm'' $\|\cdot\|_{{\infty,\supp(\gamma)}}$, see Lemma \ref{lem:variance} and also Lemma \ref{lem:sub-Gaussianepsilon} below. Let us write $\pF=\set{G-\G_0}{G\in\pG}$. As an immediate consequence of Assumption \ref{assump:g0assump}, there exists some $F_\infty<\infty$ such that
\begin{equation}\label{eq:F-bound}
\sup_{F\in\pF}\|F\|_{{\infty,\supp(\gamma)}}=\sup_{G\in\pG}\,\lVert G-\G_0\rVert_{{\infty,\supp(\gamma)}}\leq F_{\infty}.
\end{equation} 
We can now state our main concentration inequality for the convergence of $\lVert \hat{G}_n-\G_0\rVert_{L^2(\gamma)}$, which in particular provides a bound for the mean squared error
 $\IE_{\G_0}[\lVert \hat{G}_n-\G_0\rVert_{L^2(\gamma)}^2]$ as well. The proof of Theorem \ref{th:samplecomplexitywhitnoise} can be found in Appendix \ref{app:proofsamplecomplexity}.

\begin{theorem}[$L^2(\gamma)$-Concentration under Random Design]
\label{th:samplecomplexitywhitnoise}
    Consider the nonparametric regression model \eqref{eq:dataxy} either with \hyperlink{WN}{white noise}  or with \hyperlink{SGN}{sub-Gaussian noise} and any measurable empirical risk minimizer $\hat G_n$ from \eqref{eq:empiricalrisminimizer}. Suppose that $G_0$, $\pG$, $G^*$, $\gamma$ and $\Psi_n(\cdot)$ are such that Assumption \ref{assump:g0assump} holds. Then there exists some universal constant $C>0$ such that for any positive sequences $(\delta_n)_{n\in\IN}$ and $(\tildedeltan)_{n\in\IN}$ with 
      \begin{equation}\label{eq:deltandeltanhat}
        \sqrt{n}\delta_n^2\geq C\sigma\Psi_n(\delta_n)\quad
        \text{and}\quad
        n\tildedeltan^2\geq C\Finf^2 H(\pG,\norm[{\infty,\supp(\gamma)}]{\cdot},\tildedeltan),
      \end{equation}
      all $G^*\in\pG$ as above and all
\begin{align}\label{eq:Rinequality}
          R\geq C\max\left\{\delta_n,\tildedeltan,\lVert G^*-G_0\rVert_{{\infty,\supp(\gamma)}},\frac{\sigma+F_\infty}{\sqrt{n}}\right\}
      \end{align}
      we have, 
      \begin{align}\label{eq:L2concentration}
          \IP_{G_0}\left(\lVert\hatgn-G_0\rVert_\ltg\geq R\right)\leq 2\exp\left(-\frac{nR^2}{C^2(\sigma^2+F_\infty^2)}\right).
      \end{align}
\end{theorem}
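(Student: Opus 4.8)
The plan is to transfer the empirical‑norm concentration of Theorem~\ref{th:empiricalerror} to the population norm $\|\cdot\|_{\ltg}$ by means of a uniform comparison of $\|\cdot\|_n$ and $\|\cdot\|_{\ltg}$ over the shifted class $\pF=\{G-G_0:G\in\pG\}$. As a first step I would verify that Assumption~\ref{assump:g0assump} entails hypotheses \ref{item:Gassump1}--\ref{item:Gassump2} of Theorem~\ref{th:empiricalerror} for a.e.\ design $\px\sim\gamma^{\otimes n}$: compactness in $\|\cdot\|_{\infty,\supp(\gamma)}$ gives compactness in the weaker seminorm $\|\cdot\|_n$ (with constant $1$, since $\|G\|_n^2\le\|G\|_{\infty,\supp(\gamma)}^2$ because $x_i\in\supp(\gamma)$ a.s.), and the \emph{deterministic} $\Psi_n$ from \ref{item:Gassump21} dominates $J(\pG_n^*(\delta),\|\cdot\|_n)$ for a.e.\ $\px$ with the required monotonicity of $\delta\mapsto\Psi_n(\delta)/\delta^2$. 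Hence a measurable ERM $\hatgn$ exists (Theorem~\ref{th:empiricalerror}\ref{item:minimizer}), and, taking the universal constant in \eqref{eq:deltandeltanhat} at least $32\,\CCh$ and using $\|\gstar-G_0\|_n\le\|\gstar-G_0\|_{\infty,\supp(\gamma)}$ deterministically so that \eqref{R-def} holds, Theorem~\ref{th:empiricalerror}\ref{item:conditionaltail} gives, for every
\[
r\ \ge\ r_0:=\max\Big\{\delta_n,\ \tfrac{4\CCh\sigma}{\sqrt n},\ \sqrt2\,\|\gstar-G_0\|_{\infty,\supp(\gamma)}\Big\},
\]
the conditional tail bound $\IP_{G_0}^{\px}\big(\|\hatgn-G_0\|_n\ge r\big)\le 2\exp\big(-nr^2/(16\CCh^2\sigma^2)\big)$, valid for a.e.\ $\px$.

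The heart of the argument is the empirical‑versus‑population comparison. For $F\in\pF$ the summands $\|F(x_i)\|_\Y^2$ are i.i.d., bounded by $\Finf^2$ (cf.\ \eqref{eq:F-bound}), with mean $\|F\|_{\ltg}^2$ and variance at most $\Finf^2\|F\|_{\ltg}^2$. Applying a Talagrand--Bousquet concentration inequality to the supremum of the empirical process $F\mapsto\|F\|_{\ltg}^2-\|F\|_n^2$ over $\pF$, controlling its expectation by a Dudley bound in the uniform entropy $H(\pG,\|\cdot\|_{\infty,\supp(\gamma)},\cdot)$, and peeling over $\|F\|_{\ltg}$‑shells, one arrives at a statement of the form: provided $n\tildedeltan^2\ge C\,\Finf^2 H(\pG,\|\cdot\|_{\infty,\supp(\gamma)},\tildedeltan)$ as in \eqref{eq:deltandeltanhat},
\[
\IP\Big(\exists\,F\in\pF:\ \|F\|_{\ltg}^2> 2\|F\|_n^2+C_1\tildedeltan^2+t\Big)\ \le\ \exp\big(-c\,n t/\Finf^2\big)\qquad\text{for all }t>0 .
\]
This is exactly what Lemmas~\ref{lem:variance} and~\ref{lem:sub-Gaussianepsilon} are meant to supply (the latter handling the moment estimates specific to the sub‑Gaussian noise model); the displayed event depends only on $\px$.

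To conclude, I would fix $R$ as in \eqref{eq:Rinequality} with the universal constant large enough, apply the comparison with $t=R^2/2$ and the first step with $r=R/4$. On the intersection of the comparison event and $\{\|\hatgn-G_0\|_n\le R/4\}$, applying the comparison to $F=\hatgn-G_0$,
\[
\|\hatgn-G_0\|_{\ltg}^2\ \le\ 2\|\hatgn-G_0\|_n^2+C_1\tildedeltan^2+\tfrac12 R^2\ \le\ \tfrac18 R^2+C_1\tildedeltan^2+\tfrac12 R^2\ \le\ R^2,
\]
using $R\gtrsim\tildedeltan$ from \eqref{eq:Rinequality}. Writing $\IP_{G_0}=\IE_{\px}\big[\IP_{G_0}^{\px}\big]$, conditioning on the $\px$‑measurable comparison event, and invoking the conditional tail with $r=R/4\ge r_0$ (also ensured by \eqref{eq:Rinequality}) yields $\IP_{G_0}\big(\|\hatgn-G_0\|_{\ltg}\ge R\big)\le \exp(-cnR^2/\Finf^2)+2\exp(-nR^2/(256\,\CCh^2\sigma^2))$. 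Finally, the lower bound $R\ge C(\sigma+\Finf)/\sqrt n$ in \eqref{eq:Rinequality} pushes both exponents past constant order so that, after adjusting the universal constant, the two terms collapse into $2\exp\big(-nR^2/(C^2(\sigma^2+\Finf^2))\big)$; integrating this tail gives the mean‑squared‑error bound as well.

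I expect the main obstacle to be the uniform comparison of the second paragraph: obtaining genuinely uniform control over the class $\pF$ with the sharp envelope dependence, and, above all, arranging the deviation inequality so that its exponent is linear in a \emph{free} parameter $t$ with the $\Finf^{-2}$ scaling, so that the choice $t\asymp R^2$ produces decay in $R$ compatible with the sub‑Gaussian tail coming from Theorem~\ref{th:empiricalerror}. The precise thresholds imposed on $\tildedeltan$ in \eqref{eq:deltandeltanhat} and the auxiliary term $(\sigma+\Finf)/\sqrt n$ in \eqref{eq:Rinequality} are exactly what is needed to make this bookkeeping close.
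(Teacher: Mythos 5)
Your proposal follows the same two-step decomposition the paper uses (eq.~\eqref{eq:cleversplitrgn}): first the empirical-norm tail from Theorem~\ref{th:empiricalerror} integrated over random design, then a uniform comparison of $\|\cdot\|_n$ against $\|\cdot\|_{L^2(\gamma)}$ on the shifted class $\pF$. Your verification that Assumption~\ref{assump:g0assump} implies the hypotheses of Theorem~\ref{th:empiricalerror} a.s.\ (via $\|\cdot\|_n\le\|\cdot\|_{\infty,\supp(\gamma)}$ and the deterministic $\Psi_n$) matches what the paper does implicitly, and the final assembly, while a little more elaborate ($r=R/4$, additive $t$-parameter, absorb $C_1\tilde\delta_n^2$), produces the same bound.

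The only substantive divergence is the tool used for the comparison step. You propose a Talagrand--Bousquet inequality for $\sup_{F\in\pF}\bigl(\|F\|^2_{\ltg}-\|F\|_n^2\bigr)$ with the mean controlled by a Dudley bound; the paper's Lemma~\ref{lem:variance} instead peels over $L^2(\gamma)$-shells, reduces each shell to a finite $\|\cdot\|_{\infty,\supp(\gamma)}$-cover, applies Bernstein to the fixed cover elements, and takes a union bound. These routes yield comparable bounds here because the exponent needed is linear in $t\asymp R^2$ with envelope scaling $\Finf^{-2}$, which both deliver; the covering-plus-Bernstein argument is more elementary and needs no deviation-from-the-mean machinery, while the Bousquet route would give a sharper constant if one wanted to optimize. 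You also state the comparison in additive form with a free parameter $t$, whereas Lemma~\ref{lem:variance} directly bounds the probability of the ``bad'' multiplicative event $\{\|F\|_{\ltg}\ge R,\ \|F\|_{\ltg}\ge 2\|F\|_n\}$; both encode the same information, but the latter slots directly into the split in eq.~\eqref{eq:cleversplitrgn} with $R/2$ and avoids solving for $t$. One minor misattribution: Lemma~\ref{lem:sub-Gaussianepsilon} is not part of the empirical-to-population comparison --- it replaces the chaining bound on $\|\hatgn-G_0\|_n$ when the entropy integral diverges (Theorem~\ref{th:samplecomplexitysub-Gaussiannoise}); the comparison step is handled entirely by Lemma~\ref{lem:variance}, which is noise-free and applies identically in both noise models.
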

To keep the presentation simple, we have left the numerical constants in the preceding theorem implicit. However, they can be made explicit, see the proof for details.
The following bound on the mean squared error is obtained upon integration of the concentration inequalites from Theorem \ref{th:empiricalerror} and Lemma \ref{lem:variance}. Note that directly integrating the $L^2(\gamma)$-concentration, cf.~\eqref{eq:L2concentration}, gives an approximation term in the uniform norm $\|\cdot\|_{\infty,\supp(\gamma)}$ (following \eqref{eq:Rinequality}), which has weaker convergence properties in general, see Theorem \ref{th:approximationNN}.
For the proof of Corollary \ref{cor:MSE}, see Appendix \ref{app:samplecomplexityexpectation}.
\begin{corollary}[$L^2(\gamma)$-Mean Squared Error]\label{cor:MSE}
    Consider the setting of Theorem \ref{th:samplecomplexitywhitnoise} and assume in addition that Assumption \ref{assump:g0assump} is fulfilled for all $G^*\in\pG$ (with the same $\Psi_n$). Then, for some universal constant $C>0$ and all $n\in\IN$,
    \begin{align}
    \label{eq:WNconvergence}
           \IE_{\G_0}\,\Big[\lVert \hat{G}_n-\G_0\rVert_{L^2(\gamma)}^2\Big]
      \leq C\Big( \delta_n^2+\tilde{\delta}_n^2+\frac{\sigma^2+\Finf^2}{n}\Big)+8\inf_{\gstar\in \pG}\lVert \gstar-\G_0\rVert_{L^2(\gamma)}^2.
    \end{align}
\end{corollary}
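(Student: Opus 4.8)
The plan is to \emph{not} integrate the $L^2(\gamma)$-concentration inequality \eqref{eq:L2concentration} directly: doing so would, via the bound \eqref{eq:Rinequality} on $R$, leave the approximation error in the form $\inf_{\gstar}\|\gstar-\G_0\|_{\infty,\supp(\gamma)}^2$, a uniform-norm quantity that is in general strictly larger than the $L^2(\gamma)$-approximation error appearing in \eqref{eq:WNconvergence}. Instead I would return to the \emph{conditional} estimate of Theorem~\ref{th:empiricalerror}, in which the approximation error enters only through the empirical seminorm $\|\gstar-\G_0\|_n$, whose expectation under random design $\px\sim\gamma^n$ is exactly $\|\gstar-\G_0\|_{L^2(\gamma)}^2$; the price is that one must separately control the deviation between $\|\cdot\|_n$ and $\|\cdot\|_{L^2(\gamma)}$, which is precisely what Lemma~\ref{lem:variance} provides.

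\emph{Step 1 (conditional empirical-norm bound).} Fix an arbitrary $\gstar\in\pG$. By hypothesis Assumption~\ref{assump:g0assump} holds for this $\gstar$ with the common $\Psi_n$, so (after enlarging the universal constant in \eqref{eq:deltandeltanhat} if needed) $\delta_n$ satisfies \eqref{eq:deltan}, and conditionally on $\px$ Theorem~\ref{th:empiricalerror}\,\ref{item:conditionaltail} gives, for all $R\ge R_0(\px):=\max\{\delta_n,4\CCh\sigma/\sqrt n,\sqrt2\,\|\gstar-\G_0\|_n\}$, the tail $\IP_{G_0}^{\px}(\|\hatgn-\G_0\|_n\ge R)\le 2\exp(-nR^2/(16\CCh^2\sigma^2))$. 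A layer-cake integration, splitting at $u=R_0(\px)^2$, yields $\IE_{G_0}^{\px}[\|\hatgn-\G_0\|_n^2]\lesssim R_0(\px)^2+\sigma^2/n\lesssim \delta_n^2+\|\gstar-\G_0\|_n^2+\sigma^2/n$, and taking $\IE_{\px}$ together with $\IE_{\px}[\|\gstar-\G_0\|_n^2]=\|\gstar-\G_0\|_{L^2(\gamma)}^2$ gives $\IE_{G_0}[\|\hatgn-\G_0\|_n^2]\lesssim \delta_n^2+\|\gstar-\G_0\|_{L^2(\gamma)}^2+\sigma^2/n$.

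\emph{Step 2 (empirical to population norm).} Since $\hatgn-\G_0\in\pF$ with $\sup_{F\in\pF}\|F\|_{\infty,\supp(\gamma)}\le\Finf$ by \eqref{eq:F-bound}, and $\tildedeltan$ satisfies the second inequality of \eqref{eq:deltandeltanhat}, Lemma~\ref{lem:variance} provides a uniform comparison $\|F\|_{L^2(\gamma)}^2\le 2\|F\|_n^2+Z$ valid for all $F\in\pF$, with $Z=Z(\px)\ge0$ the deviation of the empirical process $\sup_{F\in\pF}(\|F\|_{L^2(\gamma)}^2-2\|F\|_n^2)$; integrating its exponential tail (splitting at the threshold of order $\tildedeltan^2+\Finf^2/n$) gives $\IE_{G_0}[Z]\lesssim \tildedeltan^2+\Finf^2/n$. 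Applying the comparison to $F=\hatgn-\G_0$, taking expectations and inserting Step~1 produces $\IE_{G_0}[\|\hatgn-\G_0\|_{L^2(\gamma)}^2]\le 2\IE_{G_0}[\|\hatgn-\G_0\|_n^2]+\IE_{G_0}[Z]\lesssim \delta_n^2+\tildedeltan^2+(\sigma^2+\Finf^2)/n+\|\gstar-\G_0\|_{L^2(\gamma)}^2$.

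Finally, the left-hand side does not depend on $\gstar$, while the last bound holds for \emph{every} $\gstar\in\pG$ — here the assumption that Assumption~\ref{assump:g0assump} holds for all $\gstar\in\pG$ with the \emph{same} $\Psi_n$ is used, so that the one sequence $\delta_n$ is admissible for all of them — so one may take the infimum over $\gstar\in\pG$ on the right-hand side, and tracking the numerical constants through Steps~1--2 (the two factors of $2$, together with $R_0^2\ge 2\|\gstar-\G_0\|_n^2$, accumulate to the factor $8$ in front of the approximation term, a single universal $C$ absorbing the rest) yields \eqref{eq:WNconvergence}. The only genuinely delicate point is Step~2: turning the concentration inequality of Lemma~\ref{lem:variance} for the random $\|\cdot\|_n$-versus-$\|\cdot\|_{L^2(\gamma)}$ ratio — uniformly over the non-convex class $\pF$ — into an expectation bound of the sharp order $\tildedeltan^2+\Finf^2/n$, ensuring the atypical-design event contributes no worse; this is exactly where the uniform bound $\Finf$ from \eqref{eq:F-bound} and the defining relation for $\tildedeltan$ are needed. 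Everything else is bookkeeping: matching constants with the hypotheses of Theorem~\ref{th:empiricalerror} and Lemma~\ref{lem:variance}, plus the two routine layer-cake integrations.
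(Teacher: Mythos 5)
Your proof is correct and takes essentially the same route as the paper's: both combine the conditional empirical-norm bound of Theorem~\ref{th:empiricalerror}, averaged over the design $\px\sim\gamma^n$, with the empirical-to-population comparison of Lemma~\ref{lem:variance}, which is precisely what forces the approximation error into $L^2(\gamma)$ rather than the uniform norm. The only difference is presentational: the paper decomposes the tail probability via \eqref{eq:cleversplitrgn} and integrates once, while you pass to a pointwise norm comparison and take expectations of the two pieces separately --- an equivalent reordering (the event $\lVert F\rVert_{L^2(\gamma)}\ge 2\lVert F\rVert_n$ in Lemma~\ref{lem:variance} actually gives the factor $4$, not $2$, in your comparison, but this only affects the universal constant).
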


To demonstrate the typical use-cases of our abstract results, we summarize in the following corollary the rates which can be obtained under algebraic approximation properties of $\pG$ and two concrete scalings of the metric entropy of $\pG$. These correspond to the typical entropy bounds satisfied by (i) some fixed $n$-independent, infinite-dimensional regression class, and (ii) an $N$-dimensional approximation class, where $N$ is chosen in terms of $n$. In the following corollary, $\lesssim$ refers to an inequality involving a constant independent of $N$ and $\delta$. For a proof of Corollary \ref{cor:explicitentropy}, see Appendix \ref{app:prooconcreteentropyWN}.

\begin{corollary}\label{cor:explicitentropy}
Consider the setting of Corollary \ref{cor:MSE}. Let $\pG=\pG(N)$, $N\in \N$, be a sequence of regression classes\footnote{We may think of $N$ as the number of
parameters of $\pG$, e.g.~the size of the FrameNet class $\pG_{\rm FN}$ in Section \ref{sec:networkarchitecture} below.} such that
$\inf_{G^*\in\pG(N)}\lVert \gstar-\G_0\rVert_{L^2(\gamma)}^2\lesssim N^{-\beta}$ for some $\beta>0$ and all $N\in\N$.
Denote the entropy by $H(N,\delta)\coloneqq H(\pG(N),\|\cdot\|_{\infty,\supp(\gamma)},\delta)$.
\begin{enumerate}
    \item\label{item:concrete1} If $H(N,\delta)\lesssim \delta^{-\alpha}$ for some $0<\alpha<2$, then 
      \[  \IE_{\G_0}\,\Big[\lVert \hat{G}_n-\G_0\rVert_{L^2(\gamma)}^2\Big]\lesssim n^{-\frac{2}{2+\alpha}}.\]
    \item\label{item:concrete2} If $H(N,\delta)\lesssim N\log(\delta^{-1})$, then for all $\tau>0$
    \[  \IE_{\G_0}\,\Big[\lVert \hat{G}_n-\G_0\rVert_{L^2(\gamma)}^2\Big]\lesssim n^{-\frac{\beta}{\beta+1}+\tau},\]
    where the constant in $\lesssim$ in general depends on $\tau$.
\end{enumerate}
\end{corollary}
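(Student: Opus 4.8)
The plan is to apply Corollary \ref{cor:MSE} with optimized choices of $\delta_n$, $\tildedeltan$, and (in the second case) the dimension parameter $N=N(n)$, converting the abstract bound \eqref{eq:WNconvergence} into the stated algebraic rates. Throughout, I would treat $\sigma$ and all problem-dependent constants as fixed, so that the $\frac{\sigma^2+\Finf^2}{n}$ term is negligible compared with the other contributions (one must check $\Finf$ stays bounded, which in case (i) is immediate since $\pG$ is $n$-independent, and in case (ii) requires the construction of $\pG(N)$ to keep $\|G-G_0\|_{\infty,\supp(\gamma)}$ uniformly bounded — I would note this as an implicit standing assumption or verify it via the approximation hypothesis and a crude bound).

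\textbf{Case (i).} Here $\pG$ is fixed and $H(\pG,\|\cdot\|_{\infty,\supp(\gamma)},\delta)\lesssim \delta^{-\alpha}$ with $0<\alpha<2$. First I would exhibit a valid majorant $\Psi_n$ for the entropy integral $J(\pG_n^*(\delta),\|\cdot\|_n)$: since $\|\cdot\|_n\le \|\cdot\|_{\infty,\supp(\gamma)}$ pointwise on $\supp(\gamma)^n$, one has $H(\pG_n^*(\delta),\|\cdot\|_n,\rho)\le H(\pG,\|\cdot\|_{\infty,\supp(\gamma)},\rho)\lesssim \rho^{-\alpha}$, hence
\[
J(\pG_n^*(\delta),\|\cdot\|_n)=\int_0^\delta H^{1/2}(\pG_n^*(\delta),\|\cdot\|_n,\rho)\,d\rho\lesssim \int_0^\delta \rho^{-\alpha/2}\,d\rho \simeq \delta^{1-\alpha/2},
\]
so I may take $\Psi_n(\delta)=c\,\delta^{1-\alpha/2}$; this is legitimate because $\alpha<2$ guarantees convergence of the integral at $0$, and $\Psi_n(\delta)/\delta^2 = c\,\delta^{-1-\alpha/2}$ is non-increasing as required in Assumption \ref{assump:g0assump}\ref{item:Gassump21}. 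The constraint $\sqrt n\,\delta_n^2 \ge C\sigma\Psi_n(\delta_n)$ becomes $\sqrt n\,\delta_n^2\gtrsim \delta_n^{1-\alpha/2}$, i.e. $\delta_n^{1+\alpha/2}\gtrsim n^{-1/2}$, which is satisfied by $\delta_n \simeq n^{-1/(2+\alpha)}$; similarly the constraint $n\tildedeltan^2 \ge C\Finf^2 H(\pG,\|\cdot\|_\infty,\tildedeltan)\simeq \tildedeltan^{-\alpha}$ forces $\tildedeltan^{2+\alpha}\gtrsim n^{-1}$, again met by $\tildedeltan\simeq n^{-1/(2+\alpha)}$. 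Finally the approximation term $\inf_{G^*\in\pG}\|G^*-G_0\|_{L^2(\gamma)}^2$ is a fixed constant; but for this to vanish — or rather, for the stated rate to hold — one needs $G_0\in\overline{\pG}$, i.e. the approximation error is zero. I would point out that Corollary \ref{cor:explicitentropy} implicitly assumes $G_0$ lies in the closure (otherwise only $\delta_n^2+\tildedeltan^2$ shrinks and one picks up an irreducible bias); under that understanding, \eqref{eq:WNconvergence} yields $\IE_{G_0}[\|\hatgn-G_0\|_{L^2(\gamma)}^2]\lesssim \delta_n^2+\tildedeltan^2+ n^{-1}\lesssim n^{-2/(2+\alpha)}$, using $2/(2+\alpha)\le 1$.

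\textbf{Case (ii).} Now $\pG=\pG(N)$ with $H(N,\delta)\lesssim N\log(\delta^{-1})$ and $\inf_{G^*\in\pG(N)}\|G^*-G_0\|_{L^2(\gamma)}^2\lesssim N^{-\beta}$. The strategy is to choose $N=N(n)$ to balance the approximation term $N^{-\beta}$ against the stochastic terms. For the entropy integral: on $\pG_n^*(\delta)$, $H(\pG_n^*(\delta),\|\cdot\|_n,\rho)\le H(N,\rho)\lesssim N\log(\rho^{-1})$, so $J(\delta)\lesssim \int_0^\delta \sqrt{N\log(\rho^{-1})}\,d\rho \lesssim \delta\sqrt{N\log(\delta^{-1})}$ (the integral $\int_0^\delta\sqrt{\log(1/\rho)}\,d\rho$ converges and is $\lesssim\delta\sqrt{\log(1/\delta)}$ for small $\delta$); take $\Psi_n(\delta)=c\,\delta\sqrt{N\log(\delta^{-1})}$, which satisfies the monotonicity of $\Psi_n(\delta)/\delta^2$. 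The constraint $\sqrt n\,\delta_n^2\gtrsim \sigma\,\delta_n\sqrt{N\log(\delta_n^{-1})}$ gives $\delta_n\gtrsim \sqrt{N\log(\delta_n^{-1})/n}$, so up to the logarithmic factor $\delta_n^2\simeq N/n$ (more precisely $\delta_n^2 \simeq (N\log n)/n$, absorbing logs into $n^\tau$); and $n\tildedeltan^2\gtrsim \Finf^2 N\log(\tildedeltan^{-1})$ similarly gives $\tildedeltan^2\simeq (N\log n)/n$. Plugging into \eqref{eq:WNconvergence}:
\[
\IE_{G_0}[\|\hatgn-G_0\|_{L^2(\gamma)}^2]\lesssim \frac{N\log n}{n}+\frac{1}{n}+N^{-\beta}.
\]
Choosing $N\simeq n^{1/(\beta+1)}$ balances $N/n\simeq n^{-\beta/(\beta+1)}$ against $N^{-\beta}\simeq n^{-\beta/(\beta+1)}$, and the extra $\log n$ factor is absorbed into an arbitrary $n^\tau$, yielding $\IE_{G_0}[\|\hatgn-G_0\|_{L^2(\gamma)}^2]\lesssim n^{-\beta/(\beta+1)+\tau}$, with the constant depending on $\tau$ through how the $\log$ is absorbed.

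The main obstacle — really a bookkeeping subtlety rather than a deep difficulty — is the careful handling of the logarithmic factors in case (ii): one cannot literally solve $\delta_n^2 = (N\log(\delta_n^{-1}))/n$ in closed form, so I would instead verify that the ansatz $\delta_n^2 = c\,(N\log n)/n$ satisfies the inequality \eqref{eq:deltandeltanhat} for $c$ large enough and $n$ large enough (using $\log(\delta_n^{-1})\lesssim \log n$ when $N\le n$), and then show $(N\log n)/n\lesssim n^{\tau}\cdot n^{-\beta/(\beta+1)}$ under $N\simeq n^{1/(\beta+1)}$. A secondary point is confirming the growth/monotonicity hypotheses of Assumption \ref{assump:g0assump}\ref{item:Gassump21} hold for the chosen $\Psi_n$ in both cases, and remarking (as discussed above) that the approximation error in Corollary \ref{cor:MSE} is taken to be zero resp. $O(N^{-\beta})$, which is exactly the hypothesis supplied. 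None of the steps requires new ideas beyond Corollary \ref{cor:MSE} and elementary calculus.
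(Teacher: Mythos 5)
Your proposal is correct and follows essentially the same route as the paper: bound the entropy integral by $\Psi_n(\delta)\simeq\delta^{1-\alpha/2}$ resp.\ $\Psi_n(\delta)\simeq\sqrt{N}\,\delta(1+\log(\delta^{-1}))$, solve \eqref{eq:deltandeltanhat} with $\delta_n^2\simeq\tilde\delta_n^2\simeq n^{-2/(2+\alpha)}$ resp.\ $\simeq N\log(n)/n$, and in case (ii) balance with $N\simeq n^{1/(\beta+1)}$, absorbing logarithms into $n^\tau$. The only (cosmetic) divergence is in case (i): rather than positing that the approximation error vanishes, the paper keeps $\pG=\pG(N)$, observes that the entropy bound $\delta^{-\alpha}$ — and hence the stochastic terms — are independent of $N$, and lets $N\to\infty$ so that the $N^{-\beta}$ term disappears from the bound.
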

\begin{remark}[Effective smoothness]
In classical nonparametric regression over $s$-smooth function classes on $[0,1]^d$, the entropy assumption $H(N,\delta)\lesssim \delta^{-\alpha}$ from part (i) is fulfilled for $\alpha=d/\mathfrak{s}$, which yields the minimax-optimal rate $n^{-\frac{2\mathfrak{s}}{2\mathfrak{s}+d}}$, see Section \ref{sec:finitedimensional} for details. Since the rate only depends on $\alpha$ (or equivalently $\alpha^{-1}$), we can think of $\alpha^{-1}=\mathfrak{s}/d$ as the ``effective smoothness'' of the statistical model at hand.
\end{remark}

The sub-Gaussian noise model poses a more restrictive regularity assumption on $\eps_i$ than the assumption of white noise. It is 
possible to get $L^2(\gamma)$-convergence for sub-Gaussian noise in the case the entropy integral $J(\delta)$ in \eqref{eq:Jintegral} is not finite, such that Assumption \ref{assump:g0assump} \ref{item:Gassump21} does not hold. The details are shown in the next theorem, its proof is deferred to Appendix \ref{app:subgaussnoJ}. 
\begin{theorem}
\label{th:samplecomplexitysub-Gaussiannoise}
   Consider the nonparametric regression model \eqref{eq:dataxy} with sub-Gaussian noise and the empirical risk from \eqref{eq:empiricalrisminimizer}. 
Let Assumption \ref{assump:g0assump} \ref{item:Gassump11} hold, i.e.~suppose that $\|G_0\|_{\infty,\supp(\gamma)}<\infty$ and $\pG$ is compact with respect to $\|\cdot\|_{{\infty,\supp(\gamma)}}$.
 Then there exists some universal constant $C_1>0$, such that for any positive sequences $(\delta_n)_{n\in\IN}$ and $(\tildedeltan)_{n\in\IN}$ with 
      \begin{equation}\label{eq:deltandeltanhatSBGN}
           n\delta_n^4\geq C_1^2\sigma^2\Finf^2H\left(\frac{\delta_n^2}{8\sigma^2+\delta_n^2}\right)\quad
        \text{and}\quad
        n\tildedeltan^2\geq 6\Finf^2 H(\pG,\norm[{\infty,\supp(\gamma)}]{\cdot},\tildedeltan),
      \end{equation}
      all $G^*\in\pG$ and all
\begin{align}\label{eq:RinequalitySBGN}
          R\geq\max\left\{\delta_n,\tildedeltan,\lVert G^*-G_0\rVert_{{\infty,\supp(\gamma)}},\frac{\Finf}{\sqrt{n}}\right\},
      \end{align}
      we have
      \begin{align}\label{eq:L2concentrationSBGN}
          \IP_{G_0}\left(\lVert\hatgn-G_0\rVert_\ltg\geq R\right)\leq 4\exp\left(-\frac{ nR^4}{C_1^2\sigma^2(1+F_\infty^2)}\right)+2\exp\left(-\frac{nR^2}{C_1^2\Finf^2}\right).
      \end{align}  
    Furthermore, for all $n\in\IN$ there exists a universal constant $C_2>0$ such that
   \begin{align}\label{eq:SBGNconvergence2}
            \IE_{\G_0}\,\Big[\lVert \hat{G}_n-\G_0\rVert_{L^2(\gamma)}^2\Big]
          \leq C_2\Big(\delta_n^2+\tildedeltan^2+\frac{(1+\sigma)(1+\Finf^2)}{\sqrt{n}}+\inf_{\gstar\in \pG}\lVert \gstar-\G_0\rVert_{L^2(\gamma)}^2\Big).
    \end{align}
\end{theorem}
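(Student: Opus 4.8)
Since the hypotheses control $\pG$ only through the uniform entropy $H(\pG,\lVert\cdot\rVert_{\infty,\supp(\gamma)},\cdot)$ and do \emph{not} require the entropy integral $J(\delta)$ of \eqref{eq:Jintegral} to be finite, Theorem \ref{th:empiricalerror} is not available, and the generic chaining underlying it has to be replaced by a cruder single-net argument --- which is affordable precisely because here $\eps_i$ genuinely takes values in $\Y$. The plan is to: (i) establish a concentration inequality for the \emph{empirical} error $\lVert\hat G_n-\G_0\rVert_n$ by combining the least-squares basic inequality with a single-net control of the linear noise term; (ii) lift this to the $\lVert\cdot\rVert_{L^2(\gamma)}$-error using the comparison between $\lVert\cdot\rVert_n$ and $\lVert\cdot\rVert_{L^2(\gamma)}$ furnished by Lemma \ref{lem:variance}, which gives \eqref{eq:L2concentrationSBGN}; and (iii) integrate the $\lVert\cdot\rVert_n$-tail from (i), exactly as for Corollary \ref{cor:MSE}, to obtain \eqref{eq:SBGNconvergence2}. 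Existence of a measurable minimizer $\hat G_n$ of \eqref{eq:empiricalrisminimizer} follows from compactness of $\pG$ in $\lVert\cdot\rVert_{\infty,\supp(\gamma)}$ (Assumption \ref{assump:g0assump}\,\ref{item:Gassump11}) as in Theorem \ref{th:empiricalerror}, and in the sub-Gaussian model $y_i\in\Y$ a.s., so minimizing \eqref{eq:empiricalrisminimizer} and \eqref{eq:empiricalrisminimizersubGaussian} is equivalent.

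For step (i), write $\hat F=\hat G_n-\G_0$ and $F^*=G^*-\G_0\in\pF=\{G-\G_0:G\in\pG\}$; using $y_i=\G_0(x_i)+\sigma\eps_i$, optimality of $\hat G_n$ gives the basic inequality $\lVert\hat F\rVert_n^2\le\lVert F^*\rVert_n^2+\frac{2\sigma}{n}\sum_{i=1}^n\langle\eps_i,\hat F(x_i)-F^*(x_i)\rangle_\Y$, so everything reduces to controlling the linear process $\nu_n(F):=\frac1n\sum_{i=1}^n\langle\eps_i,F(x_i)\rangle_\Y$ over $\pF$. Since each $\eps_i$ is sub-Gaussian in $\Y$ with parameter $1$, every $\langle\eps_i,v\rangle_\Y$ is a centered real sub-Gaussian variable with proxy $\lesssim\lVert v\rVert_\Y^2$, so for fixed $F$ the variable $\nu_n(F)$ is sub-Gaussian with proxy $\lesssim\lVert F\rVert_n^2/n\le\Finf^2/n$ by \eqref{eq:F-bound}. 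I would then fix the net resolution $\rho_n:=\delta_n^2/(8\sigma^2+\delta_n^2)$, pick a $\rho_n$-net $G_1,\dots,G_M$ of $\pG$ in $\lVert\cdot\rVert_{\infty,\supp(\gamma)}$ with $\log M=H(\pG,\lVert\cdot\rVert_{\infty,\supp(\gamma)},\rho_n)$, and split $\nu_n(F)=\nu_n(G_k-\G_0)+\nu_n(F-(G_k-\G_0))$ for the net point $G_k$ closest to $F+\G_0$: a Hoeffding-type union bound over $k$ controls $\max_k|\nu_n(G_k-\G_0)|$ by a multiple of $\Finf(H(\rho_n)/n)^{1/2}$, while Cauchy--Schwarz bounds the residual by $\rho_n(\frac1n\sum_i\lVert\eps_i\rVert_\Y^2)^{1/2}$, and $\frac1n\sum_i\lVert\eps_i\rVert_\Y^2$ concentrates sub-exponentially around $\IE\lVert\eps_1\rVert_\Y^2\le 4$. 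The definition of $\rho_n$ and the first inequality in \eqref{eq:deltandeltanhatSBGN} are calibrated so that, on an event of the asserted probability, $2\sigma\sup_{F\in\pF}|\nu_n(F)|$ is bounded by a constant times $R^2$ plus the slack already present in \eqref{eq:SBGNconvergence2}; feeding this into the basic inequality (and bounding the fixed term $\nu_n(F^*)$ via $\lVert F^*\rVert_n\le\lVert G^*-\G_0\rVert_{\infty,\supp(\gamma)}$) yields, for $R\ge\max\{\delta_n,\ \lVert G^*-\G_0\rVert_{\infty,\supp(\gamma)},\ \Finf/\sqrt n\}$,
\begin{equation*}
  \IP_{\G_0}^{\px}\bigl(\lVert\hat G_n-\G_0\rVert_n\ge R\bigr)\le 4\exp\Bigl(-\tfrac{nR^4}{C_1^2\sigma^2(1+\Finf^2)}\Bigr).
\end{equation*}

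Steps (ii)--(iii) then follow the white-noise template. Using Assumption \ref{assump:g0assump}\,\ref{item:Gassump11} and the second inequality in \eqref{eq:deltandeltanhatSBGN}, Lemma \ref{lem:variance} gives, on an event of probability $\ge 1-2\exp(-nR^2/(C_1^2\Finf^2))$ and for $R\ge\tildedeltan$, a two-sided comparison of $\lVert F\rVert_n^2$ and $\lVert F\rVert_{L^2(\gamma)}^2$ up to an additive $\tildedeltan^2$, uniformly over $F\in\pF$; intersecting this with the event from step (i) and re-expressing the radius constraints in the $\lVert\cdot\rVert_{\infty,\supp(\gamma)}$-norm gives \eqref{eq:L2concentrationSBGN}. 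For \eqref{eq:SBGNconvergence2} I would \emph{not} integrate \eqref{eq:L2concentrationSBGN} directly (which would leave an approximation term in the weaker $\lVert\cdot\rVert_{\infty,\supp(\gamma)}$-norm) but instead integrate the $\lVert\cdot\rVert_n$-tail from step (i), in which the approximation term enters as $\lVert G^*-\G_0\rVert_n^2$ with $\IE\lVert G^*-\G_0\rVert_n^2=\lVert G^*-\G_0\rVert_{L^2(\gamma)}^2$ under random design; then take expectations, pass to $\inf_{\gstar\in\pG}$, and finally transfer the left-hand side from $\lVert\cdot\rVert_n$ to $\lVert\cdot\rVert_{L^2(\gamma)}$ via Lemma \ref{lem:variance}, the $\tildedeltan^2$ and $(1+\sigma)(1+\Finf^2)/\sqrt n$ terms being the cost of the transfer and the tail integration.

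The main obstacle is step (i): controlling $\nu_n$ uniformly over the possibly high-entropy class $\pF$ (whose metric entropy integral $J(\delta)$ need not be finite) \emph{without} chaining, and tuning the single net so that the metric-entropy contribution $\sigma\Finf(H(\rho_n)/n)^{1/2}$ and the discretization residual $\sigma\rho_n(\frac1n\sum_i\lVert\eps_i\rVert_\Y^2)^{1/2}$ are \emph{simultaneously} of admissible order across all regimes of $\sigma$ relative to $\delta_n$ --- it is this balancing, in place of the chaining used for white noise, that forces the $\delta_n^4$-scaling in \eqref{eq:deltandeltanhatSBGN} and the quartic exponent $\exp(-nR^4/\cdots)$ in \eqref{eq:L2concentrationSBGN}, and that makes the sub-exponential (rather than merely sub-Gaussian) concentration of $\frac1n\sum_i\lVert\eps_i\rVert_\Y^2$ necessary. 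Everything else is a routine adaptation of the arguments behind Theorem \ref{th:samplecomplexitywhitnoise} and Corollary \ref{cor:MSE}.
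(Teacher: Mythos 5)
Your proposal is correct and follows essentially the same route as the paper: the paper's Lemma \ref{lem:sub-Gaussianepsilon} implements exactly your single-net argument (basic inequality, a $\|\cdot\|_{\infty,\supp(\gamma)}$-net at resolution $R^2/(8\sigma\IE[\lVert\eps_i\rVert_\Y]+R^2)$, a union bound over net points, and a Cauchy--Schwarz residual controlled by the concentration of $\frac1n\sum_i\lVert\eps_i\rVert_\Y$), and the $L^2(\gamma)$-lift and MSE integration go through Lemma \ref{lem:variance} and the empirical-norm tail precisely as you describe. The only cosmetic difference is that the paper bounds the discretization residual via the first moment $\frac1n\sum_i\lVert\eps_i\rVert_\Y$ with sub-Gaussian Hoeffding rather than via $(\frac1n\sum_i\lVert\eps_i\rVert_\Y^2)^{1/2}$ with sub-exponential concentration.
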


\begin{remark}\label{rmk:SBGNconcreteentopy}
Consider $\alpha\ge 2$ in Corollary \ref{cor:explicitentropy} \ref{item:concrete1}. Then $J(\delta)$ in \eqref{eq:Jintegral} is not necessarily finite, and hence Assumption \ref{assump:g0assump} \ref{item:Gassump21} need not be satisfied. Therefore Theorem \ref{th:samplecomplexitywhitnoise} cannot be applied. In the sub-Gaussian noise case, we may still use the $L^2(\gamma)$-bound in \eqref{eq:SBGNconvergence2} however. Similar as in the proof of Corollary \ref{cor:explicitentropy}, it can then be shown that $\delta_n^2\lesssim n^{-\frac{1}{2+\alpha}}$ and $\tildedeltan^2\lesssim n^{-\frac{2}{2+\alpha}}$ satisfy \eqref{eq:deltandeltanhatSBGN}. This yields
\begin{align*}
\IE_{\G_0}\,\Big[\lVert \hat{G}_n-\G_0\rVert_{L^2(\gamma)}^2\Big]
  \lesssim n^{-\frac{1}{2+\alpha}},
\end{align*}
i.e.~half the convergence rate of the ``chaining regime'' considered in Corollary \ref{cor:explicitentropy}.
\end{remark}  

\section{Learning Holomorphic Operators with FrameNet}\label{sec:networkarchitecture}
In this section we first recall the NN-based operator class FrameNet from
\cite[Section 2]{Herrmann.2024}, see Sections
\ref{sec:representationssystems} and \ref{sec:framNet}.
This will provide the regression class
  $\pG_{{\rm FN}}$ over which to estimate $G_0$. Similar to, e.g.,
  PCA-Net \cite{HESTHAVEN201855}, FrameNet consists of mappings
\begin{equation}
\label{eq:Gg}
    G=\D_{\Y}\circ g\circ E_{\X},
  \end{equation}
  for a linear encoder $\E_{\X}:\X\to \ell^2(\IN)$, a
  linear decoder $\D_{\Y}: \ell^2(\IN)\to\Y$,
  and a coefficient map $\ell^2(\IN)\to\ell^2(\IN)$.
  The encoder maps an $x\in\X$ to its coefficients in some representation system of $\X$. Conversely, the decoder builds a $y\in\Y$ out of a coefficient series in $\ell^2(\IN)$. These representation systems consist of apriorily fixed frames. The coefficient map $g$ is represented by a feedforward neural network, that will be trained by ERM.
  
  Subsequently, in Sections \ref{sec:framenetapprox}-\ref{sec:Framenetstatistic}, we apply our analysis
  to the learning of \emph{holomorphic} operators $G_0:\X\to\Y$. For such
  mappings, the FrameNet architecture was shown in \cite{Herrmann.2024}
  to be capable of overcoming the curse of dimensionality in terms of
  the \emph{approximation error}. We generalize this property to the case of bounded network parameters in subsection \ref{sec:framenetapprox} and furthermore show metric entropy bounds.
  This allows us to 
  prove that FrameNet can overcome the curse of dimensionality in the learning of holomorphic
  operators, both in terms the \emph{approximation capability} and in terms of 
  \emph{sample complexity}.
  
\subsection{Representation Systems}\label{sec:representationssystems}
We briefly recall basic definitions and properties of frames.
For more details see for instance \cite{Christensen.2016}.
\subsubsection{Frames}\label{sec:frames}
\begin{definition}
    \label{def:frame}
    A family $\pmb{\Psi}=\{\psi_j:j\in\IN\}\subset \X$ is called a frame of $\X$, if the \emph{analysis operator} 
    \begin{equation*}
        F:\X\to \ell^2(\IN),\qquad v\mapsto\bigl(\langle v,\psi_j\rangle_{\X}\bigr)_{j\in\IN} 
    \end{equation*}
    is bounded and boundedly invertible between $\X$ and $\range(F)\subset \ell^2(\IN)$.
  \end{definition}
  Every orthonormal basis of $\X$ is trivially a frame. Since Definition \ref{def:frame} merely requires bounded invertibility on the range of $F$, $F$ need not be surjective, and in particular $\pmb{\psi}$ need not consist of linearly independent vectors. The \emph{frame bounds} of $\pmb{\psi}$ are defined as
\begin{equation}\label{eq:FrameConstant}
    \Lambda_{\pmb{\Psi}}\coloneqq\lVert F\rVert_{\X\to \ell^2}=\sup_{0\neq v\in\X}\frac{\lVert F v\rVert_{\ell^2}}{\lVert v\rVert_{\X}}, \quad\lambda_{\pmb{\Psi}}\coloneqq\inf_{0\neq v\in\X}\frac{\lVert Fv\rVert_{\ell^2}}{\lVert v\rVert_{\X}},
  \end{equation}
the \emph{synthesis operator} $F'$ as
\begin{equation*}
    F':\ell^2(\IN)\to\X,\quad\bigl(v_i\bigr)_{i\in\IN}\mapsto\pmb{v}^T\pmb{\psi}\coloneqq\sum_{i\in\IN}v_i\psi_i,
\end{equation*}
and finally the \emph{frame operator} as $T\coloneqq F'F:\, \X\to\X$. The following lemma gives a characterization of $T$. For a proof, see \cite[Lemma 5.1.5]{Christensen.2016}.
\begin{lemma}
\label{ref:frameoperator}
The frame operator $T$ is boundedly invertible, self-adjoint and positive. Furthermore, it holds that $\lVert T\rVert_{\X\to\X}=\Lambda_{\pmb{\Psi}}^2$ and $\lVert T{^{-1}}\rVert_{\X\to\X}=\lambda_{\pmb{\Psi}}^{-2}$. The family $\tilde{\pmb{\Psi}}\coloneqq T^{-1}\pmb{\Psi}$ is a frame of $\X$, called the \emph{(canonical) dual frame} of $\X$. The analysis operator of the dual frame is $\tilde{F}\coloneqq F(F'F)^{-1}$ and its frame bounds are $\lambda_{\pmb{\Psi}}^{-1}$ and $\Lambda_{\psi}^{-1}$.
\end{lemma}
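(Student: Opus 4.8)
The plan is to reduce everything to elementary spectral calculus for the positive self-adjoint operator $T=F'F$. The first and only slightly delicate point is to identify the synthesis operator $F'$ with the Hilbert space adjoint $F^*:\ell^2(\IN)\to\X$ of the analysis operator $F$. Since $F$ is bounded, $F^*$ exists and is bounded; testing against the standard basis $(e_j)_{j\in\IN}$ of $\ell^2(\IN)$ gives $\langle F^* e_j,v\rangle_\X=\langle e_j,Fv\rangle_{\ell^2}=\langle\psi_j,v\rangle_\X$, hence $F^* e_j=\psi_j$, and extending by linearity and continuity shows that the series $\sum_j w_j\psi_j$ converges in $\X$ for every $w=(w_j)_j\in\ell^2(\IN)$ with $F^* w=\sum_j w_j\psi_j=F'w$. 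Thus $T=F'F=F^* F$.

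From $T=F^* F$ one immediately gets $T^*=T$ and $\langle Tv,v\rangle_\X=\|Fv\|_{\ell^2}^2\ge 0$. The definition of the frame bounds in \eqref{eq:FrameConstant} reads $\lambda_{\pmb{\Psi}}^2\|v\|_\X^2\le\|Fv\|_{\ell^2}^2\le\Lambda_{\pmb{\Psi}}^2\|v\|_\X^2$, i.e.\ $\lambda_{\pmb{\Psi}}^2\Id\le T\le\Lambda_{\pmb{\Psi}}^2\Id$ in the operator order, where $\lambda_{\pmb{\Psi}}>0$ because bounded invertibility of $F$ on $\range(F)$ forces $\|Fv\|\ge\|F^{-1}\|^{-1}\|v\|$. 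For a positive self-adjoint operator, the upper bound gives $\|T\|_{\X\to\X}=\sup_{\|v\|=1}\langle Tv,v\rangle=\sup_{\|v\|=1}\|Fv\|^2=\Lambda_{\pmb{\Psi}}^2$; the lower bound $T\ge\lambda_{\pmb{\Psi}}^2\Id$ gives $\|Tv\|\ge\lambda_{\pmb{\Psi}}^2\|v\|$, so $T$ is injective with closed range, and self-adjointness then yields $\range(T)=\ker(T)^\perp=\X$, so $T$ is bijective with $\|T^{-1}w\|\le\lambda_{\pmb{\Psi}}^{-2}\|w\|$; combined with $\|T^{-1}\|=(\inf_{\|v\|=1}\langle Tv,v\rangle)^{-1}$ this gives $\|T^{-1}\|_{\X\to\X}=\lambda_{\pmb{\Psi}}^{-2}$. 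Inverting the operator inequality also gives $\Lambda_{\pmb{\Psi}}^{-2}\Id\le T^{-1}\le\lambda_{\pmb{\Psi}}^{-2}\Id$, which is needed next.

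For the dual frame, note $T^{-1}$ is bounded, self-adjoint and positive, so $\tilde{\pmb{\Psi}}=T^{-1}\pmb{\Psi}$ is a well-defined family in $\X$. Its analysis operator sends $v\mapsto(\langle v,T^{-1}\psi_j\rangle_\X)_j=(\langle T^{-1}v,\psi_j\rangle_\X)_j=F(T^{-1}v)$, i.e.\ $\tilde F=FT^{-1}=F(F'F)^{-1}$. To obtain the frame bounds of $\tilde{\pmb{\Psi}}$ I would compute $\|\tilde F v\|_{\ell^2}^2=\langle FT^{-1}v,FT^{-1}v\rangle=\langle T^{-1}v,F^* FT^{-1}v\rangle=\langle T^{-1}v,v\rangle_\X$ and then insert $\Lambda_{\pmb{\Psi}}^{-2}\Id\le T^{-1}\le\lambda_{\pmb{\Psi}}^{-2}\Id$ to get $\Lambda_{\pmb{\Psi}}^{-2}\|v\|^2\le\|\tilde F v\|^2\le\lambda_{\pmb{\Psi}}^{-2}\|v\|^2$. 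Hence $\tilde F$ is bounded and boundedly invertible onto its range, so $\tilde{\pmb{\Psi}}$ is a frame of $\X$ with frame bounds $\Lambda_{\pmb{\Psi}}^{-1}$ (lower) and $\lambda_{\pmb{\Psi}}^{-1}$ (upper), as claimed.

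The only genuinely non-routine step is the first one: carefully checking that the formal series defining $F'$ converges in $\X$ and that $F'=F^*$ as bounded operators rather than merely as densely defined maps. Once this is in place, everything follows from the two-sided operator bound $\lambda_{\pmb{\Psi}}^2\Id\le T\le\Lambda_{\pmb{\Psi}}^2\Id$ and standard spectral theory for positive self-adjoint operators. As this is a classical fact, one may alternatively simply invoke \cite[Lemma 5.1.5]{Christensen.2016}, as indicated in the statement.
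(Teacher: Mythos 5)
Your proof is correct. The paper does not actually give an argument here: it simply cites \cite[Lemma 5.1.5]{Christensen.2016} for this standard fact, so you are supplying a self-contained proof where the paper defers to a reference. Your argument is the classical one: identify $F'=F^*$ (convergence of the synthesis series in $\X$ being the one point that genuinely needs a word, as you note), deduce the two-sided operator inequality $\lambda_{\pmb{\Psi}}^2\Id\le T\le\Lambda_{\pmb{\Psi}}^2\Id$ from the frame bounds, and then read off everything about $T$, $T^{-1}$, and the dual frame from spectral calculus for bounded positive self-adjoint operators. The norm identities $\|T\|=\sup_{\|v\|=1}\langle Tv,v\rangle$ and $\|T^{-1}\|=(\inf_{\|v\|=1}\langle Tv,v\rangle)^{-1}$ are exactly the right facts to invoke, and the computation $\|\tilde Fv\|_{\ell^2}^2=\langle T^{-1}v,v\rangle_\X$ cleanly delivers the dual frame bounds with no extra work. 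One cosmetic remark: when you assert $\lambda_{\pmb\Psi}>0$ from bounded invertibility of $F$ on $\range(F)$, it is worth saying explicitly that this is what makes the lower operator bound nontrivial and hence $T$ bijective; you do state it, and it is right, but in a reference proof it deserves a full sentence. Otherwise nothing is missing, and the chain of reasoning matches what a textbook treatment (e.g.\ the cited source) would do.
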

\begin{definition}
    \label{def:Rieszbasis}
    A family $\pmb{\Psi}=\{\psi_j:\,j\in\IN\}\subset\X$ is called a Riesz basis of $\X$ if there exists a bounded, bijective operator $A:\,\X\to\X$ and an orthonormal basis $(e_j)_{j\in\IN}$ with $\psi_j=Ae_j$ for all $j\in\IN$.
  \end{definition}
A Riesz basis is a frame $\pmb{\Psi}$ which is also a basis. Equivalently, a Riesz basis is a frame with $\trm{ker}(F')=0$ and therefore $\trm{range}(F)=\ell^2(\IN)$.
Moreover, the dual frame $\tilde{\pmb{\Psi}}$ of a Riesz basis is also a Riesz basis,
e.g.\ \cite[Section 5]{Christensen.2016}.
\subsubsection{Encoder and Decoder}\label{sec:decoderencoder}
Throughout the rest of this paper we fix frames and their duals on $\X$, $\Y$ and denote them by
\begin{equation}
\label{eq:framesXY}
    \pmb{\Psi}_{\X}=(\psi_j)_{j\in\IN},\quad\tilde{\pmb{\Psi}}_{\X}=(\tilde{\psi}_j)_{j\in\IN}, \quad  \pmb{\Psi}_{\Y}=(\eta_j)_{j\in\IN},\quad\tilde{\pmb{\Psi}}_{\Y}=(\tilde{\eta}_j)_{j\in\IN}.
\end{equation}
The corresponding analysis operators are $F_{\X}$, $\tilde{F}_{\X}$, $F_{\Y}$ and $\tilde{F}_{\Y}$.  
We then introduce encoder and decoder maps
via
\begin{align}
    \label{eq:definiitonDE}
    &\E_{\X}\coloneqq\tilde{F}_{\X}=\begin{cases}
        \X\to \ell^2(\IN),\\
        x\mapsto(\langle x,\tilde{\psi}_j\rangle_{\X})_{j\in\IN},
    \end{cases}\qquad
        \D_{\Y}\coloneqq F_{\Y}'=\begin{cases}
        \ell^2(\IN)\to\Y,\\
        (y_j)_{j\in\IN}\mapsto\sum_{j\in\IN}y_j\eta_j.
    \end{cases}
\end{align}
In case $\pmb{\Psi}_{\X}$ and $\pmb{\Psi}_{\Y}$ are Riesz bases, the mappings in \eqref{eq:definiitonDE} are boundedly invertible.

\subsubsection{Smoothness Scales}\label{sec:smoothness}
The encoder mapping $\E_\X:\X\to\ell^2(\IN)$ maps an element to its coefficients in the frame representation. For computational purposes, this coefficient sequence must be truncated, as only finitely many coefficients can be considered. Consequently, it is essential to control the error resulting from discarding higher-order frame coefficients. To formalize this we next introduce scales of subspaces of $\X$, $\Y$, whose elements exhibit a certain coefficient decay.

\begin{definition}
  \label{def:smoothnessscales}  Let $\pmb{\theta}=(\theta_j)_{j\in\IN}$ be a strictly positive, monotonically decreasing sequence such that $\pmb{\theta}^{1+\eps}\in l^{1}(\IN)$ for all $\eps>0$.

  For all $r$, $t\ge0$ we introduce the subspaces $\X_{r}\subset\X$ and $\Y_t\subset \Y$ via
    \begin{align*}
        \X_r\coloneqq\{x\in\X:\,\lVert x\rVert_{\X_r}<\infty\}\qquad\text{and}\qquad
        \Y_t\coloneqq\{y\in\Y:\,\lVert y\rVert_{\Y_t}<\infty\}
    \end{align*}
    where
\begin{align*}
    \lVert x\rVert_{\X_r}^2\coloneqq\sum_{j\in\IN}\langle x,\tilde{\psi}_j\rangle_{\X}^2 \theta_j^{-2r}\qquad\text{and}\qquad
    \lVert y\rVert_{\Y_t}^2\coloneqq\sum_{j\in\IN}\langle y,\tilde{\eta}_j\rangle_{\Y} ^2\theta_j^{-2t}.
\end{align*}
\end{definition}
For every $r\ge 0$, $\X_r$ is a Hilbert space {\cite[Lemma 1]{Herrmann.2024}}.

\subsection{FrameNet}\label{sec:framNet}
In this subsection we recall the FrameNet architecture from \cite[Section 2]{Herrmann.2024}.
We start by formally introducing feedforward neural networks (NNs) following \cite{Opschoor.2022,Herrmann.2024}.  

\subsubsection{Feedforward Neural Networks}
\begin{definition}
  \label{def:NN}
  A function $f:\IR^{p_0}\to\IR^{p_{L+1}}$ is called a neural network, if there exists
  $\sigma:\IR\to\IR$, integers $p_1,\dots,p_{L+1}\in\IN$, $L\in\IN$
  and real numbers $w_{i,j}^l$, $b_j^l\in\R$ such that for all $\px=(x_i)_{i=1}^{p_0}\in\R^{p_0}$
\begin{subequations}\label{eq:NN}
\begin{align}
    z^1_j&=\sigma\biggl(\sum_{i=1}^{p_0}w_{i,j}^1x_i+b_j^1\biggr),\quad j=1,\dots,p_1,\label{eq:NNdef1}\\
        z^{l+1}_j&=\sigma\biggl(\sum_{i=1}^{p_{l}}w_{i,j}^{l+1}z_i^l+b_j^{l+1}\biggr),\quad l=1,\dots, L-1,\quad j=1,\dots,p_{l+1},\label{eq:NNdef2} \\
f(x)&=(z_j^{L+1})_{j=1}^{p_{L+1}}=\biggl(\sum_{i=1}^{p_L}w_{i,j}^{L+1}z_i^L+b_j^{L+1}\biggr)_{j=1}^{p_{L+1}}.\label{eq:NNdef3}
\end{align}
\end{subequations}
We call $\sigma$ the \emph{activation function}, $L$ the \emph{depth},
$p\coloneq \max_{l=0,\dots,L+1} p_l$ the \emph{width},
 $w_{i,j}^l\in\IR$ the \emph{weights}, and $b_j^l\in\IR$
the \emph{biases} of the NN.
\end{definition}

While different NNs can realize the same function, for simplicity we refer to a function $f:\R^{p_0}\to\R^{p_{L+1}}$ as an NN of type \eqref{eq:NN}, if it allows for (at least) one such representation. Additionally, our analysis will require some further terminology: For a NN $f:\R^{p_0}\to\R^{p_{L+1}}$ as in \eqref{eq:NN} its
\begin{itemize}
\item \emph{size} is the number of nonzero parameters
  \begin{equation*}
    {\rm size}(f)\coloneqq|\{(i,j,l)\,:\,w_{i,j}^l\neq 0\}|+|\{(j,l)\,:\,b_j^l\neq 0\}|,
  \end{equation*}
\item \emph{maximum of parameters} is
  \begin{equation*}
    {\rm mpar}(f)\coloneqq\max\,\Big\{\max_{i,j,l}|w_{i,j}^l|,\max_{j,l}|b_j^l|\Big\},
  \end{equation*}  
\item \emph{maximum range} on $\Omega\subseteq\R^{p_0}$ is
  \begin{equation*}
    {\rm mran}_{\Omega}(f)\coloneqq\sup_{x\in\Omega}\bigl\lVert f(x)\bigr\rVert_{2}=\sup_{x\in\Omega}\left(\sum_{j=1}^{p_{L+1}}\bigl(z_j^{L+1}(x)\bigr)^2\right)^{\frac{1}{2}}.
  \end{equation*}
\end{itemize}

Throughout, for $q\in\N$, $q\ge 1$, we consider the activation function
\begin{equation*}
    \sigma_q(x)\coloneqq\max\{0,x\}^{q},\qquad x\in\IR.
\end{equation*}
For $q=1$, $\sigma_1$ is the \emph{rectified linear unit} (ReLU), for $q\ge 2$,
$\sigma_q$ is 
called \emph{rectified power unit} (RePU).

\begin{remark}
    Definition \ref{def:NN} introduces 
    a NN as a function $f:\IR^{p_0}\to\IR^{p_{L+1}}$. Throughout, we also understand the realization of a NN as a map $f:\ell^2(\IN)\to \ell^2(\IN)$ via extension by zeros. This is equivalent to suitably padding the weight matrices $(w_{i,j}^l)_{i,j}$ and bias vectors $(b_j^l)_j$ in Definition \ref{def:NN} for $l\in\{0,L+1\}$ with infinitely many zeros, see \cite[Remark 13]{Herrmann.2024}.
\end{remark}

\subsubsection{The FrameNet Class}
By definition of frames, the encoding operator $\E_\X:\X\to\ell^2(\N)$
in \eqref{eq:definiitonDE} is injective, and the decoding operator $\D_\Y:\ell^2(\N)\to\Y$ is
surjective. Thus for every mapping $G:\X\to\Y$, there exists a
coefficient map $g:\ell^2(\N)\to\ell^2(\N)$ such that
\begin{equation}
  G = \D_\Y\circ g\circ\E_X.
\end{equation}
This motivates the introduction of the following function class.

Given $\sigma:\R\to\R$, positive integers $L$, $p$, $s\in\IN$, and reals $M$, $B\in\R$, let
\begin{align*}
    \label{eq:functionspaceg}
  \pmb{g}_{\trm{FN}}(\sigma,L,p,s,M,B)\coloneqq\bigl\{ &g:\IR^{p_0}\to\IR^{p_{L+1}}\text{ is NN with activation function $\sigma$ s.t.}\\
  &{\rm depth}(g)\le L,~
  {\rm width}(g)\le p,~
  {\rm size}(g)\le s,\\
  &{\rm mpar}(g)\le M,~{\rm mran}_{[-1,1]^{p_0}}(g)\le B,~p_0,p_{L+1}\le p\bigr\}.
\end{align*}
Instead of directly considering \eqref{eq:Gg}, for our analysis, contrary to \cite{Herrmann.2024},
with $\pmb{\theta}$ from Definition \ref{def:NN}, fixed $R>0$, and $U:=[-1,1]^\IN$, it will be convenient to introduce the linear scaling
\begin{equation}
    \label{eq:scaling}
    S_r\coloneq\begin{cases}
      \times_{j\in\IN}[-R\theta_j^r,R\theta_j^r]\to U\\
      (x_j)_{j\in\IN}\mapsto\big(\frac{x_j}{R\theta_j^r}\big)_{j\in\IN}.
      \end{cases}
\end{equation}
The FrameNet class then consists of all operators
\begin{equation}
\label{eq:functionclassG}
    \pmb{G}_{\rm FN}(\sigma,L,p,s,M,B)\coloneqq\bigl\{G=\D_{\Y}\circ g\circ S_r\circ\E_{\X}:\, g\in\pmb{g}_{\trm{FN}}(\sigma,L,p,s,M,B)\bigr\}.
  \end{equation}

  \subsection{Approximation of Holomorphic Operators
  }\label{sec:framenetapprox}
  Our statistical theory established in Section \ref{sec:framework} shows that sample complexity depends on 
  \begin{enumerate}
      \item the approximation quality of $\pG$ w.r.t~$G_0$, cf.~the term $\inf_{G^*\in\pG}\|G^*-G_0\|_{L^2(\gamma)}$ in \eqref{eq:WNconvergence},
      \item the metric entropy of $\pG$, cf.~the terms $\delta_n^2$ and $\tildedeltan^2$ in \eqref{eq:WNconvergence}.
  \end{enumerate}
  In this subsection we give results for both the approximation quality and the metric entropy of the FrameNet class $\pG_{\rm FN}$.
\subsubsection{Setting}
  Let us start by making the assumptions on $G_0$ and the sampling
  measure $\gamma$ on $\X$ more precise. Denote in the following
  $U\coloneqq [-1,1]^{\IN}$, let $r>\frac{1}{2}$, $R>0$, and let the
  frames $(\psi_j)_{j\in\IN}$, $(\eta_j)_{j\in\IN}$ be as in Section
  \ref{sec:networkarchitecture}, and $(\theta_j)_{j\in\IN}$ as in
  Definition \ref{def:smoothnessscales}.  Then
\begin{equation}
\label{eq:sigmarR}
    \sigma_R^r\coloneqq\begin{cases}
        U\to\X,\\
        \By\mapsto R\sum_{j\in\IN}\theta_j^ry_j\psi_j
    \end{cases}
\end{equation}
yields a well-defined map, since by construction $(y_j\theta_j^r)_{j\in\IN}\in\ell^2(\N)$
for every $\By\in U$, and $(\psi_j)_{j\in\IN}$ is a frame.
Next, we introduce the ``cubes''
\begin{align}
    \label{eq:CrR}
    C_R^r(\X)=\biggl\{a\in\X:\sup_{j\in\IN}\theta_j^{-r}|\langle a,\tilde{\psi}_j\rangle_{\X}|\leq R\biggr\}.
\end{align}
Observe that with $\sigma_R^r(U)\coloneq \set{\sigma_R^r(\By)}{\By\in U}$, clearly
\begin{equation*}
  C_R^r(\X)\subseteq \sigma_R^r(U),
\end{equation*}
but equality holds in general only if the $(\psi_j)_{j\in\IN}$ form a Riesz
basis \cite[Remark 10]{Herrmann.2024}.
\begin{example}
  Let $\X=\IR$, $r=1$, $R=1$, $\theta_1=3/2$ and $\theta_2=1/2$. Consider the frame $\pmb{\Psi}=\{1,1\}$ (which is not a basis) with dual analogue $\pmb{\tilde \Psi}=\{1/2,1/2\}$. Then with $U=[-1,1]^2$
  \begin{equation*}
    C_1^1(\IR)=[-1,1]\subsetneq [-2,2]=\set{\theta_1 y_1\psi_1+\theta_2y_2\psi_2}{\By\in U}.
  \end{equation*}
\end{example}

The holomorphy assumption on $G_0$ can now be formulated as follows, \cite[Assumption 1]{Herrmann.2024}. Recall that for a real Hilbert space $\X$, we denote by $\X_{\IC}$ its complexification.
  \begin{assumption}
    \label{assump:holomorphicextension}
   
      For some $r>1$, $R>0$, $t>0$, $C_{G_0}<\infty$ there exists an open set
      $O_{\IC}\subset\X_{\IC}$ containing $\sigma_R^r(U)$, such that
      \begin{enumerate}[label=(\alph*)]
      \item      $\sup_{a\in O_{\IC}}\lVert G_0(a)\rVert_{\Y_{\IC}^t}\leq
      C_{G_0}$, and $ G_0:O_{\IC}\to\Y_{\IC}$ is holomorphic,
      \item $\gamma$ is a probability measure on $\X$ with $\supp(\gamma)\subseteq C_R^r(\X)$.
      \end{enumerate}
    \end{assumption}   

    The assumption requires $G_0$ to be holomorphic on a superset of $\sigma_R^r(U)$. The probability measure $\gamma$ is allowed to have support on $C_R^r(\X)$ which is potentially smaller than $\sigma_R^r(U)$. In particular, $G_0$ is then holomorphic on a superset of the support of $\gamma$.
    
    \begin{example}\label{ex:pi}
      Denote by $\lambda$ the Lebesgue measure. Then $\pi\coloneq\otimes_{j\in\N}\frac{\lambda}{2}$ is the uniform probability measure on $U=[-1,1]^\IN$, and
      \begin{equation}\label{eq:gamma}
        \gamma\coloneq (\sigma_R^r)_\sharp\pi
      \end{equation}
      defines a probability measure on $\X$ with support $\sigma_R^r(U)$.
      If $\pmb{\psi}$ is a Riesz basis, then $\supp(\gamma)=C_R^r(\X)$,
      so that $\supp(\gamma)\subseteq C_R^r(\X)$ as required
      in Assumption \ref{assump:holomorphicextension}.
    \end{example}   

\subsubsection{Approximation Theory}\label{sec:NNapproximationtheory}
Theorem 1 in \cite{Herrmann.2024} establishes a convergence rate for approximating $G_0$ within $\pG_{\rm FN}$ (with unbounded weights), in terms of the number of trainable network parameters. Our main results on sample complexity depend on bounds on the metric entropy, which require bounded network weights. To address this, we now extend Theorem 1 from \cite{Herrmann.2024} to FrameNet architectures with bounded weights, as introduced in Section \ref{sec:networkarchitecture}.

Similar as in \cite{Schwab2019Deep,Opschoor2022Bayesian,Herrmann.2024}, the analysis builds on polynomial chaos expansions, e.g.\ \cite{Xiu2002}. Denote by $(L_j)_{j\in\IN}$ the univariate Legendre polynomials normalized such that $\frac{1}{2}\int_{-1}^1L_j(x)^2\dd x = 1$ for all $j\in\IN$. Then, \cite[Chapter 22]{abramowitz1948handbook}
    \begin{equation}
        \label{eq:LinfLegendreunivarainte}
        \sup_{x\in[-1,1]}\lvert L_j(x)\rvert\leq\sqrt{2j+1}\qquad \forall j\in\IN_0.
    \end{equation}
Next, let $\F$ be the set of infinite-dimensional multiindices with finite support, i.e.\
\begin{equation}\label{eq:multiindices}
    \F\coloneqq\Big\{(\Bnu_j)_{j\in\IN_0}\in\IN_0^{\IN}:\,|\supp\Bnu|<\infty\Big\},
  \end{equation}
  where $\supp\Bnu\coloneq\set{j}{\nu_j\neq 0}$.
  For finite sets of multiindices $\Lambda\subseteq\F$, their effective dimension and maximal order is defined as
  \begin{equation*}
    d(\Lambda)\coloneq \sup\set{|\supp\Bnu|}{\Bnu\in\Lambda}\qquad\text{and}\qquad
    m(\Lambda)\coloneq \sup\set{|\Bnu|}{\Bnu\in\Lambda},
  \end{equation*}
  where $|\Bnu|\coloneq \sum_{j\in\IN}\nu_j$. Moreover, with $U\coloneq [-1,1]^\IN$
  for all $\By\in U$ and $\Bnu\in\F$, we let $L_{\Bnu}(\By)\coloneq \prod_{j\in\IN}L_{\nu_j}(y_j)$ be the corresponding multivariate Legendre polynomial. This infinite product is well-defined, since for all but finitely many $j$ holds $\nu_j=0$ so that $L_{\nu_j}\equiv 1$.
  The next Proposition gives an approximation result for multivariate Legendre polynomials.
  It is an 
  extension of \cite[Proposition 2.13]{Opschoor.2022} to the case of bounded network
  parameters. The proof is 
  provided in Appendix \ref{app:prooftensorizedsig1}.
\begin{proposition}[$\sigma_1$-NN approximation of $L_{\Bnu}$]
\label{prop:tensorlegendreapproximation}
Let $\delta\in (0,1/2)$ and $\Lambda\subset\F$ be finite. 
Then there exists a $\sigma_1$-NN $f_{\Lambda,\delta}$, such that its outputs $\{\tilde{L}_{\Bnu,\delta}\}_{\Bnu\in\Lambda}$ satisfy
\begin{equation}
\label{eq:Linfboundtenorizedlegendre}
    \forall\Bnu\in\Lambda:\qquad \sup_{\py\in U}\,|L_{\Bnu}(\py)-\tilde{L}_{\Bnu,\delta}(\py)|\leq \delta.
\end{equation}
Furthermore, there exists a constant $C>0$ independent of $\Lambda$, $d(\Lambda)$, $m(\Lambda)$ and $\delta$ such that
\begin{subequations}
    \begin{align}
      {\rm depth}(f_{\Lambda,\delta})&\leq C\Big[\log(d(\Lambda))d(\Lambda)\log(m(\Lambda))m(\Lambda)+ m(\Lambda)^2+\log(\delta^{-1})\bigl(\log(d(\Lambda))+ m(\Lambda)\bigr)\Big]\label{eq:Legdnresigma1approxtensorNNNdepth}\\
{\rm width}\bigl(f_{\Lambda,\delta}\bigr)&\leq C|\Lambda|d(\Lambda)\label{eq:Legdnresigma1approxNNNtensorwidth}\\
       {\rm size}\bigl(f_{\Lambda,\delta}\bigr)&\leq C\Bigl[|\Lambda|d(\Lambda)^2\log\bigl(m(\Lambda)\bigr)+m(\Lambda)^3d(\Lambda)^2+\log\bigl(\delta^{-1}\bigr)\bigl(m(\Lambda)^2+|\Lambda|\bigr)d(\Lambda)\Bigr]\\ 
{\rm mpar}\bigl(f_{\Lambda,\delta}\bigr)&\leq1.\label{eq:Legdnresigma1approxNNNtensorweightbound}
    \end{align}
  \end{subequations}
\end{proposition}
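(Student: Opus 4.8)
The plan is to build the network $f_{\Lambda,\delta}$ in two stages: first approximate each univariate Legendre polynomial $L_{\nu_j}$ appearing in $\Lambda$, and then approximate the products $L_{\Bnu}(\By)=\prod_{j\in\supp\Bnu}L_{\nu_j}(y_j)$ by composing with an approximate-multiplication network. For the univariate step, I would recall that $L_m$ is a polynomial of degree $m$; using the standard $\sigma_1$-approximation of $x\mapsto x^2$ on a bounded interval (via the ``sawtooth''/hat-function construction of Yarotsky, or the version in \cite{Opschoor.2022}), one obtains a ReLU network approximating $x\mapsto x^k$ on $[-1,1]$, and hence, by summing the monomial expansion of $L_m$ with its (bounded) Legendre coefficients, a ReLU network approximating $L_m$ on $[-1,1]$ to any accuracy, with depth $O(\log m \cdot m + \log(\text{accuracy}^{-1}))$ and size polynomial in $m$. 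The key point for the bounded-weights claim \eqref{eq:Legdnresigma1approxNNNtensorweightbound} is that all the constituent gadgets ($x^2$-approximation, identity channels, scalar multiplication by coefficients) can be implemented with weights bounded by $1$ after a suitable rescaling: one absorbs large Legendre coefficients by splitting them across several layers (so each layer multiplies by at most a constant $\le 1$) at the cost of a logarithmic-depth overhead, exactly as in the technique underlying \cite[Proposition 2.13]{Opschoor.2022}.

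For the product step, I would use the RePU-free (i.e.\ $\sigma_1$) approximate multiplication network: there is a ReLU network $\tilde\times_{\eps}$ with two inputs in $[-1,1]$ and ${\rm mpar}\le 1$, depth $O(\log(\eps^{-1}))$, width $O(1)$, approximating $(a,b)\mapsto ab$ to accuracy $\eps$. Iterating this in a balanced binary tree over the $\le d(\Lambda)$ active factors of a given $\Bnu$ yields an approximation of $L_{\Bnu}$ with depth $O(\log d(\Lambda)\cdot\log(\eps^{-1}))$ and width $O(d(\Lambda))$; propagating the per-factor univariate errors through the product (each factor is bounded by $\sqrt{2\nu_j+1}\le\sqrt{2m(\Lambda)+1}$ by \eqref{eq:LinfLegendreunivarainte}) shows it suffices to take the univariate and multiplication accuracies $\eps\simeq \delta/(d(\Lambda)(2m(\Lambda)+1)^{d(\Lambda)/2})$, which only contributes $\log(\delta^{-1})+d(\Lambda)\log(m(\Lambda))$ to the depth — matching \eqref{eq:Legdnresigma1approxtensorNNNdepth}. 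One must also preserve input channels $y_j$ unchanged through the depth of the univariate subnetworks (identity channels, realizable with weights in $\{-1,0,1\}$ via $x=\sigma_1(x)-\sigma_1(-x)$), so the whole thing assembles into a single network computing all $|\Lambda|$ outputs in parallel; the shared univariate subnetworks (at most $d(\Lambda)m(\Lambda)$ distinct pairs $(j,\nu_j)$, or one can just bound by $|\Lambda|d(\Lambda)$) explain the width and size bounds \eqref{eq:Legdnresigma1approxNNNtensorwidth}, where the $m(\Lambda)^3 d(\Lambda)^2$ and $m(\Lambda)^2 d(\Lambda)^2$ terms come from the monomial-summation cost of the univariate blocks.

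The main obstacle I expect is the careful bookkeeping needed to simultaneously (i) keep \texttt{mpar} $\le 1$, (ii) keep the depth only \emph{logarithmic} in $\delta^{-1}$ and $m(\Lambda)$ rather than polynomial, and (iii) correctly track how the $d(\Lambda)$ factorwise errors compound in the product given that individual Legendre factors can be as large as $\sqrt{2m(\Lambda)+1}$. The constraint (i) fights against (ii)–(iii): naively rescaling to make weights small blows up depth, so one has to use the layer-splitting trick with just the right granularity. The cleanest route is to import the univariate bounded-parameter statement essentially verbatim from the proof of \cite[Proposition 2.13]{Opschoor.2022} (suitably rescaled to account for the $\|L_{\nu_j}\|_\infty$ growth) and then do only the product-tree error analysis in detail here, stating explicitly which constants are absorbed. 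A secondary, more routine obstacle is handling the $\delta\in(0,1/2)$ regime so that all intermediate quantities stay in $[-1,1]$ (or a fixed dilation thereof) as required by the multiplication gadget — this is where the hypothesis $\delta<1/2$ and the normalization $U=[-1,1]^\IN$ are used.
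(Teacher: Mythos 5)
Your proposal matches the paper's proof essentially step for step: the paper also builds $f_{\Lambda,\delta} = f_{\Lambda,\delta}^{(1)} \circ f_{\Lambda,\delta}^{(2)}$, where $f_{\Lambda,\delta}^{(2)}$ approximates the univariate Legendre factors $L_{\nu_j}$ in parallel (via the bounded-weight polynomial approximation of \cite[Proposition III.5]{Elbrachter.2021}, recorded in Corollary~\ref{Cor:ApproxLegendreunivaraintsigma1}), and $f_{\Lambda,\delta}^{(1)}$ multiplies the outputs through a binary-tree approximate-product network (Proposition~\ref{prop:sigma1multiplicationNNnumbers}), with identity channels to synchronize depth and ${\rm mpar}\le 1$ enforced throughout by the sparse-concatenation calculus. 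The only minor deviation is in the choice of the inner accuracy: you propose $\varepsilon\simeq \delta/(d(2m+1)^{d/2})$ using the sharp bound $\|L_{\nu_j}\|_\infty\le\sqrt{2\nu_j+1}$, while the paper works with the slightly looser factor bound $M_\bsnu = 2|\bsnu|_1+2$ and takes $\delta'=(2d(\Lambda))^{-1}(2m(\Lambda)+2)^{-d(\Lambda)+1}\delta$; both yield the claimed depth/size bounds up to the constant $C$, so this does not create a gap.
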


A similar result holds for RePU neural networks, see Proposition \ref{prop:tensorlegendreapproximationsigq}. 
Given $q\in\IN$, 
and $N\in\IN$, we introduce the two FrameNet classes
\begin{subequations}\label{eq:architectures}
   \noeqref{eq:Framenet}
  \begin{equation}
    \begin{aligned}
    \GNNSP&:=\pmb{G}_{\trm{FN}}(\sigma_q,{\rm width}_N,{\rm depth}_N,{\rm size}_N,M,B),\label{eq:Framenet}\\
      \GNNFC&:=\pmb{G}_{\trm{FN}}(\sigma_q,{\rm width}_N,{\rm depth}_N,\infty,M,B)
    \end{aligned}
    \end{equation}
    where for certain constants $C_L$, $C_p$, $C_s$, $M$, $B\ge 1$ (to be determined later)
    \begin{align}\label{eq:Framenetconstants}
      {\rm depth}_N = \max\{1,\lceil C_L \log(N)\rceil\},\qquad
     {\rm width}_N = \lceil C_p N\rceil,\qquad
      {\rm size}_N = \lceil C_sN\rceil,
    \end{align}
\end{subequations}    
    and $\lceil x \rceil$ 
    denotes the smallest integer larger or equal to $x\in\IR$.
    
    We emphasize that $\GNNSP$ corresponds to a sparsely connected architecture, whereas $\GNNFC$ represents a fully connected architecture (because there is no constraint on its size). In particular, since every linear transformation in between activation functions has at most ${\rm width}_N+{\rm width}_N^2$ parameters, we have
    \begin{equation}\label{eq:fullsize}
      {\rm size}(\GNNFC)\le ({\rm depth}_N+1)({\rm width}_N+{\rm width}_N^2)
      =O(\log(N)N^2)\qquad\text{as }N\to\infty.
    \end{equation}
    Thus $\GNNFC$ can essentially be quadratically larger than $\GNNSP$.
    
The next theorem extends \cite[Theorems 1 and 2]{Herrmann.2024} to the case of bounded network parameters. 
The proof is provided in Appendix \ref{app:proofapproximationerror}.
    \begin{theorem}[Sparse network approximation]\label{th:approximationNN}
      Let $G_0$, $\gamma$ satisfy Assumption \ref{assump:holomorphicextension} with $r>1$, $t>0$.      
      Let $q\geq 1$ be an integer and fix $\tau>0$ (arbitrarily small).
        \begin{enumerate}
        \item \label{item:Linfapprox}
    There exists $C>0$ s.t.\ for all $N\in\IN$
    \begin{align*}
\inf_{G\in\GNNSP}\,\biggl[\bigl\lVert G- G_0\bigr\rVert_{\infty,\supp(\gamma)}^2\biggr]\leq C N^{-2\min\{r-1,t\}+\tau}.
    \end{align*}
  \item \label{item:Ltwoapprox}Let $\pmb{\Psi}_{\X}$ be a Riesz basis and let 
    $\gamma$ be as in \eqref{eq:gamma}.
 Then there exists $C>0$ s.t.\ for all $N\in\IN$
        \begin{align*}
       \inf_{G\in\GNNSP}\,\Big[\bigl\lVert G- G_0\bigr\rVert_{L^2(\gamma)}^2\Big]\leq C N^{-2\min\{r-\frac{1}{2},t\}+\tau}.
    \end{align*}
    \end{enumerate}
  \end{theorem}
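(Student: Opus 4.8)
The plan is to reduce Theorem~\ref{th:approximationNN} to an approximation statement for the parametric map $\mathcal G_0\coloneqq G_0\circ\sigma_R^r\colon U\to\Y_t$ and then to emulate a truncated polynomial–chaos surrogate of $\mathcal G_0$ by a FrameNet with \emph{bounded} parameters. The reduction is purely structural: for any $G=\D_\Y\circ g\circ S_r\circ\E_\X\in\GNNSP$ and any $a\in C_R^r(\X)\supseteq\supp(\gamma)$, the sequence $\By_a\coloneqq S_r(\E_\X(a))$ lies in $U$ and satisfies $\sigma_R^r(\By_a)=a$ by the frame reconstruction identity, so that $G(a)-G_0(a)=\D_\Y(g(\By_a))-\mathcal G_0(\By_a)$ and $\{\By_a:a\in\supp\gamma\}\subseteq U$. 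Hence for part~\ref{item:Linfapprox} it suffices to construct $g\in\gNNSP$ with $\sup_{\By\in U}\lVert\D_\Y(g(\By))-\mathcal G_0(\By)\rVert_\Y\lesssim N^{-\min\{r-1,t\}+\tau/2}$; and for part~\ref{item:Ltwoapprox}, using $\gamma=(\sigma_R^r)_\sharp\pi$ together with the biorthogonality $\langle\psi_j,\tilde\psi_k\rangle_\X=\delta_{jk}$ of a Riesz basis (which yields $\E_\X\circ\sigma_R^r=\Id$ on the relevant cube, hence $\lVert G-G_0\rVert_{L^2(\gamma)}^2=\int_U\lVert\D_\Y(g(\By))-\mathcal G_0(\By)\rVert_\Y^2\,d\pi(\By)$), it suffices to obtain the corresponding $L^2(\pi;\Y)$-error $\lesssim N^{-\min\{r-1/2,t\}+\tau/2}$.

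To build $g$ I would follow the polynomial–chaos strategy underlying \cite[Theorems~1 and~2]{Herrmann.2024}. Since $\sigma_R^r$ is affine and $\sigma_R^r(U)\subset O_\IC$, Assumption~\ref{assump:holomorphicextension} makes $\mathcal G_0$ holomorphic and bounded by $C_{G_0}$ in $\lVert\cdot\rVert_{\Y_t}$ on a complex neighbourhood of $U$; expand $\mathcal G_0(\By)=\sum_{\Bnu\in\F}v_\Bnu L_\Bnu(\By)$ with $v_\Bnu\in\Y_t$. The holomorphy, the coordinatewise scaling $\theta_j^r$ in $\sigma_R^r$, and the summability $\pmb{\theta}^{1+\eps}\in\ell^1$ imply, by the standard parametric-holomorphy estimates \cite{CohenDeVore,CDS11,Herrmann.2024}, $\ell^p$-summability (with exponent $p=p(r,\tau)<1$, more precisely any $p>1/r$) of the sequence controlling the $L^\infty(U;\Y_t)$-error, and of the analogous sequence for the $L^2(\pi;\Y_t)$-error with the improved effective exponent afforded by the $L^2(\pi)$-orthonormality of the $L_\Bnu$. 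A best-$N$-term selection then produces a finite (downward closed) $\Lambda_N\subset\F$ with $|\Lambda_N|\lesssim N$, $d(\Lambda_N)\lesssim\log N$ and $m(\Lambda_N)\lesssim\log N$, such that $\sum_{\Bnu\in\Lambda_N}v_\Bnu L_\Bnu$ approximates $\mathcal G_0$ at rate $N^{-(r-1)+\tau/4}$ in $L^\infty$ and $N^{-(r-1/2)+\tau/4}$ in $L^2(\pi)$. Independently, truncating the $\Y$-frame expansion of the $v_\Bnu$ after $\lesssim N$ indices produces an error $\lesssim\theta_N^{\,t}\lesssim N^{-t+\tau/4}$, using $\lVert\mathcal G_0(\By)\rVert_{\Y_t}\le C_{G_0}$, the frame upper bound, and $\pmb{\theta}^{1+\eps}\in\ell^1$; this is the source of the $t$ in the minimum.

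Next I would emulate each retained $L_\Bnu$, $\Bnu\in\Lambda_N$, by $\tilde L_{\Bnu,\delta}$ from Proposition~\ref{prop:tensorlegendreapproximation} (or Proposition~\ref{prop:tensorlegendreapproximationsigq} for $q\ge2$) with accuracy $\delta\coloneqq N^{-K}$, $K=K(r,t)$ large, and let $g$ compute $\By\mapsto\bigl(\sum_{\Bnu\in\Lambda_N}\langle v_\Bnu,\tilde\eta_j\rangle_\Y\,\tilde L_{\Bnu,\delta}(\By)\bigr)_j$ over the $\lesssim N$ retained output indices, padded by zeros, so that $\D_\Y\circ g$ is the desired surrogate. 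A triangle inequality splits $\lVert\D_\Y(g(\By))-\mathcal G_0(\By)\rVert_\Y$ into an output-truncation term, a multiindex-truncation term, and a Legendre-emulation term; the last is controlled via the choice of $\delta$ together with $\sup_{\By\in U}|L_\Bnu(\By)|\le\prod_j\sqrt{2\nu_j+1}$, cf.~\eqref{eq:LinfLegendreunivarainte}, and the $\ell^1$-type summability from the previous step. Reading off the network parameters via Proposition~\ref{prop:tensorlegendreapproximation}: the Legendre block has $\mathrm{mpar}\le1$, the decoder weights $\langle v_\Bnu,\tilde\eta_j\rangle_\Y$ are uniformly bounded (and rescaled into $[-M,M]$), the range on $[-1,1]^{p_0}$ is bounded by a constant $B$, the depth is $\lesssim\log N$, and the width/size are $\lesssim N\,\mathrm{polylog}(N)$; the residual polylog factors are removed in the usual way by running the construction with $\widetilde N\simeq N/(\log N)^c$ retained degrees of freedom, which preserves $\mathrm{depth}\lesssim\log N$ and $\mathrm{width},\mathrm{size}\lesssim N$, cf.~\eqref{eq:Framenetconstants}, at the price of the arbitrarily small exponent loss $\tau$. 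Part~\ref{item:Ltwoapprox} is identical with all polynomial errors measured in $L^2(\pi;\Y)$.

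I expect the main obstacle to be the parameter bookkeeping for the \emph{sparse} class $\GNNSP$: keeping $\mathrm{size}(g)\le C_sN$ up to the polylog that $\tau$ absorbs forces one to choose the retained support jointly and anisotropically in the pair $(\Bnu,j)$ — allocating more Legendre terms and more output frame indices to the dominant multiindices — while simultaneously keeping $d(\Lambda_N),m(\Lambda_N)\lesssim\log N$ so that the $\log d$, $m$, $m^2$ and $m^3d^2$ factors in the bounds of Proposition~\ref{prop:tensorlegendreapproximation} remain polylogarithmic in $N$. The genuinely new ingredient relative to \cite{Herrmann.2024}, namely the constraint $\mathrm{mpar}(g)\le M$, is precisely what Proposition~\ref{prop:tensorlegendreapproximation} and its RePU counterpart supply.
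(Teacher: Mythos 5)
Your proposal is correct and follows essentially the same route as the paper's proof: reduction to the parametric map $G_0\circ\sigma_R^r$ on $U$ via the frame reconstruction identity, a $\Y$-valued tensorized Legendre expansion with $\ell^p$-summable coefficients from holomorphy, emulation of the retained polynomials by the bounded-weight networks of Propositions \ref{prop:tensorlegendreapproximation}/\ref{prop:tensorlegendreapproximationsigq}, and absorption of polylog factors into $\tau$. The ``joint anisotropic truncation in $(\Bnu,j)$'' that you flag as the main obstacle is precisely what Lemma \ref{eq:summabilitylemma} (quoted from \cite{Herrmann.2024}) supplies via the allocation $\sum_i m_i\le \tilde N$, with its $\ell^1$- versus $\ell^2$-aggregation accounting for the $r-1$ versus $r-\tfrac12$ rates in the two parts.
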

  
\begin{remark}
  Theorem \ref{th:approximationNN} provides an approximation rate for $\GNNSP$ in
  terms of $N$.  Since $\GNNFC\supseteq \GNNSP$, trivially the statement
  remains true for $\GNNFC$. However, as the size of $\GNNFC$ can be
    quadratically larger by \eqref{eq:fullsize}, the convergence rate in terms of
    \emph{network size} is essentially halfed for the fully connected
    architecture.
\end{remark}

\subsubsection{Entropy Bounds for FrameNet}
In the following we bound the metric entropy (cp.~\eqref{eq:entropy}) of FrameNet for ReLU activation. Recall that $\Lambda_{\pmb{\Psi}_{\Y}}$ is the frame constant in \eqref{eq:FrameConstant}. The proof of Lemma \ref{lem:entropybound} is given in Appendix \ref{app:proofentropy}.

\begin{lemma}[cf.~{\cite[Lemma 5]{SH20}}]\label{lem:entropybound}
  Let $L$, $p$, $s$, $M$, $B\ge 1$, $\sigma_R^r$ be as in \eqref{eq:sigmarR} and $U=[-1,1]^{\IN}$. Then $\pG_{\rm FN}=\pG_{\rm FN}(\sigma_1,L,p,s,M,B)$ is compact with respect to $\|\cdot\|_{\infty,\sigma_R^r(U)}$ and $\|\cdot\|_n$.
  Furthermore, $\pG_{\rm FN}$ satisfies for all $\delta>0$
     \begin{align}\label{eq:Hsig1general}
H(\pG_{\rm FN},\|\cdot\|_{\infty,\sigma_R^r(U)},\delta)\leq(s+1)\log\Big(2^{L+6}\Lambda_{\pmb{\Psi}_{\Y}}L^2M^{L+1}p^{L+4}\max\{1,\delta^{-1}\}\Big).
     \end{align}
In particular there exist constants $C_H^\trm{SP},\,C_H^\trm{FC}>0$ such that for the sparse and fully-connected FrameNet classes from \eqref{eq:architectures} it holds
\begin{align}
H(\GNNSP[\sigma_1],\|\cdot\|_{\infty,\sigma_R^r(U)},\delta)&\leq C_H^\trm{SP} N\left(1+\log(N)^2+\log\left(\max\left\{1,\delta^{-1}\right\}\right)\right)\label{eq:Hsig1sparse}\\
 H(\GNNFC[\sigma_1],\|\cdot\|_{{\infty,\sigma_R^r(U)}},\delta)&\leq C_H^\trm{FC} N^2\left(1+\log(N)^3+\log\left(\max\left\{1,\delta^{-1}\right\}\right)\right)\label{eq:Hsig1FC}
 \end{align}
for all $\delta>0$.
\end{lemma}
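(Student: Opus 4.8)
The proof follows the template of \cite[Lemma 5]{SH20}, adapted to the encoder--decoder composition. The first step is a \emph{reduction to the network part}. Any $G\in\pG_{\rm FN}(\sigma_1,L,p,s,M,B)$ has the form $G=\D_{\Y}\circ g\circ S_r\circ\E_{\X}$ with $g\in\pmb{g}_{\trm{FN}}(\sigma_1,L,p,s,M,B)$; the decoder $\D_{\Y}=F_{\Y}'$ is bounded with $\|\D_{\Y}\|_{\ell^2\to\Y}=\|F_{\Y}\|_{\Y\to\ell^2}=\Lambda_{\pmb{\Psi}_{\Y}}$, while $S_r\circ\E_{\X}$ takes values in the cube $[-1,1]^{p_0}$ on the relevant domain (this is exactly the input set over which ${\rm mran}$ is controlled in the definition of $\pmb{g}_{\trm{FN}}$; for Riesz bases $\sigma_R^r(U)=C_R^r(\X)$ and the image is precisely $[-1,1]^{p_0}$ after truncation). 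Hence, for any two FrameNet maps $G,G'$ with network parts $g,g'$,
\[
  \|G-G'\|_{\infty,\sigma_R^r(U)}\le\Lambda_{\pmb{\Psi}_{\Y}}\sup_{\bz\in[-1,1]^{p_0}}\|g(\bz)-g'(\bz)\|_2,
\]
so it suffices to cover $\pmb{g}_{\trm{FN}}(\sigma_1,L,p,s,M,B)$ in the sup-norm on $[-1,1]^{p_0}$ and rescale the covering radius by $\Lambda_{\pmb{\Psi}_{\Y}}$.

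The core estimate is the \emph{Lipschitz dependence of a ReLU network on its weights}. Writing $g_v=A_{L+1}\circ\sigma_1\circ A_L\circ\cdots\circ\sigma_1\circ A_1$ as a composition of affine maps $A_l(\bx)=W_l\bx+b_l$ whose entries are bounded by $M$, one has $\|W_l\|_{\rm op}\le Mp$ and $\|b_l\|_2\le M\sqrt p$; since $\sigma_1$ is $1$-Lipschitz and fixes $0$, the composition of the first $\ell$ layers is $(Mp)^{\ell}$-Lipschitz on $[-1,1]^{p_0}$ and maps this set into a Euclidean ball of radius $\lesssim (2Mp)^{\ell}$. Telescoping over the $L+1$ layers (replacing the weights of one layer at a time, controlling the Lipschitz factor of the suffix network and the output magnitude of the prefix network) then yields, for parameter vectors with $\|v-v'\|_\infty\le\epsilon$,
\[
  \sup_{\bz\in[-1,1]^{p_0}}\|g_v(\bz)-g_{v'}(\bz)\|_2\le C_0\,2^{L}L\,M^{L}p^{L+2}\,\epsilon
\]
for a universal $C_0$. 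Combined with the reduction, achieving operator accuracy $\delta$ requires parameter accuracy $\epsilon\simeq\delta(\Lambda_{\pmb{\Psi}_{\Y}}2^{L}LM^{L}p^{L+2})^{-1}$.

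It remains to \emph{count} and to verify \emph{compactness}. An architecture of depth $\le L$ and width $\le p$ has at most $P\le(L+1)(p^2+p)$ admissible parameter positions, hence at most $\binom{P}{s}\le P^{s}$ choices of which $\le s$ of them are nonzero; for each such sparsity pattern the active parameters are placed on a grid in $[-M,M]$ of spacing $\epsilon$, contributing $(2M\epsilon^{-1}+1)^{s}$ further possibilities. Multiplying, taking logarithms, inserting the value of $\epsilon$, and bounding the polynomial-in-$(L,p)$ factors crudely gives
\[
  H(\pG_{\rm FN},\|\cdot\|_{\infty,\sigma_R^r(U)},\delta)\le(s+1)\log\!\Big(2^{L+6}\Lambda_{\pmb{\Psi}_{\Y}}L^2M^{L+1}p^{L+4}\max\{1,\delta^{-1}\}\Big),
\]
with the usual factor-$2$ adjustment used to keep cover centres inside the class and the trivial bound $H=0$ once $\delta$ exceeds the diameter. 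Compactness w.r.t.\ $\|\cdot\|_{\infty,\sigma_R^r(U)}$ follows because, for each of the finitely many admissible skeletons, the constraints ${\rm size}\le s$ and ${\rm mran}_{[-1,1]^{p_0}}\le B$ cut out a closed and bounded subset of a finite-dimensional parameter space, and the parameter-to-operator map is continuous into $(C(\sigma_R^r(U);\Y),\|\cdot\|_\infty)$ by the Lipschitz estimate above; compactness w.r.t.\ $\|\cdot\|_n$ is then immediate since $\|\cdot\|_n\le\|\cdot\|_{\infty,\sigma_R^r(U)}$ for design points in $\sigma_R^r(U)$. Finally, \eqref{eq:Hsig1sparse} and \eqref{eq:Hsig1FC} are obtained by substituting \eqref{eq:Framenetconstants} — and, for the fully connected class, the total parameter count \eqref{eq:fullsize} in place of $s$ — into the general bound and simplifying with $L=O(\log N)$, $p,s=O(N)$ (resp.\ $s=O(N^2\log N)$).

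The main obstacle is the parameter-perturbation estimate of the second step: one must simultaneously track the layerwise Lipschitz constants \emph{and} the output magnitudes of all partial sub-networks through the telescoping, since a careless bound accrues spurious extra powers of $p$ or $L$ that would propagate into the exponents $M^{L+1}p^{L+4}$ and ultimately degrade the statistical rates obtained in Section~\ref{sec:networkarchitecture}. The reduction through the bounded encoder/decoder, the combinatorial count over sparsity patterns, and the compactness argument are otherwise routine.
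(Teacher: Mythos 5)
Your proposal is correct and follows the same overall route as the paper's proof: (i) push the entropy computation through the linear decoder $\D_\Y$ and the rescaled encoder to reduce to a cover of the raw network class on $[-1,1]^{p_0}$, (ii) prove a weight-perturbation Lipschitz estimate for ReLU networks by telescoping through the layers while simultaneously tracking the magnitudes of the partial forward maps $A_k^+$ and the Lipschitz constants of the partial backward maps $A_k^-$, (iii) combine this with the $\binom{(L+1)(p^2+p)}{s}$ count over sparsity patterns and an $\epsilon$-grid on $[-M,M]$ per active parameter, and (iv) verify compactness. The only cosmetic differences are that you work entirely in Euclidean norms (using $\|W_l\|_{\rm op}\le Mp$ and $\|b_l\|_2\le M\sqrt p$) whereas the paper passes to $\|\cdot\|_{\infty,\infty}$ immediately after the reduction, inserting the $\sqrt p$ conversion there; and that you prove compactness directly (closed bounded parameter set $\to$ continuous parameter-to-function map $\to$ compact image in $C(\sigma_R^r(U);\Y)$) rather than citing \cite[Proposition 3.5]{Petersen.2021} for compactness of $(\pmb{g}_{\rm FN},\|\cdot\|_{\infty,\infty})$ and then using linearity of the encoder-decoder conjugation. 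Both variants yield the same entropy bound up to the universal constant absorbed into $2^{L+6}$, and the substitutions \eqref{eq:Framenetconstants}, \eqref{eq:fullsize} for the sparse/fully-connected cases are handled as in the paper.
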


  \begin{remark}
    The entropy bound is independent of the constant $B$  bounding the maximum range of the network, see Subsection \ref{sec:framNet}. However, $B<\infty$ will be necessary to apply Theorem \ref{th:samplecomplexitywhitnoise}.
  \end{remark}
For RePU activation, as mentioned before, the metric entropy bounds exhibit a worse dependency on the network parameters, due to the lack of global Lipschitz continuity of $\sigma_q$ if $q\ge 2$.
The proof of Lemma \ref{lem:entropyboundRePU} is given in Appendix \ref{app:proofentropyRePU}.
\begin{lemma}\label{lem:entropyboundRePU}
  Let $q\in\IN$, $q\ge 2$, and $L$, $p$, $s$, $M$, $B\ge 1$, $\sigma_R^r$ be as in \eqref{eq:sigmarR} and $U=[-1,1]^{\IN}$. Then $\pG_{\rm FN}=\pG_{\rm FN}(\sigma_q,L,p,s,M,B)$ is compact with respect to $\|\cdot\|_{{\infty,\sigma_R^r(U)}}$ and $\|\cdot\|_n$.
  Furthermore, $\pG_{\rm FN}$ satisfies for all $\delta>0$
     \begin{align}\label{eq:Hsigqgeneral}
       H(\pG_{\rm FN},\|\cdot\|_{{\infty,\sigma_R^r(U)}},\delta)\leq(s+1)\log\Big(\Lambda_{\pmb{\Psi}_{\Y}} L q^{L+q}(2p M)^{4q^{2L+2}}\max\{1,\delta^{-1}\}\Big).
     \end{align}
    Consider the constant $C_L$ from \eqref{eq:architectures}. Then there exists $C_H^\trm{SP},\,C_H^\trm{FC}>0$ such that
\begin{align}
H(\GNNSP[\sigma_q],\|\cdot\|_{{\infty,\sigma_R^r(U)}},\delta)&\leq C_H^\trm{SP} N^{1+2C_L\log(q)}\left(1+\log(N)^2+\log\left(\max\left\{1,\delta^{-1}\right\}\right)\right)\label{eq:Hsigqsparse}\\
 H(\GNNFC[\sigma_q],\|\cdot\|_{{\infty,\sigma_R^r(U)}},\delta)&\leq C_H^\trm{FC} N^{2+2C_L\log(q)}\left(1+\log(N)^2+\log\left(\max\left\{1,\delta^{-1}\right\}\right)\right)\label{eq:HsigqFC}
 \end{align}
for all $\delta>0$.
\end{lemma}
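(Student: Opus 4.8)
The plan is to follow the structure of the ReLU case (Lemma \ref{lem:entropybound}), the key difference being the replacement of the global Lipschitz bound for $\sigma_1$ by a local Lipschitz bound for $\sigma_q$ on compact sets. The starting point is to decompose a FrameNet map $G = \D_\Y \circ g \circ S_r \circ \E_\X$ and observe, as in \cite[Lemma 5]{SH20}, that two networks $g_1, g_2 \in \pmb{g}_{\trm{FN}}(\sigma_q,L,p,s,M,B)$ which differ only in their nonzero parameters by at most $\epsilon$ in $\ell^\infty$ produce realizations that are close in $\|\cdot\|_{\infty,[-1,1]^{p_0}}$. For this, I would track, layer by layer, how a perturbation of the weights and biases propagates; since each hidden pre-activation is bounded (the inputs lie in $[-1,1]^{p_0}$, $\|g\|_\infty \le B$ on the relevant domain, and $\rm mpar \le M$), one controls $\sigma_q$ via its Lipschitz constant on the compact interval of admissible pre-activation values, which is of order $q \cdot (\text{bound})^{q-1}$. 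Iterating over $L+1$ layers and using $\rm width \le p$, the compounding yields a Lipschitz-type factor of the form $q^{L+q}(2pM)^{c\,q^{2L+2}}$ — the doubly-exponential dependence on $L$ is exactly what distinguishes this from the ReLU bound, and arises because a degree-$q$ activation applied $L$ times produces a function of degree $q^L$, whose local Lipschitz constant on a set of size $\sim (pM)^{q^L}$ scales accordingly.

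Next I would turn this stability estimate into a covering number bound. Since $\rm size(g) \le s$, each network is determined (up to the perturbation above) by $s$ parameters, each bounded by $M$ in absolute value. Covering $[-M,M]^s$ by an $\epsilon$-net requires $(CM/\epsilon)^s$ points; combined with the Lipschitz factor relating parameter-perturbations to $\|\cdot\|_{\infty,[-1,1]^{p_0}}$-perturbations of the realization, and then passing through the bounded decoder $\D_\Y$ (which contributes the frame constant $\Lambda_{\pmb{\Psi}_\Y}$) and the linear scaling $S_r\circ\E_\X$ (which maps $\sigma_R^r(U)$ into $[-1,1]^{p_0}$ by construction of $S_r$), one obtains that an $\epsilon$-net in parameter space induces a $\delta$-net in $\|\cdot\|_{\infty,\sigma_R^r(U)}$ with $\delta \simeq \Lambda_{\pmb{\Psi}_\Y} \cdot q^{L+q}(2pM)^{cq^{2L+2}} \cdot \epsilon$. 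Taking logarithms gives the entropy bound $H \le (s+1)\log\big(\Lambda_{\pmb{\Psi}_\Y} L q^{L+q}(2pM)^{4q^{2L+2}}\max\{1,\delta^{-1}\}\big)$ after absorbing constants; the factor $(s+1)$ rather than $s$ accommodates also the ``empty'' network / bias count, matching \eqref{eq:Hsig1general}. Compactness with respect to both $\|\cdot\|_{\infty,\sigma_R^r(U)}$ and $\|\cdot\|_n$ follows since the parameter set is compact, the realization map is continuous in the parameters (by the same local Lipschitz argument), and the sublevel structure is closed.

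Finally, for the sparse and fully-connected classes \eqref{eq:architectures}, I substitute the scalings from \eqref{eq:Framenetconstants}: $L = \rm depth_N = O(\log N)$, $p = \rm width_N = O(N)$, and $s = \rm size_N = O(N)$ (sparse) or $s = O(\log(N) N^2)$ (fully connected, via \eqref{eq:fullsize}). The delicate term is $(2pM)^{4q^{2L+2}}$: with $L \le C_L \log N$ we get $q^{2L+2} \le q^{2} N^{2C_L \log q}$, so $\log\big((2pM)^{4q^{2L+2}}\big) = 4q^{2L+2}\log(2pM) = O\big(N^{2C_L\log q}\log N\big)$. Multiplying by $s+1 = O(N)$ (sparse) yields the claimed $N^{1+2C_L\log q}(1+\log(N)^2+\log\max\{1,\delta^{-1}\})$, and by $O(N^2\log N)$ (fully connected) yields — after a careful bookkeeping of the log powers — the bound $N^{2+2C_L\log q}(1+\log(N)^2+\log\max\{1,\delta^{-1}\})$. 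The main obstacle I anticipate is the layer-by-layer propagation of parameter perturbations through the RePU nonlinearities while keeping the constants in the exponent sharp enough to land exactly at the stated $4q^{2L+2}$; one must be careful that the pre-activation bounds used for the local Lipschitz constants are themselves controlled by $(2pM)^{q^{\text{(depth so far)}}}$, which requires an induction on the layer index with the right inductive hypothesis.
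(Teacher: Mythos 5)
Your proposal is correct and follows essentially the same route as the paper's proof: reduce to the entropy of the coefficient-network class via the decoder's frame constant, propagate parameter perturbations layer by layer using the local Lipschitz constant of $\sigma_q$ on the inductively controlled pre-activation ranges (giving the $q^{L+q}(2pM)^{4q^{2L+2}}$ factor), count sparsity patterns and grid points to get the $(s+1)\log(\cdot)$ bound, and substitute the architecture scalings. The compactness argument via continuity of the realization map on the compact parameter set likewise matches the paper's appeal to \cite[Proposition 3.5]{Petersen.2021}.
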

\subsection{Statistical Theory}\label{sec:Framenetstatistic}
Using our statistical results from Theorem \ref{th:samplecomplexitywhitnoise} and Corollaries \ref{cor:MSE}-\ref{cor:explicitentropy} as well as the approximation result from Theorem \ref{th:approximationNN}, we can bound 
$\IE_{\G_0}[\lVert \hat{G}_n-\G_0\rVert_{L^2(\gamma)}^2]$
solely in terms of the statistical sample size $n$. This is formalized in the next two theorems, for ReLU and RePU activation.

Our result distinguishes between the sparse and fully connected architectures $\GNNSP$, $\GNNFC$ in \eqref{eq:architectures}. In practice, fully connected architectures are often preferred, due to their simpler implementation, and because training sparse NN architectures can run into problems like ``bad'' local minima, see, e.g., \cite{Evci.2019}. Our theoretical upper bounds are sharper for sparse architectures; this is because the additional free parameters in the fully connected architecture increase the entropy of this class, but do not yield better approximation properties in our proofs. 

\begin{theorem}[ERM for white noise and ReLU]
  \label{th:RgnG0boundWN}
  Let $G_0:\X\to\Y$ and $\gamma$ satisfy Assumption \ref{assump:holomorphicextension} for some $r>1$, $t>0$.
  Fix $\tau>0$ (arbitrarily small) and set (cp.~Example \ref{ex:pi})
    \begin{align}\label{eq:kappa}
        \kappa\coloneq\begin{cases}
           2\min\{r-\frac{1}{2},t\} &\text{if }\pmb{\Psi}_{\X}\text{ is a Riesz basis and } \gamma=(\sigma_R^r)_{\#}\pi,\\
           2\min\{r-1,t\} &\text{otherwise.}
        \end{cases}
   \end{align}
   For every $n\in\IN$, let $(x_i,y_i)_{i=1}^n$ be data generated by \eqref{eq:dataxy} either in the \hyperlink{WN}{white noise model} or in the \hyperlink{SGN}{sub-Gaussian noise model}.
   Then there exist constants
   $C_L$, $C_p$, $C_s$, $M$, $B\geq1$ in \eqref{eq:architectures} and $C>0$
   (all independent of $n$) such that
    \begin{enumerate} 
    \item\label{item:sparseReLU}
      {\bf Sparse FrameNet:} with $N=N(n)=\lceil n^{\frac{1}{\kappa+1}}\rceil$ and $\pG=\GNNSP[\sigma_1]$, there exists a measurable choice of an ERM $\hat G_n$ of \eqref{eq:empiricalrisminimizer}. Any such $\hat G_n$ satisfies
          \begin{equation}
            \IE_{\G_0}[\lVert \hat{G}_n-\G_0\rVert_{L^2(\gamma)}^2]
            \leq Cn^{-\frac{\kappa}{\kappa+1}+\tau},\label{eq:RGnG0bound}
    \end{equation}
  \item\label{item:fullyconnectedReLU}
    {\bf Fully connected FrameNet:}
with $N=N(n)=\lceil n^{\frac{1}{\kappa+2}}\rceil$ and $\pG=\GNNFC[\sigma_1]$, there exists a measurable choice of an ERM $\hat G_n$ of \eqref{eq:empiricalrisminimizer}. Any such $\hat G_n$ satisfies
        \begin{equation}
          \IE_{\G_0}[\lVert \hat{G}_n-\G_0\rVert_{L^2(\gamma)}^2]
        \leq Cn^{-\frac{\kappa}{\kappa+2}+\tau}.\label{eq:RGnG0boundFC}
    \end{equation}
    \end{enumerate}
\end{theorem}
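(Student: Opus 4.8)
The plan is to combine the approximation estimates from Theorem \ref{th:approximationNN} with the entropy bounds from Lemma \ref{lem:entropybound}, and feed these into the abstract mean-squared-error bound of Corollary \ref{cor:MSE} (or, equivalently, Corollary \ref{cor:explicitentropy} \ref{item:concrete2}), optimizing the resulting bound over the network size $N$ as a function of the sample size $n$. First I would verify the structural hypotheses: by Lemma \ref{lem:entropybound}, the classes $\GNNSP[\sigma_1]$ and $\GNNFC[\sigma_1]$ are compact with respect to $\|\cdot\|_{\infty,\sigma_R^r(U)}$ (hence with respect to $\|\cdot\|_{\infty,\supp(\gamma)}$, since $\supp(\gamma)\subseteq C_R^r(\X)\subseteq \sigma_R^r(U)$), which gives Assumption \ref{assump:g0assump} \ref{item:Gassump11}. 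Since the elements of $\pG_{\rm FN}$ satisfy ${\rm mran}_{[-1,1]^{p_0}}(g)\le B<\infty$, and $\|G_0\|_{\infty,\supp(\gamma)}<\infty$ follows from Assumption \ref{assump:holomorphicextension}(a) (as $t>0$ implies $\Y_{\IC}^t \hookrightarrow \Y$), we get $F_\infty<\infty$ in \eqref{eq:F-bound}. The non-increasing-ratio condition in Assumption \ref{assump:g0assump} \ref{item:Gassump21} follows because the entropy bounds \eqref{eq:Hsig1sparse}, \eqref{eq:Hsig1FC} are of the form $H(N,\delta)\lesssim N(1+\log(N)^2)\log(\max\{1,\delta^{-1}\})$, i.e.\ logarithmic in $\delta^{-1}$, which yields a finite entropy integral $J(\delta)$ growing slower than $\delta$; one then takes $\Psi_n$ to be an explicit antiderivative-type upper bound (this is exactly the computation behind Corollary \ref{cor:explicitentropy} \ref{item:concrete2}), which is $\px$-independent since the uniform entropy is.

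Next I would track the two regimes separately. For the \textbf{sparse} case, set $s={\rm size}_N \simeq N$ and $L={\rm depth}_N\simeq \log N$; then by \eqref{eq:Hsig1sparse}, $H(N,\delta)\lesssim N(1+\log(N)^2+\log(\max\{1,\delta^{-1}\}))$. Solving $n\tildedeltan^2 \gtrsim \Finf^2 H(\pG,\|\cdot\|_{\infty},\tildedeltan)$ gives $\tildedeltan^2 \simeq N\log(N)^2/n$ up to logarithmic factors, and similarly $\delta_n^2 \lesssim \tildedeltan^2$ (up to logs) from the entropy-integral condition. The approximation error from Theorem \ref{th:approximationNN} is $\inf_{G\in\GNNSP}\|G-G_0\|_{L^2(\gamma)}^2 \lesssim N^{-\kappa+\tau'}$ with $\kappa$ as in \eqref{eq:kappa} — here one uses part \ref{item:Ltwoapprox} when $\pmb{\Psi}_{\X}$ is a Riesz basis and $\gamma=(\sigma_R^r)_\#\pi$, and part \ref{item:Linfapprox} otherwise (bounding the $L^2(\gamma)$-error by the $\infty$-error on $\supp(\gamma)$). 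Plugging into \eqref{eq:WNconvergence}, the bound reads, up to $n^\tau$-factors, $\delta_n^2 + \tildedeltan^2 + \tfrac{\sigma^2+\Finf^2}{n} + N^{-\kappa} \lesssim N/n + N^{-\kappa}$ (absorbing logs into $n^\tau$), which is balanced by $N = N(n) = \lceil n^{1/(\kappa+1)}\rceil$, giving the rate $n^{-\kappa/(\kappa+1)+\tau}$. For the \textbf{fully connected} case, the only change is that \eqref{eq:Hsig1FC} gives $H(N,\delta)\lesssim N^2(1+\log(N)^3+\log(\max\{1,\delta^{-1}\}))$, so the stochastic terms become $\simeq N^2/n$ (up to logs); balancing $N^2/n + N^{-\kappa}$ now yields $N = N(n) = \lceil n^{1/(\kappa+2)}\rceil$ and the rate $n^{-\kappa/(\kappa+2)+\tau}$. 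In both cases one must also check the lower-bound conditions on $R$ in \eqref{eq:Rinequality}: the terms $\delta_n$, $\tildedeltan$, and $\|G^*-G_0\|_{\infty,\supp(\gamma)}$ (for $G^*$ the near-optimal approximant from Theorem \ref{th:approximationNN}\ref{item:Linfapprox}) and $(\sigma+F_\infty)/\sqrt n$ are all of the claimed order, so integrating \eqref{eq:L2concentration} reproduces \eqref{eq:WNconvergence} with the stated $n$-dependence; in the sub-Gaussian case one instead invokes \eqref{eq:SBGNconvergence2}, which yields the same rates since the extra $(1+\sigma)(1+\Finf^2)/\sqrt n$ term is lower order.

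The \textbf{main obstacle} I anticipate is the careful bookkeeping of the arbitrarily small $\tau>0$: the approximation theorem already carries a loss $N^{\tau'}$, the entropy bounds contribute poly-logarithmic factors in $N$ (hence in $n$), and Corollary \ref{cor:explicitentropy} \ref{item:concrete2} introduces its own $\tau$-dependent constant; one must choose the various internal exponents so that their \emph{sum} is absorbed into a single final $n^\tau$, which forces the constants $C_L, C_p, C_s, M, B$ in \eqref{eq:architectures} to be fixed appropriately (in particular $C_L$ large enough that the depth suffices for the approximation theorem, but this only affects logarithmic factors for $\sigma_1$). A secondary technical point is ensuring the \emph{same} $\Psi_n$ works simultaneously for every $G^*\in\pG$ — this is automatic here because the entropy bound is a uniform (data-independent) bound on the whole class, so the localized entropy $H(\pG_n^*(\delta),\|\cdot\|_n,\rho)$ is dominated by $H(\pG,\|\cdot\|_{\infty,\supp(\gamma)},\rho)$ regardless of the center $G^*$. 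Everything else is a routine substitution into the cited results.
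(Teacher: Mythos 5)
Your proposal is correct and follows essentially the same route as the paper: verify Assumption \ref{assump:g0assump} for all $G^*\in\pG$ via the entropy bounds and compactness in Lemma \ref{lem:entropybound}, note $F_\infty<\infty$ thanks to the uniform range bound $B$ and $\|G_0\|_{\infty,\supp(\gamma)}\le\theta_1^t C_{G_0}<\infty$, invoke Theorem \ref{th:approximationNN} for the approximation error, and feed everything into Corollary \ref{cor:explicitentropy} \ref{item:concrete2} with $\beta=\kappa$ (sparse) and $\beta=\kappa/2$ (fully connected, since the $N^2$-entropy is the $N$-entropy of the ``effective'' size parameter $N^2$). Two secondary remarks in your proposal deserve a correction, though they do not affect your main line of argument. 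First, for the sub-Gaussian case you do \emph{not} want to fall back on Theorem \ref{th:samplecomplexitysub-Gaussiannoise} / \eqref{eq:SBGNconvergence2}: that bound carries a genuine $n^{-1/2}$ term which \emph{dominates} $n^{-\kappa/(\kappa+1)}$ whenever $\kappa>1$, so it would weaken the result. The correct observation is that Theorem \ref{th:samplecomplexitywhitnoise} and hence Corollary \ref{cor:MSE} already cover both noise models, so a single application of Corollary \ref{cor:explicitentropy} does the job. Second, you cannot obtain \eqref{eq:WNconvergence} by ``integrating \eqref{eq:L2concentration}'': as the paper explicitly points out after Theorem \ref{th:samplecomplexitywhitnoise}, that would produce the approximation term in the $\|\cdot\|_{\infty,\supp(\gamma)}$-norm (i.e.\ $\kappa=2\min\{r-1,t\}$ always), losing the improved Riesz-basis rate $2\min\{r-\tfrac12,t\}$ in \eqref{eq:kappa}. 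Corollary \ref{cor:MSE} is instead proved by integrating the \emph{empirical-norm} concentration of Theorem \ref{th:empiricalerror} together with Lemma \ref{lem:variance}, which is what keeps the $L^2(\gamma)$-approximation term; relying on \eqref{eq:WNconvergence} as stated avoids this pitfall.
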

One can further use Theorem \ref{th:samplecomplexitywhitnoise} to show concentration inequalities for the risk $\lVert \hat{G}_n-\G_0\rVert_{L^2(\gamma)}$.

\begin{proof}
  The proof follows directly from the entropy bounds in Lemma \ref{lem:entropybound} and Corollary \ref{cor:explicitentropy} \ref{item:concrete2}: First,
  Lemma \ref{lem:entropybound} in particular verifies Assumption \ref{assump:g0assump} for all $G^*\in\pG$, which is required for Corollary \ref{cor:explicitentropy}.
  Applying the corollary with
  $\beta=\kappa$ then gives \eqref{eq:RGnG0bound},
  and with
  $\beta=\frac{\kappa}{2}$ we obtain \eqref{eq:RGnG0boundFC}.
  Note that crucially $\Finf$ (see \eqref{eq:F-bound}) does not depend on $N$, because $\|G_0\|_{\infty,\supp{\gamma}} <\infty$ and $\pG_{\rm FN}$ is universally bounded by the $N$-independent constant $B$, see \eqref{eq:functionclassG}.
\end{proof}
\begin{theorem}[ERM for white noise and RePU]
    \label{th:RgnG0boundWNREPU}
    Consider the setting of Theorem \ref{th:RgnG0boundWN} and let $q\in\IN$, $q\ge 2$.
   There exist constants
   $C_L$, $C_p$, $C_s$, $M$, $B\ge 1$ in \eqref{eq:architectures} and $C>0$ (all independent of $N$) such that 
    \begin{enumerate} 
    \item\label{item:sparseRePU}
      {\bf Sparse FrameNet:} with $N=N(n)=\lceil n^{\frac{1}{\kappa+1+4C_L\log(q)}}\rceil$ and $\pG=\GNNSP[\sigma_q]$, there exists a measurable choice of an ERM $\hat G_n$ of \eqref{eq:empiricalrisminimizer}. Any such $\hat G_n$ satisfies
     \begin{equation}
          \IE_{\G_0}[\lVert \hat{G}_n-\G_0\rVert_{L^2(\gamma)}^2]    
        \leq Cn^{-\frac{\kappa}{\kappa+1+4C_L\log(q)}+\tau}.\label{eq:RGnG0boundRePU}
    \end{equation}
  \item\label{item:fullyconnectedRePU}
    {\bf Fully connected FrameNet:}
    with $N=N(n)=\lceil n^{\frac{1}{\kappa+2+4C_L\log(q)}}\rceil$ and $\pG=\GNNFC[\sigma_q]$, there exists a measurable choice of ERM $\hat G_n$ of \eqref{eq:empiricalrisminimizer}. Any such $\hat G_n$ satisfies
        \begin{equation}
          \IE_{\G_0}[\lVert \hat{G}_n-\G_0\rVert_{L^2(\gamma)}^2]          
        \leq Cn^{-\frac{\kappa}{\kappa+2+4C_L\log(q)}+\tau}.\label{eq:RGnG0boundRePUFC}
    \end{equation}
    \end{enumerate}
\end{theorem}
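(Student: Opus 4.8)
The plan is to repeat, almost verbatim, the short argument behind Theorem \ref{th:RgnG0boundWN}, the only change being that the ReLU metric-entropy bound of Lemma \ref{lem:entropybound} is replaced by the RePU bound of Lemma \ref{lem:entropyboundRePU}. No new approximation input is required: Theorem \ref{th:approximationNN} is stated for every integer $q\ge 1$, so with $\kappa$ as in \eqref{eq:kappa} it already supplies, for a suitable choice of the architecture constants $C_L,C_p,C_s,M,B\ge 1$ in \eqref{eq:architectures}, the bound $\inf_{G\in\GNNSP}\lVert G-G_0\rVert_{L^2(\gamma)}^2\lesssim N^{-\kappa+\tau}$ (and the corresponding $\lVert\cdot\rVert_{\infty,\supp(\gamma)}$-bound). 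Moreover the abstract risk estimates of Corollary \ref{cor:MSE} and Corollary \ref{cor:explicitentropy} cover both the white-noise and the sub-Gaussian noise model, so no case distinction is needed.

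First I would verify that the class $\pG=\pG_{\rm FN}(\sigma_q,\dots)$ fits the abstract framework. Lemma \ref{lem:entropyboundRePU} gives compactness of $\pG$ with respect to $\lVert\cdot\rVert_{\infty,\sigma_R^r(U)}$ and $\lVert\cdot\rVert_n$; since $\supp(\gamma)\subseteq C_R^r(\X)\subseteq\sigma_R^r(U)$ under Assumption \ref{assump:holomorphicextension}, $\pG$ is also compact with respect to $\lVert\cdot\rVert_{\infty,\supp(\gamma)}$, which yields the existence of a measurable ERM $\hat G_n$ via Theorem \ref{th:empiricalerror}. Taking $\Psi_n$ to be the entropy integral of the deterministic uniform bound \eqref{eq:Hsigqsparse} (resp.\ \eqref{eq:HsigqFC}), one checks exactly as in the ReLU case that $\delta\mapsto\Psi_n(\delta)/\delta^2$ is non-increasing, so Assumption \ref{assump:g0assump} holds for all $G^*\in\pG$ with a common $\Psi_n$. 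The key point here is that $\Finf=\sup_{G\in\pG}\lVert G-G_0\rVert_{\infty,\supp(\gamma)}$ is finite and independent of $N$, because every element of $\pG_{\rm FN}$ has maximum range bounded by the $N$-independent constant $B$ in \eqref{eq:functionclassG} and $\lVert G_0\rVert_{\infty,\supp(\gamma)}<\infty$; hence the $\Finf$-dependent terms in Corollary \ref{cor:MSE} do not grow with $N$.

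Next I would invoke Corollary \ref{cor:explicitentropy} \ref{item:concrete2} after re-parametrizing the class by its effective number of parameters. In the sparse case, writing $\tilde N:=N^{1+2C_L\log q}$, the bound \eqref{eq:Hsigqsparse} takes the form $H\lesssim \tilde N(1+\log(\tilde N)^2+\log\max\{1,\delta^{-1}\})$, which is precisely the shape of the ReLU bound \eqref{eq:Hsig1sparse} with $\tilde N$ in the role of $N$, while $\inf_{G^*\in\pG}\lVert G^*-G_0\rVert_{L^2(\gamma)}^2\lesssim N^{-\kappa+\tau}$ becomes an algebraic decay in $\tilde N$; feeding these into Corollary \ref{cor:explicitentropy} \ref{item:concrete2} returns an algebraic rate in $n$, and with $N=N(n)$ chosen as in the statement — which is exactly what makes the approximation term $N^{-\kappa}$ the dominant contribution — one recovers \eqref{eq:RGnG0boundRePU}. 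The fully connected estimate \eqref{eq:RGnG0boundRePUFC} follows identically with $\tilde N:=N^{2+2C_L\log q}$. (Equivalently one can plug \eqref{eq:Hsigqsparse}--\eqref{eq:HsigqFC} straight into Corollary \ref{cor:MSE}, picking $\delta_n,\tildedeltan$ of order $n^{-1/2}$ times $\sqrt{N^{1+2C_L\log q}}$, resp.\ $\sqrt{N^{2+2C_L\log q}}$, up to $\log n$-factors, and balancing against $N^{-\kappa+\tau}$.)

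The only feature not already present in the ReLU proof, and the step I expect to be the main (essentially bookkeeping) obstacle, is that for $q\ge 2$ the RePU metric entropy of Lemma \ref{lem:entropyboundRePU} grows like a strictly higher power of $N$: since $\sigma_q$ is not globally Lipschitz, the depth-$L$ composition inflates the relevant covering constant by a factor $(2pM)^{4q^{2L+2}}$, which with $L$ of order $\log N$ is polynomial in $N$. This larger exponent has to be propagated correctly through the balance of the stochastic and approximation terms, keeping all $\log$-factors inside the arbitrarily small $\tau$ and tracking that the constants $C_L,C_p,C_s,M,B$ can indeed be taken $n$-independent. No additional probabilistic estimate is required, however: the chaining condition \eqref{eq:deltandeltanhat} remains compatible with the larger class because $\Psi_n(\delta)/\delta^2$ stays non-increasing regardless of the size of the $N$-dependent prefactor, so Theorem \ref{th:samplecomplexitywhitnoise} and Corollary \ref{cor:MSE} apply unchanged.
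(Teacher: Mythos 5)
Your proposal follows essentially the same route as the paper: it combines the RePU entropy bounds of Lemma \ref{lem:entropyboundRePU} with Corollary \ref{cor:explicitentropy} \ref{item:concrete2} (after re-parametrizing the class size as $\tilde N=N^{1+2C_L\log q}$, resp.\ $\tilde N=N^{2+2C_L\log q}$) and the approximation rates of Theorem \ref{th:approximationNN}, exactly mirroring the ReLU case. The additional checks you spell out (compactness, the $N$-independence of $F_\infty$ via the uniform range bound $B$, and the monotonicity of $\Psi_n(\delta)/\delta^2$) are the same ones invoked in the proof of Theorem \ref{th:RgnG0boundWN}, so the argument is correct.
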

\begin{proof}
Similar to the proof of Theorem  \ref{th:RgnG0boundWN}, the proof of Theorem \ref{th:RgnG0boundWNREPU} follows from the entropy bounds in Lemma \ref{lem:entropyboundRePU} below and Corollary \ref{cor:explicitentropy} \ref{item:concrete2} with $\gamma=\kappa/(1+2C_L\log(q))$ for \eqref{eq:RGnG0boundRePU} and $\gamma=\kappa/(2+2C_L\log(q))$ for \eqref{eq:RGnG0boundRePUFC}. 
\end{proof}
A few remarks are in order. In the RePU case the activation function $\sigma_q(x)=\max\{0,x\}^q$ has no global Lipschitz condition for $q\geq 2$. As a result, the entropy bounds obtained for the corresponding FrameNet class are larger than for ReLU. This leads to worse convergence rates.
Moreover, for the RePU case, the convergence rate depends on the constant $C_L$ in \eqref{eq:Framenetconstants}. The proof of Theorem \ref{th:approximationNN} shows that $C_L$ depends on the decay properties of the Legendre coefficients $(c_{\Bnu_i,j})_{i,j\in\IN}$ of the function $G_0\circ\sigma_R^r$, i.e.~$C_L$ depends on $G_0$ (see \eqref{eq:1normnu} and \eqref{eq:suppnu} in Theorem \ref{th:algebraicboundonlegendrecoefficients}). Explicit bounds on $C_L$ are possible, see \cite[Lemma 1.4.15]{Zech.2018}. 

\section{Applications}\label{sec:applications}
We now present two applications of our results. First, in finite dimensional regression, our analysis recovers well-known minimax-optimal rates for standard smoothness classes. This indicates that our main statistical result in Theorem \ref{th:samplecomplexitywhitnoise} is \emph{in general optimal}. However, we do not claim optimality specifically for the approximation of holomorphic operators discussed in Section \ref{sec:networkarchitecture}. Second, for an infinite dimensional problem we address the learning of a solution operator to a parameter dependent PDE.

\subsection{Finite Dimensional Regression}\label{sec:finitedimensional}
Let $d\in\N$ and $\mathcal X=\R^d$, $\mathcal Y=\R$.
Moreover, let $D\subseteq\R^d$ be a bounded, open, smooth domain, and
$G_0:D\to\R$ a ground-truth regression function. Suppose that
\begin{subequations}\label{eq:datafinite}
\begin{equation}
  x_i\overset{\text{iid}}{\sim}\gamma\quad\text{and}\quad
  \eps_i\overset{\text{iid}}{\sim}\mathcal{N}(0,1)\qquad\forall i\in\IN
\end{equation}
are independent samples for some probability measure $\gamma$ on $D$, and let
\begin{equation}
  y_i= G_0(x_i) +\eps_i\qquad\forall i\in\IN.
\end{equation}
\end{subequations}
Given a regression class $\pG$ of measurable mappings from $D\to\R$, the least-squares problem is to determine
\begin{equation}\label{nonparam:ls}
    \hat G_n \in \argmin_{G\in \pG} \sum_{i=1}^n -2 G(x_i)y_i+G(x_i)^2= \argmin_{G\in\pG} \sum_{i=1}^n|G(x_i) -y_i|^2.
\end{equation}
For $\mathfrak{s}\ge 0$, it is well-known that the minimax-optimal rate of recovering a ground-truth function $G_0$ in $\mathfrak{s}$-smooth function classes, such as the Sobolev space $H^\mathfrak{s}(D)$ or the Besov space $B_{\infty,\infty}^\mathfrak{s}(D)$ (see, e.g., \cite{ET96} for definitions), equals $n^{-\frac{2\mathfrak{s}}{2\mathfrak{s}+d}}$, e.g., \cite{Gine.op.2016, vanGeer.2009}.

Denote now by $\pG^\mathfrak{s}_R$ the ball of radius $R>0$ around the origin in either $H^\mathfrak{s}(D)$ or $B_{\infty,\infty}^\mathfrak{s}(D)$. Then,
\begin{equation*}
  H(\pG^\mathfrak{s}_R,\norm[{\infty,\supp(\gamma)}]{\cdot},\delta) \simeq \Big(\frac{R}{\delta}\Big)^{d/\mathfrak{s}}\qquad\forall \delta\in (0,1),
\end{equation*}
which holds for all $\mathfrak{s}>0$ if $\pG_R^\mathfrak{s}$ is the ball in $B_{\mathfrak{s},\mathfrak{s}}^\infty(D)$,
and for all $\mathfrak{s}>d/2$ in case $\pG^\mathfrak{s}_R$ is the ball in $H^\mathfrak{s}(D)$, see Theorem 4.10.3 in \cite{Triebel.1995}.
Corollary \ref{cor:explicitentropy} \ref{item:concrete1} (with $\alpha=d/\mathfrak{s}<2$) then directly yield the following theorem. It recovers the minimax optimal rate for nonparametric least squares/maximum likelihood estimators.
\begin{theorem}\label{th:besovfinitedimensional} Let $R>0$ and $\mathfrak{s}>d/2$. 
  Then, there exists $C>0$ such that for all $G_0\in \pG_R^\mathfrak{s}$,
  the estimator $\hat G_n$ in \eqref{nonparam:ls} with $\pG=\pG_R^\mathfrak{s}$
  and data as in \eqref{eq:datafinite} satisfies
  \begin{equation*}
    \mathbb E_{G_0}\big[ \|\hat G_n - G_0\|_{L^2(D)}^2 \big]\le Cn^{-\frac
{2}{2+\alpha}} = Cn^{-\frac
{2\mathfrak{s}}{2\mathfrak{s}+d}}\qquad \forall n\in\IN.
\end{equation*}
\end{theorem}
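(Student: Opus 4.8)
The plan is to obtain Theorem~\ref{th:besovfinitedimensional} as a direct consequence of Corollary~\ref{cor:explicitentropy}~\ref{item:concrete1}, applied to the constant sequence of regression classes $\pG(N)\equiv\pG_R^{\mathfrak{s}}$ with $\alpha:=d/\mathfrak{s}$. Note first that \eqref{eq:datafinite} is the special case of \eqref{eq:dataxy} with $\sigma=1$ and standard Gaussian noise, which is sub-Gaussian (and, in finite dimensions, ``white''), and that the objective in \eqref{nonparam:ls} agrees with \eqref{eq:empiricalrisminimizer} up to the $G$-independent summand $\sum_i y_i^2$, so the machinery applies. Since $G_0\in\pG_R^{\mathfrak{s}}=\pG(N)$, the approximation term vanishes, $\inf_{G^*\in\pG(N)}\|G^*-G_0\|_{L^2(\gamma)}^2=0\lesssim N^{-\beta}$ for any $\beta>0$, so only the entropy hypothesis and the structural requirements of Corollary~\ref{cor:MSE}/Assumption~\ref{assump:g0assump} need checking. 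First I would record that $\mathfrak{s}>d/2$ forces the Sobolev (resp.\ Besov--Zygmund) embedding $H^{\mathfrak{s}}(D)\hookrightarrow C(\overline D)$ (resp.\ $B^{\mathfrak{s}}_{\infty,\infty}(D)\hookrightarrow C(\overline D)$) to be \emph{compact} on the bounded smooth domain $D$; hence the norm-closed ball $\pG_R^{\mathfrak{s}}$ is compact in $C(\overline D)$, a fortiori compact with respect to $\|\cdot\|_{\infty,\supp(\gamma)}$, which is Assumption~\ref{assump:g0assump}~\ref{item:Gassump11}. The same embedding gives $\|G_0\|_{\infty,\supp(\gamma)}<\infty$ and, via Theorem~\ref{th:empiricalerror}~\ref{item:minimizer}, the existence of a measurable ERM $\hat G_n$.

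The second step is the entropy bound. Here I would invoke Theorem~4.10.3 of \cite{Triebel.1995}, which gives $H(\pG_R^{\mathfrak{s}},\|\cdot\|_{\infty,D},\delta)\simeq (R/\delta)^{d/\mathfrak{s}}$ for $\delta\in(0,1)$ --- unconditionally for the $B^{\mathfrak{s}}_{\infty,\infty}$-ball and for $\mathfrak{s}>d/2$ for the $H^{\mathfrak{s}}$-ball --- and note $\alpha=d/\mathfrak{s}\in(0,2)$ precisely because $\mathfrak{s}>d/2$. Since $\|\cdot\|_{\infty,\supp(\gamma)}\le\|\cdot\|_{\infty,D}$, and since $\pG_n^*(\delta)\subseteq\pG_R^{\mathfrak{s}}$ with $\|\cdot\|_n\le\|\cdot\|_{\infty,\supp(\gamma)}$, this transfers to a bound $H(\pG_n^*(\delta),\|\cdot\|_n,\rho)\lesssim\rho^{-\alpha}$ that is uniform in the (random) design $\px$, whence $\Psi_n(\delta):=CR^{\alpha/2}\delta^{1-\alpha/2}$ dominates the entropy integral $J(\pG_n^*(\delta),\|\cdot\|_n)$ and $\delta\mapsto\Psi_n(\delta)/\delta^2=CR^{\alpha/2}\delta^{-1-\alpha/2}$ is non-increasing; this is Assumption~\ref{assump:g0assump}~\ref{item:Gassump21}, with the same $\Psi_n$ valid for every $G^*\in\pG_R^{\mathfrak{s}}$. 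With all hypotheses in place, Corollary~\ref{cor:explicitentropy}~\ref{item:concrete1} yields $\IE_{\G_0}[\|\hat G_n-G_0\|_{L^2(\gamma)}^2]\lesssim n^{-2/(2+\alpha)}$, and the elementary identity $\tfrac{2}{2+\alpha}=\tfrac{2\mathfrak{s}}{2\mathfrak{s}+d}$ finishes the rate computation. To pass from the $L^2(\gamma)$-norm in the corollary to the $L^2(D)$-norm in the statement I would observe that for the canonical design $\gamma=\lambda|_D/|D|$ the two norms coincide up to the constant $|D|$, and more generally are equivalent whenever $\mathrm d\gamma/\mathrm d\lambda$ is bounded above and below on $D$.

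I do not anticipate a genuine obstacle here: the argument is essentially bookkeeping layered on top of Corollary~\ref{cor:explicitentropy}. The one point requiring care is the compactness claim in the uniform norm --- one must use the \emph{compact} embedding (Rellich--Kondrachov, or Arzelà--Ascoli applied to the Hölder--Zygmund characterisation of $B^{\mathfrak{s}}_{\infty,\infty}$), not merely boundedness of the embedding, and it is exactly here that $\mathfrak{s}>d/2$ is needed on the Sobolev scale. A secondary point I would flag in the write-up is the mild imprecision in the statement: the conclusion is naturally an $L^2(\gamma)$-bound and matches the classical minimax $L^2(D)$-rate only under the standard comparability assumption on $\gamma$ above.
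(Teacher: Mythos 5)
Your proposal is correct and follows essentially the same route as the paper: the paper's proof of Theorem~\ref{th:besovfinitedimensional} is exactly the one-line application of Corollary~\ref{cor:explicitentropy}~\ref{item:concrete1} with $\alpha=d/\mathfrak{s}$, using Triebel's entropy bound, and you have simply filled in the implicit verifications (compactness of the ball in the uniform norm via the compact embedding for $\mathfrak{s}>d/2$, vanishing of the approximation term since $G_0\in\pG_R^{\mathfrak{s}}$, and the arithmetic $\tfrac{2}{2+\alpha}=\tfrac{2\mathfrak{s}}{2\mathfrak{s}+d}$). Your closing remark correctly identifies a genuine but minor imprecision in the theorem statement --- the corollary naturally yields an $L^2(\gamma)$-bound, which coincides with the stated $L^2(D)$-bound only when $\gamma$ is comparable to (normalized) Lebesgue measure on $D$.
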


\subsection{Parametric Darcy Flow}
\label{sec:DiffEqTorus}
As a second application we apply Theorem \ref{th:RgnG0boundWN} to the solution operator of the diffusion equation, extending the discussion of approximation errors in \cite[Section 7.1]{Herrmann.2024}.
\subsubsection{Setup}\label{sec:torusG}
We recall the setup from \cite[Sections 7.1.1, 7.1.2]{Herrmann.2024}.

Let $d\in\N$, and denote by $\bbT^d\simeq [0,1]^d$ the $d$-dimensional
torus. In the following, all function spaces on $\bbT^d$ are
understood to be one-periodic in each variable. Fix
$\bar{a}\in L^\infty(\bbT^d)$ and $f\in H^{-1}(\bbT^d)/\IR$ such that
for some constant $a_{\min}>0$
\begin{equation}\label{eq:u_bar}
    \essinf_{x\in \bbT^d} (\bar{a}(x)+a(x)) > a_{\min}.
\end{equation}
We consider the ground truth $G_0:a\mapsto u$, mapping
$a\in L^\infty(\bbT^d)$ to the solution $u\in H^1(\bbT^d)$ of
\begin{equation} \label{eq:DiffPeriodic}
 - \nabla \cdot ((\bar{a} + a )\nabla u) = f 
\text{ on } \bbT^d\qquad
\text{ and }\qquad \int_{\bbT^d}u(x)\dd x  =0.
\end{equation}
Then $G_0:\set{a\in L^\infty(\bbT^d)}{\text{\eqref{eq:u_bar} holds}}\to H^1(\bbT^d)$
is well-defined.

To represent $a$ and $u$, we use Fourier expansions on $\bbT^d$.
Denote for $j\in\N_0$ and $\bsj\in\N_0^d$, $d\ge 2$,
\begin{equation*}
\xi_0 \coloneq 1,\quad
\xi_{2j}(x) \coloneq \sqrt 2\cos(2\pi j x),\quad
\xi_{2j-1}(x) \coloneq \sqrt 2\sin(2\pi jx),\quad
\xi_\bsj(x_1,\dots,x_d)\coloneq \prod_{k=1}^d \xi_{j_k}(x_k).
\end{equation*}
Then for $r\ge 0$, $\set{\max\{1,|\bsj|\}^r\xi_\bsj}{\bsj\in\IN_0^d}$ forms an ONB of $H^r(\bbT^d)$ equipped with inner product
\begin{align*}
    \langle u,v\rangle_{H^r(\bbT^d)}\coloneqq\sum_{\pmb{j}\in\IN_0^d}\langle u,\xi_{\pmb{j}}\rangle_{L^2}\langle v,\xi_{\pmb{j}}\rangle_{L^2}\max\left\{1,\lvert \pmb{j}\rvert\right\}^{2r}.
\end{align*}
In the following, fix $r_0$, $t_0\ge 0$ and set
\begin{equation}\label{eq:cXscYt_torus}
  \begin{aligned}
  \cX&:=H^{r_0}(\bbT^d),\qquad\psi_\bsj:=\max\{1,|\bsj|\}^{-r_0}\xi_\bsj,\\
  \cY&:=H^{t_0}(\bbT^d),\qquad\eta_\bsj:=\max\{1,|\bsj|\}^{-t_0}\xi_\bsj,
  \end{aligned}
\end{equation}
so that $\bsPsi_\cX:=(\psi_{\bsj})_{\bsj\in\IN_0^d}$,
$\bsPsi_\cY:=(\eta_{\bsj})_{\bsj\in\IN_0^d}$ form ONBs of $\cX$, $\cY$
respectively.  The encoder $\cE_\X$ and decoder $\cD_\Y$ are now as in
\eqref{eq:definiitonDE}. Direct calculation shows $\X^r=H^{r_0+rd}$
and $\Y^t=H^{t_0+td}$ for $r$, $t\geq 0$; for more details see
\cite[Section 7.1.2]{Herrmann.2024}.

\subsubsection{Sample Complexity}
We now analyze the sample complexity for learning the PDE solution operator $G_0$ in Section \ref{sec:torusG}. For a proof of Theorem \ref{thm:torus}, see Appendix \ref{app:prooftorus}. 

\begin{theorem}\label{thm:torus}
  Let $d\in\N$, $d\geq 2$, $\mathfrak{s}>\frac{3d}{2}$ and $t_0\in[0,1]$.
  Fix $\tau_1>0$,
  $\tau_2\in (0,\min\{\mathfrak{s}-\frac{3d}{2},\frac{\tau_1 d}{8}\})$ (both
  arbitrarily small), and set
\begin{equation}\label{eq:torusr0}
  r_0 =\begin{cases}
        \frac{d}{2}+\tau_2 &\text{if }\mathfrak{s}\in (\frac{3d}{2},2d+1-t_0]\\
        \frac{\mathfrak{s}+t_0-1}{2} &\text{if }\mathfrak{s} > 2d+1-t_0.
        \end{cases}
      \end{equation}
      Moreover let $f\in C^\infty(\bbT^d)$, and let
  \begin{enumerate}[label=(\alph*)]
  \item {\bf ground truth}: $G_0:a\mapsto u$ be given through \eqref{eq:DiffPeriodic},
  \item {\bf representation system}: $\cE_\X$, $\cD_\Y$ be as in \eqref{eq:definiitonDE} with the orthonormal basis in \eqref{eq:cXscYt_torus}, and $r_0$, $t_0$ from above,
  \item {\bf data}: $\gamma$ be the measure defined in \eqref{eq:sigmarR} and \eqref{eq:gamma}
    with $r=\frac{\mathfrak{s}-r_0}{d}$
    such that $\bar a+a$ satisfies \eqref{eq:u_bar} for all $a\in \supp(\gamma)$,
    and let $(x_i,y_i)_{i=1}^n$ be generated by \eqref{eq:dataxy} with the \hyperlink{WN}{additive white noise model} or the \hyperlink{SGN}{sub-Gaussian noise model},
  \item {\bf regression class}: $\pG=\GNNSP[\sigma_1]$ be the $n$-dependent sparse FrameNet architecture in \eqref{eq:architectures} with $N(n)=\lceil n^{\frac{1}{\kappa+1}}\rceil$.
  \end{enumerate}

  Then there exists a constant $C>0$ such that for all $n\in\IN$ there
  exists a measurable ERM $\hat G_n\in\pG^{\rm sp}_{\rm FN}(\sigma_1,N(n))$ in
  \eqref{eq:empiricalrisminimizer}, and any such $\hat G_n$ satisfies
  \begin{equation}\label{eq:Rtorus}
          \IE_{\G_0}[\lVert \hat{G}_n-\G_0\rVert_{L^2(H^{r_0}(\bbT^d),\gamma;H^{t_0}(\bbT^d))}^2]    
\leq C n^{-\frac{\kappa}{\kappa+1}+\tau_1}
\end{equation}
where
  \begin{equation}\label{eq:toruskappa}
    \kappa =
    \begin{cases}
      2\min\bigl\{\frac{\mathfrak{s}}{d}-1,\frac{1-t_0}{d}\bigr\} &\text{if }\mathfrak{s}\in (\frac{3d}{2},2d+1-t_0],\\
      \frac{\mathfrak{s}+1-t_0}{d}-1 &\text{if }\mathfrak{s} > 2d+1-t_0.
    \end{cases}
  \end{equation}
\end{theorem}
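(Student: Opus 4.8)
The plan is to deduce the statement from Theorem~\ref{th:RgnG0boundWN} \ref{item:sparseReLU}. Since $\bsPsi_\cX$ in \eqref{eq:cXscYt_torus} is an orthonormal, hence Riesz, basis and the design measure is $\gamma=(\sigma_R^r)_\#\pi$ by hypothesis~(c), the convergence exponent in Theorem~\ref{th:RgnG0boundWN} is governed by the first branch of \eqref{eq:kappa}, i.e.\ $\kappa=2\min\{r-\tfrac12,t\}$, and the prescribed architecture in~(d) is precisely $\pG=\GNNSP[\sigma_1]$ with $N(n)=\lceil n^{1/(\kappa+1)}\rceil$. Everything thus reduces to (i) verifying that the Darcy operator $G_0$ from \eqref{eq:DiffPeriodic} and the measure $\gamma$ satisfy Assumption~\ref{assump:holomorphicextension} with the value $r=\tfrac{\mathfrak{s}-r_0}{d}$ from~(c) and a suitable $t>0$, and (ii) checking that the resulting $\kappa$ equals \eqref{eq:toruskappa}. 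That $G_0$ is a priori defined only on $\{a:\eqref{eq:u_bar}\text{ holds}\}$ is harmless by Remark~\ref{rmk:subsetV}: by Example~\ref{ex:pi} and the Riesz basis property, $\supp(\gamma)=\sigma_R^r(U)=C_R^r(\X)$, which lies in that set by~(c), so the $L^2(\gamma)$-quantities do not depend on the chosen measurable extension.

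First I would construct the holomorphic extension. With $r=\tfrac{\mathfrak{s}-r_0}{d}$ one has, as in \cite[Section~7.1.2]{Herrmann.2024}, the smoothness scales $\X^r=H^{\mathfrak{s}}(\bbT^d)$ and $\Y^t=H^{t_0+td}(\bbT^d)$, so $\sigma_R^r(U)$ is bounded in $H^{\mathfrak{s}}(\bbT^d)$; since $r_0>\tfrac d2$ in both branches of \eqref{eq:torusr0}, the embeddings $H^{\mathfrak{s}}\hookrightarrow H^{r_0}\hookrightarrow C(\bbT^d)$ hold with the first one compact, whence $\sigma_R^r(U)$ is $\|\cdot\|_{H^{r_0}}$-compact. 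Combining this with~(c), which gives $\essinf_{\bbT^d}(\bar a+a)>a_{\min}$ for each $a\in\sigma_R^r(U)$, a compactness argument yields a bounded open set $O_\IC\subset\X_\IC=H^{r_0}(\bbT^d)_\IC$ containing $\sigma_R^r(U)$ with $\essinf_{\bbT^d}\mathrm{Re}(\bar a+a)\ge a_{\min}/2$ on it. For $a\in O_\IC$ the sesquilinear form $(u,v)\mapsto\int_{\bbT^d}(\bar a+a)\nabla u\cdot\overline{\nabla v}$ on $H^1(\bbT^d)_\IC/\IC$ is bounded and coercive (its real part dominates $\tfrac{a_{\min}}2\|\nabla u\|_{L^2}^2$, and $\|\nabla u\|_{L^2}$ is an equivalent norm there), so by Lax--Milgram $G_0(a)\in H^1(\bbT^d)_\IC/\IC$ is uniquely determined with $\|G_0(a)\|_{H^1}\lesssim\|f\|_{H^{-1}}$ uniformly on $O_\IC$; since $a\mapsto(\bar a+a)$ is affine and operator inversion is holomorphic (Neumann series), $a\mapsto G_0(a)$ is holomorphic into $H^1(\bbT^d)_\IC$, and in particular $\|G_0\|_{\infty,\supp(\gamma)}<\infty$.

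Next I would upgrade the regularity and fix $t$. By the elliptic regularity facts recalled in \cite[Section~7.1]{Herrmann.2024} — a bootstrap exploiting that $H^{r_0}$ is a Banach algebra and $f\in C^\infty(\bbT^d)$ — one obtains uniformly over the bounded set $O_\IC$ that $G_0(a)\in H^{r_0+1}(\bbT^d)_\IC$ when $\mathfrak{s}>2d+1-t_0$, whereas in the regime $\mathfrak{s}\in(\tfrac{3d}{2},2d+1-t_0]$ — where $r_0=\tfrac d2+\tau_2$ is too low for the bootstrap to raise the regularity — one only retains $G_0(a)\in H^1(\bbT^d)_\IC$. Accordingly set
\[
    t=\begin{cases}(1-t_0)/d & \text{if }\mathfrak{s}\in(\tfrac{3d}{2},\,2d+1-t_0],\\ (\mathfrak{s}+1-t_0)/(2d) & \text{if }\mathfrak{s}>2d+1-t_0;\end{cases}
\]
then in the first case $\Y^t=H^{t_0+td}=H^1$, and in the second case (using $r_0=\tfrac{\mathfrak{s}+t_0-1}2>d\ge\tfrac d2+1$ for $d\ge2$) $\Y^t=H^{r_0+1}$, so in both cases $\sup_{a\in O_\IC}\|G_0(a)\|_{\Y_\IC^t}\le C_{G_0}<\infty$. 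Together with $\supp(\gamma)\subseteq C_R^r(\X)$ this verifies Assumption~\ref{assump:holomorphicextension}; the requirement $r>1$ holds because $r=\tfrac{\mathfrak{s}-r_0}{d}>1$ reduces in the two cases to $\mathfrak{s}>\tfrac{3d}{2}+\tau_2$ and $\mathfrak{s}>2d-1+t_0$, both implied by $\mathfrak{s}>\tfrac{3d}{2}$, $\tau_2<\mathfrak{s}-\tfrac{3d}{2}$ and (second case) $\mathfrak{s}>2d+1-t_0\ge 2d-1+t_0$, and $t>0$ holds for $t_0<1$ (for $t_0=1$ one gets $\kappa=0$ and the assertion is vacuous).

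Finally I would handle the exponent and conclude. In the second case a short computation gives $r=t=\tfrac{\mathfrak{s}+1-t_0}{2d}$, hence $r-\tfrac12<t$ and $\kappa=2(r-\tfrac12)=\tfrac{\mathfrak{s}+1-t_0}{d}-1$. In the first case $r-\tfrac12=\tfrac{\mathfrak{s}}{d}-1-\tfrac{\tau_2}{d}$ and $t=\tfrac{1-t_0}{d}$; as $1-t_0\le\tfrac d2$ for $d\ge2$, the bound $\tau_2<\mathfrak{s}-\tfrac{3d}{2}$ forces $\tau_2<\mathfrak{s}-d-1+t_0$, so $t<r-\tfrac12$ and $\kappa=2t=\tfrac{2(1-t_0)}{d}$. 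In both cases $\kappa$ equals \eqref{eq:toruskappa}, so Theorem~\ref{th:RgnG0boundWN} \ref{item:sparseReLU} applies with this $\kappa$, $\pG=\GNNSP[\sigma_1]$ and $N(n)=\lceil n^{1/(\kappa+1)}\rceil$, producing a measurable ERM $\hat G_n$ with $\IE_{\G_0}[\lVert\hat G_n-G_0\rVert_{L^2(\gamma)}^2]\lesssim n^{-\kappa/(\kappa+1)+\tau}$ for arbitrarily small $\tau>0$; absorbing $\tau$ and the auxiliary parameters into $\tau_1$ (for which $\tau_2<\tau_1 d/8$ leaves room) yields \eqref{eq:Rtorus}. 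The main obstacle is the second step — establishing holomorphy of the complexified Darcy operator into the correct Sobolev scale \emph{with a bound uniform over a full $H^{r_0}$-neighbourhood} of $\sigma_R^r(U)$ and locating the regularity ceiling correctly — the point being that on such a neighbourhood only $H^{r_0}$-control of the coefficient is available, which is exactly why the two regimes of \eqref{eq:torusr0} arise; the rest is routine bookkeeping, the PDE input being available from \cite[Section~7.1]{Herrmann.2024}.
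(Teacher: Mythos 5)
Your proposal takes essentially the same route as the paper: verify Assumption \ref{assump:holomorphicextension} for the Darcy operator with $\X$, $\Y$ as in \eqref{eq:cXscYt_torus}, then invoke Theorem \ref{th:RgnG0boundWN} \ref{item:sparseReLU} (Riesz basis $\Rightarrow$ first branch of \eqref{eq:kappa}), and finally optimize $r_0$ to balance the two terms in $\kappa=2\min\{r-\tfrac12,t\}$. The main difference is that you re-derive the holomorphic-extension step via Lax--Milgram plus a bootstrap, which the paper simply cites from \cite[Proof of Proposition 3, Step 1]{Herrmann.2024}.

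There is however one genuine overclaim in the regime $\mathfrak{s}>2d+1-t_0$. You assert that the bootstrap yields $G_0(a)\in H^{r_0+1}(\bbT^d)_\IC$ uniformly on $O_\IC$, and accordingly choose $t=\frac{\mathfrak{s}+1-t_0}{2d}$ so that $\Y^t=H^{r_0+1}$. The result invoked by the paper gives holomorphy into $\Y^t$ only for $t\in\bigl[0,(1+r_0-\tfrac d2-t_0)/d\bigr)$, i.e.\ into $H^s_\IC$ only for $s<r_0+1-\tfrac d2$. The loss of $\tfrac d2$ is not slack in the cited proof — it is the price of Sobolev multiplication when the coefficient is controlled only in $H^{r_0}$, precisely the subtlety you yourself flag at the end. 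Your chosen $t$ therefore sits a full $\tfrac12$ above the supremum of the admissible range, and the $\min$ defining $\kappa$ is in fact attained at $t$, not at $r-\tfrac12$ as you write. With the correct choice $t=(1+r_0-\tfrac d2-t_0)/d-\tau=r-\tfrac12-\tau$ one gets $\kappa=2t=\frac{\mathfrak{s}+1-t_0}{d}-1-2\tau$, which is what the paper's own proof obtains and which the theorem's $-\tau_1$ margin is designed to absorb. So your final bound \eqref{eq:Rtorus} is correct, but the intermediate regularity statement and the resulting $\kappa$-computation in the second case need this patch; the first case is fine (your $t=(1-t_0)/d$ is admissible, and the $\min$ is indeed attained at $t$ there).
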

\begin{remark}\label{rmk:torus}
    Consider the setting of Theorem \ref{thm:torus}, and let $\supp(\gamma)\subseteq C_R^r(\X)$. A slight modification of the proof of Theorem \ref{thm:torus} similar to \cite{Herrmann.2024} (using the approximation bound in Theorem \ref{th:approximationNN} \ref{item:Linfapprox} instead of \ref{item:Ltwoapprox}) then yields
  \begin{equation}\label{eq:Linftorus}
          \IE_{\G_0}[\lVert \hat{G}_n-\G_0\rVert_{L^2(H^{r_0}(\bbT^d),\gamma;H^{t_0}(\bbT^d))}^2]    
\leq C n^{-\frac{\kappa}{\kappa+1}+\tau_1}
  \end{equation}
    where for some (small) $\tau_2>0$
    \begin{equation*}
      (r_0,\kappa)=\begin{cases}
        (\frac{d}{2}+\tau_2,\frac{2\mathfrak{s}}{d}-3) &\text{if }\mathfrak{s}\in (\frac{3d}{2},\frac{3d}{2}+1-t_0]\\
        (\frac{\mathfrak{s}+t_0-\frac{d}{2}-1}{2},\frac{\mathfrak{s}+1-t_0}{d}-\frac{3}{2}) &\text{if }\mathfrak{s} > \frac{3d}{2}+1-t_0.
        \end{cases}
  \end{equation*}
  Since 
  \begin{align*}
      B_R(H^\mathfrak{s}(\bbT^d))&=B_R(\X^r)
    =\{x\in\X:~\|x\|_{\X^r}\leq R\}=\left\{x\in\X: \sum_{j\in\IN}\langle x,\tilde{\psi}_j\rangle_{\X}^2 \theta_j^{-2r}\leq R^2 \right\}\\
      &\subseteq\left\{a\in\X:\sup_{j\in\IN}\theta_j^{-r}|\langle a,\tilde{\psi}_j\rangle_{\X}|\leq R\right\}=C_R^r(\X),
  \end{align*}
  in particular, \eqref{eq:Linftorus} holds
  for any $\gamma$ with $\supp(\gamma)\subseteq B_R(H^\mathfrak{s}(\bbT^d))$. This shows Theorem \ref{thm:intro}.  
\end{remark}
Similar rates can also be obtained for this PDE model on a convex, polygonal domain $D\subset\bbT^2$ with Dirichlet boundary conditions.
The argument uses the Riesz basis constructed in \cite{Davydov.2005},
but is otherwise similar to the torus, for details see \cite[Section 7.2]{Herrmann.2024}.
Moreoever, using the RePU activation function, \eqref{eq:Rtorus} holds with convergence rate $\kappa
 /(\kappa+1+4C_L\log(q))$, where $\kappa$ is from \eqref{eq:toruskappa} and $C_L$ from \eqref{eq:Framenetconstants}. The proof is similar to Theorem \ref{thm:torus} using Theorem \ref{th:RgnG0boundWNREPU} instead of Theorem \ref{th:RgnG0boundWN}. Finally, rates for the fully-connected class $\GNNFC$ can be established using Theorems \ref{th:RgnG0boundWN} \ref{item:fullyconnectedReLU} and \ref{th:RgnG0boundWNREPU} \ref{item:fullyconnectedRePU}.

\section{Conclusions}\label{sec:discussion}
In this work, we established convergence theorems for empirical risk minimization to learn mappings $G_0$ between infinite dimensional Hilbert spaces. Our setting assumes given data in the form of $n$ input-output pairs, with an additive noise model. We discuss both the case of Gaussian white noise and sub-Gaussian noise. Our main statistical result, Theorem \ref{th:samplecomplexitywhitnoise}, bounds the mean-squared $L^2$-error $\IE[\lVert\hatgn-G_0\rVert_{L^2(\gamma)}^2]$ in terms of the approximation error, and algebraic rates in $n$ depending only on the metric entropy of $\pG$. This provides a general framework to study operator learning from the perspective of sample complexity.

In the second part of this work, we applied our statistical results to a specific operator learning architecture from \cite{Herrmann.2024}, termed FrameNet. 
As our main application, we showed that holomorphic operators
$\G_0$ can be learned with NN-based 
surrogates without suffering from the curse of dimension, cf.~Theorem \ref{th:RgnG0boundWN}.
Such results have wide applicability, as the required holomorphy assumption is well-established in the literature, and has been verified for a variety of models including for example general elliptic PDEs \cite{CDS10,CDS11,CohenDeVore,harbrecht2016analysis}, Maxwell's equations \cite{JSZ17}, the Calderon projector \cite{henriquez2021shape}, the Helmholtz equation \cite{Hiptmair2018,spence2023wavenumber} and also nonlinear PDEs such as the Navier-Stokes equations \cite{CSZ18_2319}. 

\newpage
\printbibliography[heading=bibintoc,title=References]
\newpage
\begin{appendices}
\section{Auxiliary Probabilistic Lemmas}\label{sec:secnotation}
We recall the classical Bernstein inequality.
\begin{lemma}[Bernstein's inequality]\label{lem:bernstein}
   Let $X_1,\dots,X_n$ be independent, centered RVs with finite second moments $\IE[X_i^2]<\infty$ and uniform bound $|X_i|\leq M$  for $i=1,\dots, n$. Then it holds
\begin{align*}
     \IP\left(\left|\sum_{i=1}^nX_i\right|\geq t\right)\leq 2\exp\left(-\frac{t^2}{2\sum_{i=1}^n \IE[X_i^2]+\frac{2}{3}Mt}\right),\quad t\geq0.
 \end{align*}
\end{lemma}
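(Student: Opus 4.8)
The plan is to prove this via the classical exponential-moment (Chernoff) method. Fixing $\lambda>0$, Markov's inequality applied to $e^{\lambda\sum_{i=1}^n X_i}$ together with independence gives $\IP(\sum_{i=1}^n X_i\ge t)\le e^{-\lambda t}\prod_{i=1}^n\IE[e^{\lambda X_i}]$, so the whole problem reduces to bounding the moment generating function of a single centered, bounded random variable and then optimizing over $\lambda$.

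For the moment generating function bound, I would Taylor expand $e^{\lambda X_i}$, take expectations using $\IE[X_i]=0$, and control the higher moments via $|\IE[X_i^k]|\le M^{k-2}\IE[X_i^2]$ for $k\ge 2$, which is immediate from $|X_i|\le M$. Summing the resulting geometric-type series yields $\IE[e^{\lambda X_i}]\le 1+\frac{\IE[X_i^2]}{M^2}(e^{\lambda M}-1-\lambda M)$, and then $1+x\le e^x$ gives $\IE[e^{\lambda X_i}]\le\exp\bigl(\frac{\IE[X_i^2]}{M^2}(e^{\lambda M}-1-\lambda M)\bigr)$. Writing $v:=\sum_{i=1}^n\IE[X_i^2]$ and multiplying over $i$ produces the Bennett-type estimate $\IP(\sum_{i=1}^n X_i\ge t)\le\exp\bigl(-\lambda t+\frac{v}{M^2}(e^{\lambda M}-1-\lambda M)\bigr)$.

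To reach the precise denominator $2v+\frac23 Mt$, I would next bound $e^u-1-u\le\frac{u^2/2}{1-u/3}$ for $0\le u<3$; this follows by comparing power series, since $\frac{u^2/2}{1-u/3}=\sum_{k\ge2}\frac{u^k}{2\cdot3^{k-2}}$ while $k!\ge2\cdot3^{k-2}$ for all $k\ge2$. Taking $u=\lambda M$, the exponent becomes $-\lambda t+\frac{v\lambda^2/2}{1-\lambda M/3}$; the choice $\lambda=\frac{t}{v+Mt/3}$ (which keeps $\lambda M/3<1$) and a short simplification give exactly $-\frac{t^2}{2(v+Mt/3)}=-\frac{t^2}{2v+\frac23 Mt}$. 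Finally, applying the identical argument to $-X_1,\dots,-X_n$, which satisfy the same hypotheses, and combining the two one-sided bounds by a union bound yields the claimed two-sided inequality with prefactor $2$.

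The routine parts are the Taylor expansion and the algebra after the choice of $\lambda$; the only mildly delicate steps are the elementary inequality $e^u-1-u\le\frac{u^2/2}{1-u/3}$ and checking that the selected $\lambda$ is admissible (i.e.\ $\lambda M<3$). Neither is a real obstacle, so I expect no genuine difficulty; this is a standard argument, presumably stated here only for later use.
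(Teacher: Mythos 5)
Your proposal is correct and complete: the Chernoff bound, the moment-generating-function estimate via $|\IE[X_i^k]|\le M^{k-2}\IE[X_i^2]$, the elementary inequality $e^u-1-u\le\frac{u^2/2}{1-u/3}$, and the choice $\lambda=t/(v+Mt/3)$ all check out, and the union bound gives the two-sided statement. The paper does not prove this lemma but simply cites Pollard (1984, p.~193); your argument is precisely the standard derivation found there, so there is nothing further to compare.
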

For a proof of Bernstein's inequality, see \cite[page 193]{Pollard.1984}.

\begin{lemma}[Basic Inequality]\label{lem:basicineq}
    Let $\eps_i$, $i=1,\dots,n$ be i.i.d.~white noise or sub-Gaussian noise. Then it holds for all $G^*\in\pG$
    \begin{align*}
      \lVert \hat{G}_n-\G_0\rVert_n^2\leq
      \lVert G^*-\G_0\rVert_n^2+
      \frac{2\sigma}{n}
        \sum_{i=1}^n\langle\eps_i,\hat{G}_n(x_i)-G^*(x_i)\rangle_{\Y}.
    \end{align*}
\end{lemma}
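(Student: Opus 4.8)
The plan is to derive the so-called \emph{basic inequality} directly from the definition of the empirical risk minimizer $\hat G_n$. Recall that $\hat G_n$ minimizes $I_n(G)=\frac1n\sum_{i=1}^n\bigl(-2\langle G(x_i),y_i\rangle_\Y+\lVert G(x_i)\rVert_\Y^2\bigr)$ over $\pG$, and that $G^*\in\pG$. Hence $I_n(\hat G_n)\le I_n(G^*)$. First I would substitute the data model $y_i=G_0(x_i)+\sigma\eps_i$ into both sides and rearrange. Writing $\langle G(x_i),y_i\rangle_\Y = \langle G(x_i),G_0(x_i)\rangle_\Y + \sigma\langle G(x_i),\eps_i\rangle_\Y$ (in the white-noise case this pairing is understood as the Gaussian process $\eps_i$ evaluated at $G(x_i)$, as explained after \eqref{eq:empiricalrisminimizer}), the inequality $I_n(\hat G_n)\le I_n(G^*)$ becomes
\begin{align*}
  \frac1n\sum_{i=1}^n\Bigl(\lVert\hat G_n(x_i)\rVert_\Y^2-2\langle\hat G_n(x_i),G_0(x_i)\rangle_\Y\Bigr)
  -\frac{2\sigma}{n}\sum_{i=1}^n\langle\hat G_n(x_i),\eps_i\rangle_\Y\\
  \le
  \frac1n\sum_{i=1}^n\Bigl(\lVert G^*(x_i)\rVert_\Y^2-2\langle G^*(x_i),G_0(x_i)\rangle_\Y\Bigr)
  -\frac{2\sigma}{n}\sum_{i=1}^n\langle G^*(x_i),\eps_i\rangle_\Y.
\end{align*}
Next I would complete the square on each side: adding $\frac1n\sum_i\lVert G_0(x_i)\rVert_\Y^2$ to both sides turns the first bracket into $\lVert\hat G_n-G_0\rVert_n^2$ and the corresponding one on the right into $\lVert G^*-G_0\rVert_n^2$, using the notation \eqref{eq:empiricalnorm} for the empirical seminorm. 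Collecting the noise terms on one side then gives
\begin{equation*}
  \lVert\hat G_n-G_0\rVert_n^2\le \lVert G^*-G_0\rVert_n^2
  +\frac{2\sigma}{n}\sum_{i=1}^n\langle\eps_i,\hat G_n(x_i)-G^*(x_i)\rangle_\Y,
\end{equation*}
which is exactly the claimed inequality.

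There is essentially no analytic obstacle here; the only point requiring a little care is the interpretation of the inner products $\langle\eps_i,\cdot\rangle_\Y$ and of the terms $\langle G(x_i),y_i\rangle_\Y$ in the white-noise model, where $y_i\notin\Y$ and $\eps_i$ is a stochastic process indexed by $\Y$ rather than an element of $\Y$. One should note that all the quantities appearing are almost surely finite (this is precisely why the modified objective $I_n$ in \eqref{eq:empiricalrisminimizer} was introduced rather than $\tilde I_n$), and that the cancellation of the $G$-independent term $\frac1n\sum_i\lVert y_i\rVert_\Y^2$ — which is itself infinite in the white-noise case — is harmless because it never appears: we only ever add the finite quantity $\frac1n\sum_i\lVert G_0(x_i)\rVert_\Y^2$ to complete the square, and the pairings $\langle\eps_i, G(x_i)\rangle_\Y=\mathbb W_\Y(G(x_i))$ are almost surely well-defined Gaussian random variables. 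Thus the manipulation above is rigorous in both noise models, and the proof reduces to the elementary algebraic rearrangement sketched above.
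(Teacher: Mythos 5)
Your proposal is correct and is essentially the same argument as the paper's: both rest solely on the minimization property $I_n(\hat G_n)\le I_n(G^*)$, the substitution $y_i=G_0(x_i)+\sigma\eps_i$, and completing the square by adding $\tfrac1n\sum_i\lVert G_0(x_i)\rVert_\Y^2$. The paper simply orders the algebra in the opposite direction (expanding $\lVert\hat G_n-G_0\rVert_n^2$ first and then invoking the minimization inequality in the middle of the chain), but the content is the same, and your closing remark about why the would-be term $\tfrac1n\sum_i\lVert y_i\rVert_\Y^2$ never needs to be touched in the white-noise model is a fair and accurate clarification of what the paper leaves implicit.
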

\begin{proof}
Let $\gstar\in\pG$ be arbitrary. Using the definition of $\hat{G}_n$ from \eqref{eq:empiricalrisminimizer}, it holds
\begin{align*}
    &\lVert \hat{G}_n-\G_0\rVert_n^2\\
    =\,\,&\frac{1}{n}\sumin\lVert\hatgn(x_i)\rVert_{\Y}^2-2\langle\G_0(x_i),\hatgn(x_i)\rangle_{\Y} +\lVert \G_0(x_i)\rVert_{\Y}^2\\
=\,\,&\frac{1}{n}\sumin\lVert\hatgn(x_i)\rVert_{\Y}^2-2\langle\G_0(x_i)+\sigma\eps_i,\hatgn(x_i)\rangle_{\Y} +2\sigma\langle\eps_i,\hatgn(x_i)\rangle_{\Y} +\lVert \G_0(x_i)\rVert_{\Y}^2\\
\leq\,\,& \frac{1}{n}\sumin\lVert\gstar(x_i)\rVert_{\Y}^2-2\langle\G_0(x_i)+\sigma\eps_i,\gstar(x_i)\rangle_{\Y} +2\sigma\langle\eps_i,\hatgn(x_i)\rangle_{\Y} +\lVert \G_0(x_i)\rVert_{\Y}^2\\
=\,\,& \frac{1}{n}\sumin\lVert\gstar(x_i)\rVert_{\Y}^2-2\langle\G_0(x_i),\gstar(x_i)\rangle_{\Y} +\lVert\G_0(x_i)\rVert_{\Y}^2 +2\sigma\langle\eps_i,\hatgn(x_i)-\gstar(x_i)\rangle_{\Y}\\
=\,\,&\lVert G^*-\G_0\rVert_n^2+\frac{2\sigma}{n}\sumin\langle\eps_i,\hatgn(x_i)-\gstar(x_i)\rangle_{\Y},
\end{align*}
which shows the claim.\end{proof}
Next, we state a generic chaining result from \cite[Theorem 3.2]{Dirksen.2015}, originally derived for finite index sets, for countable index sets $T$. We restrict ourselves to the case of real-valued stochastic processes.
\begin{lemma}\label{lem:dirksengenerichaining}
Let $T$ be a countable index set and $d:T\times T\to[0,\infty)$ a pseudometric. Furthermore, let $(X_t)_{t\in T}$ be an $\IR$-valued stochastic process such that for some $\alpha>0$ and all $s,t\in T$,
\begin{align}\label{eq:psialpha}
    \IP\left(\lvert X_t-X_s\rvert\geq ud(t,s)\right)\leq 2 \exp\left(-u^{\alpha}\right),\quad u\geq 0.
\end{align}
Then, there exists a constant $M>0$ depending only on $\alpha$, such that for all $t_0\in T$ it holds that
\begin{align*}
    \IP\left(\sup_{t\in T}\,\lvert X_t-X_{t_0}\rvert\geq M\left(J_{\alpha}(T,d)+u\sup_{s,t\in T} d(s,t)\right)\right)\leq\exp\left(-\frac{u^{\alpha}}{\alpha}\right),\quad u\geq 1,
\end{align*}
where $J_\alpha$ denotes the metric entropy integral
\begin{align*}
J_{\alpha}(T,d)=\int_0^{\infty}\left(\log N(T,d,u)\right)^{\frac{1}{\alpha}}\dd u.
\end{align*}
\end{lemma}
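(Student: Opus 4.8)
The plan is to reduce the countable case to the finite-index-set version, i.e.\ \cite[Theorem 3.2]{Dirksen.2015}, via an exhaustion argument combined with continuity of measure. Since $T$ is countable, enumerate $T=\{t_0,t_1,t_2,\dots\}$ keeping the same base point $t_0$, and put $T_k:=\{t_0,t_1,\dots,t_k\}$. One may assume that $J_\alpha(T,d)<\infty$ and $\Delta:=\sup_{s,t\in T}d(s,t)\in(0,\infty)$, since otherwise the claimed radius is infinite (or the problem degenerates to a single point) and the bound is trivial. Note also that $\sup_{t\in T}|X_t-X_{t_0}|$ is a countable supremum of measurable functions, hence itself measurable.

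First I would invoke the finite version of the result on each truncation $T_k$. The increment hypothesis \eqref{eq:psialpha} restricts verbatim to $T_k\times T_k$, so Theorem 3.2 of \cite{Dirksen.2015} supplies a constant $M>0$ \emph{depending only on $\alpha$} --- in particular the same $M$ works for every $k$ --- such that for all $u\ge 1$,
\[
\mathbb{P}\Big(\sup_{t\in T_k}|X_t-X_{t_0}|\ge M\big(J_\alpha(T_k,d)+u\sup_{s,t\in T_k}d(s,t)\big)\Big)\le\exp(-u^\alpha/\alpha).
\]
Because $T_k\subseteq T$, every $u$-net of $T$ induces a $u$-net of $T_k$, so $N(T_k,d,u)\le N(T,d,u)$ for all $u>0$ and hence $J_\alpha(T_k,d)\le J_\alpha(T,d)$; similarly $\sup_{s,t\in T_k}d(s,t)\le\Delta$. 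Writing $R:=M(J_\alpha(T,d)+u\Delta)$, the event $\{\sup_{t\in T_k}|X_t-X_{t_0}|\ge R\}$ is contained in the event above, whence
\[
\mathbb{P}\Big(\sup_{t\in T_k}|X_t-X_{t_0}|\ge R\Big)\le\exp(-u^\alpha/\alpha)\qquad\text{for all }k\in\mathbb{N}.
\]

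Next I would pass to the limit $k\to\infty$. The events $A_k:=\{\sup_{t\in T_k}|X_t-X_{t_0}|\ge R\}$ are nondecreasing in $k$, so by continuity of measure from below $\mathbb{P}(\bigcup_k A_k)=\lim_{k\to\infty}\mathbb{P}(A_k)\le\exp(-u^\alpha/\alpha)$. Since the nondecreasing sequence $\sup_{t\in T_k}|X_t-X_{t_0}|$ converges to $\sup_{t\in T}|X_t-X_{t_0}|$, one has $\{\sup_{t\in T}|X_t-X_{t_0}|>R\}\subseteq\bigcup_k A_k$, and therefore $\mathbb{P}(\sup_{t\in T}|X_t-X_{t_0}|>R)\le\exp(-u^\alpha/\alpha)$. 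To upgrade the strict inequality to ``$\ge$'' with the same $M$, fix $u\ge 1$ and apply the last bound with any $u'\in[1,u)$: then $R':=M(J_\alpha(T,d)+u'\Delta)<R$, so $\{\sup_{t\in T}|X_t-X_{t_0}|\ge R\}\subseteq\{\sup_{t\in T}|X_t-X_{t_0}|>R'\}$ and hence $\mathbb{P}(\sup_{t\in T}|X_t-X_{t_0}|\ge R)\le\exp(-(u')^\alpha/\alpha)$; letting $u'\uparrow u$ yields the assertion.

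The argument is essentially bookkeeping, and the one point that must be checked carefully is that the constant $M$ produced by the finite-index-set theorem is uniform over the truncations $T_k$ (it depends on $\alpha$ alone) --- this is precisely what allows the exhaustion to pass to the limit without the bound deteriorating. The monotonicity $J_\alpha(T_k,d)\le J_\alpha(T,d)$ and $\sup_{s,t\in T_k}d\le\Delta$, together with the monotone convergence of the truncated suprema, are the remaining routine ingredients.
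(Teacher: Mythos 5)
Your proposal is correct and follows essentially the same route as the paper: exhaust $T$ by finite sets $T_k$, invoke Dirksen's Theorem 3.2 with a constant depending only on $\alpha$ (hence uniform in $k$), and pass to the limit using monotonicity of $J_\alpha$ and of the truncated suprema. The only difference is bookkeeping — the paper takes the limit at the level of Dirksen's moment bound (Eq.\ (3.2)) via monotone convergence and then applies Dirksen's Lemma A.1 once to convert to a tail bound, whereas you convert to tail bounds for each finite $T_k$ first; note that since Theorem 3.2 is stated as a moment bound, the finite-index tail inequality you quote itself implicitly uses that same Lemma A.1 conversion.
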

\begin{proof} Since $T$ is countable, we can write $T=\{t_j:\,j\in\IN\}$. Using this, we define $T_n=\{t_j:\,j\leq n\}$ for $n\in\IN$. Since $T_n$ is finite \cite[Theorem 3.2, Eq.~(3.2) and its proof]{Dirksen.2015} gives $\tilde{M}>0$ s.t.
\begin{align}
\label{eq:chainninglemma1}
    \left(\IE\left[\sup\limits_{t\in T_n} \lvert X_t-X_{t_0}\rvert^p\right]\right)^{\frac{1}{p}}\leq \tilde{M}\left(J_{\alpha}(T_n,d)+\sup_{s,t\in T_n} d(s,t)p^{\frac{1}{a}}\right)
\end{align}
for all $p\geq 1$, $t_0\in T$ and $n\in\IN$. In \eqref{eq:chainninglemma1}, we used \cite[Eq.\ (2.3)]{Dirksen.2015} to upper bound the $\gamma_{\alpha}$-functionals by the respective metric entropy integrals $J_{\alpha}$. The monotone convergence theorem shows \eqref{eq:chainninglemma1} for $T$ in the limit $n\to\infty$. Applying \cite[Lemma A.1]{Dirksen.2015} then gives the claim with $M=\exp(\alpha^{-1})\tilde{M}$.
\end{proof}

We now use Lemma \ref{lem:dirksengenerichaining} to establish the following concentration bound, which is tailored towards the empirical processes appearing in our proofs, cf.~Lemma \ref{lem:basicineq}. Note that this lemma can be viewed as a generalization of the key chaining Lemma 3.12 in \cite{NW20} to $\Y$-valued regression functions; the proof follows along the same lines.

\begin{lemma}[Chaining Lemma]\label{lem:chaining1} Let $\X, \Y$ be separable Hilbert spaces, and suppose $\Theta$ is a (possibly uncountable) set parameterizing a class of maps
\begin{align*}
    \HH =\left\{h_{\theta}:\,\X\to\Y,\,\theta\in\Theta\right\}.
\end{align*}
Consider an empirical process of the form
  \begin{align*}
      Z_n(\theta)=\frac{1}{n}\sum_{i=1}^n\langle h_{\theta}(x_i),\eps_i\rangle_{\Y},
  \end{align*}
where $x_1,\dots,x_n\in\X$ are fixed elements and $\eps_i,\dots, \eps_n$ are either \textbf{(i)} i.i.d.~Gaussian white noise processes indexed by $\mathcal Y$, or \textbf{(ii)} i.i.d.~sub-Gaussian random variables in $\mathcal Y$ with parameter $1$.

Recall the empirical seminorm $\|\cdot\|_n$. 
Suppose that
\begin{equation}\label{eq:hupper}
  \sup_{\theta\in\Theta}\,\lVert h_{\theta}\rVert_n=: U<\infty,
\end{equation}
and define the metric entropy integral 
\begin{align*}
    J(\HH,d_n)=\int_0^U\sqrt{\log N(\HH,d_n,\tau)}\dd\tau,\quad d_n(\theta,\theta')=\lVert h_{\theta}-h_{\theta'}\rVert_n.
\end{align*}
Let the space $(\Theta,d_n)$ be separable. Then $\sup_{\theta\in\Theta} Z_n(\theta)$ is measurable and there exists a universal constant $C_{{\rm Ch}}>0$ such that for all $\delta>0$ with 
  \begin{align}\label{eq:deltanchaining}
      \sqrt{n}\delta\geq C_{{\rm Ch}}J(\HH,d_n),
  \end{align}
  it holds
  \begin{align}
\IP\left(\sup_{\theta\in\Theta} |Z_n(\theta)|\geq\delta\right)\leq\exp\left(-\frac{8n\delta^2}{C_{{\rm Ch}}^2U^2}\right).
      \label{eq:chaining}
  \end{align}
\end{lemma}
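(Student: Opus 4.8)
The plan is to deduce the bound from the generic chaining estimate of Lemma~\ref{lem:dirksengenerichaining} (with $\alpha=2$), once we have verified that $Z_n$ has sub-Gaussian increments with respect to a suitably rescaled version of $d_n$. By separability of $(\Theta,d_n)$ we first fix a countable $d_n$-dense subset $\Theta_0\subseteq\Theta$. Since $\theta\mapsto Z_n(\theta)$ is $L^2$-continuous with respect to $d_n$ --- indeed $\IE|Z_n(\theta)-Z_n(\theta')|^2=\tfrac1n\|h_\theta-h_{\theta'}\|_n^2$ in the white-noise model and $\lesssim\tfrac1n\|h_\theta-h_{\theta'}\|_n^2$ for sub-Gaussian noise --- it suffices to bound $\sup_{\theta\in\Theta_0}|Z_n(\theta)|$, the identification with $\sup_{\theta\in\Theta}|Z_n(\theta)|$ and the measurability of this supremum being standard once the $d_n$-uniform continuity of a version of $Z_n$ is furnished by the chaining bound itself. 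For $\theta,\theta'\in\Theta$ write $v_i:=h_\theta(x_i)-h_{\theta'}(x_i)\in\Y$, so $Z_n(\theta)-Z_n(\theta')=\tfrac1n\sum_{i=1}^n\langle v_i,\eps_i\rangle_\Y$ is a sum of independent centred real random variables: in the white-noise case each summand is $\mathcal N(0,\|v_i\|_\Y^2)$, hence $Z_n(\theta)-Z_n(\theta')\sim\mathcal N\big(0,\tfrac1n d_n(\theta,\theta')^2\big)$; in the sub-Gaussian case $|\langle v_i,\eps_i\rangle_\Y|\le\|v_i\|_\Y\|\eps_i\|_\Y$ shows each summand is centred sub-Gaussian with proxy variance $\lesssim\|v_i\|_\Y^2$, so $Z_n(\theta)-Z_n(\theta')$ is sub-Gaussian with proxy variance $\lesssim\tfrac1n d_n(\theta,\theta')^2$. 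In either case there is a universal $C_0>0$ such that $d(\theta,\theta'):=\tfrac{C_0}{\sqrt n}d_n(\theta,\theta')$ fulfils \eqref{eq:psialpha} with $\alpha=2$; and, taking $v_i=h_{\theta_0}(x_i)$ with $\|h_{\theta_0}\|_n\le U$, we also obtain the single-point tail $\IP(|Z_n(\theta_0)|\ge t)\le 2\exp(-c\,nt^2/U^2)$ for any fixed $\theta_0\in\Theta_0$ and universal $c>0$.

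Applying Lemma~\ref{lem:dirksengenerichaining} to $\Theta_0$, the pseudometric $d$, and $\alpha=2$ yields a universal $M$ with
\[\IP\!\Big(\sup_{\theta\in\Theta_0}|Z_n(\theta)-Z_n(\theta_0)|\ge M\big(J_2(\Theta_0,d)+u\sup_{\theta,\theta'\in\Theta_0}d(\theta,\theta')\big)\Big)\le\exp(-u^2/2)\qquad\forall u\ge1.\]
A change of variables gives $J_2(\Theta_0,d)=\tfrac{C_0}{\sqrt n}\int_0^\infty\sqrt{\log N(\Theta_0,d_n,\tau)}\,d\tau$. Because $\|h_\theta\|_n\le U$ forces the $d_n$-diameter of $\Theta_0$ to be at most $2U$, the integrand vanishes for $\tau>2U$, and monotonicity of $\tau\mapsto N(\cdot,\tau)$ gives $\int_0^{2U}\le 2\int_0^U$; hence $J_2(\Theta_0,d)\le\tfrac{2C_0}{\sqrt n}J(\HH,d_n)$ and $\sup_{\theta,\theta'\in\Theta_0}d(\theta,\theta')\le\tfrac{2C_0U}{\sqrt n}$.

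Combining these via $|Z_n(\theta)|\le|Z_n(\theta)-Z_n(\theta_0)|+|Z_n(\theta_0)|$, for all $u\ge1$ and $t>0$,
\[\IP\!\Big(\sup_{\theta\in\Theta_0}|Z_n(\theta)|\ge\tfrac{2MC_0}{\sqrt n}\big(J(\HH,d_n)+uU\big)+t\Big)\le\exp(-u^2/2)+2\exp(-c\,nt^2/U^2).\]
It then remains to fix $\CCh$ as a sufficiently large function of $M,C_0,c$: the hypothesis $\sqrt n\delta\ge\CCh J(\HH,d_n)$ makes $\tfrac{2MC_0}{\sqrt n}J(\HH,d_n)\le\delta/4$; choosing $t=\delta/4$ and $u=\sqrt n\delta/(4MC_0U)$ makes the threshold equal to $\delta$; and for $\CCh$ large the right-hand side is dominated by $\exp(-8n\delta^2/(\CCh^2U^2))$, i.e. \eqref{eq:chaining}. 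The use of Lemma~\ref{lem:dirksengenerichaining} requires $u\ge1$, i.e. $\sqrt n\delta\gtrsim U$; in the complementary regime $\CCh J(\HH,d_n)\le\sqrt n\delta\lesssim U$ the claimed bound is not restrictive (its right-hand side stays bounded away from $0$) and follows from a crude estimate, e.g. the $u=1$ instance of Lemma~\ref{lem:dirksengenerichaining} together with the single-point bound.

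The decisive probabilistic ingredient, the generic chaining tail inequality, is already available (Lemma~\ref{lem:dirksengenerichaining}); the genuine care is needed (i) in the white-noise model, where $\eps_i\notin\Y$ so $Z_n(\theta)$ must be understood via the iso-normal process and only its increments are bona fide random variables, together with the attendant measurability of $\sup_{\theta\in\Theta}|Z_n(\theta)|$; and (ii) in the constant bookkeeping that collapses the diameter term, the single-point term and the entropy-integral term into the single clean exponent in \eqref{eq:chaining}, including the small-$\delta$ regime.
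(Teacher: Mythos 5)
Your proof follows essentially the same route as the paper's: verify that the increments of $Z_n$ are sub-Gaussian with respect to $\sqrt{2}d_n/\sqrt{n}$ (Gaussianity in the white-noise model, the generalized Hoeffding inequality of \cite[Theorem 2.6.2]{Vershynin.2018} in the sub-Gaussian model), reduce to a countable dense subset via separability and sample continuity to obtain measurability, apply Lemma \ref{lem:dirksengenerichaining} with $\alpha=2$, rescale the entropy integral by $1/\sqrt n$, add a single-point tail bound, and absorb everything into $C_{\rm Ch}$. The one place where your argument is looser than the paper's is the constraint $u\ge 1$: your claim that in the regime $\sqrt n\delta\lesssim U$ the bound ``is not restrictive'' is not by itself a proof (a right-hand side bounded away from $0$ can still be exceeded, and the $u=1$ instance of the chaining lemma only controls exceedance of the threshold $M(J_2+\mathrm{diam})$, which may be larger than $\delta$ there); the paper instead observes that $N(\HH,d_n,\tau)\ge 2$ for $\tau\le U/2$ forces $J(\HH,d_n)> U/4$, so the hypothesis $\sqrt n\delta\ge C_{\rm Ch}J(\HH,d_n)$ already implies $u\ge 1$ and the complementary regime is vacuous.
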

\begin{proof} In both of the cases, we will apply \cite[Theorem 3.2]{Dirksen.2015}, which we stated in Lemma \ref{lem:dirksengenerichaining}.

\textbf{White noise case.} 
Let $\theta,\theta'\in\Theta$ be arbitrary. Since $\eps_i$, $i=1,\dots,n$ are independent white noise processes, we have $Z_n(\theta)-Z_n(\theta')\sim\NN(0,n^{-1}\lVert h_{\theta}-h_{\theta'}\rVert_n^2)$, i.e.~the increments of $Z_n$ are normal (recall that $\px$ is regarded as fixed here). Thus,
\[\IP\left(|Z_n(\theta)-Z_n(\theta')|\geq t\right)\leq 2\exp\left(-nt^2/(2\lVert h_{\theta}-h_{\theta'}\rVert_n^2)\right),~~~~~t\geq 0,\]
and
\begin{align}\label{eq:sbgnwrtdn}
    \IP\left(\left|Z_n(\theta)-Z_n(\theta')\right|\geq \frac{\sqrt{2}td_n(\theta,\theta')}{\sqrt{n}}\,\right)\leq  2\exp\left(-\frac{d_n(\theta,\theta')^2t^2}{\lVert h_{\theta}-h_{\theta'}\rVert_n^2}\right) =2\exp\left(-t^2\right),~ t\geq 0, 
\end{align}
which verifies the assumption \eqref{eq:psialpha} for $\alpha=2$ and $\bar{d}_n:=\sqrt{2}d_n/\sqrt{n}$.

Eq.~\eqref{eq:sbgnwrtdn} shows that the process $Z_n(\theta)$ is sub-Gaussian w.r.t.~the pseudometric $\bar{d}_n(\theta,\theta')=\sqrt{2}\lVert h_{\theta}-h_{\theta'}\rVert_n/\sqrt{n}$. Therefore \cite[Theorem 2.3.7 (a)]{Gine.op.2016} yields that $Z_n(\theta)$ is sample bounded and uniformly sample continuous. Since $(\Theta,d_n)$ is separable, so is $(\Theta,\bar{d}_n)$. Thus it holds
\begin{align}
\label{eq:countablesupremawn}\sup\limits_{\theta\in\Theta}Z_n(\theta)=\sup\limits_{\theta\in\Theta_0}Z_n(\theta)\quad {\rm a.s.},
\end{align}
where $\Theta_0\subset \Theta$ denotes a countable, dense subset. The right hand side of \eqref{eq:countablesupremawn} is measurable as a countable supremum. Therefore also the left hand side $\sup_{\theta\in\Theta}Z_n(\theta)$ is measurable. Applying Lemma \ref{lem:dirksengenerichaining} (to the countable set $\Theta_0$) and using \eqref{eq:countablesupremawn} gives that for some universal constant $M$ and all $\theta_{\dagger}\in\Theta$,
\begin{align}
\label{eq:Dirksenchaining}
\IP\left(\sup_{\theta\in\Theta}\left|Z_n(\theta)-Z_n(\theta_{\dagger})\right|\geq M\left(J(\HH,\bar{d}_n)+\frac{tU}{\sqrt{n}}\right)\right)\leq\exp\left(-\frac{t^2}{2}\right),\quad t\geq 1.
\end{align}
Due to \eqref{eq:hupper} it holds
  $N(\HH,d_n,\delta)=1$ for all $\delta\ge U$, and thus
  \begin{equation*}
    N(\HH,\bar d_n,\tau)=N(\HH,d_n,\sqrt{n}\tau/\sqrt 2)=1\qquad\forall \tau\ge \frac{\sqrt 2U}{\sqrt{n}}.
  \end{equation*}
  Substituting $\rho = \sqrt n \tau/\sqrt 2$ we get
\begin{align*}
  J(\HH,\bar d_n)&= \int_0^{\sqrt 2U/\sqrt{n}}\sqrt{\log N(\HH,\bar{d}_n,\tau)}\dd\tau\\
  &= \frac{\sqrt 2}{\sqrt{n}}\int_0^{U}\sqrt{\log N\Big(\HH,\bar{d}_n,\frac{\sqrt 2\rho}{\sqrt{n}}\Big)}\dd\rho\\
  &= \frac {\sqrt 2}{\sqrt n} \int_0^{U}\sqrt{\log N(\HH,d_n,\rho)}\dd\rho = \frac {\sqrt 2J(\HH, d_n)}{\sqrt n}
\end{align*}
and therefore 
\begin{align}
\label{eq:dirksen2}\IP\left(\sup_{\theta\in\Theta}|Z_n(\theta)-Z_n(\theta_{\dagger})|\geq \frac{\sqrt 2M}{\sqrt{n}}\left(J(\HH,d_n)+tU\right)\right)\leq\exp\left(-\frac{t^2}{2}\right),\quad t\geq 1.
\end{align}
Since $Z_n(\theta_{\dagger})\sim\NN(0,n^{-1}\lVert h_{\theta_{\dagger}}\rVert_n^2)$, it holds for all $\theta_{\dagger}\in\Theta$
\begin{align}
\label{eq:Zngaussiantails}
\IP\left(|Z_n(\theta_{\dagger})|\geq \frac{MtU}{\sqrt{n}}\right)\leq \exp\left(\frac{-M^2U^2t^2}{2\lVert h_{\theta_{\dagger}}\rVert_n^2}\right)\leq\exp\left(-\frac{M^2t^2}{2}\right),\quad t\geq 0.
\end{align}
Combining \eqref{eq:dirksen2} and \eqref{eq:Zngaussiantails} yields for $t\geq 1$
\begin{align*}
&\IP\left(\sup_{\theta\in\Theta} |Z_n(\theta)|\geq \frac{3M}{\sqrt{n}}\left(J(\HH, d_n)+tU\right)\right)\\
    \leq\,\,&
\IP\left(\sup_{\theta\in\Theta}|Z_n(\theta)-Z_n(\theta_{\dagger})|\geq \frac{\sqrt 2M}{\sqrt{n}}\left(J(\HH,d_n)+tU\right)\right)+\IP\left(|Z_n(\theta_{\dagger})|\geq \frac{M}{\sqrt{n}}\left(J(\HH,d_n)+tU\right)\right)\\
    \leq\,\,&\exp\left(\frac{-t^2}{2}\right)+ \IP\left(|Z_n(\theta_{\dagger})|\geq \frac{MtU}{\sqrt n}\right)\\
    \leq\,\,& \exp\left(\frac{-t^2}{2}\right)+\exp\left(\frac{-M^2t^2}{2}\right)\\
    =\,\,& 2\exp\left(\frac{-t^2}{2}\right),
\end{align*}
where we assumed without loss of generality that $M\geq 1$. Substitute $\delta=3M/\sqrt{n}\,(J(\HH,d_n)+tU)$, i.e.~$t=(\sqrt{n}\delta/3M-J(\HH,d_n))/U$. Because $N(\HH,d_n,\tau)\geq 2$ for $\tau\leq U/2$, we have that $J(\HH,d_n)\geq U/2\sqrt{\log(2)}>U/4$. Therefore $\sqrt{n}\delta\geq 15M J(\HH,d_n)\coloneqq \CCh J(\HH,d_n)$ implies $t\geq 1$ and thus
\begin{align}
\label{eq:chainingfinal}
    \IP\left(\,\sup_{\theta\in\Theta}|Z_n(\theta)|\geq\delta\right)\leq2\exp\left(-\frac{\left(\sqrt{n}\delta/(3M)-J(\HH,d_n)\right)^2}{2U^2}\right)\leq 2\exp\left(-\frac{8n\delta^2}{\CCh^2U^2}\right),
\end{align}
which gives the claim for the white noise case.

\textbf{Sub-Gaussian case.} For $\theta,\theta'\in\Theta$ it holds
\begin{align*}
    Z_n(\theta)-Z_n(\theta')=\frac{1}{n}\sum_{i=1}^n\langle\eps_i,h_{\theta}(x_i)-h_{\theta'}(x_i)\rangle_{\Y}.
\end{align*}
Since the centered RVs $n^{-1}\langle\eps_i,h_{\theta}(x_i)-h_{\theta'}(x_i)\rangle_{\Y}$ are i.i.d.~sub-Gaussian with parameter $n^{-1}\lVert h_{\theta}(x_i)\\-h_{\theta'}(x_i)\rVert_{\Y}$, the `generalized' Hoeffding inequality for sub-Gaussian variables (see \cite[Theorem 2.6.2]{Vershynin.2018}) implies that for some universal constant $c>0$~the increment $Z_n(\theta)-Z_n(\theta')$ is sub-Gaussian with parameter $c\lVert h_{\theta}-h_{\theta'}\rVert_n/\sqrt{n}$. Therefore
\begin{align*}
    \IP\left(\left|Z_n(\theta)-Z_n(\theta')\right|\geq \frac{\sqrt{2}tc}{\sqrt{n}}d_n(\theta,\theta')\right)\leq2\exp\left(-t^2\right),\quad t\geq 0.
\end{align*}
From here on, the proof is similar to the white noise case and we obtain \eqref{eq:chainingfinal} by absorbing $c$ into $C_{{\rm Ch}}$.
\end{proof}

\begin{lemma}\label{lem:variance} Let $x_i \overset{iid}{\sim}\gamma$ and consider the entropy $H(\delta)=H(\pG,\|\cdot\|_{\infty,\supp(\gamma)},\delta)$. Let $(\tildedeltan)_{n\in\IN}$ be a positive sequence with
\begin{align}\label{eq:hatdeltan}
    n\tildedeltan^2\geq 6\Finf^2 H(\tildedeltan).
\end{align}
Then for $R\geq\max\{8\tildedeltan,18\Finf/\sqrt{n}\}$, it holds that
\begin{align*}
    \IP\left(\lVert \hatgn-G_0\rVert_{L^2(\gamma)}\geq R,\,\,\lVert\hatgn-G_0\rVert_{L^2(\gamma)}\geq 2\lVert\hatgn-G_0\rVert_n\right)\leq2\exp\left(-\frac{nR^2}{320\Finf^2}\right).
\end{align*}
\end{lemma}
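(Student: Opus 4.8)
The plan is to discretise over a fixed metric net and then apply Bernstein's inequality (Lemma~\ref{lem:bernstein}) centre‑by‑centre, exploiting that $\lVert G(x_i)\rVert_\Y^2$ averages to $\lVert G-\G_0\rVert_\ltg^2$ for $x_i\sim\gamma$. Write $\pF=\{G-\G_0:G\in\pG\}$; since $\hatgn\in\pG$, the event in question is contained in $\{\exists F\in\pF:\ \lVert F\rVert_\ltg\geq R,\ \lVert F\rVert_n\leq\tfrac12\lVert F\rVert_\ltg\}$, and $\sup_{F\in\pF}\lVert F\rVert_{\infty,\supp(\gamma)}\leq\Finf$ by \eqref{eq:F-bound}. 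First I would fix a minimal $\tildedeltan$‑net $\{\bar F_1,\dots,\bar F_K\}$ of $\pF$ in $\lVert\cdot\rVert_{\infty,\supp(\gamma)}$, with centres in $\pF$, so that $K\leq\exp(H(\tildedeltan))$ by translation invariance of the entropy. Because the net is in the uniform norm and the $x_i$ lie in $\supp(\gamma)$, the bounds $\lVert F-\bar F_k\rVert_n\leq\tildedeltan$ and $\lVert F-\bar F_k\rVert_\ltg\leq\tildedeltan$ hold \emph{deterministically}.

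The second step transfers the bad event to the net. Suppose $F\in\pF$ satisfies $\lVert F\rVert_\ltg\geq R$ and $\lVert F\rVert_n\leq\tfrac12\lVert F\rVert_\ltg$, and let $\bar F_k$ be a centre with $\lVert F-\bar F_k\rVert_{\infty,\supp(\gamma)}\leq\tildedeltan$. Using $R\geq 8\tildedeltan$ one gets $\lVert\bar F_k\rVert_\ltg\geq R-\tildedeltan\geq\tfrac78 R\geq 7\tildedeltan$, and hence
\[
\lVert\bar F_k\rVert_n\leq\lVert F\rVert_n+\tildedeltan\leq\tfrac12\lVert F\rVert_\ltg+\tildedeltan\leq\tfrac12\lVert\bar F_k\rVert_\ltg+\tfrac32\tildedeltan\leq\tfrac57\lVert\bar F_k\rVert_\ltg .
\]
Consequently the bad event is contained in the \emph{finite} union $\bigcup_{k:\,\lVert\bar F_k\rVert_\ltg\geq 7R/8}\bigl\{\lVert\bar F_k\rVert_\ltg^2-\lVert\bar F_k\rVert_n^2\geq\tfrac{24}{49}\lVert\bar F_k\rVert_\ltg^2\bigr\}$, which also settles measurability; note that any $F$ with $\lVert F\rVert_\ltg\ge R$ can only be covered by a centre with $\lVert\bar F_k\rVert_\ltg\ge 7R/8$, so no relevant centre is dropped.

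Thirdly I would estimate each term. For a fixed centre $\bar F=\bar F_k$ the variables $Z_i:=\lVert\bar F(x_i)\rVert_\Y^2$ are i.i.d.\ with $Z_i\in[0,\Finf^2]$, mean $v:=\lVert\bar F\rVert_\ltg^2$, and $\IE[(Z_i-v)^2]\leq\IE[Z_i^2]\leq\Finf^2 v$. Bernstein applied to $\sum_i(v-Z_i)$ with $s=\tfrac{24}{49}v\leq v$ gives $\IP(v-\lVert\bar F\rVert_n^2\geq s)\leq 2\exp(-c_0 n v/\Finf^2)$ for an explicit absolute constant $c_0>0$. Taking a union bound over the at most $\exp(H(\tildedeltan))$ relevant centres, all of which satisfy $v\geq\tfrac{49}{64}R^2$, yields a bound of the form $2\exp\!\bigl(H(\tildedeltan)-c_0\tfrac{49}{64}nR^2/\Finf^2\bigr)$; here the floor $R\geq 18\Finf/\sqrt n$ is used to keep the Bernstein exponent in the variance‑dominated regime and to fix the remaining numerical constants. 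Finally, the hypothesis $n\tildedeltan^2\geq 6\Finf^2 H(\tildedeltan)$ together with $\tildedeltan\leq R/8$ gives $H(\tildedeltan)\leq n\tildedeltan^2/(6\Finf^2)\leq nR^2/(384\Finf^2)$, so the entropy term is absorbed and the asserted bound $2\exp(-nR^2/(320\Finf^2))$ follows.

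The main obstacle is the bookkeeping of constants in the last step: one must check that the per‑centre Bernstein exponent $\asymp nR^2/\Finf^2$ strictly dominates the metric‑entropy contribution $H(\tildedeltan)\lesssim n\tildedeltan^2/\Finf^2$, which works precisely because the hypothesis couples the two scales through $R\geq 8\tildedeltan$ and because $\Finf$ (not $\sigma$) is the natural scale of the boundedness/variance estimate. A secondary, purely technical point is the choice of the net (centres inside $\pF$, so that $\lVert\bar F_k\rVert_{\infty,\supp(\gamma)}\leq\Finf$ and $Z_i\leq\Finf^2$), which is standard since $\pG$ is compact in $\lVert\cdot\rVert_{\infty,\supp(\gamma)}$.
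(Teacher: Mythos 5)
Your proof is correct, and the strategy is genuinely different from the paper's. The paper proceeds by a peeling (slicing) argument: it decomposes the event over annuli $\pF_s=\{F\in\pF:\,sR\le\|F\|_{L^2(\gamma)}\le(s+1)R\}$, covers each slice at the coarse scale $R/8$, applies Bernstein on each slice using the absolute deviation $|\,\|F_{s,j}\|_{L^2(\gamma)}^2-\|F_{s,j}\|_n^2\,|\ge s^2R^2/4$, and then sums the tail bounds over $s$; the floor $R\ge 18\Finf/\sqrt n$ is what makes the geometric $\sum_s \exp(-c s^2 nR^2/\Finf^2)$ comparison (via $\exp(-xy)\le \exp(-y)/x$) go through. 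You instead take a single $\tildedeltan$-net of the whole class $\pF$, notice that the required deviation at a centre is a \emph{fixed fraction} $\tfrac{24}{49}\|\bar F_k\|_{L^2(\gamma)}^2$ of the variance scale, so Bernstein's exponent is automatically monotone in $\|\bar F_k\|_{L^2(\gamma)}$, and one union bound over the relevant centres (all of which satisfy $\|\bar F_k\|_{L^2(\gamma)}\ge 7R/8$) absorbs the entropy contribution directly via the hypothesis $n\tildedeltan^2\ge 6\Finf^2 H(\tildedeltan)$ and $R\ge 8\tildedeltan$. What each buys: the paper uses a coarser net ($R/8\ge\tildedeltan$) but must do the peeling and needs the $R\ge 18\Finf/\sqrt n$ floor for the summability trick; your approach pays with a finer net but, since the lemma's hypothesis already controls $H(\tildedeltan)$, this costs nothing and eliminates the peeling entirely. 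A small correction to your commentary: the floor $R\ge 18\Finf/\sqrt n$ is not actually needed in your argument (the Bernstein constant dominates both the entropy term and the target $1/320$ without it); its role in the paper is specifically to ensure convergence of the slice sum, which your argument avoids.
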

\begin{proof}
In the following we write $\|\cdot\|_{\infty}=\|\cdot\|_{\infty,\supp(\gamma)}$. Let $\pF=\{G-\G_0:\,G\in\pG\}$. Then for $R\geq 0$ it holds that
\begin{align}
\label{eq:varianceproof1}
&\IP_{G_0}\left(\lVert \hatgn-G_0\rVert_{L^2(\gamma)}\geq R,\,\,\lVert\hatgn-G_0\rVert_{L^2(\gamma)}\geq 2\lVert\hatgn-G_0\rVert_n\right)\notag\\
\leq\,\,&\IP_{G_0}\left(\sup_{F\in\pF,\,\lVert F\rVert_\ltg\geq R}\lVert F\rVert_\ltg\geq 2\lVert F\rVert_n\right).
\end{align}
For $s\in\IN$, we define $\pF_s=\{F\in\pF:\,sR\leq\lVert F\rVert_\ltg\leq(s+1)R\}$. As a union of disjoint events, it holds
\begin{align}
    \label{eq:varianceproof2}
 \IP_{G_0}\left(\sup_{F\in\pF,\,\lVert F\rVert_\ltg\geq R}\lVert F\rVert_\ltg\geq 2\lVert F\rVert_n\right)&=\sum_{s=1}^\infty\IP_{G_0}\left(\sup_{F\in\pF_s}\lVert F\rVert_\ltg\geq 2\lVert F\rVert_n\right) \\
 &\leq\sum_{s=1}^\infty\IP_{G_0}\left(\sup_{F\in\pF_s}\left|\lVert F\rVert_\ltg-\lVert F\rVert_n\right|\geq\frac{sR}{2}\right),\notag
\end{align}
where we used for $F\in\pF_s$ and $s\in\IN$
\begin{align*}
    \lVert F\rVert_\ltg\geq 2\Fnorm_n\implies \Fnorm_\ltg-\Fnorm_n\geq\frac{\Fnorm_\ltg}{2}\geq \frac{sR}{2}.
\end{align*}
Now consider some covering $\pF_s^*=\{F_{s,j}\}_{j=1}^ N$ with $ N= N(\pF_s,\|\cdot\|_{\infty},R/8)$. Thus for arbitrary $s\in\IN$ and $F\in\pF_s$ there exists $F_{s,j}\in\pF_s^*$ s.t.~$\lVert F-F_{s,j}\rVert_{\infty}\leq R/8$. We get
\begin{align*}
    \left|\Fnorm_\ltg-\Fnorm_n\right|&\leq \left|\Fnorm_\ltg-\Fsjnorm_\ltg\right|+\left|\Fsjnorm_\ltg-\Fsjnorm_n\right|\notag\\
    &\quad+\left|\Fsjnorm_n-\Fnorm_n\right|\\
    &\leq  \frac{R}{4}+\left|\Fsjnorm_\ltg-\Fsjnorm_n\right|.
\end{align*}
Therefore 
\begin{align}
    \label{eq:varianceproof3}
&\sum_{s=1}^\infty\IP_{G_0}\left(\sup_{F\in\pF_s}\left|\lVert F\rVert_\ltg-\lVert F\rVert_n\right|\geq\frac{s R}{2}\right)\notag\\
\leq\,\,& \sum_{s=1}^\infty\IP_{G_0}\left(\max_{j=1,\dots, N}\left|\Fsjnorm_\ltg-\Fsjnorm_n\right|\geq\frac{s R}{4}\right)\notag\\
\leq\,\,& \sum_{s=1}^\infty  N(\pF_s,\|\cdot\|_{\infty}, R/8)\max_{j=1,\dots, N}\IP_{G_0}\left(\left|\Fsjnorm_\ltg-\Fsjnorm_n\right|\geq\frac{s R}{4}\right),
\end{align}
where we used $sR/2-R/4\geq sR/4$ for $s\in\IN$.
Since $F_{s,j}\in\pF_s$, we have $\Fsjnorm_\ltg\geq s R$ and therefore 
\begin{align*}
     \left|\Fsjnorm_\ltg-\Fsjnorm_n\right|&=\frac{\left|\Fsjnorm_\ltg^2-\Fsjnorm_n^2\right|}{ \Fsjnorm_\ltg+\Fsjnorm_n}\\
     &\leq \frac{1}{s R}\left|\Fsjnorm_\ltg^2-\Fsjnorm_n^2\right|.
\end{align*}
Inserting into \eqref{eq:varianceproof3} gives 
\begin{align}
     \label{eq:varianceproof4}
&\sum_{s=1}^\infty  N(\pF_s,\|\cdot\|_{\infty}, R/8)\max_{j=1,\dots, N}\IP_{G_0}\left(\left|\Fsjnorm_\ltg-\Fsjnorm_n\right|\geq\frac{s R}{4}\right)\notag\\
\leq\,\,&\sum_{s=1}^\infty  N(\pF_s,\|\cdot\|_{\infty}, R/8)\max_{j=1,\dots, N}\IP_{G_0}\left(\left|\Fsjnorm_\ltg^2-\Fsjnorm_n^2\right|\geq\frac{s^2 R^2}{4}\right).
\end{align}
Define the variables
\begin{align*}
    Y_i=\frac{1}{n}\left(\Fsjnorm_\ltg^2-\lVert F_{s,j}(x_i)\rVert_\Y^2\right),\quad i=1,\dots,n.
\end{align*}
It holds for all $i=1,\dots,n$
\begin{align*}
    \IE[Y_i]&=0,\\
    |Y_i|&\leq\frac{2\Finf^2}{n},\\
    \IE[Y_i^2]&=\frac{1}{n^2}\IE\left[\left(\Fsjnorm_\ltg^2-\Fsjxinorm_\Y^2\right)^2\right]\\
    &=\frac{1}{n^2}\IE\left[\left(\Fsjnorm_\ltg^4-2\Fsjnorm_\ltg^2\Fsjxinorm_\Y^2+\Fsjxinorm_\Y^4\right)\right]\\
    &=\frac{1}{n^2}\IE\left[\Fsjxinorm_\Y^4\right]-\Fsjnorm_\ltg^4\\
    &\leq\frac{\Finf^2}{n^2}\Fsjnorm_\ltg^2.
\end{align*}
Applying Bernstein's inequality (Lemma \ref{lem:bernstein}) for the variables $Y_i$ yields
\begin{align}
    \label{eq:varianceproof42}
& \sum_{s=1}^\infty  N(\pF_s,\|\cdot\|_{\infty}, R/8)\max_{j=1,\dots, N}\IP_{G_0}\left(\left|\Fsjnorm_\ltg^2-\Fsjnorm_n^2\right|\geq\frac{s^2 R^2}{4}\right)\notag\\
 \leq\,\,&\sum_{s=1}^\infty  N(\pF_s,\|\cdot\|_{\infty}, R/8)\max_{j=1,\dots, N}\exp\left(-\frac{ns^4 R^4}{32\Finf^2(\Fsjnorm_\ltg^2+\frac{s^2 R^2}{6})}\right)\notag\\
\leq\,\,&\sum_{s=1}^\infty\exp\left(H(\pF_s,\|\cdot\|_{\infty}, R/8)-\frac{ns^2 R^2}{160\Finf^2}\right),
\end{align}
where we used $\Fsjnorm_\ltg^2\leq(s+1)^2 R^2\leq 4s^2 R^2$ for all $j=1,\dots, N$ and $s\in\IN$, since $F_{j,s}\in\pF_s$.

Since $H(\delta)/\delta^2$ is non-increasing in $\delta$ and $R\geq8\tildedeltan$, we have 
\begin{align*}
    \frac{ns^2 R^2}{160\Finf^2}\geq\frac{n}{3\Finf^2}\left(\frac{R}{8}\right)^2\geq 2 H(\pG,\|\cdot\|_{\infty}, R/8)\geq 2 H(\pF_s,\|\cdot\|_{\infty}, R/8).
\end{align*}
Therefore we get
\begin{align}
\label{eq:varianceproof5}
&\sum_{s=1}^\infty\exp\left(H(\pF_s,\|\cdot\|_{\infty}, R/8)-\frac{ns^2 R^2}{160\Finf^2}\right)\notag\\
\leq\,\,&\sum_{s=1}^\infty\exp\left(-\frac{ns^2 R^2}{320\Finf^2}\right)\leq\sum_{s=1}^\infty\frac{1}{s^2}\exp\left(-\frac{n R^2}{320\Finf^2}\right)<2\exp\left(-\frac{n R^2}{320\Finf^2}\right).
\end{align}
Note that for $x,y\geq 1$, we have $\exp(-xy)\leq\exp(-y)/x$.  Applying this with $x=s^2\geq 1$ and $y=n R^2/(320\Finf^2)\geq 1$ for $R\geq 18\Finf/\sqrt n$, together with $\sum_{s=1}^\infty 1/s^2=\pi^2/6<2$, gives \eqref{eq:varianceproof5}.
Combining \eqref{eq:varianceproof1}--\eqref{eq:varianceproof5} shows the claim.
\end{proof}
\begin{lemma}\label{lem:sub-Gaussianepsilon}
    Consider the sub-Gaussian noise model, i.e.~suppose $\lVert\eps_i\rVert_{\Y}$ are i.i.d.~sub-Gaussian with parameter $1$. 
    Abbreviate $H(\delta)=H(\pG,\|\cdot\|_{\infty,\supp(\gamma)},\delta)$. Then there exists a universal constant $C>0$ s.t.~for all positive sequences $(\delta_n)_{n\in\IN}$ with

    \begin{align}\label{eq:sbgndelatan}
    n\delta_n^4\geq C^2\sigma^2\Finf^2H\left(\frac{\delta_n^2}{8\sigma^2+\delta_n^2}\right),
    \end{align}
   all $G^*\in\pG$ and $R\geq\max\{\delta_n,\sqrt{2}\lVert G^*-G_0\rVert_n\}$, it holds for every $\px=(x_1,...,x_n)\in \X^n$
    \begin{align}\label{eq:concentartionvarianceSBGN}
        \IP_{G_0}^{\px}\left(\lVert\hatgn-G_0\rVert_n\geq R\right)\leq 4\exp\left(-\frac{nR^4}{C^2\sigma^2(1+F_{\infty}^2)}\right).
    \end{align}

\end{lemma}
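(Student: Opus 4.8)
The plan is to follow the slicing structure used in the proof of Lemma~\ref{lem:variance}, but applied to the empirical-process term coming from the basic inequality, and — crucially — to replace the chaining step of Lemma~\ref{lem:chaining1} by a single crude $\|\cdot\|_{\infty,\supp(\gamma)}$-covering, which is exactly what degrades the rate from $R^2$ to $R^4$. Write $\|\cdot\|_\infty=\|\cdot\|_{\infty,\supp(\gamma)}$ and $\pF=\set{G-\G_0}{G\in\pG}$, so $\sup_{F\in\pF}\|F\|_\infty\le\Finf$ by \eqref{eq:F-bound}; since $\|\hatgn-\G_0\|_n\le\|\hatgn-\G_0\|_\infty\le\Finf$ always, I may assume $R\le\Finf$. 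First I would fix $G^*\in\pG$, set $F^*:=G^*-\G_0\in\pF$, and apply Lemma~\ref{lem:basicineq} to get $\|\hatgn-\G_0\|_n^2-\|F^*\|_n^2\le\frac{2\sigma}{n}\sum_{i=1}^n\langle\eps_i,\hatgn(x_i)-G^*(x_i)\rangle_\Y$. On the event $\{\|\hatgn-\G_0\|_n\ge R\}$, the hypothesis $R\ge\sqrt2\|F^*\|_n$ yields $\|F^*\|_n^2\le R^2/2\le\tfrac12\|\hatgn-\G_0\|_n^2$, so the left-hand side is at least $\tfrac12\|\hatgn-\G_0\|_n^2$. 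Slicing $\pF$ into the shells $\pF_s:=\set{F\in\pF}{sR\le\|F\|_n<(s+1)R}$, $s\in\IN$ (finitely many nonempty, since $\|F\|_n\le\Finf$), a union bound reduces the claim to controlling, for each $s$, the probability that $\sup_{F\in\pF_s}\frac{2\sigma}{n}\sum_{i=1}^n\langle\eps_i,F(x_i)-F^*(x_i)\rangle_\Y\ge s^2R^2/2$; here $F^*$, $\hatgn-\G_0$ and the shells are all measurable, so no measurability subtlety arises beyond the finite/countable reductions.

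Next I would discretise: for each $s$, take a $\rho$-net $\{F_{s,j}\}_j$ of $\pF_s$ in $\|\cdot\|_\infty$ with $\rho$ of order $R^2/(8\sigma^2+R^2)$, capped at the diameter $2\Finf$ — this radius is calibrated precisely so that the right-hand side of \eqref{eq:sbgndelatan} appears. The number of centres is at most $\exp(H(\rho))$, and since $R\ge\delta_n$ while $R\mapsto R^2/(8\sigma^2+R^2)$ is non-decreasing and $H$ is non-increasing, $H(\rho)\le H\bigl(\delta_n^2/(8\sigma^2+\delta_n^2)\bigr)$. For $F$ in the cell of $F_{s,j}$, the elementary bound $|\langle\eps_i,v\rangle_\Y|\le\|\eps_i\|_\Y\|v\|_\Y$ splits the process into the value at the centre, $A_{s,j}:=\frac{2\sigma}{n}\sum_i\langle\eps_i,(F_{s,j}-F^*)(x_i)\rangle_\Y$, plus a remainder bounded uniformly in $F$ by $2\sigma\rho\cdot\frac1n\sum_{i=1}^n\|\eps_i\|_\Y$. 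The remainder is controlled by the sub-Gaussian concentration of $n^{-1}\sum_i\|\eps_i\|_\Y$ around its $O(1)$ mean: because $\rho$ is small, the event that it exceeds $s^2R^2/4$ has probability at most $2\exp(-cn(\cdot))$, which — using $R\le\Finf$ to compare exponents — will be dominated by the bound of the next step.

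It then remains to bound $\IP_{G_0}^{\px}(A_{s,j}\ge s^2R^2/4)$. Conditionally on $\px$, $A_{s,j}$ is a sum of independent centred real random variables $\frac{2\sigma}{n}\langle\eps_i,(F_{s,j}-F^*)(x_i)\rangle_\Y$, each sub-Gaussian with parameter $\frac{2\sigma}{n}\|(F_{s,j}-F^*)(x_i)\|_\Y$ (exactly as in the proof of Lemma~\ref{lem:chaining1}), hence $A_{s,j}$ is sub-Gaussian with parameter $\frac{2\sigma}{\sqrt n}\|F_{s,j}-F^*\|_n\le\frac{4\sigma\Finf}{\sqrt n}$, where I deliberately use the wasteful bound $\|F_{s,j}-F^*\|_n\le 2\Finf$ instead of localising this norm. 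Hoeffding's inequality for sub-Gaussian sums then gives $\IP_{G_0}^{\px}(A_{s,j}\ge s^2R^2/4)\le 2\exp\bigl(-ns^4R^4/(c_1\sigma^2\Finf^2)\bigr)$ for a universal $c_1>0$ — note the $R^4$, which is the signature of not chaining. A union bound over the $\le\exp(H(\rho))$ centres followed by summation over $s\in\IN$ yields $\sum_{s\ge1}\exp(H(\rho))\cdot 2\exp(-ns^4R^4/(c_1\sigma^2\Finf^2))$, and choosing the universal constant $C_1$ in \eqref{eq:sbgndelatan} large enough forces $H(\rho)\le nR^4/(2c_1\sigma^2\Finf^2)$; the entropy is then absorbed into the exponent, which after summation over $s$ leaves at most a universal multiple of $\exp(-nR^4/(2c_1\sigma^2\Finf^2))$. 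Adding the remainder estimate and enlarging $\Finf^2$ to $1+\Finf^2$ at the cost of larger constants gives \eqref{eq:concentartionvarianceSBGN} after absorbing all numerical factors into $C$.

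The main obstacle is the competing requirements on the covering radius $\rho$: it must be small enough that the discretisation remainder $2\sigma\rho\,n^{-1}\sum_i\|\eps_i\|_\Y$ is negligible at the scale $s^2R^2$ — which is what forces the somewhat awkward capped radius of order $R^2/(8\sigma^2+R^2)$ in \eqref{eq:sbgndelatan} — yet coarse enough that the metric entropy $H(\rho)$ still fits within the budget $nR^4/(\sigma^2\Finf^2)$. Relatedly, one must accept the loss of one power of $R$ compared with the chaining bound of Theorem~\ref{th:empiricalerror}: that loss comes precisely from bounding $\|F_{s,j}-F^*\|_n$ by $2\Finf$ rather than localising it, and it is exactly what allows the argument to go through in regimes where the entropy integral $J$ in \eqref{eq:Jintegral} is infinite.
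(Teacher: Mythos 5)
Your argument is correct and follows essentially the same route as the paper's proof of Lemma \ref{lem:sub-Gaussianepsilon}: the basic inequality, a single uniform $\|\cdot\|_{\infty,\supp(\gamma)}$-cover at radius of order $R^2/(\sigma^2+R^2)$, a centre-plus-remainder split with the remainder controlled by concentration of $n^{-1}\sum_i\|\eps_i\|_{\Y}$ around its mean, and Hoeffding at the centres with the crude global sub-Gaussian parameter $\lesssim \sigma\Finf/\sqrt{n}$, which is exactly what produces the $R^4$ in the exponent. The only deviation is your peeling over the shells $\pF_s$, which the paper omits (it buys nothing once $\lVert F_{s,j}-F^*\rVert_n$ is bounded by $2\Finf$ anyway) and which obliges you to sum $\sum_{s\ge 1}\exp\bigl(-ns^4R^4/(c\sigma^2\Finf^2)\bigr)$ --- summable to the claimed bound only after observing that one may assume $nR^4\gtrsim \sigma^2(1+\Finf^2)$, since otherwise the right-hand side of \eqref{eq:concentartionvarianceSBGN} exceeds one and the statement is trivial.
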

\begin{proof}
In the following we write $\|\cdot\|_{\infty}=\|\cdot\|_{\infty,\supp(\gamma)}$.
For $ R^2\geq 2\lVert G^*-G_0\rVert_n^2$, we use the basic inequality (Lemma \ref{lem:basicineq}) to obtain
\begin{align}\label{eq:varianceSBGN}
    \IP_{G_0}^{\px}\left(\lVert \hatgn-G_0\rVert_n^2\geq R^2\right)&\leq \IP_{G_0}^{\px}\left(\bigl|\lVert\hatgn-G_0\rVert_n^2-\lVert G^*-G_0\rVert_n^2\big|\geq  \frac{R^2}{2}\right).\notag\\
    &\leq \IP_{G_0}^{\px}\left(\left|\frac{1}{n}\sumin\langle\eps_i,\hatgn(x_i)-G^*(x_i)\rangle_{\Y}\right|\geq \frac{R^2}{4\sigma}\right).
\end{align}
For $R> 0$, let $\pG^*=\{G-G^*,\,\,G\in\pG\}$ and $(G_j)_{j=1}^{N}$ with $N=N(\pG^*,\|\cdot\|_{\infty},R^2/(8\sigma \IE[\lVert \eps_i\rVert_{\Y}]+R^2))$ denote a minimal $\|\cdot\|_{\infty}$-cover of $\pG^*$. 

It holds for $R> 0$
\begin{align}
    \label{eq:sbgninitialsplit}
    &\IP_{G_0}^{\px}\left(\left|\frac{1}{n}\sumin\langle\eps_i,\hatgn(x_i)-G^*(x_i)\rangle_{\Y}\right|\geq \frac{R^2}{4\sigma}\right)\notag\\
    \leq\,\,& \IP_{G_0}^{\px}\left(\sup_{G\in\pG}\left|\frac{1}{n}\sumin\langle\eps_i,G(x_i)-G^*(x_i)\rangle_{\Y}\right|\geq \frac{R^2}{4\sigma}\right)\notag\\
    \leq\,\,& \IP_{G_0}^{\px}\left(\sup_{G\in\pG}\left|\frac{1}{n}\sumin\langle\eps_i,G(x_i)-G^*(x_i)-G_{j^*}(x_i)\rangle_{\Y}\right|+\max_{j=1,\dots N}\left|\frac{1}{n}\sumin\langle\eps_i,G_{j}(x_i)\rangle_{\Y}\right|\geq \frac{R^2}{4\sigma}\right)\notag\\
    \leq\,\,& \underbrace{\IP_{G_0}^{\px}\left(\sup_{G\in\pG}\left|\frac{1}{n}\sumin\langle\eps_i,G(x_i)-G^*(x_i)-G_{j^*}(x_i)\rangle_{\Y}\right|\geq\frac{R^2}{8\sigma}\right)}_{(i)}\notag\\
    &\quad+\underbrace{\IP_{G_0}^{\px}\left(\max_{j=1,\dots N}\left|\frac{1}{n}\sumin\langle\eps_i,G_{j}(x_i)\rangle_{\Y}\right|\geq \frac{R^2}{8\sigma}\right)}_{(ii)}.
\end{align}
We estimate the terms $(i)$ and $(ii)$ separatly. For $(i)$, use $\lVert G-G^*-G_{j^*}\rVert_{\infty}\leq R^2/(8\sigma \IE[\lVert\eps_i\rVert_{\Y}]+R^2)
$ for all $G\in\pG$ and estimate
\begin{align}
    \label{eq:sbgni}
&\IP_{G_0}^{\px}\left(\sup_{G\in\pG}\left|\frac{1}{n}\sumin\langle\eps_i,G(x_i)-G^*(x_i)-G_{j^*}(x_i)\rangle_{\Y}\right|\geq\frac{R^2}{8\sigma}\right)\notag\\
\leq\,\,&\IP_{G_0}^{\px}\left(\left|\frac{1}{n}\sumin\lVert\eps_i\rVert_{\Y}\frac{R^2}{8\sigma \IE[\lVert\eps_i\rVert_{\Y}]+R^2}\right|\geq\frac{R^2}{8\sigma}\right)\notag\\
\leq\,\,&\IP_{G_0}^{\px}\left(\left|\frac{1}{n}\sumin\lVert\eps_i\rVert_{\Y}-\IE[\lVert\eps_i\rVert_{\Y}]\right|\geq\frac{R^2}{8\sigma}\right)\leq 2\exp\left(-\frac{nR^4}{C^2\sigma^2}\right),
\end{align}
where we used the Hoeffding inequality for sub-Gaussian random variables \cite[Theorem 2.6.2]{Vershynin.2018}.

For $(ii)$, it holds 
\begin{align*}
&\IP_{G_0}^{\px}\left(\max_{j=1,\dots N}\left|\frac{1}{n}\sumin\langle\eps_i,G_{j}(x_i)\rangle_{\Y}\right|\geq \frac{R^2}{8\sigma}\right)\\
\leq\,\,&N\,\max_{j=1,\dots N}\IP_{G_0}^{\px}\left(\left|\frac{1}{n}\sumin\langle\eps_i,G_{j}(x_i)\rangle_{\Y}\right|\geq \frac{R^2}{8\sigma}\right)\\
\leq\,\,&2N\,\exp\left(-\frac{2nR^4}{C^2\sigma^2\Finf^2}\right)\leq 2\exp\left(H\left(\frac{R^2}{8\sigma^2+R^2}\right)-\frac{2nR^4}{C^2\sigma^2\Finf^2}\right),
\end{align*}
where we used $\IE[\lVert\eps_i\rVert_{\Y}]\leq\sigma$ (Cauchy-Schwarz) and \cite[Theorem 2.6.2]{Vershynin.2018} for $Y_i=n^{-1}\langle\eps_i,G_j(x_i)\rangle_{\Y}$, $i=1,\dots,n$.

Since $H(\delta)/\delta^2$ is non-increasing, also $H(\delta/(8\sigma^2+\delta))/\delta^2$ is non-increasing and thus it holds for $ R\geq\delta_n$ 
\begin{align*}
    \frac{nR^4}{C^2\sigma^2\Finf^2}\geq H\left(\frac{R^2}{8\sigma^2+R^2}\right).
\end{align*}
This gives for $R \geq\delta_n$
\begin{align}\label{eq:sbgnii}
    (ii)\leq 2\exp\left(-\frac{nR^4}{C^2\sigma^2\Finf^2}\right).
\end{align}
 Combining \eqref{eq:varianceSBGN}--\eqref{eq:sbgnii} gives the result.
\end{proof}
\section{Proofs of Section \ref{sec:framework}}
\subsection{Proof of Theorem \ref{th:empiricalerror}}\label{app:proofempiricaltot}
\subsubsection{Existence and Measurability of \texorpdfstring{$\hat{G}_n$}{Gn}}\label{app:existencegnhat}
\begin{proof}[Proof of Theorem \ref{th:empiricalerror} \ref{item:minimizer}]

\textbf{White noise case.} For $i=1,\dots,n$, denote the probability space of the RVs $x_i$ and the white noise processes $\eps_i$ as $(\Omega,\Sigma,\IP)$. 
Furthermore, equip the Hilbert spaces $\X$ and $\Y$ with Borel $\sigma$-algebras $\B_{\X}$ and $\B_{\Y}$ and the space $\pG$ with the Borel $\sigma$-algebra $\B_{\pG}$. Then, there exists an orthonormal basis $(\psi_j)_{j\in\IN}$ of $\Y$ and i.i.d.~Gaussian variables $Z_j\sim\NN(0,1)$ s.t.

\begin{align*}
    &\eps_i:\Omega\times\Y\to\IR,\notag\\
    &\eps_i(\omega,y)=\sum_{j=1}^{\infty}\langle y,\psi_j\rangle_{\Y} Z_j(\omega)
\end{align*}
for $i=1,\dots,n$, see also \cite[Example 2.1.11]{Gine.op.2016}

Recall the noise level $\sigma>0$. For $i=1,\dots, n$, we define
\begin{align}
    &u_i:\Omega\times\pG\to\IR,\notag\\
    &u_i(\omega,G)=2\sigma\eps_i\left(\omega,G(x_i(\omega))\right)+2\langle G_0(x_i(\omega)),G(x_i(\omega))\rangle_{\Y}-\lVert G(x_i(\omega))\rVert_{\Y}^2.\label{eq:ui}
\end{align}
We aim to apply \cite[Proposition 5]{Nickl.2007} to $u\coloneqq 1/n\sum_{i=1}^n u_i$ in order to get existence and measurability of $\hat{G}_n$ in \eqref{eq:empiricalrisminimizer}.

Per assumption, the metric space $(\pG,\|\cdot\|)$ is compact. We show that $u_i$ from \eqref{eq:ui} is measurable in the first component and continuous in the second component for all $i=1,\dots,n$. Then \cite[Proposition 5]{Nickl.2007} shows the claim. Consider an arbitrary $G\in\pG$. We show that $u_i(.\,,G)$ is $(\Sigma,\B_{\IR})$-measurable, where $\B_{\IR}$ is the Borel $\sigma$-algebra in $\IR$.

The RVs $x_i$ are $(\Sigma,\B_{\X})$-measurable by definition. The maps $G$ and $\G_0$ are assumed to be $(\B_{\X},\B_{\Y})$-measurable. Furthermore, because of their continuity, the scalar product $\langle .\,, .\rangle_{\Y}$ is $(\B_{\Y},\B_{\IR})$-measurable in both components and also the norm $\lVert\, .\, \rVert_{\Y}$ is $(\B_{\Y},\B_{\IR})$-measurable. Therefore, since the composition of measurable functions is measurable, the 
latter
two summands in \eqref{eq:ui} are $(\Sigma,\B_{\IR})$-measurable.

Proceeding with the first summand, the RVs $Z_j$ are $(\Sigma,\B_{\IR})$-measurable
by definition for all $j\in\IN$. Therefore the products $\langle\,.\,,\psi_j\rangle_{\Y} Z_j$ are $(\Sigma\otimes\B_{\Y},\B_{\IR})$-measurable for all $j\in\IN$. Thus $\eps_i$ is, as the pointwise limit, $(\Sigma\otimes\B_{\Y},\B_{\IR})$-measurable. Then, as the composition of measurable functions, the first summand in \eqref{eq:ui} is $(\Sigma,\B_{\IR})$-measurable. Therefore $u_i(\,.\,, G)$, $i=1,\dots,n$, and thus $u(\,.\,,G)$ is $(\Sigma,\B_{\IR})$-measurable for all $G\in\pG$.

We proceed and show that $u(\omega,\,.\,)$ is continuous w.r.t.~$\|\cdot\|$. Therefore choose $G,G'\in\pG$ and $\omega\in\Omega$. Then it holds for $i=1,\dots,n$ and $x_i=x_i(\omega)$
\begin{align}
\label{eq:Gnhatcontinuity1}
    \left|u_i(\omega,G)-u_i(\omega,G')\right|&\leq 2\sigma\left|\eps_i\left(\omega,G(x_i)-G'(x_i)\right)\right|+2\left|\langle G_0(x_i),G(x_i)-G'(x_i)\rangle_{\Y}\right|\notag\\
    &\qquad+\left|\lVert G(x_i)\rVert_{\Y}-\lVert G'(x_i)\rVert_{\Y}\right|\,\left|\lVert G(x_i)\rVert_{\Y}+\lVert G'(x_i)\rVert_{\Y}\right|\notag\\
    &\leq 2\sigma\left|\eps_i\left(\omega,G(x_i)-G'(x_i)\right)\right|\notag\\
    &\quad+\sqrt n\left(2\lVert \G_0(x_i)\rVert_{\Y}+\lVert G(x_i)\rVert_{\Y}+\lVert G'(x_i)\rVert_{\Y}\right)\,\lVert G-G'\rVert_n\notag\\
    &\leq 2\sigma\left|\eps_i\left(\omega,G(x_i)-G'(x_i)\right)\right|\notag\\
    &\quad+C \sqrt n\left(2\lVert \G_0(x_i)\rVert_{\Y}+\lVert G(x_i)\rVert_{\Y}+\lVert G'(x_i)\rVert_{\Y}\right)\,\lVert G-G'\rVert,
\end{align}
where we used that $\|\cdot\|_n\leq C \|\cdot\|$ at the last inequality.
Furthermore, \cite[page 40 and Proposition 2.3.7(a)]{Gine.op.2016} yields that the white noise processes $\eps_i$ are a.s.\ sample $d_{\eps_i}$-continuous w.r.t.\ their intrinsic pseudometrics $d_{\eps_i}:\Y\times\Y\to\IR$, $d_{\eps_i}(y,y')=\IE[\langle\eps_i,y-y'\rangle_{\Y}^2]^{1/2}=\lVert y-y'\rVert_{\Y}$.
Since for all $G$, $G'\in\pG$ we have
\begin{align*}
\lVert G(x_i)-G'(x_i)\rVert_\Y\leq \sqrt{n}\|G-G'\|_n\leq C\sqrt{n}\lVert G-G'\rVert,
\end{align*}
the white noise processes are also a.s.~sample $d$-continuous, where $d$ is the metric induced by $\lVert\cdot\rVert$.

Together with \eqref{eq:Gnhatcontinuity1} this shows that there exists a null-set $\Omega_0\subset\Omega$ such that for all $\omega\in\Omega\backslash\Omega_0$, $u(\omega,\, .)$ is continuous w.r.t.\ $\|\cdot\|$. Now we choose versions $\tilde{x}_i$ and $\tilde{\eps}_i$ s.t.~$u(\omega,\,.)=0$ for all $\omega\in\Omega_0$. Then
$G\mapsto u(\omega,G):(\pG,\norm{\cdot})\to\R$
is continuous 
for all $\omega\in\Omega$. Applying \cite[Proposition 5]{Nickl.2007} gives an $(\Sigma,\B_{\pG})$-measurable MLSE $\hat{G}_n$ in \eqref{eq:empiricalrisminimizer} with the desired minimization property. 

\textbf{Sub-Gaussian case.} For $i=1,\dots,n$, consider the functions $u_i$ from \eqref{eq:ui}. 
The measurability of $u_i(\,.\,,G)$, $i=1,\dots,n$ follows from the measurability of the scalar product $\langle.\,,.\rangle_{\Y}$, the norm $\|\cdot\|_{\Y}$, $\G_0$ and all $G\in\pG$. Note that in contrast to the white noise case, $\eps_i$ are RVs in $\Y$ for all $i=1,\dots,n$ and therefore measurable without any further investigation. Also, since $\eps_i(\omega)\in\Y$ for all $\omega$, Cauchy-Schwarz immediately shows that $u_i(\omega,\,.)$ and therefore $u(\omega,\,.)$ is continuous w.r.t.\ $\|\cdot\|$ for all $\omega\in\Omega\backslash\Omega^0$. Choosing versions $\tilde{x}_i$ and $\tilde{\eps}_i$ and applying \cite[Proposition 5]{Nickl.2007} as above gives the existence of an $(\Sigma,\B_{\pG})$-measurable LSE $\hat{G}_n$ in \eqref{eq:empiricalrisminimizer} and therefore finishes the proof.
\end{proof}
\subsubsection{Concentration Inequality for $\hat G_n$}\label{app:proofempirical}
\begin{proof}[Proof of Theorem \ref{th:empiricalerror} \ref{item:conditionaltail}]

  The proof follows ideas developed in \cite[Section 10.3]{vanGeer.2009} as well as \cite{NW20}, where generic chaining bounds from \cite{Dirksen.2015} were used to bound the relevant empirical processes appearing below.

\paragraph*{Slicing argument.} Recall the definition \eqref{eq:empiricalnorm} of the empirical norm. For $ R^2\geq 2\lVert G^*-G_0\rVert_n^2$, we have
\begin{align}\label{eq:proofempirical1}
    \IP_{G_0}^{\px}\bigl(\lVert \hatgn-G_0\rVert_n^2\geq R^2\bigr)\leq \IP_{G_0}^{\px}\bigl(2\bigl|\lVert\hatgn-G_0\rVert_n^2-\lVert G^*-G_0\rVert_n^2\bigr|\geq  R^2\bigr).
\end{align}
As a union of disjoint events, it holds
\begin{align}\label{eq:proofempirical2}
    &\IP_{G_0}^{\px}\bigl(2\bigl|\lVert\hatgn-G_0\rVert_n^2-\lVert G^*-G_0\rVert_n^2\bigr|\geq  R^2\bigr)\notag\\
    =\,\,&\sum_{s=0}^{\infty}\IP_{G_0}^{\px}\biggl(2^{2s} R^2\leq 2\bigl|\lVert\hatgn-G_0\rVert_n^2-\lVert \gstar-G_0\rVert_n^2\bigr|< 2^{2s+2} R^2\biggr).
\end{align}
Applying the basic inequality (Lemma \ref{lem:basicineq}) and defining the empirical process $(X_G:G\in\pG)$ indexed by the operator class $\pG$ as
\begin{align*}
    X_G=\frac{1}{n}\sum_{i=1}^n\langle\eps_i,G(x_i)-\gstar(x_i)\rangle_{\Y},
\end{align*}
gives
\begin{align}\label{eq:proofempirical3}
   &\sum_{s=0}^{\infty}\IP_{G_0}^{\px}\biggl(2^{2s} R^2\leq 2\bigl|\lVert\hatgn-G_0\rVert_n^2-\lVert \gstar-G_0\rVert_n^2\bigr|< 2^{2s+2} R^2\biggr)\notag\\
    \leq\,\,& \sum_{s=0}^{\infty}\IP_{G_0}^{\px}\biggl(\bigl|X_{\hatgn}\bigr|\geq \frac{2^{2s-2}R^2}{\sigma},\,\,\bigl|\lVert\hatgn-G_0\rVert_n^2-\lVert \gstar-G_0\rVert_n^2\bigr|< 2^{2s+1} R^2\biggr)\notag\\
    \leq\,\,&\sum_{s=0}^{\infty}\IP_{G_0}^{\px}\biggl(\sup_{G\in\pG_n^*(2^{s+3/2} R)}\bigl|X_{G}\bigr|\geq \frac{2^{2s-2}R^2}{\sigma}\biggr).
\end{align}
In \eqref{eq:proofempirical3}, we additionally used that if $2\lVert \gstar-G_0\rVert_n^2\leq R
^2$ and
\begin{align*}
    \bigl|\lVert\hatgn-G_0\rVert_n^2-\lVert \gstar-G_0\rVert_n^2\bigr|<2^{2s+1} R^2
\end{align*}
then $\lVert\hatgn-G_0\rVert_n^2\leq 2^{2s+1} R^2+ R^2/2$ and thus
\begin{align*}
    \lVert \hatgn-\gstar\rVert_n^2\leq 2\bigl(\lVert\hatgn-G_0\rVert_n^2+\lVert \gstar-G_0\rVert_n^2\bigr)
    \leq2^{2s+2} R^2+2 R^2\leq 2^{2s+3} R^2.
\end{align*}
\paragraph*{Concentration inequality for each slice.} We wish to apply Lemma \ref{lem:chaining1} to bound the probabilities in \eqref{eq:proofempirical3}. Let $C_{\rm Ch}$ be the generic constant from this lemma
and let $\delta_n$ satisfy \eqref{eq:deltan}. Then, due to $\delta\mapsto \Psi_n(\delta)/\delta^2$ being non-increasing, \eqref{eq:deltan} gives for all $R\ge \delta_n$ and $s\in\IN_0$
\begin{equation*}
  \sqrt{n}(2^{2s+3}R^2)\ge 32C_{\rm Ch}\sigma\Psi_n(2^{s+3/2}R)
\end{equation*}
so that with $\Theta:=\pgnstar(2^{s+3/2} R)$
\begin{equation}\label{eq:deltanchainingcheck}
  \sqrt{n}\frac{2^{2s-2}R^2}{\sigma}\ge C_{\rm Ch}\Psi_n(2^{s+3/2}R)
  \ge C_{\rm Ch} J(\Theta,\norm[n]{\cdot}).
\end{equation}
With 
$h_G:=G-\gstar$ and
$\HH:=\set{h_G}{G\in\Theta}$
we have $J(\Theta,\norm[n]{\cdot})=J(\HH,\norm[n]{\cdot})$ and
thus \eqref{eq:deltanchainingcheck} verifies assumption \eqref{eq:deltanchaining} of Lemma \ref{lem:chaining1} for $\delta=2^{2s-2}R^2/\sigma$. 

Furthermore, since $\Theta=\pgnstar(2^{s+3/2} R)\subset\pG$ for $s\in\IN_0$, $R>0$ and $(\pG,\|\cdot\|_{n})$ is compact, the space $(\Theta,\norm[n]{\cdot})$ is separable, which verifies the last assumption of Lemma \ref{lem:chaining1}. 
Applying this lemma  with $U=2^{s+3/2} R$ shows that the (uncountable) suprema in \eqref{eq:proofempirical3} are measurable
and that for all $R\geq \delta_n$,

\begin{align}\label{eq:proofempirical4}
\sum_{s=0}^{\infty}\IP_{G_0}^{\px}\biggl(\sup_{G\in\pG_n^*(2^{s+3/2} R)}\bigl|X_{G}\bigr|\geq \frac{2^{2s-2} R^2}{\sigma}\biggr)
  &\leq\sum_{s=0}^{\infty}\exp\biggl(-\frac{8n2^{4s-4} R^4}{\CCh^2\sigma^2 2^{2s+3} R^2}\biggr)\notag\\
    &\leq \sum_{s=0}^{\infty}\exp\biggl(-\frac{2^{2s-4}n R^2}{\CCh^2\sigma^2}\biggr)\notag\\
  &\leq \sum_{s=0}^{\infty} 2^{-2s}\exp\biggl(-\frac{n R^2}{16\CCh^2\sigma^2}\biggr)\notag\\
  &< 2 \exp\biggl(-\frac{n R^2}{16\CCh^2\sigma^2}\biggr).
\end{align}
In \eqref{eq:proofempirical4} we additionally used $\exp(-xy)\leq\exp(-x)/y$, which holds for all $x,y\geq1$, i.e.~for $R\geq 4\CCh\sigma/\sqrt{n}$. Combining \eqref{eq:proofempirical1}--\eqref{eq:proofempirical3} and \eqref{eq:proofempirical4} gives \eqref{eq:empiricalconcentration} and therefore shows the claim.
\end{proof}
\subsection{Proof of Theorem \ref{th:samplecomplexitywhitnoise}}\label{app:proofsamplecomplexity}
We use the concentration inequality from Theorem \ref{th:empiricalerror} for the empirical error and combine it with a key concentration result for the empirical norm around the $L^2(\gamma)$-norm, proved in Lemma \ref{lem:variance}. For any $R>0$,
\begin{align}\label{eq:cleversplitrgn}
\IP_{G_0}\left(\lVert\hatgn-G_0\rVert_\ltg\geq R\right)& \leq \IP_{G_0}\left(\lVert\hatgn-G_0\rVert_n\geq \frac{R}{2}\right)\notag\\
&\quad+\IP_{G_0}\left(\lVert\hatgn-G_0\rVert_\ltg\geq R,\,\,\lVert\hatgn-G_0\rVert_\ltg\geq2\lVert\hatgn-G_0\rVert_n\right).
\end{align}
Now suppose that $R$ satisfies
\[R \ge 2\max\left\{\delta_n,4\tilde \delta_n,\sqrt{2}\lVert G^*-G_0\rVert_{\infty,\supp(\gamma)},\frac{4C_{ch}\sigma}{\sqrt{n}},\frac{9\Finf}{\sqrt{n}}\right\}.\]
This is implied by (\ref{eq:Rinequality}) for an appropriate choice of $C$. In particular, $R\geq 2\sqrt{2}\lVert G^*-G_0\rVert_n$ for all 
$\px\in\X^n$. Since Theorem \ref{th:empiricalerror} holds for $\gamma^n$-almost every $\px\in \mathcal X^n$, taking expectations over $\px\sim \gamma^n$ gives
\begin{align}\label{eq:proofl2concentration1}
    \IP_{G_0}\left(\lVert\hatgn-G_0\rVert_n\geq \frac{R}{2}\right)\leq 2\exp\left(-\frac{nR^2}{64\CCh^2\sigma^2}\right).
\end{align}
Furthermore, Lemma \ref{lem:variance} gives
\begin{align}
\label{eq:proofl2concentration2}
   \IP_{G_0}\left(\lVert\hatgn-G_0\rVert_\ltg\geq R,\,\,\lVert\hatgn-G_0\rVert_\ltg\geq2\lVert\hatgn-G_0\rVert_n\right)\leq 2\exp\left(-\frac{nR^2}{320\Finf^2}\right),
\end{align}
since we have assumed $R\geq \max\{8\tildedeltan,18\Finf/\sqrt{n}\}$. Combining~\eqref{eq:proofl2concentration1} and \eqref{eq:proofl2concentration2} shows \eqref{eq:L2concentration} for some $C$.

\subsection{Proof of Corollary \ref{cor:MSE}}\label{app:samplecomplexityexpectation}
It remains to show \eqref{eq:WNconvergence}.
We use
\eqref{eq:cleversplitrgn} to estimate
\begin{align}
\label{eq:whitenoiseproof1}
&\IE_{\G_0}\bigl[\lVert\hatgn-\G_0\rVert_\ltg^2\bigr]\notag\\
=\,\,&\int_0^{\infty}\IP_{G_0}\left(\lVert\hatgn-\G_0\rVert_\ltg\geq  \sqrt{R}\right)\dd  R\notag\\ 
\leq\,\,& \int_0^\infty\int_{\X^n} \IP_{G_0}^{\px}\left(\lVert\hatgn-G_0\rVert_n\geq \frac{\sqrt R}{2}\right)\dd\gamma(\px)\dd R\notag\\
&\;\;+\int_0^\infty \IP_{G_0}\left(\lVert\hatgn-G_0\rVert_\ltg\geq \sqrt{R},\,\,\lVert\hatgn-G_0\rVert_\ltg\geq2\lVert\hatgn-G_0\rVert_n\right)\dd R.
\end{align}
For $G^*\in\pG$ set $R_1=2\max\{\delta_n,\sqrt 2 \lVert G^*-G_0\rVert_n,\frac{4\CCh\sigma}{\sqrt{n}}\}$, where $\CCh$ is from Lemma \ref{lem:chaining1}. Then Theorem \ref{th:empiricalerror} gives
\begin{align}
    \label{eq:Rgnestimate1WN}
&\int_0^\infty\int_{\X^n} \IP_{G_0}^{\px}\left(\lVert\hatgn-G_0\rVert_n\geq \frac{\sqrt R}{2}\right)\dd\gamma(x)\dd R\notag\\
\leq\,\,&\int_{\X^n}R_1^2\dd\gamma(\px)+2\int_{R_1^2}^\infty\exp\left(-\frac{nR}{64\CCh^2\sigma^2}\right)\dd R\notag\\
\leq \,\,&4\delta_n^2+8\lVert G^*-G_0\rVert_\ltg^2+\frac{16\CCh^2\sigma^2}{n}+2\int_{R_1^2}^\infty\exp\left(-\frac{nR}{64\CCh^2\sigma^2}\right)\dd R\notag\\
\leq\,\,&4\delta_n^2+8\lVert G^*-G_0\rVert_\ltg^2+\frac{80\CCh^2\sigma^2}{n}.
\end{align}
Lemma \ref{lem:variance} gives with $R_2=\max\{8\tildedeltan,18\Finf/\sqrt n\}$
\begin{align}
  \label{eq:Rgnestimate2WN}  
&\int_0^\infty \IP_{G_0}\left(\lVert\hatgn-G_0\rVert_\ltg\geq \sqrt{R},\,\,\lVert\hatgn-G_0\rVert_\ltg\geq2\lVert\hatgn-G_0\rVert_n\right)\dd R\notag\\
\leq\,\,&R_2^2+2\int_{R_2^2}^\infty\exp\left(-\frac{nR}{320\Finf^2}\right)\dd R\notag\\
\leq\,\,& 64\tildedeltan^2+\frac{964\Finf^2}{n}.
\end{align}
Combining \eqref{eq:whitenoiseproof1}--\eqref{eq:Rgnestimate2WN} and 
taking an infimum over $G^*\in\pG$
shows \eqref{eq:WNconvergence} for some $C$ and thus finishes the proof of Corollary \ref{cor:MSE}.

\subsection{Proof of Corollary \ref{cor:explicitentropy}}\label{app:prooconcreteentropyWN}
We construct sequences $(\delta_n)_{n\in\IN}$ and $(\tilde{\delta}_n)_{n\in\IN}$ satisfying \eqref{eq:deltandeltanhat} and balance the approximation term and the entropy terms in \eqref{eq:WNconvergence} by choosing $N(n)$ appropriately.
\paragraph*{Proof of \ref{item:concrete1}:} Choosing $\tilde{\delta}_n^2\simeq n^{-2/(2+\alpha)}$ immediately shows the second part of \eqref{eq:deltandeltanhat}.
For $J(\delta)$ from \eqref{eq:Jintegral} it holds 
\[J(\delta)\leq\int_0^{\delta} \sqrt{H(\rho,N)}\dd\rho\lesssim \delta^{1-\alpha/2}\eqqcolon \Psi_n(\delta).\]
Hence $\delta_n^2\simeq n^{-2/(2+\alpha)}$ satisfies the first part of \eqref{eq:deltandeltanhat}. Therefore Corollary \ref{cor:MSE} shows 
\[  \IE_{\G_0}\,\Big[\lVert \hat{G}_n-\G_0\rVert_{L^2(\gamma)}^2\Big]\lesssim N^{-\beta}+n^{-\frac{2}{2+\alpha}}.\]
Since the entropy term is independent of $N$, the limit $N\to\infty$ gives the claim.

\paragraph*{Proof of \ref{item:concrete2}:}
Choosing $\tilde{\delta}_n^2\simeq N(1+\log(n))/n$ shows in particular $\log(n)\gtrsim \log(\tilde{\delta}_n^{-1})$, which in turn shows that $\tilde{\delta}_n$ satisfies the second part of \eqref{eq:deltandeltanhat}.
For $J(\delta)$ from \eqref{eq:Jintegral} it holds 
\[J(\delta)\lesssim\int_0^{\delta} \sqrt{H(\rho,N)}\dd\rho\lesssim\sqrt{N}\delta\left(1+\log(\delta^{-1})\right) \eqqcolon \Psi_n(\delta).\]
Hence $\delta_n^2\simeq N(1+\log(n))/n$ satisfies the first part of \eqref{eq:deltandeltanhat}. Therefore Corollary \ref{cor:MSE} shows 
\[  \IE_{\G_0}\,\Big[\lVert \hat{G}_n-\G_0\rVert_{L^2(\gamma)}^2\Big]\lesssim N^{-\beta}+\frac{N(1+\log(n))}{n}.\] 
Choosing $N(n)=\lceil n^{\frac{1}{\beta+1}}\rceil$ and using $\log(n)\leq n^{\tau}/\tau$ for all $\tau>0$ and $n\geq 1$ gives the claim.

\subsection{Proof of Theorem \ref{th:samplecomplexitysub-Gaussiannoise}}\label{app:subgaussnoJ}
The concentration inequality \eqref{eq:L2concentrationSBGN} follows immediately from \eqref{eq:cleversplitrgn}, \eqref{eq:proofl2concentration2} (which holds for sub-Gaussian noise) and \eqref{eq:concentartionvarianceSBGN} from Lemma \ref{lem:sub-Gaussianepsilon}.
Application of Lemma \ref{lem:sub-Gaussianepsilon} yields
\begin{align*}
&\int_0^\infty\int_{\X^n} \IP_{G_0}^{\px}\left(\lVert\hatgn-G_0\rVert_n\geq \frac{\sqrt \delta}{2}\right)\dd\gamma(\px)\dd \delta\notag\\
\qquad&\leq 4\delta_n^2+8\lVert G^*-G_0\rVert_\ltg^2+4\int_{0}^\infty\exp\left(-\frac{n\delta^2}{16C_1^2\sigma^2(1+F_{\infty}^2)}\right)\dd \delta\notag\\
\qquad&\leq 4\delta_n^2+8\lVert G^*-G_0\rVert_\ltg^2+16C_1\sigma(1+\Finf)\sqrt\frac{\pi}{n}.
\end{align*}
Using \eqref{eq:whitenoiseproof1} and \eqref{eq:Rgnestimate2WN}, and minimizing over $G^*\in\pG$ gives the bound on 
the mean-squared error \eqref{eq:SBGNconvergence2} for some $C_2$.

\section{Neural Network Theory}
In this section we recap elementary operations and approximation theory for neural networks, based on \cite{Petersen.2018}. For a NN $f:\IR^{p_0}\to\IR^{p_{L+1}}$ (see Definition \ref{def:NN}) we denote by ${\rm size}_{{\rm in}}(f)$ and ${\rm size}_{{\rm out}}(f)$ the number of nonzero weights and biases of the first and last layer respectively.

\subsection{Operations on Neural Networks}\label{app:NNoperations}
In this subsection, we recap elementary operations on NNs. We start with the parallelization of two NNs following 
\cite[Section 2.2.1]{Opschoor.2022}.
\begin{definition}[Parallelization]
\label{def:NNparalleliazion}
Let $q\in\IN$, $q\geq2$ and $\sigma\in\{\sigma_1,\sigma_q\}$. Let $f$ and $g$ be two $\sigma$-NNs realizing the functions $f$ and $g$ with the same depth $L$. Furthermore, define the input dimensions of $f$  and $g$ as $n_f$ and $n_g$ and the output dimensions as $m_f$ and $m_g$.\footnote{Using the syntax from \eqref{eq:NNdef1}--\eqref{eq:NNdef3}, it holds  $n_f=p_0(f)$, $n_g=p_0(g)$, $m_f=p_{L+1}(f)$ and $m_g=p_{L+1}(g)$.} Then there exists a $\sigma$-NN $(f,g)$, called the \emph{parallelization} of $f$ and $g$, which simultaneously realizes $f$ and $g$, i.e.
\begin{equation*}
    \label{eq:NNparallel}
(f,g): \IR^{n_f}\times\IR^{n_g}\to \IR^{m_f}\times\IR^{m_g}: (\pmb{x},\tilde{\pmb{x}})\mapsto (f(\pmb{x}),g(\tilde{\pmb{x}})). 
\end{equation*}
It holds
\begin{align}
    {\rm size}\left((f,g)\right)&={\rm size}(f)+{\rm size}(g),\label{eq:parallelization2NNsize}\\
    {\rm depth}\left((f,g)\right)&={\rm depth}(f)={\rm depth}(g)\label{eq:parallelization2NNdepth},\\
    {\rm width}\left((f,g)\right)&={\rm width}(f)+{\rm width}(g)\label{eq:parallelization2NNwidth},\\
    {\rm mpar}\left((f,g)\right)&=\max\left\{{\rm mpar}(f),{\rm mpar}(g)\right\}\label{eq:parallelization2NNmaximumweights},\\
        {\rm mran}_{\Omega}\left((f,g)\right)^2&={\rm mran}_{\Omega}(f)^2+{\rm mran}_{\Omega}(g)^2.\label{eq:parallelization2NNmaximuml2}
\end{align}
\end{definition}
Let $N\in\IN$, $N\geq3$. We extend Definition \ref{def:NNparalleliazion} to parallelize $N$ $\sigma$-neural networks $f_i$, $i=1,\dots N$ with equal depth and denote the resulting $\sigma$-NN as $(\{f_i\}_{i=1}^{N})$. It holds that
\begin{align}
{\rm size}\left(\left(\left\{f_i\right\}_{i=1}^{N}\right)\right)&=\sum_{i=1}^N{\rm size}(f_i)\label{eq:sizemoreparallelizationNN},\\
{\rm size}_{{\rm in}}\left(\left(\left\{f_i\right\}_{i=1}^{N}\right)\right)&=\sum_{i=1}^N{\rm size}_{{\rm in}}(f_i)\label{eq:sizeinmoreparallelizationNN},\\
{\rm size}_{{\rm out}}\left(\left(\left\{f_i\right\}_{i=1}^{N}\right)\right)&=\sum_{i=1}^N{\rm size}_{{\rm out}}(f_i)\label{eq:sizeoutmoreparallelizationNN},\\
{\rm depth}\left(\left(\left\{f_i\right\}_{i=1}^{N}\right)\right)&={\rm depth}(f_1)\label{eq:lengthmoreparallelizationNN},\\
{\rm width}\left(\left(\left\{f_i\right\}_{i=1}^{N}\right)\right)&=\sum_{i=1}^{N}{\rm width}(f_i)\label{eq:widthmoreparallelizationNN},\\
{\rm mpar}\left(\left(\left\{f_i\right\}_{i=1}^{N}\right)\right)&=\max_{i=1,\dots,N}{\rm mpar}(f_i)\label{eq:maxweightsmoreparallelizationNN},\\
{\rm mran}_{\Omega}\left(\left(\left\{f_i\right\}_{i=1}^{N}\right)\right)^2&=\sum_{i=1}^N {\rm mran}_{\Omega}(f_i)^2\label{eq:maxl2moreparallelizationNN}.
\end{align}

Next, we recall the concatenation of NNs, \cite[Definition 2.2]{Petersen.2018}.
\begin{lemma}[Concatenation]
\label{def:NNconcatenation}
Let $q\in\IN$, $q\geq2$ and $\sigma\in\{\sigma_1,\sigma_q\}$. Let $f$ and $g$ be two $\sigma$-NNs. Furthermore, let the output dimension $m_g$ of $g$ equal the input dimension $n_f$ of $f$. Then there exists a $\sigma$-NN $f\sbullet g$ realizing the composition $f\circ g:\,x\mapsto f(g(x))$ of the functions $f$ and $g$. It holds 
\begin{align*}
   {\rm depth}\left(f\sbullet g\right)&={\rm depth}(f)+{\rm depth}(g),\\
   {\rm width}\left(f\sbullet g\right)&= \max\{{\rm width}(f),{\rm width}(g)\},\\
    {\rm mran}_{\Omega}\left(f\sbullet g\right)&={\rm mran}_{g(\Omega)}\left(f\right).\
\end{align*}
\end{lemma}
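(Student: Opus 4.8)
The plan is to realize $f\circ g$ by a single $\sigma$-NN obtained by ``gluing'' the final, activation-free affine layer of $g$ directly onto the first affine layer of $f$, using the elementary fact that the composition of two affine maps is again affine. Concretely, I would first rewrite both networks in the form of Definition \ref{def:NN} as alternating compositions $g=A^g_{L_g+1}\circ\sigma\circ A^g_{L_g}\circ\cdots\circ\sigma\circ A^g_1$ and $f=A^f_{L_f+1}\circ\sigma\circ A^f_{L_f}\circ\cdots\circ\sigma\circ A^f_1$, where each $A^{\,\cdot}_\ell:\pmb x\mapsto W^\ell\pmb x+\pmb b^\ell$ is affine, $\sigma$ acts componentwise as in \eqref{eq:NNdef1}--\eqref{eq:NNdef2}, and the outermost maps $A^g_{L_g+1}$, $A^f_{L_f+1}$ carry no activation, cf.~\eqref{eq:NNdef3}. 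Since $m_g=n_f$, the map $\tilde A:=A^f_1\circ A^g_{L_g+1}$ is a well-defined affine map $\IR^{p_{L_g}(g)}\to\IR^{p_1(f)}$, and I would define
\[
  f\sbullet g := A^f_{L_f+1}\circ\sigma\circ A^f_{L_f}\circ\cdots\circ\sigma\circ A^f_2\circ\sigma\circ\tilde A\circ\sigma\circ A^g_{L_g}\circ\cdots\circ\sigma\circ A^g_1 .
\]
This is again a $\sigma$-NN in the sense of Definition \ref{def:NN}, and since the activation pattern inside $f$ and $g$ is untouched and only the two affine maps at the interface are composed, it realizes $\pmb x\mapsto f(g(\pmb x))$.

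Next I would read off the network parameters from this explicit construction. The affine maps of $f\sbullet g$ are $A^g_1,\dots,A^g_{L_g},\tilde A,A^f_2,\dots,A^f_{L_f+1}$, i.e.\ $(L_g+1)+(L_f+1)-1=L_f+L_g+1$ of them, which gives $\mathrm{depth}(f\sbullet g)=L_f+L_g$. The layer dimensions of $f\sbullet g$ are $p_0(g),p_1(g),\dots,p_{L_g}(g)$ followed by $p_1(f),\dots,p_{L_f+1}(f)$; the only dimension that disappears is the interface dimension $m_g=n_f$, which is absorbed into the composition defining $\tilde A$. Maximizing over all remaining dimensions yields $\mathrm{width}(f\sbullet g)=\max\{\mathrm{width}(f),\mathrm{width}(g)\}$ (in general this is the natural upper bound, with equality unless the widths of $f$ and $g$ are attained only at the interface dimension). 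Finally, $\mathrm{mran}_\Omega(f\sbullet g)=\sup_{\pmb x\in\Omega}\|f(g(\pmb x))\|_2=\sup_{\pmb y\in g(\Omega)}\|f(\pmb y)\|_2=\mathrm{mran}_{g(\Omega)}(f)$ follows directly from the definition of the maximum range.

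A useful point to record is that the argument is entirely insensitive to the activation: no identity or padding subnetworks are required (in contrast to situations where one must synchronize depths), so the same construction and bounds apply verbatim to $\sigma=\sigma_1$ and to $\sigma=\sigma_q$ with $q\ge 2$. I expect the only genuine care is in the interface bookkeeping — verifying that merging $A^g_{L_g+1}$ with $A^f_1$ loses no activation and reduces the affine-map count (hence the depth) by exactly one — and in noting that, for this simple concatenation, the \emph{size} and the maximal parameter $\mathrm{mpar}$ are \emph{not} controlled, since the product $W^1_f W^{L_g+1}_g$ may be dense; this is precisely why only $\mathrm{depth}$, $\mathrm{width}$ and $\mathrm{mran}$ are asserted in the statement.
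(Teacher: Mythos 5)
Your proposal is correct and follows exactly the standard construction that the paper invokes (by citing Petersen--Voigtlaender, Definition~2.2): since the last affine map of $g$ carries no activation, it can be merged with the first affine map of $f$ into a single affine interface layer $\tilde A = A^f_1\circ A^g_{L_g+1}$, which drops the affine-map count by one and hence gives ${\rm depth}(f\sbullet g)={\rm depth}(f)+{\rm depth}(g)$. The bookkeeping of the surviving layer dimensions and the change-of-variables argument for $\mathrm{mran}$ are both correct, and your closing observation --- that this fusion destroys any control on $\mathrm{size}$ and $\mathrm{mpar}$ because the product $W^1_f W^{L_g+1}_g$ can be dense and have large entries, which is why the paper subsequently introduces sparse concatenation via an inserted identity block --- is precisely the relevant point. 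The only small caveat worth noting is that $\mathrm{width}(f\sbullet g)=\max\{\mathrm{width}(f),\mathrm{width}(g)\}$ is strictly speaking an upper bound (equality may fail if both networks attain their maximal width only at the discarded interface dimension $m_g=n_f$), but you already flag this and it is immaterial to how the lemma is used.
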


There is no simple control over the size and the weight bound of the concatenation $f\sbullet g$ in Definition \ref{def:NNconcatenation}.
The reason is that the network $f\sbullet g$ multiplies network weights and biases of the NNs $f$ and $g$ at layer $l={\rm depth}(g)+1$ (for details see \cite[Definition 2.2]{Petersen.2018}). In the following we use \emph{sparse concatenation} to get control over the size and the weights. We first introduce the realization of the identity map and separate the analysis for the $\sigma_1$- and $\sigma_q$-case. The following lemma is proven in \cite[Remark 2.4]{Petersen.2018}.
\begin{lemma}[$\sigma_1$-realization of identity map]
\label{def:Identityemulationsigma1}
 Let $d\in\IN$ and $L\in\IN$. Then there exists a \emph{$\sigma_1$-identity network} $\Id_{\IR^d}$ of depth $L$, which exactly realizes the identity map $\Id_{\IR^d}:\IR^d\to\IR^d,~\pmb{x}\mapsto\pmb{x}$. It holds
\begin{align}
    {\rm size}\left(\Id_{\IR^d}\right)&\leq 2d(L+1),\label{eq:sigma1identityemulationsize} \\
    {\rm width}\left(\Id_{\IR^d}\right)&\leq 2d,\label{eq:sigma1identityemulationwidth}\\
    {\rm mpar}\left(\Id_{\IR^d}\right)&\leq 1 \label{eq:sigma1identityemulationweightbound}.
\end{align}
\end{lemma}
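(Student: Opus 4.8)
The plan is to prove the $\sigma_1$-realization of the identity map (Lemma \ref{def:Identityemulationsigma1}) by giving an explicit two-layer building block and then chaining it. The key observation is the elementary identity $x = \sigma_1(x) - \sigma_1(-x)$ for all $x\in\IR$, which lets a ReLU network reproduce any real number exactly provided the value is allowed to flow through both the positive and negative channels. First I would, for a single coordinate, construct a one-hidden-layer network with two hidden neurons: the first weight row maps $x\mapsto(x,-x)$ (weights $1$ and $-1$, biases $0$), applies $\sigma_1$ to obtain $(\sigma_1(x),\sigma_1(-x))$, and the output layer maps this back via weights $(1,-1)$ and bias $0$, recovering $x$. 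Stacking this block $d$ times in parallel (using the parallelization of Definition \ref{def:NNparalleliazion}, or simply writing block-diagonal weight matrices) gives a depth-$1$ network realizing $\Id_{\IR^d}$ with $2d$ hidden neurons, hence width $2d$, size at most $2d + 2d = 4d \le 2d(L+1)$ for $L=1$, and all parameters in $\{-1,0,1\}$ so ${\rm mpar}\le 1$.

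To obtain arbitrary depth $L$, I would concatenate $L$ copies of this elementary block. Because the composition of identity maps is the identity map, the resulting depth-$L$ network still realizes $\Id_{\IR^d}$ exactly. When composing, one must be slightly careful about how the output layer of one block merges with the input layer of the next: the ``naive'' concatenation would multiply the $\pm1$ weights, but here all products are again in $\{-1,0,1\}$, so the merged layer still has ${\rm mpar}\le 1$; alternatively one simply keeps the hidden layers separated (each of width $2d$) and inserts exact linear pass-throughs. Counting: there are $L$ hidden layers of width $2d$, and the weight matrices between consecutive layers (including input and output) are sparse — each hidden neuron receives exactly one nonzero incoming weight and contributes to exactly one or two outgoing weights — giving at most $2d$ nonzero parameters per layer transition and $L+1$ transitions, hence ${\rm size}(\Id_{\IR^d}) \le 2d(L+1)$. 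The width is clearly $2d$, and since every nonzero parameter equals $\pm 1$ with all biases zero, ${\rm mpar}(\Id_{\IR^d})\le 1$.

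The main obstacle — though a mild one — is bookkeeping consistency with the paper's precise definitions: ${\rm size}$ counts nonzero weights \emph{and} biases (all biases here are $0$, so they contribute nothing), and one must confirm that the concatenation used does not inflate parameter magnitudes. Since every weight in the construction is $\pm1$ and the only arithmetic performed at a merged layer is multiplying two such weights, the bound ${\rm mpar}\le 1$ is preserved; this is exactly why ReLU identity emulation is ``free'' in the sense of Lemma \ref{def:Identityemulationsigma1}, in contrast to the RePU case treated later. I do not expect any genuine difficulty here; the lemma is a standard fact (see \cite[Remark 2.4]{Petersen.2018}) and the proof is essentially the explicit construction above together with the elementary size/width/parameter count.
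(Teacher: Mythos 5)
Your construction is the standard one, which is exactly what the paper defers to in its cited reference, and the bounds you arrive at are all correct. Two points of exposition are worth tightening, however. First, the ``naive'' concatenation of $L$ depth-one blocks does \emph{not} yield the claimed size bound: merging the output-layer weights $(1,-1)$ of one block with the input-layer weights $(1,-1)^{\top}$ of the next produces, per coordinate, a dense $2\times 2$ matrix with four nonzero entries, so the resulting depth-$L$ network has size $4dL$, which exceeds $2d(L+1)$ as soon as $L\geq 2$. The count you actually give (``each hidden neuron receives exactly one nonzero incoming weight'') is true only for the pass-through construction, so the naive concatenation should not be presented as an equivalent alternative — it preserves ${\rm mpar}\le 1$ but not the size bound. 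Second, the interior layers are not ``exact linear pass-throughs'': they still apply $\sigma_1$. What makes a diagonal interior weight matrix exact is that, after the first hidden layer, both channels carry the nonnegative values $\sigma_1(x_i)$ and $\sigma_1(-x_i)$, on which $\sigma_1$ acts as the identity. That nonnegativity observation is the one nontrivial step missing from your write-up; with it stated, the width, size, and ${\rm mpar}$ bounds you give all check out.
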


We proceed with the analogous result for the RePU activation function. The following lemma follows from the construction in \cite[Theorem 2.5 (2)]{Li.2020} with a parallelization argument.
\begin{lemma}[$\sigma_q$-realization of identity map]
\label{def:Identityemulationsigmaq}
 Let $q\in\IN$ with $q\geq 2$. Further let $d\in\IN$ and $L\in\IN$ be arbitrary. Then there exists a $\sigma_q$-NN $\Id_{\IR^d}$ of depth $L$, which exactly realizes the identity map $\Id_{\IR^d}$. It holds
\begin{align}
{\rm size}(\Id_{\IR^d})&\leq C_qdL,\label{eq:sigmarqemulationidentitysize}\\
{\rm width}(\Id_{\IR^d})&\leq C_qd,\label{eq:sigmarqemulationidentitywidth}\\
{\rm mpar}(\Id_{\IR^d})&\leq C_q,\label{eq:sigmarqemulationidentitymaximumweights}
\end{align}
where $C_q$ is independent of $d$ (but does depend on $q$).
\end{lemma}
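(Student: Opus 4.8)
The plan is to reduce to the scalar case $d=1$ and then lift to $\IR^d$ by running $d$ identical copies in parallel. For $d=1$, I would invoke the construction in \cite[Theorem 2.5(2)]{Li.2020} (composed, if necessary, with additional depth-one identity blocks to attain depth exactly $L$), which yields for every $q\ge 2$ and $L\in\IN$ a $\sigma_q$-network $\mathrm{id}_q^L:\IR\to\IR$ of depth $L$ realizing the scalar identity $x\mapsto x$ \emph{exactly}, with ${\rm width}(\mathrm{id}_q^L)\le c_q$, ${\rm size}(\mathrm{id}_q^L)\le c_q L$ and ${\rm mpar}(\mathrm{id}_q^L)\le c_q$ for a constant $c_q$ depending only on $q$. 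The mechanism behind this is elementary: although $\sigma_q$ is not piecewise affine for $q\ge 2$, one has $\sigma_q(x)+(-1)^q\sigma_q(-x)=x^q$ for all $x\in\IR$, so applying the $(q-1)$-st forward finite difference $\Delta_1^{q-1}$ to shifted copies of this expression produces the affine map $x$ as a linear combination of $O(q)$ terms $\sigma_q(\pm x+b_i)$ with integer shifts $b_i\in\{0,\dots,q-1\}$ and rational coefficients of size $\le C_q$; this is a single hidden layer with parameters bounded by a constant depending only on $q$, and concatenating $L$ such blocks (sparsely, so that consecutive affine layers are absorbed into one another) gives depth $L$.

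Given $\mathrm{id}_q^L$, I would then apply the $N$-fold parallelization described after Definition~\ref{def:NNparalleliazion} to the $d$ identical networks $f_i:=\mathrm{id}_q^L$, $i=1,\dots,d$, all of equal depth $L$, obtaining the $\sigma_q$-network $(\{f_i\}_{i=1}^d):\IR^d\to\IR^d$. By construction it maps $(x_1,\dots,x_d)\mapsto(\mathrm{id}_q^L(x_1),\dots,\mathrm{id}_q^L(x_d))=(x_1,\dots,x_d)$, i.e.\ it realizes $\mathrm{Id}_{\IR^d}$ exactly, and by \eqref{eq:lengthmoreparallelizationNN} its depth is $L$. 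The claimed bounds then follow directly from the parallelization identities: \eqref{eq:sizemoreparallelizationNN} gives ${\rm size}\big((\{f_i\}_{i=1}^d)\big)=\sum_{i=1}^d {\rm size}(f_i)\le c_q d L$; \eqref{eq:widthmoreparallelizationNN} gives ${\rm width}\big((\{f_i\}_{i=1}^d)\big)=\sum_{i=1}^d {\rm width}(f_i)\le c_q d$; and \eqref{eq:maxweightsmoreparallelizationNN} gives ${\rm mpar}\big((\{f_i\}_{i=1}^d)\big)=\max_{i} {\rm mpar}(f_i)\le c_q$. Setting $C_q:=c_q$ establishes \eqref{eq:sigmarqemulationidentitysize}–\eqref{eq:sigmarqemulationidentitymaximumweights}.

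The one point needing genuine care is that $C_q$ must be independent of both $d$ and $L$. Independence of $d$ is automatic from the parallelization formulas, since widths and sizes add (producing exactly the linear factor $d$) while ${\rm mpar}$ takes a maximum over the $d$ identical blocks. Independence of $L$ is the subtle part: in assembling the depth-$L$ scalar block from $L$ depth-one blocks, each seam merges the scalar-valued output layer of one block with the input-to-hidden layer of the next, replacing the two weight matrices by their product; for the scalar identity these are vectors with entries bounded by $c_q$, so the merged entries are bounded by $c_q^2$, and the essential observation is that each seam involves a product of \emph{exactly two} such matrices — not a number growing with $L$ — so the parameter bound does not deteriorate with depth. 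I expect this to be the main (and essentially only) obstacle; once it is addressed, the remaining estimates are routine bookkeeping with the parallelization and concatenation identities already recorded in Appendix~\ref{app:NNoperations}.
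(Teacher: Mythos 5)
Your proof is correct and takes essentially the same route as the paper: invoke the scalar $\sigma_q$-identity from \cite[Theorem 2.5(2)]{Li.2020} and lift to $\IR^d$ by parallelization. The paper's own proof is just this one-line citation plus ``a parallelization argument''; the extra detail you supply — the finite-difference mechanism behind the scalar block, and, importantly, the observation that the $\sbullet$-concatenation merges only two bounded matrices at each seam so ${\rm mpar}$ stays $O(1)$ independently of $L$ — correctly fills in the bookkeeping the paper leaves implicit, and in particular avoids the circularity of invoking the paper's $\sigma_q$-sparse concatenation (which itself presupposes this lemma).
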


We next define the sparse concatenation of two NNs.
\begin{definition}[$\sigma_1$-sparse concatenation, {\cite[Definition 2.5]{Petersen.2018}}]
\label{def:sparseconcatenationNN}
Let $f$ and $g$ be two $\sigma_1$-NNs. Furthermore, let the output dimension $m_g$ of $g$ equal the input dimension $n_f$ of $f$. Then there exists a $\sigma_1$-NN $f\circ g$ with
\begin{equation*}
    \label{eq:sparseconncatentaionNN}
    f\circ g\coloneqq f\sbullet\Id_{\IR_{m_g}}\sbullet g
\end{equation*}
realizing the composition $f\circ g:\,x\mapsto f(g(x))$ of the functions $f$ and $g$.\footnote{The symbol $\circ$ does mean either the functional concatenation of $f$ and $g$ or the sparse concatenation of the NN $f$ and $g$ (which realizes the function $f\circ g$).} It holds 
\begin{align}
{\rm size}(f\circ g)&\leq {\rm size}(g)+{\rm size}_{{\rm out}}(g)+{\rm size}_{{\rm in}}(f)+{\rm size}(f)\leq 2{\rm size}(f)+2{\rm size}(g),\label{eq:sparseconcatsize}\\
{\rm size}_{{\rm in}}(f\circ g)&\leq\begin{cases}
    {\rm size}_{{\rm in}}(g)\quad & {\rm depth}(g)\geq 1,\\
    2{\rm size}_{{\rm in}}(g)\quad & {\rm depth}(g)=0, 
\end{cases}\label{eq:sparseconcatsizein}\\
{\rm size}_{{\rm out}}(f\circ g)&\leq \begin{cases}
    {\rm size}_{{\rm out}}(f)\quad & {\rm depth}(f)\geq 1,\\
    2{\rm size}_{{\rm out}}(f)\quad & {\rm depth}(f)=0, 
\end{cases}\label{eq:sparseconcatsizeout}\\
{\rm depth}(f\circ g)&={\rm depth}(f)+{\rm depth}(g)+1,\label{eq:sparseconcatdepth}\\
{\rm width}(f\circ g)&\leq 2\max\{{\rm width}(f),{\rm width}(g)\},\label{eq:sparseconcatwidth}\\
{\rm mpar}(f\circ g)&\leq \max\left\{{\rm mpar}(f),{\rm mpar}(g)\right\},\label{eq:sparseconcatweightbound}\\
 {\rm mran}_{\Omega}\left(f\circ g\right)&=B_{g(\Omega)}\left(f\right).\label{eq:sparseconcatl2bound}
\end{align}
\end{definition}
\begin{proof}
The bounds in \eqref{eq:sparseconcatdepth}, \eqref{eq:sparseconcatwidth} and \eqref{eq:sparseconcatl2bound} follow from Definition \ref{def:Identityemulationsigma1} with the NN calculus from Definition \ref{def:NNconcatenation}. The bounds on the sizes in \eqref{eq:sparseconcatsize}--\eqref{eq:sparseconcatsizeout} and the weight and bias bound \eqref{eq:sparseconcatweightbound} follow from the specific structure of the $\sigma_1$-identity network, see \cite[Remark 2.6]{Petersen.2018}.\end{proof}

We proceed with the sparse concatenation of $\sigma_q$-NNs.
\begin{definition}[$\sigma_q$-sparse concatenation]
\label{def:sigqsparseconcatenationNN} Let $q\in\IN$ with $q\geq 2$. Let $f$ and $g$ be two $\sigma_q$-NNs. Furthermore, let the output dimension $m_g$ of $g$ equal the input dimension $n_f$ of $f$. Then there exists a $\sigma_q$-NN $f\circ g$ with
\begin{equation*}
    \label{eq:sigqsparseconncatentaionNN}
    f\circ g\coloneqq f\sbullet\Id_{\IR_{m_g}}\sbullet g
\end{equation*}
realizing the composition $f\circ g:\,x\mapsto f(g(x))$ of the functions $f$ and $g$. It holds 
\begin{align}
{\rm size}(f\circ g)&\leq {\rm size}(g)+(C_q-1){\rm size}_{{\rm out}}(g)+(C_q-1){\rm size}_{{\rm in}}(f)+{\rm size}(f)\\
&\leq C_q{\rm size}(f)+C_q{\rm size}(g),\label{eq:sigqsparseconcatsize}\\
{\rm size}_{{\rm in}}(f\circ g)&\leq\begin{cases}
    {\rm size}_{{\rm in}}(g)\quad & {\rm depth}(g)\geq 1,\\
    C_q{\rm size}_{{\rm in}}(g)\quad & {\rm depth}(g)=0, 
\end{cases}\label{eq:sigqsparseconcatsizein}\\
{\rm size}_{{\rm out}}(f\circ g)&\leq \begin{cases}
    {\rm size}_{{\rm out}}(f)\quad & {\rm depth}(f)\geq 1,\\
    C_q{\rm size}_{{\rm out}}(f)\quad & {\rm depth}(f)=0, 
\end{cases}\label{eq:sigqsparseconcatsizeout}\\
{\rm depth}(f\circ g)&={\rm depth}(f)+{\rm depth}(g)+1,\label{eq:sigqsparseconcatdepth}\\
{\rm width}(f\circ g)&\leq C_q\max\{{\rm width}(f),{\rm width}(g)\},\label{eq:sigqsparseconcatwidth}\\
{\rm mpar}(f\circ g)&\leq C_q\max\left\{{\rm mpar}(f),{\rm mpar}(g)\right\},\label{eq:sigqsparseconcatweightbound}\\
 {\rm mran}_{\Omega}\left(f\circ g\right)&=B_{g(\Omega)}\left(f\right)\label{eq:sigqsparseconcatl2bound}
\end{align}
with a constant $C_q>1$ depending only on $q$.
\end{definition}
\begin{proof} 
The bounds in \eqref{eq:sigqsparseconcatdepth}, \eqref{eq:sigqsparseconcatwidth} and \eqref{eq:sigqsparseconcatl2bound} follow from Definition \ref{def:Identityemulationsigmaq} with the NN calculus from Definition \ref{def:NNconcatenation}. The bounds on the sizes in \eqref{eq:sigqsparseconcatsize}--\eqref{eq:sigqsparseconcatsizeout} and the weight and bias bound  in \eqref{eq:sigqsparseconcatweightbound} hold because of the specify structure of the $\sigma_q$-identity network $\Id_{\IR^n}$, see \cite[Eq.~(2.57)]{Li.2020}.\end{proof}

Definitions \ref{def:sparseconcatenationNN} and \ref{def:sigqsparseconcatenationNN} show that one can control the size, as well as the weights and biases of the concatenation of two NN by inserting one additional identity layer between the two networks.
We end this subsection by introducing summation and scalar multiplication networks. 
\begin{definition}[Summation networks] \label{def:summationnetworks}
Let $q\in\IN$, $q\geq2$, $\sigma\in\{\sigma_1,\sigma_q\}$ and $d,m\in\IN$. Then there exists a $\sigma$-NN $\Sigma_m$ such that for $x_1,\dots,x_m\in\IR^d$
\begin{align*}
    \Sigma_m(x_1,\dots,x_m)=\sum_{i=1}^mx_i,
\end{align*}
with ${\rm depth}(\Sigma_m)=0$, ${\rm width}(\Sigma_m)=md$, ${\rm size}(\Sigma_m)=md$ and ${\rm mpar}(\Sigma_m)=1$.
\end{definition}
\begin{proof} Set $\pmb{w}^1=(\mathbb{1}_d,\dots,\mathbb{1}_d)$ and $\pmb{b}^1=0$ with the $d\times d$ identity matrices $\mathbb{1}_d$.\end{proof}
\begin{definition}[Scalar multiplication networks]\label{def:scalarmultiplicatoinsimaq}
Let $q\in\IN$ with $q\geq 2$ and $\sigma\in\{\sigma_1,\sigma_q\}$. Let $\alpha\in\IR$ and $d\in\IN$. Then there exists a $\sigma$-NN $SM_{\alpha}$ with 
\begin{equation*}
    SM_{\alpha}(x)=\alpha x,\qquad x\in\IR^d.
\end{equation*}
Furthermore, there exists a constant $C_q$ only depending on $q$ such that
\begin{align}
    {\rm depth}\left(SM_{\alpha}\right)&\leq C_q\max\{1,\log(\lvert\alpha\rvert)\},\label{eq:SMsigmaqL}\\
    {\rm width}\left(SM_{\alpha}\right)&\leq C_q d,\label{eq:SMsigmaqp}\\
    {\rm size}\left(SM_{\alpha}\right)&\leq C_q d\max\{1,\log(\lvert\alpha\rvert)\},\label{eq:SMsigmaqs}\\
    {\rm mpar}\left(SM_{\alpha}\right)&\leq C_q .\label{eq:SMsigmaqM}
    \end{align}
\end{definition}
\begin{proof}
For the proof of the $\sigma_1$-case, see \cite[Lemma A.1]{Elbrachter.2021}. We prove the RePU case in the following.

Without loss of generality we can choose $\alpha>1$. For $\alpha<0$ we set $SM_{\alpha}=-SM_{-\alpha}$. For $0<\alpha<1$ we set $SM_{\alpha}=\alpha\Id_{\IR^d}$, where we directly multiply the weights and biases of the identity network (with depth $L=1$) with $\alpha$.

Therefore let $\alpha>1$.
Let $K$ be the maximum integer smaller than $\log_2(\alpha)$, and set $\tilde{\alpha}=2^{-K+1}\alpha<1$. Furthermore, set $A_1=(\Id_{\IR^d},\Id_{\IR^d})$ and $A_2=\Sigma_2$ with the one-layered identity network $\Id_{\IR^d}$ and the summation network $\Sigma_2$ from Definition \ref{def:summationnetworks}. We notice that 
\begin{align*}
    A_2\sbullet A_1 x=2x\quad\forall x\in\IR^d.
\end{align*}
Using the bounds for $\Sigma_2$ from Definition \ref{def:summationnetworks} and $\Id_{\IR^d}$ from Definition \ref{def:Identityemulationsigmaq}, we have ${\rm depth}(A_2\sbullet A_1)=1$, ${\rm width}(A_2\sbullet A_1)\leq C_q d$, ${\rm size}(A_2\sbullet A_1)\leq C_qd$ and ${\rm mpar}(A_2\sbullet A_1)\leq C_q$. Setting $A_{2k+1}=A_1$, $A_{2k+2}=A_2$ for $k=1,\dots, K$ and $A_{2K+3}=\tilde{\alpha}\Id_{\IR^d}$, we get
\begin{align*}
    A_{2K+3}\circ A_{2K+2}\sbullet A_{2K+1}\circ A_{2K}\sbullet A_{2K-1}\circ\dots..\circ A_2\sbullet A_1x=\alpha x\quad\forall x\in\IR^d.
\end{align*}
Applying the $\sigma_q$-NN calculus for concatenation (Definition \ref{def:NNconcatenation}) and sparse concatenation (Definition \ref{def:sigqsparseconcatenationNN}) gives the desired bounds for $SM_{\alpha}$.
\end{proof}

\subsection{Neural Network Approximation Theory}\label{app:NNapproximation}
In this subsection, we summarize approximation results for ReLU and RePU neural networks. In recent years, the expressivity and approximation properties of neural network architectures have been extensively studied in the literature (e.g., \cite{mhaskar1996neural,MR1819645,Yarotsky.2017,poggio2017and,Petersen.2018,Elbrachter.2021,MR4376568,de2021approximation}). However, with few exceptions (e.g., \cite{SH20,de2021approximation,Elbrachter.2021}), most of these works do not provide bounds on the size of the weights, which are crucial for controlling the entropy. Therefore, we revisit some of these arguments to provide complete proofs of our results.

We start with the well-known result that ReLU-NNs can approximate the multiplication map exponentially fast. The following proposition was shown in \cite[Proposition III.3]{Elbrachter.2021}. 
\begin{proposition}[$\sigma_1$-NN approximation of multiplication, cf.~{\cite[Propsition III.3]{Elbrachter.2021}}]
    \label{prop:approxmult2numbersReLU1}
    Let $D\in\IR$, $D\geq 1$ and $\delta\in (0,1/2)$. Then there exists a $\sigma_1$-NN $\tilde{\times}_{\delta,D}:\,[-D,D]^2\to\IR$ satisfying
    \begin{equation*}
       \sup_{x,y\in [-D,D]} \left\lvert\tilde{\times}_{\delta,D}(x,y)-xy\right\rvert\leq\delta.
    \end{equation*}
    Furthermore, there exists a constant $C$, independent of $D$ and $\delta$, such that
    \begin{align}
       {\rm depth}\left(\tilde{\times}_{\delta,D}\right)&\leq C\left(\log(D)+\log\left(\delta^{-1}\right)\right),\label{eq:NNmultiplication2unmbersdepth}\\
       {\rm width}\left(\tilde{\times}_{\delta,D}\right)&\leq 5,\label{eq:NNmultiplication2unmberswidth}\\
       {\rm size}\left(\tilde{\times}_{\delta,D}\right)&\leq C\left(\log( D)+\log\left(\delta^{-1}\right)\right)\label{eq:NNmultiplication2unmberssize},\\{\rm mpar}\left(\tilde{\times}_{\delta,D}\right)&\leq 1.\label{eq:NNmultiplication2unmbersmaximumweights}
    \end{align}
\end{proposition}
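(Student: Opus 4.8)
The plan is to build the approximating network $\tilde\times_{\delta,D}$ by composition of two standard ingredients: an efficient ReLU approximation of the squaring map $x\mapsto x^2$ on a bounded interval, together with the polarization identity $xy = \tfrac14\bigl((x+y)^2-(x-y)^2\bigr)$. First I would recall (or construct) the classical ``sawtooth'' approximation of the univariate square function due to Yarotsky: there is a $\sigma_1$-NN $\mathrm{sq}_m:[-1,1]\to\R$ built from $m$ compositions of the triangle/hat function such that $\sup_{x\in[-1,1]}|\mathrm{sq}_m(x)-x^2|\le 2^{-2m-2}$, with $\mathrm{depth}(\mathrm{sq}_m)=O(m)$, $\mathrm{width}(\mathrm{sq}_m)=O(1)$, $\mathrm{size}(\mathrm{sq}_m)=O(m)$, and crucially $\mathrm{mpar}(\mathrm{sq}_m)\le 1$ since all weights in the hat-function construction lie in $\{-2,-1,1,2,1/2,\dots\}$ — here one must be slightly careful and, if necessary, rescale so that all parameters are bounded by $1$ (e.g. by splitting a weight of size $2$ into two layers, or by absorbing constants), which only changes depth and size by a bounded factor.

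Next I would handle the rescaling from $[-D,D]$ to $[-1,1]$. Since $|x\pm y|\le 2D$ for $x,y\in[-D,D]$, I set $u=(x+y)/(2D)$, $v=(x-y)/(2D)\in[-1,1]$ and use $xy = D^2\bigl(u^2-v^2\bigr)$. The affine maps $(x,y)\mapsto u$ and $(x,y)\mapsto v$ are realized in the input layer; the constant $1/(2D)$ is $\le 1$ so $\mathrm{mpar}$ is not affected there, but the final multiplication by $D^2$ requires a scalar-multiplication subnetwork — by the $\sigma_1$ case of Definition~\ref{def:scalarmultiplicatoinsimaq} (citing \cite[Lemma A.1]{Elbrachter.2021}), $SM_{D^2}$ has $\mathrm{depth}=O(\log D)$, $\mathrm{width}=O(1)$, $\mathrm{size}=O(\log D)$, $\mathrm{mpar}\le 1$. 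Then, choosing the accuracy level $m = m(\delta,D)$ so that $D^2\cdot 2\cdot 2^{-2m-2}\le\delta$, i.e. $m\simeq \log D + \log(\delta^{-1})$, and parallelizing the two copies of $\mathrm{sq}_m$ (Definition~\ref{def:NNparalleliazion}), subtracting via a summation network (Definition~\ref{def:summationnetworks}), and sparsely concatenating with $SM_{D^2}$ (Definition~\ref{def:sparseconcatenationNN}) to keep all parameter bounds $\le 1$, yields the claimed architecture with $\mathrm{depth}, \mathrm{size} = O(\log D + \log(\delta^{-1}))$, $\mathrm{width}\le 5$ (the two parallel squaring nets each of width $O(1)$, plus the two affine preactivations, can be laid out within width $5$ by the explicit Yarotsky construction), and $\mathrm{mpar}\le 1$.

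Since this is precisely \cite[Proposition III.3]{Elbrachter.2021}, the cleanest route is simply to invoke that reference, verifying only that the parameter bounds stated there match the normalization $\mathrm{mpar}\le 1$ used in Definition~\ref{def:NN} of the present paper. I would therefore present the proof as a short reduction: recall the Yarotsky square approximation with bounded weights, apply polarization, track the constants through the NN-calculus lemmas of Appendix~\ref{app:NNoperations}, and cite \cite{Elbrachter.2021} for the detailed bookkeeping. The main obstacle is not conceptual but purely a matter of \emph{constant control}: ensuring that rescaling by $1/(2D)$ and by $D^2$, and the hat-function weights, can all be arranged to have absolute value at most $1$ while keeping width bounded by $5$ and depth/size logarithmic — this is where one must either cite the careful construction in \cite{Elbrachter.2021} or redo the explicit layer-by-layer weight accounting, and it is the only place where a naive argument would accidentally produce weights of size $\Theta(D^2)$ or width growing with the number of composition levels.
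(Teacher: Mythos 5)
The paper gives no proof of this proposition at all — it is stated as a direct import of \cite[Proposition III.3]{Elbrachter.2021} — and your proposal's "cleanest route" of simply invoking that reference (after checking that its weight normalization matches ${\rm mpar}\le 1$) is exactly what the paper does. Your sketch of the underlying construction (Yarotsky's sawtooth approximation of the square, polarization, rescaling by $1/(2D)$ and $D^2$ via a scalar-multiplication subnetwork, with $m\simeq\log D+\log(\delta^{-1})$ composition levels) correctly reproduces the argument of the cited reference, and you rightly identify the weight- and width-bookkeeping as the only delicate point.
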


The next proposition shows that the multiplication map can be exactly realized by a $\sigma_q$-NN, which follows directly from \cite[Theorem 2.5 and Eq.~(2.59)]{Li.2020}.
\begin{proposition}[$\sigma_q$-NN approximation of multiplication]
    \label{prop:sigqapproxmult2numbersReLU}
    Let $q\in\IN$, $q\geq2$. There exists a $\sigma_q$-NN
    $\tilde{\times}:\,\IR^2\to\IR$ with ${\rm depth}(\tilde{\times})=1$ exactly realizing the multiplication of two numbers, i.e. 
    \begin{equation*}
       \tilde{\times}(x,y)=xy\quad \forall x,y\in\IR.
    \end{equation*}
\end{proposition}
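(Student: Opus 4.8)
The plan is to exhibit an explicit $\sigma_q$-NN with one hidden layer that realizes $(x,y)\mapsto xy$ exactly, by exploiting the polarization-type identity together with the fact that for $q\ge 2$ the map $z\mapsto \sigma_q(z)=\max\{0,z\}^q$ is a (one-sided) power function and that $z^q$ itself can be written as a fixed linear combination of finitely many shifted RePU units. The key algebraic observation is that
\[
  xy = \frac{(x+y)^2 - (x-y)^2}{4},
\]
so it suffices to realize $z\mapsto z^2$ exactly by a depth-$1$ $\sigma_q$-NN and then take the appropriate linear combination in the output layer; the preprocessing $z_\pm := x\pm y$ is itself linear and can be folded into the first layer's weight matrix, so no extra depth is incurred.

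First I would recall from \cite[Theorem 2.5]{Li.2020} (and the explicit formula \cite[Eq.~(2.59)]{Li.2020}) that $z\mapsto z^2$ admits an exact representation as a single-hidden-layer $\sigma_q$-network. Concretely, since $z^2 = (\sigma_q(z)^{2/q}+\sigma_q(-z)^{2/q})$ is not yet linear in the RePU outputs for $q>2$, one instead uses that on all of $\R$ the monomial $z^q$ equals $\sigma_q(z)+(-1)^q\sigma_q(-z)$, and more generally any polynomial of degree $\le q$ is a finite linear combination of $\{\sigma_q(a_k z + b_k)\}_k$ for suitably chosen fixed scalars $a_k,b_k$ (a standard finite-difference / Vandermonde argument — $q+1$ distinct shifts suffice to span all polynomials of degree $\le q$, which in particular includes $z^2$). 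This gives the hidden layer: neurons computing $\sigma_q(a_k z_+ + b_k)$ and $\sigma_q(a_k z_- + b_k)$ for $k=0,\dots,q$, with $z_\pm = x\pm y$ obtained by the input-to-hidden weights. The output layer then takes the fixed linear combination reproducing $\tfrac14\big(z_+^2 - z_-^2\big) = xy$. Since everything is an exact identity, there is no approximation error and the width/size are absolute constants depending only on $q$; the depth is $1$ as claimed.

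The main obstacle — really the only subtle point — is verifying that $z\mapsto z^2$ (equivalently, the degree-$2$ monomial, a polynomial of degree strictly less than $q$ when $q\ge 3$) is \emph{exactly} in the span of shifted RePU activations rather than merely well-approximated. This is where one invokes the cited construction in \cite{Li.2020}: the RePU network there is designed so that linear combinations of $\sigma_q$ at $q+1$ distinct knots reproduce the full space of polynomials of degree $\le q$ exactly, via an invertible Vandermonde-type linear system for the output coefficients. I would simply cite \cite[Theorem 2.5 and Eq.~(2.59)]{Li.2020} for this fact and assemble the polarization identity on top of it; no further estimates are needed, and boundedness of the domain is irrelevant here precisely because the realization is exact on all of $\R^2$.
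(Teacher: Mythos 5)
Your proof is correct and matches the paper's approach: the paper simply cites \cite[Theorem 2.5 and Eq.~(2.59)]{Li.2020}, whose underlying construction is exactly the polarization identity $xy=\frac14((x+y)^2-(x-y)^2)$ combined with the fact that $z^2$ lies in the exact span of $\sigma_q$ at $q+1$ distinct shifts (via $(z+b_k)^q=\sigma_q(z+b_k)+(-1)^q\sigma_q(-z-b_k)$ and an invertible Vandermonde system). Folding $z_\pm=x\pm y$ into the first-layer weights indeed keeps the depth at $1$, so nothing is missing.
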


We can extend the above results to the multiplication of $N$ numbers, see \cite[Proposition 2.6]{Opschoor.2022}.
\begin{proposition}[$\sigma_1$-NN multiplication of $N$ numbers]
\label{prop:sigma1multiplicationNNnumbers}
Let $N\in\IN$ with $N\geq 2$. Furthermore, let $D\in\IR$, $D\geq 1$ and $\delta\in(0,1/2)$. Then there exists a $\sigma_1$-NN $\widetilde{\prod}_{\delta,D}$: $[-D,D]^N\to \IR$ such that
\begin{equation}
    \label{eq:sigma1multinnunmberapproximation}
    \sup_{(y_i)_{i=1}^N\in [-D,D]^N}\left|\prod_{j=1}^N y_j-\widetilde{\prod}_{\delta,D}(y_1,\dots,y_N)\right|\leq\delta.
\end{equation}
Furthermore, there exists a constant $C$ independent of $N$, $\delta$ and $D$ such that
\begin{align}
 {\rm depth}\left(\widetilde{\prod}_{\delta,D}\right)&\leq C\log(N) \left(\log(N)+N\log(D)+\log\left(\delta^{-1}\right)\right),\label{sigma1multinnunmberapproximationlength}\\
 {\rm width}\left(\widetilde{\prod}_{\delta,D}\right)&\leq 5N,\label{sigma1multinnunmberapproximationwidth}\\
 {\rm size}\left(\widetilde{\prod}_{\delta,D}\right)&\leq CN\left(\log(N)+N\log(D)+\log\left(\delta^{-1}\right)\right),\label{sigma1multinnunmberapproximationsize}\\
 {\rm mpar}\left(\widetilde{\prod}_{\delta,D}\right)&\leq 1.
 \label{sigma1multinnunmberapproximationweihgtbound}
\end{align}
\end{proposition}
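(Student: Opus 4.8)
The plan is to realize the $N$-fold product by a balanced binary \emph{multiplication tree} whose internal nodes are copies of the two-factor approximate multiplication network $\tilde{\times}_{\delta',D'}$ from Proposition \ref{prop:approxmult2numbersReLU1}. Set $L:=\lceil\log_2 N\rceil$. I may assume $N=2^L$: otherwise I pad the input to $N':=2^L<2N$ components by appending constant $1$'s, realized through zero weights and bias $1$ together with a $\sigma_1$-identity realization of the genuine inputs in the first layer; this leaves $\prod_j y_j$ unchanged and keeps all entries in $[-D,D]$ since $D\ge 1$. The tree has $L$ levels: level $k\in\{1,\dots,L\}$ contracts $N'2^{-k+1}$ values into $N'2^{-k}$ values, each input to a level-$k$ node being (an approximation of) a partial product of $2^{k-1}$ original factors, hence of modulus at most $D_k:=D^{2^{k-1}}+1$. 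I therefore place $N'2^{-k}$ parallel copies of $\tilde{\times}_{\delta_k,D_k}$ at level $k$ (accuracies $\delta_k$ fixed below), using the $N$-fold parallelization of Definition \ref{def:NNparalleliazion} with \eqref{eq:sizemoreparallelizationNN}--\eqref{eq:maxl2moreparallelizationNN}, and glue consecutive levels by the $\sigma_1$-sparse concatenation of Definition \ref{def:sparseconcatenationNN}.

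For the error, let $e_k$ be the worst-case deviation of the level-$k$ outputs from the corresponding true partial products, $e_0=0$. Each level-$k$ output is $\tilde{\times}_{\delta_k,D_k}(\hat a,\hat b)$ with $|\hat a-a|,|\hat b-b|\le e_{k-1}$, $|\hat a|,|\hat b|\le D_k$ and $|ab|\le D^{2^{k-1}}\le D_k$, so
\begin{equation*}
  e_k\le \delta_k+\bigl|\hat a\hat b-ab\bigr|\le \delta_k+2D_k e_{k-1},
\end{equation*}
which unrolls to $e_L\le\sum_{k=1}^L\delta_k\prod_{j=k+1}^L 2D_j$. Since $D\ge 1$ gives $D_j\le 2D^{2^{j-1}}$, the exponents telescope: $\prod_{j=k+1}^L 2D_j\le 4^{L-k}D^{2^L-2^k}\le 4^LD^{2N}$. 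Hence choosing $\delta_k\equiv \delta\,(L4^LD^{2N})^{-1}$ forces $e_L\le\delta$, which yields \eqref{eq:sigma1multinnunmberapproximation}; moreover $\log(\delta_k^{-1})\lesssim \log(\delta^{-1})+\log L+L+N\log D\lesssim \log(\delta^{-1})+\log N+N\log D$.

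It remains to collect the architecture bounds. By Proposition \ref{prop:approxmult2numbersReLU1}, each block $\tilde{\times}_{\delta_k,D_k}$ has width $\le 5$, $\mathrm{mpar}\le 1$, and depth and size $\lesssim \log D_k+\log(\delta_k^{-1})\lesssim \log N+N\log D+\log(\delta^{-1})=:s$, uniformly in $k$ (using $\log D_k\le 1+2^{k-1}\log D\le 1+N\log D$). The $\sigma_1$-identity interface layers inserted by sparse concatenation have parameters in $\{0,1\}$ (Lemma \ref{def:Identityemulationsigma1}), as do the padding neurons, so $\mathrm{mpar}\le 1$, giving \eqref{sigma1multinnunmberapproximationweihgtbound}. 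Up to the $L-1$ depth-one identity interface layers, the network is the vertical stacking of the $L$ level-parallelizations; hence its depth equals $\sum_{k=1}^L\mathrm{depth}(\text{level }k)+(L-1)\lesssim Ls$, which is \eqref{sigma1multinnunmberapproximationlength}; its width is the largest layer width, bounded by $\max_k 5\cdot N'2^{-k}\le 5N$ and the interface widths $\le 2N$, giving \eqref{sigma1multinnunmberapproximationwidth}; and its size is $\sum_{k=1}^L\bigl(N'2^{-k}\bigr)O(s)+O(NL)\lesssim Ns+NL\lesssim N(\log N+N\log D+\log(\delta^{-1}))$, which is \eqref{sigma1multinnunmberapproximationsize}.

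The main obstacle is the error bookkeeping in the second step: the level accuracies $\delta_k$ must be chosen so that the multiplicative amplification through the tree does not destroy the target accuracy $\delta$, while keeping $\log(\delta_k^{-1})$ only logarithmic in $\delta^{-1}$ and \emph{linear} in $N\log D$. This succeeds precisely because the per-level blow-up factors $D^{2^{k-1}}$ have exponents that telescope to $\le 2N$ along the tree rather than compounding multiplicatively.
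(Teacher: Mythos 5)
Your proof is correct and follows essentially the same binary-tree construction as the paper's: pad the input to a power of two, contract pairwise with the two-factor network $\tilde{\times}_{\cdot,\cdot}$ from Proposition~\ref{prop:approxmult2numbersReLU1}, track per-level range and error by induction, and collect architecture bounds via the parallelization and sparse-concatenation calculus. One detail to pin down: your use of $|\hat a|,|\hat b|\le D_k$ in the error recursion presupposes $e_{k-1}\le 1$, so the range bound and the error bound must be established \emph{jointly} by induction over $k$ (exactly as the paper does with its $|y_j^l|\le D_l$ claim); this does follow from your choice of $\delta_k$, since the unrolled bound gives $e_k\le\delta<1/2$ for every $k\le L$, but the bootstrap should be stated explicitly (also note the typo: $|ab|\le D^{2^{k-1}}$ should read $|a|,|b|\le D^{2^{k-1}}$).
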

\label{app:proofsig1Nnumbers}
\begin{proof}
Analogous to \cite[Proposition 2.6]{Opschoor.2022} we construct $\widetilde{\prod}_{\delta,D}$ as a binary tree of $\tilde{\times}_{.,.}$-networks from Proposition \ref{prop:approxmult2numbersReLU1}. We modify the proof of \cite{Opschoor.2022} to get a construction with bounded weights.

Define $\tilde{N}\coloneqq\min\{2^k:\,k\in \IN,\,2^k\geq N\}$. We now consider the multiplication of $\tilde{N}$ numbers with $y_{N+1},\dots,y_{\tilde{N}}\coloneqq 1$. This can be implemented by a zero-layer network with 
\begin{align}
w_{i,j}^1&=
\begin{cases}
1\qquad i=j\leq N,\\
0\qquad{\rm otherwise},
\end{cases}\\
b_j^1&=
\begin{cases}
    0\qquad j\leq N,\\
    1\qquad N<j\leq\tilde{N}.
\end{cases}\label{eq:sigma1NNmultNlayersfirstlayer}
\end{align}
For $l=0,\dots,\log_2{\tilde{N}}-1$ we define the mapping $R^l$ 
\begin{align}
    \label{eq:proofNNmultapproxR}
    R^l&:[-D_l,D_l]^{\frac{\tilde{N}}{2^l}}\mapsto [-D_{l+1},D_{l+1}]^{\frac{\tilde{N}}{2^{l+1}}},\notag\\ 
    R^l(y_1^l,\dots,y_{2^{\log_2(\tilde{N})-l}}^l)&\coloneqq \left(\tilde{\times}_{\delta',D^l}(y_1^l,y_2^l),\dots,\tilde{\times}_{\delta',D^l}(y_{2^{\log_2(\tilde{N})-l}-1}^l,y_{2^{\log_2(\tilde{N})-l}}^l)\right)
\end{align}
with $\delta'\coloneqq\delta/(\tilde{N}^2D^{2\tilde{N}})$ and $D_l\coloneqq 2^{l} D^{2^{l}}$.
We now set
\begin{equation}
    \label{eq:proofNNmultapproxprodtilde}
    \widetilde{\prod}_{\delta,D}\coloneqq\ R^{\log_2(\tilde{N})-1}\circ\dots\circ R^0.
\end{equation}
Eq.~\eqref{eq:proofNNmultapproxprodtilde} shows that the map $R^l$ describes the multiplications on level $l$ of the binary tree $\widetilde{\prod}_{\delta,D}$. In order for \eqref{eq:proofNNmultapproxprodtilde} to be well-defined, we have to show that the outputs of the NN $R^ l$ are admissible inputs for the NN $R^{l+1}$. 

We therefore denote with $y_j^l$, $j=1,\dots,2^{\log_2(\tilde{N})-l}$, $l=1,\dots,\log_2(\tilde{N})-1$, the output of the network $R^{l-1}\circ\dots\circ R^0$ applied to the input $y_k^0=y_k$, $k=1,\dots,\tilde{N}$. Then we have to show $|y_j^l|\leq D_l$ for $l=0,\dots,\log_2(\tilde{N})-1$ and $j=1,\dots,2^{\log_2(\tilde{N})-l}$. We will show this claim by induction. For $l=0$ it holds $|y_j^l|\leq D=D_0$. Now assume $|y_j^l|\leq D_l$ for arbitrary but fixed $l\in\{0,\dots,\log_2(\tilde{N})-2\}$ and all $j=1,\dots,2^{\log_2(\tilde{N})-l}$. Then it holds
\begin{align*}
    |y_j^{l+1}|&=|\tilde{\times}_{\delta',D^l}(y^l_{2j-1},y_{2j}^l)|\notag\\
    &=|y^l_{2j-1}\cdot y_{2j}^l+\delta'|\leq D_l^2+\underbrace{\delta'}_{\leq 1\leq D_l^2}\leq 2D_l^2=D_{l+1}
\end{align*}
for all $j=1,\dots,2^{\log_2(\tilde{N})-(l+1)}$, which shows the claim.
We proceed by showing the error bound in \eqref{eq:sigma1multinnunmberapproximation}.
Therefore define 
\begin{equation*}
    z_j^l\coloneqq\prod_{k=1}^{2^l}y_{k+2^l(j-1)}
\end{equation*}
for $l=0,\dots,\log_2(\tilde{N})$ and $j=1,\dots,2^{\log_2(\tilde{N})-l}$. The quantites $z_j^l$ describe the exact computations up to level $l$ of the binary tree, i.e.~the output of level $l-1$, if one uses standard multiplication instead of the multiplication networks $\tilde{\times}$ in the first $l-1$ levels.  We now prove
\begin{align}
\label{eq:multiplicationNNproof1}
    | y_j^l-z_j^l|\leq 4^lD^{2^{l+1}}\delta',\quad j=1,\dots,2^{\log_2(\tilde{N})-l}
\end{align}
by induction over $l=0,\dots,\log_2 \tilde{N}$.
Inserting $l=\log_2(\tilde{N})$ then shows the error bound in \eqref{eq:sigma1multinnunmberapproximation} using the definition of $\delta'$.

We have $y_j^0=y_j=z_j^0$ for $j=1,\dots,\tilde{N}$, therefore \eqref{eq:multiplicationNNproof1} holds for $l=0$. Now assume \eqref{eq:multiplicationNNproof1} to hold for an arbitrary but fixed $l\in\{0,\dots,\log_2(\tilde{N})-1\}$. For $j=1,\dots,2^{\log_2(\tilde{N})-(l+1)}$ it holds 
\begin{align}
    \label{eq:multiplicationNNproof2}
    | y_j^{l+1}-z_j^{l+1}|&=\left|\tilde{\times}_{D_l,\delta'}(y_{2j-1}^l,y_{2j}^l)-z_{2j-1}^l z_{2j}^l\right|\notag\\
    &=\left|y_{2j-1}^ly_{2j}^l+\delta'-z_{2j-1}^l z_{2j}^l\right|\notag\\
    &=\left|(y_{2j-1}^l-z_{2j-1}^l+z_{2j-1}^l)\cdot(y_{2j}^l-z_{2j}^l+z_{2j}^l)+\delta'-z_{2j-1}^l z_{2j}^l\right|\notag\\
    &=\big|(y_{2j-1}^l-z_{2j-1}^l)\cdot(y_{2j}^l-z_{2j}^l)+
    (y_{2j-1}^l-z_{2j-1}^l)z_{2j}^l\notag\\
    &\qquad+(y_{2j}^l-z_{2j}^l)z_{2j-1}^l+\delta'\big|\notag\\
    &\leq\underbrace{\left|(y_{2j-1}^l-z_{2j-1}^l\right|)}_{{\rm use \eqref{eq:multiplicationNNproof1}}}\cdot\underbrace{\left|y_{2j}^l-z_{2j}^l\right|}_{\leq 1}+
    \underbrace{\left|y_{2j-1}^l-z_{2j-1}^l\right|}_{{\rm use \eqref{eq:multiplicationNNproof1}}}\cdot\underbrace{\left|z_{2j}^l\right|}_{\leq D^{2^l}}\notag\\
    &\qquad+\underbrace{\left|y_{2j}^l-z_{2j}^l\right|}_{{\rm use \eqref{eq:multiplicationNNproof1}}}\cdot\underbrace{\left|z_{2j-1}^l\right|}_{\leq D^{2^l}}+|\delta'|\notag\\
    &\leq\delta'\left(4^lD^{2^{l+1}}\left(1+2D^{2^l}\right)+1\right)\notag\\
    &\leq 4\,4^lD^{2^{l+1}} D^{2^{l+1}}\delta'\leq 4^{l+1} D^{2^{l+2}}\delta',
\end{align}
which shows \eqref{eq:multiplicationNNproof1} for $l+1$
and therefore the claim. 

We proceed by calculating the depth of $\widetilde{\prod}_{\delta,D}$.
Since $\widetilde{\prod}_{\delta,D}$ concatenates the maps $\tilde{\times}_{\delta',D_l}$, we can repeatedly use \eqref{eq:sparseconcatdepth} and get
\begin{align*}
    {\rm depth}\left(\widetilde{\prod}_{\delta,D}\right)&\leq \sum_{j=0}^{\log_2(\tilde{N})-1}{\rm depth}\left(\tilde{\times}_{\delta',D_j}\right)+\log_2(\tilde{N})-1.
    \end{align*}
    We use the depth bound for $\tilde{\times}$ from \eqref{eq:NNmultiplication2unmbersdepth} and calculate
    \begin{align}
    {\rm depth}\left(\widetilde{\prod}_{\delta,D}\right)&\leq  C\sum_{j=0}^{\log_2(\tilde{N})-1} \log\left(D_j\delta'^{-1}\right)+\log_2(\tilde{N})\notag\\
        &= C\sum_{j=0}^{\log_2(\tilde{N})-1}\log\left(2^jD^{2^j}\delta'^{-1}\right)+\log_2(\tilde{N})\notag\\
        &=C\log\left(\prod_{j=0}^{\log_2(\tilde{N})-1}2^jD^{2^j}\delta'^{-1}\right)+\log_2(\tilde{N})\notag\\
        &\leq C\log\left(2^{\frac{\log_2(\tilde{N})\cdot(\log_2(\tilde{N})-1)}{2}}D^{2^{\log_2(\tilde{N})}}\left(\delta'\right)^{-\log_2(\tilde{N})}\right)+\log_2(\tilde{N})\notag\\
        &\leq C\log\left(2^{\left(\log_2(\tilde{N})\right)^2}D^{\tilde{N}}\tilde{N}^{2\log_2(\tilde{N})}D^{2\tilde{N}\log_2(\tilde{N})}\delta^{-\log_2(\tilde{N})}\right)+\log_2(\tilde{N})\notag\\
        &\leq C\log\left(2^{\left(\log_2(\tilde{N}\right)^2}\tilde{N}^{2\log_2(\tilde{N})}D^{3\tilde{N}\log_2(\tilde{N})}\delta^{-\log_2(\tilde{N})}\right)+\log_2(\tilde{N})\notag\\
        &\leq C\log_2(\tilde{N})\left(\log(\tilde{N})+\tilde{N}\log(D)+\log\left(\delta^{-1}\right)\right)\notag\\
        &\leq C\log(N) \left(\log(N)+N\log(D)+\log\left(\delta^{-1}\right)\right).\label{eq:proofmultiplicationL}
\end{align}
The constant $C$ changes from line to line in \eqref{eq:proofmultiplicationL}.

For a bound on the width we use the fact that $\widetilde{\prod}_{\delta,D}$ is a parallelization of at most $\tilde{N}/2\leq N$ networks $\tilde{\times}_{\delta',D_l}$ in each layer $l\in\{1,\dots,\log_2(\tilde{N})\}$. With \eqref{eq:widthmoreparallelizationNN} and the width bound of $\tilde{\times}$ in \eqref{eq:NNmultiplication2unmberswidth} it holds
\begin{equation}
\label{eq:proofmultiplicationp}
    {\rm width}\left(\widetilde{\prod}_{\delta,D_l}\right)\leq N\, {\rm width}\left(\tilde{\times}_{\delta',D_l}\right)\leq 5N.
\end{equation}
For a bound on the size ${\rm size}(\widetilde{\prod}_{\delta,D})$ we observe that level $l$, $l=0,\dots,\log_2(\tilde{N})-1,$ of the binary tree $\tilde{\prod}$ consists of $2^{\log_2(\tilde{N})-l-1}$ product networks $\tilde{\times}_{\delta',D_l}$. We calculate 
\begin{align}
\label{eq:proofmultiplications}
    {\rm size}\left(\widetilde{\prod}_{\delta,D}\right)&\leq\sum_{l=0}^{\log_2(\tilde{N})-1}2^{\log_2(\tilde{N})-l-1}\left(s_{in}\left(\tilde{\times}_{\delta',D_l}\right)+s_{out}\left(\tilde{\times}_{\delta',D^l}\right)+{\rm size}\left(\tilde{\times}_{\delta',D^l}\right)\right)\notag\\
    &\leq \sum_{l=0}^{\log_2(\tilde{N})-1}2^{\log_2(\tilde{N})-l-1}3C\log\left(D_l\delta'^{-1}\right)\notag\\
    &\leq C\sum_{l=0}^{\log_2(\tilde{N})-1}2^{\log_2(\tilde{N})-l-1}\log\left(2^lD^{2^l}\tilde{N}^2D^{2\tilde{N}}\delta^{-1}\right)\notag\\
    &\leq C\sum_{l=0}^{\log_2(\tilde{N})-1}2^{\log_2(\tilde{N})-l-1}\left(l+2^l\log(D)+\log\left(\tilde{N}\right)+\tilde{N}\log(D)+\log\left(\delta^{-1}\right)\right)\notag\\
    &\leq C\left(\tilde{N}\log_2(\tilde{N})+\tilde{N}\log_2(\tilde{N})\log(D)+\tilde{N}\log(\tilde{N})+\tilde{N}^2\log(D)+\tilde{N}\log\left(\delta^{-1}\right)\right)\notag\\
    &\leq CN\left(\log(N)+N\log(D)+\log\left(\delta^{-1}\right)\right).
\end{align}
In \eqref{eq:proofmultiplications} we used \eqref{eq:sparseconcatsize} to bound the size of a sparse concatenation and \eqref{eq:NNmultiplication2unmberssize} for the size of the product network $\tilde{\times}_{\delta,D}$.

For the bound on the weights and biases, we get ${\rm mpar}(\widetilde{\prod}_{\delta,D})\leq 1$ because of ${\rm mpar}(\tilde{\times}_{\delta,D})\leq 1$, see \eqref{eq:NNmultiplication2unmbersmaximumweights}, and the NN calculus for sparse concatenation in \eqref{eq:sparseconcatweightbound} and parallelization in \eqref{eq:parallelization2NNmaximumweights}.\end{proof}

We continue with the RePU-case.
\begin{proposition}[$\sigma_q$-NN of multiplication of $N$ numbers]
\label{prop:sigqsigmaqmultiplicationNNnumbers}
Let $N,q\in\IN$ with $N,q\geq 2$. Then there exists a $\sigma_q$-NN $\widetilde{\prod}$: $\IR^N\to \IR$ such that
\begin{equation}
    \label{eq:sigmaqmultinnunmberapproximation}
   \widetilde{\prod}(y_1,\dots,y_N)=\prod_{j=1}^N y_j.
\end{equation}
Furthermore, there exists a constant $C_q$ 
independent of $N$
 such that
\begin{align}
 {\rm depth}\left(\widetilde{\prod}_{\delta,D}\right)&\leq C_q\log(N),\label{sigmaqmultinnunmberapproximationlength}\\
 {\rm width}\left(\widetilde{\prod}_{\delta,D}\right)&\leq C_qN,\label{sigmaqmultinnunmberapproximationwidth}\\
 {\rm size}\left(\widetilde{\prod}_{\delta,D}\right)&\leq C_q N,\label{sigmaqmultinnunmberapproximationsize}\\
 {\rm mpar}\left(\widetilde{\prod}_{\delta,D}\right)&\leq C_q.
 \label{sigmaqmultinnunmberapproximationweihgtbound}
\end{align}
\end{proposition}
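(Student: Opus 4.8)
The plan is to mimic the binary-tree construction used in the proof of Proposition \ref{prop:sigma1multiplicationNNnumbers}, but to replace the approximate blocks $\tilde{\times}_{\delta',D}$ by the \emph{exact} depth-one multiplication network $\tilde{\times}$ from Proposition \ref{prop:sigqapproxmult2numbersReLU}. First I would reduce to $N=2^k$: padding the input with the constant $1$ in the coordinates $N+1,\dots,\tilde N$, where $\tilde N:=\min\{2^k:2^k\ge N\}\le 2N$, costs only an affine (depth-zero) map and $\prod_{j=1}^N y_j=\prod_{j=1}^{\tilde N}y_j$. For $l=1,\dots,k$ I would define the level map $R^l$ as the parallelization (Definition \ref{def:NNparalleliazion}) of $\tilde N/2^{l}$ copies of $\tilde{\times}$, so that $R^l$ maps $(z_1,\dots,z_{\tilde N/2^{l-1}})$ to $(\tilde\times(z_1,z_2),\dots,\tilde\times(z_{\tilde N/2^{l-1}-1},z_{\tilde N/2^{l-1}}))$, and set $\widetilde{\prod}:=R^k\circ\cdots\circ R^1$. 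Since by Proposition \ref{prop:sigqapproxmult2numbersReLU} each $R^l$ realizes exactly one level of pairwise products, this gives $\widetilde{\prod}(y_1,\dots,y_{\tilde N})=\prod_j y_j$, i.e.\ \eqref{eq:sigmaqmultinnunmberapproximation}.

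The size, width and depth bounds would then follow from the $\sigma_q$-NN calculus once the building block is controlled: inspecting the construction behind Proposition \ref{prop:sigqapproxmult2numbersReLU} (taken from \cite[Theorem 2.5]{Li.2020}), the network $\tilde{\times}$ has ${\rm depth}(\tilde\times)=1$ and width, size and ${\rm mpar}$ all bounded by a constant depending only on $q$. Hence each $R^l$ has depth $1$, width $\le C_q\tilde N/2^{l}\le C_qN$, size $\le C_q\tilde N/2^{l}$ and ${\rm mpar}\le C_q$; composing the $k=\log_2\tilde N\le 1+\log_2 N$ levels gives ${\rm depth}(\widetilde{\prod})\le C_q\log(N)$, ${\rm width}(\widetilde{\prod})\le C_qN$, and---summing the geometric series $\sum_{l=1}^{k}C_q\tilde N/2^{l}$---also ${\rm size}(\widetilde{\prod})\le C_qN$.

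The main obstacle I expect is the parameter bound \eqref{sigmaqmultinnunmberapproximationweihgtbound}: unlike the $\sigma_1$ case, where the sparse-concatenation estimate \eqref{eq:sparseconcatweightbound} is ${\rm mpar}$-non-expansive, the $\sigma_q$ sparse concatenation of Definition \ref{def:sigqsparseconcatenationNN} introduces a factor $C_q$ per composition, so naively chaining $\log_2 N$ levels would only yield the useless bound $C_q^{\log_2 N}=N^{\log_2 C_q}$. To circumvent this I would realize $\widetilde{\prod}$ directly as a single $\sigma_q$-network with $k$ hidden layers, \emph{without} inserting any identity sub-networks: the product tree has no skip connections (every input is consumed at level $1$, and every neuron feeds exactly one block at the next level), so no identity layers are needed. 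The transition matrix between the hidden layer of $R^l$ and that of $R^{l+1}$ is the product of the affine output map of $R^l$ with the first-layer weight map of $R^{l+1}$; both are block-sparse with $O(1)$ nonzero entries per row and column and magnitudes $\le C_q$, because each $\tilde\times$ block touches only two coordinates and outputs a single value. Hence each such transition matrix is again sparse with $O(1)$ nonzeros per row/column and entries bounded by a constant depending only on $q$, which gives ${\rm mpar}(\widetilde{\prod})\le C_q$ and is also what keeps the size bound geometric (hence $O_q(N)$) rather than merely polynomial. The remaining bookkeeping---the depth-zero padding map absorbed into the first hidden layer, and the exact dimension counts at each level---is routine.
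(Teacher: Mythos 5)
Your proof is correct and follows the same binary-tree-of-$\tilde\times$-blocks strategy that the paper's own (very brief) proof of Proposition~\ref{prop:sigqsigmaqmultiplicationNNnumbers} describes, but you supply an argument for the weight bound that the paper elides. The paper simply asserts that all four estimates ``follow with the NN calculus rules from Definition~\ref{def:sigqsparseconcatenationNN}'', yet, as you correctly observe, iterating the $\sigma_q$-sparse-concatenation estimate~\eqref{eq:sigqsparseconcatweightbound} across the $\sim\log_2 N$ levels of the tree yields only ${\rm mpar}\le C_q^{\log_2 N}=N^{\log_2 C_q}$, which is not $O_q(1)$; this issue is absent in the $\sigma_1$ case because the ReLU identity network has weights $\pm 1$ and~\eqref{eq:sparseconcatweightbound} is ${\rm mpar}$-non-expansive, whereas the RePU identity network from Lemma~\ref{def:Identityemulationsigmaq} has ${\rm mpar}\le C_q>1$. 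Your remedy---avoid the identity inserts altogether, compose the levels via plain $\sbullet$ (which is legitimate here since $\tilde\times$ has depth one, so no depth synchronization is needed and the tree has no skip connections), and then observe that each joint matrix $W^1(R^{l+1})\,W^2(R^{l})$ is the product of two block-diagonal matrices with $O_q(1)$ nonzero entries per row and column and magnitudes $\le C_q$, so that the merged layer again has ${\rm mpar}=O_q(1)$ and $O_q(\tilde N/2^l)$ nonzeros---is correct, and simultaneously gives the geometric ${\rm size}=O_q(N)$ and the constant ${\rm mpar}=O_q(1)$ bounds. The depth and width bounds are immediate from the tree structure. In short, your approach is not merely an alternative route: it patches a genuine gap in the paper's stated proof of the ${\rm mpar}$ bound.
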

\begin{proof} The construction is similar to the ReLU case. We define $\tilde{\prod}$ as a binary tree of product networks $\tilde{\times}$, see \eqref{eq:proofNNmultapproxR} and \eqref{eq:proofNNmultapproxprodtilde}. The binary tree has a maximum of $2N$ binary networks $\tilde{\times}$, a maximum height of  $\log_2(2N)$ and a maximum width of $N$. Therefore \eqref{sigmaqmultinnunmberapproximationlength}--\eqref{sigmaqmultinnunmberapproximationweihgtbound} follow with the NN calculus rules from Definition \ref{def:sigqsparseconcatenationNN}.\end{proof}

We proceed and state the approximation results for univariate polynomials. We start with the ReLU case. The following proposition was shown in \cite[Proposition III.5]{Elbrachter.2021}.
\begin{proposition}[$\sigma_1$-NN approximation of polynomials, cf.~{\cite[Proposition III.5]{Elbrachter.2021}}]
    \label{prop:poolynomapproxNNsigma1}
     Let $m\in\IN$ and $a=(a_i)_{i=0}^m\in\IR^{m+1}$. Further let $D\in\IR$, $D\geq1$ and $\delta\in(0,1/2)$. Define $a_{\infty}=\max\{1,\|a\|_\infty\}$. Then there exists a $\sigma_1$-NN $\tilde{p}_{\delta,D}:[-D,D]\to\IR$ satisfying
     \begin{equation*}
        \sup_{x\in[-D,D]} \left\lvert \tilde{p}_{\delta,D}(x)-\sum_{i=0}^ma_ix^i\right\rvert\leq\delta.
     \end{equation*}
    Furthermore, there exists a constant $C$ independent of $m$, $a_i$, $D$ and $\delta$ such that 
     \begin{align}
         {\rm depth}(\tilde{p}_{\delta,D})&\leq Cm\left(m\log\left( D\right)+\log\left(\delta^{-1}\right)+\log(m)+\log(a_{\infty})\right), \label{eq:polynomsigma1approxNNNdepth}\\
         {\rm width}\left(\tilde{p}_{\delta,D}\right)&\leq 9,\label{eq:polynomsigma1approxNNNwidth}\\
         {\rm size}\left(\tilde{p}_{\delta,D}\right)&\leq Cm\left(m\log\left( D\right)+\log\left(\delta^{-1}\right)+\log(m)+\log(a_{\infty})\right),\label{eq:polynomsigma1approxNNNsize}\\
         {\rm mpar}\left(\tilde{p}_{\delta,D}\right)&\leq 1.
         \label{eq:polynomsigma1approxNNNweightbound}
     \end{align}
\end{proposition}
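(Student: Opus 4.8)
\textbf{Proof plan for Proposition~\ref{prop:poolynomapproxNNsigma1}.}
The plan is to build the approximating network by composing the monomial-approximation networks already available to us. First I would recall that Proposition~\ref{prop:sigma1multiplicationNNnumbers} gives, for each $k\in\{2,\dots,m\}$, a $\sigma_1$-network $\widetilde{\prod}_{\delta',D}\colon[-D,D]^k\to\IR$ that approximates the product of $k$ numbers; feeding it the diagonal input $x\mapsto(x,\dots,x)$ (realized by the first affine layer, or equivalently by a one-layer $\sigma_1$-identity network of Lemma~\ref{def:Identityemulationsigma1}) yields a network $\tilde{m}_{k,\delta',D}$ with $\sup_{x\in[-D,D]}|\tilde m_{k,\delta',D}(x)-x^k|\le\delta'$, and with depth, size $\le C k(k\log D+\log((\delta')^{-1})+\log k)$, width $\le 5$, and $\mathrm{mpar}\le 1$ (the diagonal map costs nothing extra in $\mathrm{mpar}$). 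The cases $k=0,1$ are trivial (constant and identity networks). Then I would parallelize these $m+1$ monomial networks using \eqref{eq:sizemoreparallelizationNN}--\eqref{eq:maxweightsmoreparallelizationNN} — after padding their depths to a common value $L^*:=\max_k \mathrm{depth}(\tilde m_{k,\delta',D})$ via $\sigma_1$-identity networks (Lemma~\ref{def:Identityemulationsigma1}, which has $\mathrm{mpar}\le 1$) — to obtain one network outputting $(\tilde m_{0},\dots,\tilde m_{m})\in\IR^{m+1}$.

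Second, I would post-compose with the linear readout $\mathbf v\mapsto\sum_{i=0}^m a_i v_i$, which is a single affine layer; the only subtlety is the weight bound, since $\mathrm{mpar}$ must stay $\le 1$ while the coefficients $a_i$ may be large. I would handle this by absorbing a scalar-multiplication network $SM_{a_\infty}$ (Definition~\ref{def:scalarmultiplicatoinsimaq} in the $\sigma_1$-case) after a readout with weights $a_i/a_\infty\in[-1,1]$: concretely, $\tilde p_{\delta,D}:=SM_{a_\infty}\circ(\text{affine with weights }a_i/a_\infty)\circ(\{\tilde m_{i}\}_{i=0}^m)$, using sparse concatenation (Definition~\ref{def:sparseconcatenationNN}) so that sizes and widths only change by constant factors and $\mathrm{mpar}$ stays bounded by $\max\{1,\mathrm{mpar}(SM_{a_\infty})\}$, which in the ReLU case is $1$ by \cite[Lemma~A.1]{Elbrachter.2021}. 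This contributes an extra $O(\log a_\infty)$ to depth and size, matching the claimed bounds.

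Third comes the error and complexity accounting. The total error is $\big|\tilde p_{\delta,D}(x)-\sum_i a_i x^i\big|\le\sum_{i=0}^m |a_i|\,|\tilde m_i(x)-x^i|\le (m+1)a_\infty\delta'$, so choosing $\delta':=\delta/((m+1)a_\infty)$ gives the required accuracy $\delta$; note $\log((\delta')^{-1})=\log(\delta^{-1})+\log(m+1)+\log a_\infty$, which is exactly why these terms appear inside the parentheses in \eqref{eq:polynomsigma1approxNNNdepth}. For the complexity: depth is $L^*+O(\log a_\infty)+O(1)$ where $L^*\le Cm(m\log D+\log((\delta')^{-1})+\log m)$; width is $\le\sum_k \mathrm{width}(\tilde m_k)\le 5(m+1)$ before padding, but padding identities have width $O(1)$ each, so one must be slightly careful — the parallelization width is $O(m)$, not $O(1)$; however the stated bound is $\mathrm{width}\le 9$, so I would instead \emph{not} parallelize but rather use Horner's scheme, i.e.\ iterate $p\mapsto xp+a_k$ as a composition of $m$ blocks each consisting of one multiplication network $\tilde\times_{\delta'',D'}$ (Proposition~\ref{prop:approxmult2numbersReLU1}, width $5$) plus an affine shift, keeping width $O(1)$. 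This is the cleaner route and is in fact what \cite{Elbrachter.2021} does.

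\textbf{Main obstacle.} The delicate point is propagating the $\mathrm{mpar}\le 1$ constraint through Horner's scheme while the partial sums and the coefficients can be of size $\sim a_\infty$ and the iterates can grow like $D^k$: each multiplication network $\tilde\times_{\delta'',D'}$ must be instantiated on the correct range $D'\asymp \max\{D, a_\infty\}^{O(m)}$, and the additive constants $a_k$ exceeding $1$ must be injected via scalar-multiplication sub-networks rather than as raw biases. Tracking how the range bound $D'$ compounds over the $m$ Horner steps (so that $\log D'=O(m\log(D a_\infty))$, feeding the $\log D$ and $\log a_\infty$ terms in the final bounds) and verifying that the error $\delta''$ per multiplication, after summation over the $m$ steps and amplification by the growing partial-product bounds, still telescopes to $\le\delta$, is the part requiring genuine care; everything else is bookkeeping with the NN-calculus rules \eqref{eq:sparseconcatsize}--\eqref{eq:sparseconcatweightbound} and Lemma~\ref{def:Identityemulationsigma1}.
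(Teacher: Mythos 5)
Your Horner-scheme route --- chaining $m$ blocks, each built from the two-number ReLU multiplication network of Proposition~\ref{prop:approxmult2numbersReLU1} --- is the right one; it is indeed the argument of \cite[Proposition~III.5]{Elbrachter.2021}, and the paper simply cites that reference without reproducing a proof. Your initial detour via parallelizing monomial networks is correctly abandoned once you notice it makes the width scale with $m$, and your identification of the two subtleties (keeping ${\rm mpar}\le 1$ in the presence of coefficients $|a_k|>1$, and tracking the growth of the Horner iterates) is the right diagnosis.

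One of your estimates is, however, too loose and would cost a factor of $m$ in the final bound. You write $D' \asymp \max\{D,a_\infty\}^{O(m)}$, equivalently $\log D' = O(m\log(Da_\infty))$, but in Horner's recursion $p_k = x\,p_{k+1} + a_k$ with $|a_k|\le a_\infty$, $|x|\le D$, $D\ge 1$, the partial values satisfy $|p_k| \le (1 + D + \cdots + D^{m-k})\,a_\infty \le (m+1)D^m a_\infty$: the coefficient size $a_\infty$ enters only as a single multiplicative factor, so the correct estimate is $\log D' = O(m\log D + \log a_\infty + \log m)$. With your looser estimate, each multiplication network has depth $O(m\log D + m\log a_\infty + \cdots)$, and summing over the $m$ blocks gives an $m^2\log a_\infty$ contribution, overshooting the claimed $Cm\log a_\infty$ in \eqref{eq:polynomsigma1approxNNNdepth}. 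The cleanest repair is the rescaling you already sketch for the parallel variant: run Horner on $q(x) := p(x)/a_\infty$ with coefficients $a_i/a_\infty\in[-1,1]$ (which are then legal raw biases, so no per-step scalar-multiplication sub-networks are needed), giving $|q_k|\le (m+1)D^m$ so that $\log D'$ carries no $a_\infty$ at all, and post-compose once with the exact $SM_{a_\infty}$ of Definition~\ref{def:scalarmultiplicatoinsimaq} (contributing $O(\log a_\infty)$ depth). The per-step multiplication accuracy then needs to be $\delta''\asymp \delta/(m D^{m} a_\infty)$, which feeds $\log a_\infty$ into $\log((\delta'')^{-1})$ linearly rather than $m$-fold and recovers \eqref{eq:polynomsigma1approxNNNdepth}--\eqref{eq:polynomsigma1approxNNNweightbound}.
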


In the RePU-case we get the well-known result that polynomials can be exactly realized by $\sigma_q$-NNs, see \cite{li2019powernet}. 
\begin{proposition}[$\sigma_q$-NN realization of polynomials]
\label{prop:sigmaqemulation}
     Let $m,q\in\IN$, $q\geq 2$ and  $a=(a_i)_{i=0}^m\in\IR^{m+1}$. Set $a_{\infty}=\max\{1,\max_{i=0,\dots,m} a_i\}$. Then there exists a $\sigma_q$-NN $\tilde{p}:\IR\to\IR$ satisfying
     \begin{equation*}
         \tilde{p}(x)=\sum_{i=0}^{m}a_ix^i\quad\forall x\in\IR.
     \end{equation*}
Furthermore, there exists a constant $C_q$ only depending on $q$ such that
     \begin{align}
         {\rm depth}(\tilde{p})&\leq C_q\left(\log(a_{\infty})+m\right), \label{eq:polynomsigmaqapproxNNNdepth}
         \\
         {\rm width}\left(\tilde{p}\right)&\leq C_q,\label{eq:polynomsigmaqapproxNNNwidth}\\
         {\rm size}\left(\tilde{p}\right)&\leq C_q\left(\log(a_{\infty})+m\right),\label{eq:polynomsigmaqapproxNNNsize}\\
         {\rm mpar}\left(\tilde{p}\right)&\leq C_q.
         \label{eq:polynomsigmaqapproxNNNweightbound}
     \end{align}
\end{proposition}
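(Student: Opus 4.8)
The plan is to build the polynomial $p(x)=\sum_{i=0}^m a_i x^i$ out of the exact monomial-realization capability of RePU networks, which is the key structural advantage of $\sigma_q$ over $\sigma_1$: since $x^q=\sigma_q(x)+(-1)^q\sigma_q(-x)$, a single RePU layer of width $2$ realizes $x\mapsto x^q$ exactly, and more generally (as recorded in \cite[Theorem 2.5]{Li.2020}, on which Propositions \ref{prop:sigqapproxmult2numbersReLU} and \ref{prop:sigqsigmaqmultiplicationNNnumbers} already rely) one has exact fixed-depth realizations of $x\mapsto x^2$ and of the bivariate product $(x,y)\mapsto xy$. First I would reduce to monomials: writing $p(x)=\sum_{i=0}^m a_i x^i$, it suffices to produce, with a single network, the tuple $(1,x,x^2,\dots,x^m)$ and then take a linear combination with coefficients $a_i$ in the output layer (the output layer of Definition \ref{def:NN} is affine, so this costs no extra depth).

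The main step is thus an efficient exact emulation of the ``power map'' $x\mapsto(x^i)_{i=0}^m$. I would do this by a Horner-type / repeated-squaring scheme. One clean option: use the bivariate product network $\tilde\times$ from Proposition \ref{prop:sigqapproxmult2numbersReLU} (depth $1$, exact) together with the $\sigma_q$-identity networks $\Id_{\IR^d}$ from Lemma \ref{def:Identityemulationsigmaq} to synchronize depths, and compute the monomials in a binary-tree fashion: given $x$, form $x^2$, then $x^4$, \dots, up to $x^{2^{\lceil\log_2 m\rceil}}$ in $O(\log m)$ layers, and combine these ``doubling'' powers by further exact products (again $\tilde\times$) to assemble each $x^i$, $i\le m$, keeping all needed intermediate quantities alive by parallelizing with identity sub-networks. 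Using the parallelization calculus \eqref{eq:sizemoreparallelizationNN}--\eqref{eq:maxl2moreparallelizationNN} and the sparse-concatenation bounds in Definition \ref{def:sigqsparseconcatenationNN}, this yields depth $O(\log m)$ and size/width $O(m)$, with ${\rm mpar}$ bounded by $C_q$. Finally the affine output layer mixing in the coefficients $a_i$ contributes a factor $\max\{1,\|a\|_\infty\}=a_\infty$ to the parameter magnitude; to keep ${\rm mpar}\le C_q$ rather than $\le C_q a_\infty$, I would instead absorb $a_\infty$ by a preceding $\sigma_q$-scalar-multiplication network $SM_{a_\infty}$ (Definition \ref{def:scalarmultiplicatoinsimaq}), which costs depth $C_q\log(a_\infty)$ and leaves the remaining coefficients bounded by $1$ in modulus; this is exactly where the $\log(a_\infty)$ term in \eqref{eq:polynomsigmaqapproxNNNdepth} and \eqref{eq:polynomsigmaqapproxNNNsize} comes from, and where the additive $m$ in those bounds comes from the monomial-assembly stage. (Alternatively, if the claimed depth bound $C_q(\log(a_\infty)+m)$ is read as allowing an $O(m)$—not $O(\log m)$—construction, one can simply use the naive Horner recursion $p_k(x)=x\,p_{k-1}(x)+a_{m-k}$ with one exact product $\tilde\times$ per step, giving depth $O(m)$ directly and sidestepping the binary-tree bookkeeping; either way the stated bounds hold.)

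The main obstacle I expect is purely combinatorial rather than conceptual: carefully tracking the depths of the various sub-networks so that the parallel branches entering each product $\tilde\times$ have equal depth (necessitating identity-network padding, Lemma \ref{def:Identityemulationsigmaq}), and then verifying that the accumulated size and width, after all the sparse concatenations \eqref{eq:sigqsparseconcatsize}--\eqref{eq:sigqsparseconcatl2bound} and parallelizations \eqref{eq:sizemoreparallelizationNN}--\eqref{eq:widthmoreparallelizationNN}, still collapse to $O(m)$ with a constant depending only on $q$. Since $q$ enters only through the fixed constants $C_q$ in the identity, product, scalar-multiplication, and concatenation lemmas, the final constants indeed depend on $q$ alone, as claimed, and the proof is complete modulo this routine accounting.
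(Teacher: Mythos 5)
Your \emph{primary} construction --- producing the full tuple $(1,x,x^2,\dots,x^m)$ by a repeated-squaring binary tree and then mixing in the coefficients $a_i$ via the affine output layer --- does not meet the claimed bounds: to output all $m+1$ monomials you must carry $\Theta(m)$ values in parallel across layers, which forces ${\rm width}(\tilde p)=\Omega(m)$, contradicting \eqref{eq:polynomsigmaqapproxNNNwidth} which demands ${\rm width}(\tilde p)\le C_q$ with $C_q$ independent of $m$. (The identity-padding needed to keep those $\Theta(m)$ values alive across $\Theta(\log m)$ layers would also inflate the size toward $\Theta(m\log m)$ rather than the claimed $O(m)$.) This is not "routine accounting" that collapses to $O(m)$; the tuple-of-monomials decomposition is structurally incompatible with the constant-width requirement, so the claim that "either way the stated bounds hold" is false for this branch of your argument.

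The \emph{alternative} you sketch in passing --- the naive Horner recursion $p_k(x)=x\,p_{k-1}(x)+a_{m-k}$ built from one exact $\tilde\times$ per step --- is the correct approach and is precisely what the paper's proof does: it writes the polynomial in the nested Horner form $a_\infty\bigl(a_0/a_\infty + x\bigl(a_1/a_\infty + \cdots + x\,a_m/a_\infty\bigr)\cdots\bigr)$, realizes the inner recursion by sparsely concatenating $\tilde\times$, $\Sigma_2$, and $\Id_\IR$ with normalized coefficients $a_i/a_\infty\in[-1,1]$ (so ${\rm mpar}\le C_q$), and finally multiplies by $a_\infty$ via $SM_{a_\infty}$ at depth $C_q\log(a_\infty)$. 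Each Horner step keeps only the running value and the input $x$ alive, giving constant width, depth $O(m)$ from the recursion plus $O(\log a_\infty)$ from the final scaling, and size $O(m+\log a_\infty)$ --- matching \eqref{eq:polynomsigmaqapproxNNNdepth}--\eqref{eq:polynomsigmaqapproxNNNweightbound}. If you lead with this version (rather than treating it as a fallback) and drop the monomial-tuple construction, the proof is correct and essentially identical to the paper's.
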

 \begin{proof} 
 We use Horner's method for polynomial evaluation and write 
 \begin{equation}
    \sum_{i=0}^{m}a_ix^i= a_{\infty}\left(\frac{a_0}{a_{\infty}}+x\left(\frac{a_1}{a_{\infty}}+\dots+x\left(\frac{a_{m-1}}{a_\infty}+x\frac{a_m}{a_\infty}\right)\dots\right)\right).
    \label{eq:horner}
 \end{equation}
 Following \eqref{eq:horner}, we build $\tilde{p}$ via
 \begin{equation}
 \label{eq:ptildebuildhorner}
     \tilde{p}=SM_{a_{\infty}}\circ \Sigma_2\left(\frac{a_0}{a_{\infty}},\tilde{\times}\left(\Id_{\IR},\Sigma_2\left(\frac{a_1}{a_\infty},\dots,SM_{a_ma_{\infty}^{-1}}\left(\Id_{\IR}\right)\right)\dots\right)\right).
 \end{equation}
The bounds for $\tilde{p}$ follow from the respective bounds for $\Sigma_2$ from Definition \ref{def:summationnetworks}, $\Id_{\IR}$ from Lemma \ref{def:Identityemulationsigmaq} and $SM_{\alpha}$ from Definition \ref{def:scalarmultiplicatoinsimaq}.\end{proof}

We now use Propositions \ref{prop:poolynomapproxNNsigma1} and \ref{prop:sigmaqemulation} to get an approximation result for univariate Legendre polynomials.
\begin{corollary}[$\sigma_1$-NN approximation of $L_j$]
\label{Cor:ApproxLegendreunivaraintsigma1}
Let $j\in\IN_0$ and $\delta\in(0,1/2)$. Then there exists a $\sigma_1$-NN $\tilde{L}_{j,\delta}:\,[-1,1]\to\IR$ with
\begin{align*}
        \sup_{x\in[-1,1]} |\tilde{L}_{j,\delta}(x)-L_j(x)|\leq\delta.
\end{align*}
Furthermore, there exists a constant $C$ such that it holds
 \begin{align}
{\rm depth}\left(\tilde{L}_{j,\delta}\right)&\leq Cj\left(j+\log(\delta^{-1})\right), \label{eq:Legdnresigma1approxNNNdepth}\\
{\rm width}\left(\tilde{L}_{j,\delta}\right)&\leq 9,\label{eq:Legdnresigma1approxNNNwidth}\\
{\rm size}\left(\tilde{L}_{j,\delta}\right)&\leq Cj\left(j+\log\left(\delta^{-1}\right)\right),\label{eq:Legdnresigma1approxNNNsize}\\ 
{\rm mpar}\left(\tilde{L}_{j,\delta}\right)&\leq 1.\label{eq:Legdnresigma1approxNNNweightbound}
 \end{align}
\end{corollary}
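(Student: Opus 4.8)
The plan is to realize $\tilde L_{j,\delta}$ by expanding $L_j$ in the monomial basis and invoking the $\sigma_1$-NN polynomial approximation result, Proposition \ref{prop:poolynomapproxNNsigma1}, with domain parameter $D=1$ and degree $m=j$. Write $L_j(x)=\sum_{i=0}^j a_i x^i$ with $a=(a_i)_{i=0}^j\in\IR^{j+1}$. Every quantity entering the bounds of Proposition \ref{prop:poolynomapproxNNsigma1} is immediately $O(j)$ or constant except $a_\infty=\max\{1,\|a\|_\infty\}$, so the only real task is an estimate of the form $\log(a_\infty)\lesssim j$.

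To obtain it I would use the three-term recurrence for the classical Legendre polynomials $P_j$ normalized by $P_j(1)=1$. The normalization fixed before \eqref{eq:LinfLegendreunivarainte} (namely $\frac12\int_{-1}^1 L_j^2=1$) gives $L_j=\sqrt{2j+1}\,P_j$, and $P_j$ satisfies $(j+1)P_{j+1}(x)=(2j+1)xP_j(x)-jP_{j-1}(x)$. Letting $c_j$ be the maximal absolute value of a monomial coefficient of $P_j$, the recurrence yields $c_{j+1}\le 2c_j+c_{j-1}$, whence $c_j\le 3^j$ by induction starting from $c_0=c_1=1$. Therefore the monomial coefficients of $L_j$ obey $\|a\|_\infty\le\sqrt{2j+1}\,3^j$, so $\log(a_\infty)\le\log\sqrt{2j+1}+j\log 3\lesssim j$. (Alternatively one may cite the explicit coefficient formula for $P_j$ and bound the binomial factors by powers of two, giving the same conclusion.)

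Finally, substitute $D=1$, $m=j$, and $\log(a_\infty)\lesssim j$ into Proposition \ref{prop:poolynomapproxNNsigma1}. Since $m\log(D)=0$ and $\log(m)\le j$, the depth and size bounds collapse to $Cj\bigl(\log(\delta^{-1})+j\bigr)$, which is exactly \eqref{eq:Legdnresigma1approxNNNdepth} and \eqref{eq:Legdnresigma1approxNNNsize}; the width bound $9$ and the parameter bound $1$ are inherited verbatim, yielding \eqref{eq:Legdnresigma1approxNNNwidth} and \eqref{eq:Legdnresigma1approxNNNweightbound}. The accuracy $\sup_{x\in[-1,1]}|\tilde L_{j,\delta}(x)-L_j(x)|\le\delta$ is the conclusion of Proposition \ref{prop:poolynomapproxNNsigma1} on $[-D,D]=[-1,1]$. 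I expect the coefficient estimate to be the only step requiring any thought; the rest is bookkeeping with the neural network calculus already recorded.
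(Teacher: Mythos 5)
Your proposal is correct and follows essentially the same route as the paper: both reduce the statement to Proposition \ref{prop:poolynomapproxNNsigma1} with $D=1$, $m=j$, and the key coefficient estimate $\log(a_\infty)\lesssim j$. The only difference is that the paper imports the bound $\sum_{l=0}^{j}|c_l^j|\le 4^j$ from the literature, whereas you derive an equivalent bound ($c_j\le 3^j$ for the $P_j$-normalization, hence $\|a\|_\infty\le\sqrt{2j+1}\,3^j$) self-containedly from the three-term recurrence, which is a valid and slightly more self-contained variant of the same argument.
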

\begin{proof}
For $j\in\IN$, $l\in\IN_0$, $l\leq j$, denote the coefficients of $L_j$ with $c_l^j$. In \cite[Eq.~(4.17)]{Opschoor.2020} the bound 
$\sum_{l=0}^{j}|c_l^j|\leq 4^j$ is derived. With $c^j=(c_l^j)_{l=0}^j$ it holds $\lVert c^j\rVert_{\infty}\leq\sum_{l=0}^{j}|c_l^j|\leq 4^j$. The result now follows with Proposition \ref{prop:poolynomapproxNNsigma1}.
\end{proof}

We continue with the $\sigma_q$-case.
\begin{corollary}[$\sigma_q$-NN approximation of $L_j$]
\label{Cor:ApproxLegendreunivaraintsigmaq}
Let $j\in\IN_0$. Then there exists a $\sigma_q$-NN $\tilde{L}_{j}:\,\IR\to\IR$ with
\begin{align*}
         \tilde{L}_{j}(x)=L_j(x)\quad \forall x\in\IR.
\end{align*}
Furthermore, there exists a constant $C_q$ only depending on $q$ such that it holds
 \begin{align}
{\rm depth}\left(\tilde{L}_{j}\right)&\leq C_q j,\label{eq:LegdnresigmaqapproxNNNdepth}\\
{\rm width}\left(\tilde{L}_{j,\delta}\right)&\leq C_q,\label{eq:LegdnresigmaqapproxNNNwidth}\\
{\rm size}\left(\tilde{L}_{j,\delta}\right)&\leq C_q j,\label{eq:LegdnresigmaqapproxNNNsize}\\ 
{\rm mpar}\left(\tilde{L}_{j,\delta}\right)&\leq C_q.\label{eq:LegdnresigmaqapproxNNNweightbound}
 \end{align}
\end{corollary}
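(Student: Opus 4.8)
The plan is to construct the desired $\sigma_q$-network $\tilde L_j$ by exactly realizing the Legendre polynomial $L_j$ as a polynomial of degree $j$, using the RePU polynomial realization result from Proposition \ref{prop:sigmaqemulation}. First I would recall that $L_j$ is a univariate polynomial of degree exactly $j$, write its coefficient vector $c^j=(c^j_l)_{l=0}^j$, and invoke the bound $\sum_{l=0}^j|c^j_l|\le 4^j$ from \cite[Eq.~(4.17)]{Opschoor.2020} (already used in the ReLU case, Corollary \ref{Cor:ApproxLegendreunivaraintsigma1}). This gives $\|c^j\|_\infty\le 4^j$, hence $a_\infty:=\max\{1,\|c^j\|_\infty\}\le 4^j$, so that $\log(a_\infty)\le j\log 4 = Cj$ for a universal constant.

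Next I would apply Proposition \ref{prop:sigmaqemulation} with $m=j$ and coefficients $a_i=c^j_i$. That proposition yields a $\sigma_q$-NN $\tilde p$ with $\tilde p(x)=\sum_{i=0}^{j}c^j_i x^i = L_j(x)$ for all $x\in\IR$, and with bounds ${\rm depth}(\tilde p)\le C_q(\log(a_\infty)+j)$, ${\rm width}(\tilde p)\le C_q$, ${\rm size}(\tilde p)\le C_q(\log(a_\infty)+j)$, and ${\rm mpar}(\tilde p)\le C_q$. Setting $\tilde L_j:=\tilde p$ and substituting $\log(a_\infty)\le Cj$ into these estimates, the terms $\log(a_\infty)+j$ collapse to $C_q' j$ (absorbing the universal constant into the $q$-dependent one), which gives precisely \eqref{eq:LegdnresigmaqapproxNNNdepth}--\eqref{eq:LegdnresigmaqapproxNNNweightbound}. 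The exactness $\tilde L_j(x)=L_j(x)$ on all of $\IR$ is immediate since Proposition \ref{prop:sigmaqemulation} realizes polynomials exactly.

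There is essentially no obstacle here: the proof is a one-line reduction to the RePU polynomial realization proposition combined with the known $4^j$ coefficient bound for Legendre polynomials. The only point requiring a small amount of care is confirming that the coefficient growth $4^j$ contributes only an $O(j)$ term to the depth and size via its logarithm, so that it does not worsen the stated linear-in-$j$ bounds; this is straightforward. (One should also note the harmless typos in the statement, where the network is labelled $\tilde L_{j,\delta}$ in \eqref{eq:LegdnresigmaqapproxNNNwidth}--\eqref{eq:LegdnresigmaqapproxNNNweightbound} despite there being no $\delta$ in the RePU case; the intended object is $\tilde L_j$.) The proof therefore reads:
\begin{proof}
For $j\in\IN_0$ the Legendre polynomial $L_j$ is a polynomial of degree $j$; denote its coefficients by $c^j=(c^j_l)_{l=0}^j$. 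By \cite[Eq.~(4.17)]{Opschoor.2020}, $\sum_{l=0}^j|c^j_l|\le 4^j$, hence with $a_\infty:=\max\{1,\|c^j\|_\infty\}$ we have $\log(a_\infty)\le j\log 4$. Applying Proposition \ref{prop:sigmaqemulation} with $m=j$ and $a=c^j$ yields a $\sigma_q$-NN $\tilde L_j$ with $\tilde L_j(x)=\sum_{i=0}^j c^j_i x^i = L_j(x)$ for all $x\in\IR$, and with ${\rm depth}(\tilde L_j),{\rm size}(\tilde L_j)\le C_q(\log(a_\infty)+j)\le C_q' j$, ${\rm width}(\tilde L_j)\le C_q$ and ${\rm mpar}(\tilde L_j)\le C_q$. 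This gives \eqref{eq:LegdnresigmaqapproxNNNdepth}--\eqref{eq:LegdnresigmaqapproxNNNweightbound}.
\end{proof}
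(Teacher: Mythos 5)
Your proof is correct and follows essentially the same route as the paper: the paper proves this corollary by noting that the argument is ``similar as in the $\sigma_1$-case'', i.e.\ by invoking the coefficient bound $\lVert c^j\rVert_\infty\le\sum_{l=0}^j|c^j_l|\le 4^j$ from \cite[Eq.~(4.17)]{Opschoor.2020} and then applying Proposition \ref{prop:sigmaqemulation}, exactly as you do. Your write-up simply makes explicit the substitution $\log(a_\infty)\le j\log 4$ which the paper leaves implicit.
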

\begin{proof}
The bounds follow similar as in the $\sigma_1$-case using Proposition \ref{prop:sigmaqemulation}.\end{proof}
\section{Proofs of Section \ref{sec:networkarchitecture}}
\subsection{Proof of Proposition \ref{prop:tensorlegendreapproximation}}\label{app:prooftensorizedsig1}
We proceed analogously to the proof of \cite[Proposition 2.13]{Opschoor.2022}.
We define $f_{\Lambda,\delta}$ as a composition of two subnetworks, $f_{\Lambda,\delta}\coloneqq f_{\Lambda,\delta}^{(1)}\circ f_{\Lambda,\delta}^{(2)}$. The subnetwork $f_{\Lambda,\delta}^{(2)}$ evaluates, in parallel, all relevant univariate Legendre polynomials, i.e. 
\begin{equation}
\label{eq:fLambda2}
    f_{\Lambda,\delta}^{(2)}(\py)\coloneqq\left(\left\{\Id_{\IR}\circ \tilde{L}_{\nu_j,\delta'}(y_j)\right\}_{(j,\nu_j)\in T}\right),
\end{equation}
where we used
\begin{align}
\label{eq:T}
    T&\coloneqq\left\{(j,\nu_j)\in\IN^2:\,\pnu\in\Lambda,\,j\in{\rm supp}\,\pmb\nu\right\},\\
    \delta'&\coloneqq \left(2d(\Lambda)\right)^{-1}\left(2m(\Lambda)+2\right)^{-d(\Lambda)+1}\delta\notag
\end{align}
and $\py=(y_j)_{(j,\nu_j)\in T}$. In \eqref{eq:fLambda2} the big round brackets denote a parallelization and we use the identity networks to synchronize the depth. The subnetwork $f_{\Lambda,\delta}^{(1)}$ takes the output of $f_{\Lambda,\delta}^{(2)}$ as input and computes, in parallel, tensorized Legendre polynomials using the multiplication networks $\widetilde{\prod}_{.,.}$ introduced in Proposition \ref{prop:sigma1multiplicationNNnumbers}. With $M_{\pnu}\coloneqq 2|\pnu|_1+2$ we define
\begin{align}
        f_{\Lambda,\delta}^{(1)}\left((z_k)_{k\leq |T|}\right)&=f_{\Lambda,\delta}^{(1)}\left(f_{\Lambda,\delta}^{(2)}(\py)\right)\notag\\
        &\coloneqq\left(\left\{\Id_{\IR}\circ\widetilde{\prod}_{\delta/2,M_{\pnu}}\left(\left\{\Id_{\IR}\circ\tilde{L}_{\nu_j,\delta'}(y_j)\right\}_{j\in{\rm supp}\,{\pnu}}\right) \right\}_{\pnu\in\Lambda}\right) \label{eq:flambda1}.
\end{align}
The multiplication networks in \eqref{eq:flambda1} are well-defined, since
\begin{equation}
\label{eq:Mnu}
    \sup_{y_j\in[-1,1]} |\tilde{L}_{\nu_j,\delta'}(y_j)|\leq 2\nu_j+2\leq 2|\pnu|_1+2=M_{\pnu},
\end{equation}
where we used \eqref{eq:LinfLegendreunivarainte} and $\delta'<1$.

We will first show the error bound in \eqref{eq:Linfboundtenorizedlegendre}. Let $\pnu\in\Lambda$ be arbitrary.
We use the shorthand notation $\|\cdot\|\coloneqq\|\cdot\|_{L^{\infty}([-1,1]^{|T|}))}$ and calculate
\begin{align}
    &\left\lVert L_{\pnu}-\tilde{L}_{\pnu,\delta}
    \right\rVert\notag\\
    \leq  &\left\lVert L_{\pnu}-\prod_{j\in{\rm supp}\,\pnu}\tilde{L}_{\nu_j,\delta'}\right\rVert+\left\lVert \prod_{j\in{\rm supp}\,\pnu}\tilde{L}_{\nu_j,\delta'}-\widetilde{\prod}_{\delta/2,M_{\pnu}}\left(\left\{\tilde{L}_{\nu_j,\delta'}\right\}_{j\in{\rm supp}\,\pnu}\right)\right\rVert\notag\\
    \leq &\sum_{k\in{\rm supp}\,\pnu}\left\lVert\prod_{\substack{j\in{\rm supp}\,\pnu:\\ j<k}}\tilde{L}_{\nu_j,\delta'}\right\rVert \cdot \left\lVert L_{\nu_k}-\tilde{L}_{\nu_k,\delta'}\right\rVert\cdot\left\lVert\prod_{\substack{j\in{\rm supp}\,\pnu:\\ j>k}}L_{\nu_j}\right\rVert+\frac{\delta}{2}\notag\\  
    \leq&d(\Lambda) M_{\pnu}^{d(\Lambda)-1}\delta'+\frac{\delta}{2}\leq\left(\frac{M_{\pnu}}{2m(\Lambda)+2}\right)^{d(\Lambda)-1}\frac{\delta}{2}+\frac{\delta}{2}\leq\delta,\label{eq:linfcalculationprooftensorizedlegendre}
\end{align}
where we used \eqref{eq:Mnu}, $M_{\pnu}\leq 2m(\Lambda)+2$ and the definition of $\delta'$.

We proceed and calculate the  depth $L$ of $f_{\Lambda,\delta}$. Since $f_{\Lambda,\delta}=f_{\Lambda,\delta}^{(1)}\circ f_{\Lambda,\delta}^{(2)}$, it holds ${\rm depth}(f_{\Lambda,\delta})\leq {\rm depth}(f_{\Lambda,\delta}^{(1)})+{\rm depth}(f_{\Lambda,\delta}^{(2)})+1$, see \eqref{eq:sparseconcatdepth}. We start with a depth bound of $f_{\Lambda,\delta}^{(2)}$. Denoting by $C$ a universal multiplicative constant that is allowed to change from line to line, it holds that
\begin{align}
    {\rm depth}\left(f_{\Lambda,\delta}^{(2)}\right)&=1+\max_{\substack{\pnu\in\Lambda\\\ j\in{\rm supp}\,\pnu}}{\rm depth}\left(\tilde{L}_{\nu_j,\delta'}\right)\notag\\
    &\leq C\max_{\substack{\pnu\in\Lambda\\\ j\in{\rm supp}\,\pnu}}\nu_j\left(\nu_j+\log\left(\delta'^{-1}\right)\right)\notag\\
    &\leq Cm(\Lambda)\left(m(\Lambda)+\log\left(\delta'^{-1}\right)\right)\notag\\
    &\leq Cm(\Lambda)\left(\log\left(d(\Lambda)\right)+d(\Lambda)\log\left(m(\Lambda)\right)+m(\Lambda)+\log\left(\delta^{-1}\right)\right)\notag\\
    &\leq Cm(\Lambda)\left(\log\left(d(\Lambda)\right)+d(\Lambda)\log\left(m(\Lambda)\right)+m(\Lambda)+\log\left(\delta^{-1}\right)\right)\label{eq:prooftensorizedLegendredepth1}. 
\end{align}
In \eqref{eq:prooftensorizedLegendredepth1} we used the depth bound for univariate Legendre polynomials, \eqref{eq:Legdnresigma1approxNNNdepth}, at the first inequality. Furthermore, we used $\nu_j\leq m(\Lambda)$.
For the depth of $f_{\Lambda,\delta}^{(1)}$ it holds
\begin{align}
{\rm depth}\left(f_{\Lambda,\delta}^{(1)}\right)&=1+\max_{\pnu\in\Lambda}\,{\rm depth}\left(\widetilde{\prod}_{\delta/2,M_{\pnu}}\right)\notag\\
&\leq 1+C\max_{\pnu\in\Lambda}\,\log\left(|\supp\,\pnu|\right)\left(\log\left(|\supp\,\pnu|\right)+|\supp\,\pnu|\log\left(M_{\pnu}\right)+\log\left(\delta^{-1}\right)\right)\notag\\
&\leq 1+C\log\left(d(\Lambda)\right)\left(\log\left(d(\Lambda)\right)+d(\Lambda)\log\left(m(\Lambda)\right)+\log\left(\delta^{-1}\right)\right),
\label{eq:prooftensorizedLegendredepth2}
\end{align}
where we used $|\supp\,\pnu|\leq d(\Lambda)$ for all $\pnu\in\Lambda$, $M_{\nu}\leq 4m(\Lambda)$ and the depth bound for $\sigma_1$-multiplication networks from Proposition \ref{prop:sigma1multiplicationNNnumbers}.
Combining the two depth bounds \eqref{eq:prooftensorizedLegendredepth1} and \eqref{eq:prooftensorizedLegendredepth2}, we get
\begin{align*}
{\rm depth}\left(f_{\Lambda,\delta}\right)&=1+{\rm depth}\left(f_{\Lambda,\delta}^{(1)}\right)+{\rm depth}\left(f_{\Lambda,\delta}^{(2)}\right)\\
&\leq Cm(\Lambda)\left(\log\left(d(\Lambda)\right)+d(\Lambda)\log\left(m(\Lambda)\right)+m(\Lambda)+\log\left(\delta^{-1}\right)\right)\\
\quad&+C\log\left(d(\Lambda)\right)\left(\log\left(d(\Lambda)\right)+d(\Lambda)\log\left(m(\Lambda)\right)+\log\left(\delta^{-1}\right)\right)\\
&\leq C\Big[\log(d(\Lambda))d(\Lambda)\log(m(\Lambda))m(\Lambda)+ m(\Lambda)^2+\log(\delta^{-1})\bigl(\log(d(\Lambda))+ m(\Lambda)\bigr)\Big].
\end{align*}
For the width ${\rm width}(f_{\Lambda,\delta})$ we use ${\rm width}(f_{\Lambda,\delta})\leq 2\max\{{\rm width}(f_{\Lambda,\delta}^{(1)}),{\rm width}(f_{\Lambda,\delta}^{(2)})\}$, see
\eqref{eq:sparseconcatwidth}. This leaves us to calculate ${\rm width}(f_{\Lambda,\delta}^{(2)})$ and ${\rm width}(f_{\Lambda,\delta}^{(1)})$. It holds
\begin{align}
    {\rm width}\left(f_{\Lambda,\delta}^{(2)}\right)&\leq\sum_{(j,\nu_j)\in T}{\rm width}\left(\Id_{\IR}\circ \tilde{L}_{\nu_j,\delta'}\right)\notag\\
    &\leq 2\sum_{(j,\nu_j)\in T}{\rm width}\left(\tilde{L}_{\nu_j,\delta'}\right)\leq 18|T|,\label{eq:prooftensorizedLegendrewidth1}
\end{align}
where we used \eqref{eq:sigma1identityemulationwidth} for the width of the $\sigma_1$-identity network and \eqref{eq:Legdnresigma1approxNNNwidth} for the width of $\tilde{L}_{\nu_j,\delta'}$.
For ${\rm width}(f_{\Lambda,\delta}^{(1)})$ it holds
\begin{align}
 {\rm width}\left(f_{\Lambda,\delta}^{(1)}\right)&\leq\sum_{\pnu\in\Lambda}{\rm width}\left(\Id_{\IR}\circ\widetilde{\prod}_{\delta/2,M_{\pnu}}\right)\notag\\
 &\leq 2\sum_{\pnu\in\Lambda}{\rm width}\left(\widetilde{\prod}_{\delta/2,M_{\pnu}}\right)\notag\\
 &\leq\sum_{\pnu\in\Lambda}10d(\Lambda)=10|\Lambda|d(\Lambda),\label{eq:prooftensorizedLegendrewidth2}
\end{align}
again using \eqref{eq:sigma1identityemulationwidth} and \eqref{sigma1multinnunmberapproximationwidth} for the width of the multiplication network $\widetilde{\prod}$.
Combining \eqref{eq:prooftensorizedLegendrewidth1} and \eqref{eq:prooftensorizedLegendrewidth2} gives 
\begin{align*}
{\rm width}\left(f_{\Lambda,\delta}\right)\leq 36|\Lambda|d(\Lambda),
\end{align*}
where $|T|\leq |\Lambda|d(\Lambda)$ was used.

To estimate ${\rm size}(f_{\Lambda,\delta})$, we use \eqref{eq:sparseconcatsize} and find ${\rm size}(f_{\Lambda,\delta})\leq 2{\rm size}(f_{\Lambda,\delta}^{(1)})+2{\rm size}(f_{\Lambda,\delta}^{(2)})$. We calculate
\begin{align}
    {\rm size}\left(f_{\Lambda,\delta}^{(2)}\right)&={\rm size}\left(\left\{\Id_{\IR}\circ \tilde{L}_{\nu_j,\delta'}(y_j)\right\}_{(j,\nu_j)\in T}\right)\notag\\
    &=\sum_{(j,\nu_j)\in T}{\rm size}\left(\Id_{\IR}\circ \tilde{L}_{\nu_j,\delta'}(y_j)\right)\notag\\
    &\leq 2m(\Lambda)d(\Lambda)\max_{(j,\nu_j)\in T}\left({\rm size}\left(\Id_{\IR}\right)+{\rm size}\left(\tilde{L}_{\nu_j,\delta'}(y_j)\right)\right)\notag\\
    &\leq 10m(\Lambda)d(\Lambda)\max_{(j,\nu_j)\in T}\,{\rm size}\left(\tilde{L}_{\nu_j,\delta'}(y_j)\right)\notag\\
    &\leq Cd(\Lambda)m(\Lambda)^2\left(m(\Lambda)+\log\left(\delta'^{-1}\right)\right)\notag\\
    &\leq Cd(\Lambda)m(\Lambda)^2\left(\log\left(d(\Lambda)\right)+d(\Lambda)\log\left(m(\Lambda)\right)+m(\Lambda)+\log\left(\delta^{-1}\right)\right)\label{eq:prooftensorizedLegendresize1}.
\end{align}
In \eqref{eq:prooftensorizedLegendresize1} we used the NN calculus rules for the sizes of a sparse concatenation in \eqref{eq:sparseconcatsize} and a parallelization in \eqref{eq:sizemoreparallelizationNN}. Furthermore, we used $|T|\leq m(\Lambda)d(\Lambda)$ at the first equality and 
\begin{align}
\label{eq:prooftensorizedLegendresizeextra}
{\rm size}\left(\Id_{\IR}\right)\leq 4\max_{(j,\nu_j)\in T}{\rm depth}\left(\tilde{L}_{\nu_j,\delta'}(y_j)\right)\leq 4\max_{(j,\nu_j)\in T}{\rm size}\left(\tilde{L}_{\nu_j,\delta'}(y_j)\right),
\end{align}
which follows from \eqref{eq:sigma1identityemulationsize}. At the third inequality in \eqref{eq:prooftensorizedLegendresize1} we used the size bound for the univariate Legendre polynomials from \eqref{eq:Legdnresigma1approxNNNsize}.

For ${\rm size}(f_{\Lambda,\delta}^{(1)})$ it holds
\begin{align}
{\rm size}\left(f_{\Lambda,\delta}^{(1)}\right)&=\sum_{\pnu\in\Lambda}{\rm size}\left(\Id_{\IR}\circ \widetilde{\prod}_{\delta/2,M_{\pnu}}\right)\notag\\
&\leq 2 \sum_{\pnu\in\Lambda}\left({\rm size}(\Id_{\IR})+{\rm size}\left(\widetilde{\prod}_{\delta/2,M_{\pnu}}\right)\right)\notag\\
&\leq 10|\Lambda|\max_{\pnu\in\Lambda}\,{\rm size}\left(\widetilde{\prod}_{\delta/2,M_{\pnu}}\right)\notag\\
&\leq C|\Lambda|\max_{\pnu\in\Lambda}\,|\supp\,\pnu|\left(\log\left(|\supp\,\pnu|\right)+|\supp\,\pnu|\log(M_{\pnu})+\log\left(\delta^{-1}\right)\right)\notag\\
&\leq C|\Lambda|d(\Lambda)\left(\log\left(d(\Lambda)\right)+d(\Lambda)\log(m(\Lambda))+\log\left(\delta^{-1}\right)\right).
\label{eq:prooftensorizedLegendresize2}
\end{align}
In \eqref{eq:prooftensorizedLegendresize2}, we used the size bound for $\tilde{\prod}$ from \eqref{sigma1multinnunmberapproximationsize} and the argument from \eqref{eq:prooftensorizedLegendresizeextra}. Additionally we used $M_{\pnu}=2|\pnu|_{1}+2\leq 4m(\Lambda)$. Combining \eqref{eq:prooftensorizedLegendresize1} and \eqref{eq:prooftensorizedLegendresize2} shows the size bound for $f_{\Lambda,\delta}$.

The network $f_{\Lambda,\delta}$ consists of sparse concatinations and parallelizations of the networks $\tilde{\prod}$ and $\tilde{L}_j$.
Because we have
${\rm mpar}(\tilde{\prod})\leq 1$ and ${\rm mpar}(\tilde{L}_j)\leq 1$, the NN calculus rules \eqref{eq:sparseconcatweightbound} and \eqref{eq:parallelization2NNmaximumweights} yield ${\rm mpar}(f_{\Lambda,\delta})\leq 1$. This finishes the proof.
\subsection{RePU-realization of Tensorized Legendre Polynomials}
\label{app:tensorsigq}
We 
show a result analogous to Proposition \ref{prop:tensorlegendreapproximation} for the RePU-realization of tensorized Legendre polynomials. The construction is similar to \cite[Proposition 2.13]{Opschoor.2022}.
\begin{proposition}[$\sigma_q$-NN approximation of $L_{\Bnu}$]
\label{prop:tensorlegendreapproximationsigq}
Consider the setting of Proposition \ref{prop:tensorlegendreapproximation}. Let $q\in\IN$, $q\geq 2$.
Then there exists a $\sigma_q$-NN $f_{\Lambda}$ such that the outputs $\{\tilde{L}_{\Bnu}\}_{\nu\in\Lambda}$ of $f_{\Lambda}$ satisfy
\begin{equation}
\label{eq:Linfboundtenorizedlegendresigq}
    \forall\nu\in\Lambda,\,\,\forall\py\in U:\qquad \tilde{L}_{\Bnu}(\py)=L_{\Bnu}(\py).
\end{equation}
Furthermore, there exists a constant $C_q>0$ depending only on $q$ such that
    \begin{align}
        {\rm depth}\bigl(f_{\Lambda}\bigr)&\leq C_q\biggl(m(\Lambda)+\log\bigl(d(\Lambda)\bigr)\biggr),\label{eq:LegdnresigmaqapproxtensorNNNdepth}\\
{\rm width}\bigl(f_{\Lambda}\bigr)&\leq C_q|\Lambda|d(\Lambda),\label{eq:LegdnresigmaqapproxNNNtensorwidth}\\
{\rm size}\bigl(f_{\Lambda,\delta}\bigr)&\leq C_qd(\Lambda)\biggl(|\Lambda|+m(\Lambda)^2\biggr),\label{eq:LegdnresigmaqapproxtensorNNNsize}\\ 
{\rm mpar}\bigl(f_{\Lambda,\delta}\bigr)&\leq C_q.\label{eq:LegdnresigmaqapproxNNNtensorweightbound}
    \end{align}
\end{proposition}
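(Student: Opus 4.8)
The plan is to repeat, in a simplified form, the two--stage construction from the proof of Proposition~\ref{prop:tensorlegendreapproximation}, replacing each \emph{approximate} $\sigma_1$--block by its \emph{exact} $\sigma_q$--counterpart. Since $\sigma_q$--networks realize univariate polynomials and products of real numbers exactly (Corollary~\ref{Cor:ApproxLegendreunivaraintsigmaq} and Proposition~\ref{prop:sigqsigmaqmultiplicationNNnumbers}), no accuracy parameter is needed and there is nothing to propagate through the multiplication tree; thus \eqref{eq:Linfboundtenorizedlegendresigq} will hold with exact equality, and the entire error analysis underlying Proposition~\ref{prop:tensorlegendreapproximation} (the magnitude bookkeeping for the intermediate multiplications, the propagation of the accuracy $\delta'$, etc.) simply disappears.

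Concretely, I would set $T\coloneqq\{(j,\nu_j)\in\IN^2:\Bnu\in\Lambda,\ j\in\supp\Bnu\}$ and build $f_\Lambda\coloneqq f_\Lambda^{(1)}\circ f_\Lambda^{(2)}$ as a $\sigma_q$--sparse concatenation (Definition~\ref{def:sigqsparseconcatenationNN}). The first subnetwork evaluates in parallel all needed univariate Legendre polynomials,
\[
  f_\Lambda^{(2)}(\py)\coloneqq\Big(\big\{\Id_{\IR}\circ\tilde L_{\nu_j}(y_j)\big\}_{(j,\nu_j)\in T}\Big),
\]
where $\tilde L_{\nu_j}$ is the exact $\sigma_q$--realization of $L_{\nu_j}$ from Corollary~\ref{Cor:ApproxLegendreunivaraintsigmaq} and the $\sigma_q$--identity networks of Lemma~\ref{def:Identityemulationsigmaq} are inserted purely to equalize depths before parallelizing. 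The second subnetwork then forms, in parallel over $\Bnu\in\Lambda$, the products of the relevant outputs via the exact $\sigma_q$--multiplication networks $\widetilde{\prod}$ of Proposition~\ref{prop:sigqsigmaqmultiplicationNNnumbers},
\[
  f_\Lambda^{(1)}\big(f_\Lambda^{(2)}(\py)\big)\coloneqq\Big(\big\{\Id_{\IR}\circ\widetilde{\prod}\big(\{\tilde L_{\nu_j}(y_j)\}_{j\in\supp\Bnu}\big)\big\}_{\Bnu\in\Lambda}\Big).
\]
Because $L_{\nu_j}\equiv1$ whenever $\nu_j=0$, the $\Bnu$--th output equals $\prod_{j\in\supp\Bnu}L_{\nu_j}(y_j)=L_{\Bnu}(\py)$ for every $\py\in U$, which is exactly \eqref{eq:Linfboundtenorizedlegendresigq}; no error estimate is required.

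It then remains to read off \eqref{eq:LegdnresigmaqapproxtensorNNNdepth}--\eqref{eq:LegdnresigmaqapproxNNNtensorweightbound} from the $\sigma_q$ calculus of Definitions~\ref{def:NNparalleliazion} and \ref{def:sigqsparseconcatenationNN}, exactly along the lines of the $\sigma_1$--case but with the smaller exact $\sigma_q$--bounds for the building blocks. By \eqref{eq:sigqsparseconcatdepth}, ${\rm depth}(f_\Lambda)\le{\rm depth}(f_\Lambda^{(1)})+{\rm depth}(f_\Lambda^{(2)})+1$; the univariate blocks have depth $\le C_q m(\Lambda)$ by \eqref{eq:LegdnresigmaqapproxNNNdepth} and the multiplication blocks depth $\le C_q\log(d(\Lambda))$ by \eqref{sigmaqmultinnunmberapproximationlength}, giving \eqref{eq:LegdnresigmaqapproxtensorNNNdepth}. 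Parallelizing at most $|T|\le m(\Lambda)d(\Lambda)$ univariate branches and $|\Lambda|$ product branches of width $\le C_q d(\Lambda)$ gives \eqref{eq:LegdnresigmaqapproxNNNtensorwidth} via \eqref{eq:widthmoreparallelizationNN} and \eqref{eq:sigqsparseconcatwidth}. For the size one combines \eqref{eq:sizemoreparallelizationNN}, \eqref{eq:sigqsparseconcatsize} with ${\rm size}(\tilde L_{\nu_j})\le C_q\nu_j\le C_q m(\Lambda)$ (from \eqref{eq:LegdnresigmaqapproxNNNsize}), ${\rm size}(\widetilde{\prod})\le C_q d(\Lambda)$ (from \eqref{sigmaqmultinnunmberapproximationsize}), and the fact that the depth--synchronizing identities have size proportional to their depth (from \eqref{eq:sigmarqemulationidentitysize}): the $f_\Lambda^{(1)}$--part contributes $\le C_q|\Lambda|d(\Lambda)$ and the $f_\Lambda^{(2)}$--part $\le C_q d(\Lambda)m(\Lambda)^2$, which is \eqref{eq:LegdnresigmaqapproxtensorNNNsize}. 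Finally, every building block has ${\rm mpar}\le C_q$ (by \eqref{eq:LegdnresigmaqapproxNNNweightbound}, \eqref{sigmaqmultinnunmberapproximationweihgtbound} and \eqref{eq:sigmarqemulationidentitymaximumweights}), and by \eqref{eq:sigqsparseconcatweightbound} and \eqref{eq:maxweightsmoreparallelizationNN} sparse concatenation and parallelization inflate ${\rm mpar}$ only by a $q$--dependent factor, so \eqref{eq:LegdnresigmaqapproxNNNtensorweightbound} follows.

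There is no genuine obstacle here; the argument is strictly simpler than the $\sigma_1$--case, as exactness collapses the accuracy analysis into the trivial identity $\tilde L_{\Bnu}=L_{\Bnu}$. The one point demanding a little care is the size bookkeeping for $f_\Lambda^{(2)}$: across the $\le m(\Lambda)d(\Lambda)$ parallel branches one must add both the sizes $\le C_q\nu_j$ of the exact Legendre networks and the sizes $\le C_q m(\Lambda)$ of the depth--synchronizing identity networks, which is precisely how the $d(\Lambda)m(\Lambda)^2$ term in \eqref{eq:LegdnresigmaqapproxtensorNNNsize} arises, and to check that using the $\sigma_q$--identity of Lemma~\ref{def:Identityemulationsigmaq} (rather than the $\sigma_1$--identity) leaves ${\rm mpar}$ bounded by a constant, which it does by \eqref{eq:sigmarqemulationidentitymaximumweights}.
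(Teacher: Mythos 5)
Your proposal is correct and follows essentially the same route as the paper: the same two-stage decomposition $f_\Lambda=f_\Lambda^{(1)}\circ f_\Lambda^{(2)}$, with the exact $\sigma_q$-realizations of $L_{\nu_j}$ (Corollary~\ref{Cor:ApproxLegendreunivaraintsigmaq}) and of the $d(\Lambda)$-fold product (Proposition~\ref{prop:sigqsigmaqmultiplicationNNnumbers}), depth-synchronizing $\sigma_q$-identity layers, and the same bookkeeping via the $\sigma_q$-calculus of Definitions~\ref{def:NNparalleliazion} and~\ref{def:sigqsparseconcatenationNN}, including the observation that $|T|\le m(\Lambda)d(\Lambda)$ drives the $d(\Lambda)m(\Lambda)^2$ term. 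Nothing to add; your version even silently corrects the paper's citation of Corollary~\ref{Cor:ApproxLegendreunivaraintsigma1} where the $\sigma_q$-version is meant.
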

\begin{proof}
Similar to the proof of Proposition \ref{prop:tensorlegendreapproximation} we define
$f_{\Lambda}$ as a composition of two subnetworks $f_{\Lambda}^{(1)}$ and $f_{\Lambda}^{(2)}$. It holds
\begin{equation*}
        f_{\Lambda}^{(2)}(\py)\coloneqq\left(\left\{\Id_{\IR}\circ \tilde{L}_{\nu_j}(y_j)\right\}_{(j,\nu_j)\in T}\right)
\end{equation*}
and 
\begin{align*}
        f_{\Lambda}^{(1)}\left((z_k)_{k\leq |T|}\right)&=f_{\Lambda}^{(1)}\left(f_{\Lambda}^{(2)}(\py)\right)\notag\\
        &\coloneqq\left(\left\{\Id_{\IR}\circ\widetilde{\prod}\left(\left\{\Id_{\IR}\circ\tilde{L}_{\nu_j}(y_j)\right\}_{j\in{\rm supp}\,{\pnu}}\right) \right\}_{\nu\in\Lambda}\right)
\end{align*}
with $T$ from \eqref{eq:T} and $\py=(y_j)_{(j,\nu_j)\in T}$. Furthermore, we use the $\sigma_q$-NNs $\tilde{L}_j$ from Corollary \ref{Cor:ApproxLegendreunivaraintsigma1} and $\tilde{\prod}$ from Proposition \ref{prop:sigqsigmaqmultiplicationNNnumbers}.
The calculations are similar to the proof of Proposition \ref{prop:tensorlegendreapproximation}. It holds
\begin{align}
    {\rm depth}\left(f_{\Lambda}^{(2)}\right)&=1+\max_{\substack{\pnu\in\Lambda\\\ j\in{\rm supp}\,\pnu}}{\rm depth}\left(\tilde{L}_{\nu_j}\right)\notag\\
    &\leq C_q\max_{\substack{\pnu\in\Lambda\\\ j\in{\rm supp}\,\pnu}}\nu_j\notag\\
    &\leq C_q m(\Lambda).\label{eq:prooftensorizedLegendredepth1sigq} 
\end{align}
In \eqref{eq:prooftensorizedLegendredepth1sigq} we used the depth bound for univariate Legendre polynomials, \eqref{eq:LegdnresigmaqapproxNNNdepth}. Furthermore, we used $\nu_j\leq m(\Lambda)$ for all $\pnu\in\Lambda$ and $j\in\supp\,\pnu$.
For the depth of $f_{\Lambda}^{(1)}$ it holds
\begin{align}
{\rm depth}\left(f_{\Lambda}^{(1)}\right)&=1+\max_{\pnu\in\Lambda}{\rm depth}\left(\widetilde{\prod}\right)\notag\\
&\leq 1+C_q \max_{\pnu\in\Lambda}\,\log\left(|\supp\,\pnu|\right)\notag\\
&\leq 1+C_q\log\left(d(\Lambda)\right),
\label{eq:prooftensorizedLegendredepth2sigq}
\end{align}
where we used $|\supp\,\pnu|\leq d(\Lambda)$ for all $\pnu\in\Lambda$ and the depth bound for $\sigma_q$-multiplication networks from Proposition \ref{prop:sigqsigmaqmultiplicationNNnumbers}.
Combining the two depth bounds from \eqref{eq:prooftensorizedLegendredepth1sigq} and \eqref{eq:prooftensorizedLegendredepth2sigq}, we get
\begin{align*}
    {\rm depth}(f_{\Lambda})\leq C_q\left(m(\Lambda)+\log(d(\Lambda)\right).
\end{align*}
For the width ${\rm width}(f_{\Lambda})$ we calculate
\begin{align}
    {\rm width}\left(f_{\Lambda}^{(2)}\right)&=\sum_{(j,\nu_j)\in T}{\rm width}\left(\Id_{\IR}\circ \tilde{L}_{\nu_j}\right)\notag\\
    &\leq C_q\sum_{(j,\nu_j)\in T}{\rm width}\left(\tilde{L}_{\nu_j}\right)\leq C_q|T|,\label{eq:prooftensorizedLegendrewidth1sigq}
\end{align}
where we used \eqref{eq:sigqsparseconcatwidth} for the width of a $\sigma_q$-sparse concatenation and \eqref{eq:LegdnresigmaqapproxNNNwidth} for the width of $\tilde{L}_{\nu_j}$.
For ${\rm width}(f_{\Lambda}^{(1)})$ it holds
\begin{align}
 {\rm width}\left(f_{\Lambda}^{(1)}\right)&\leq\sum_{\pnu\in\Lambda}{\rm width}\left(\Id_{\IR}\circ\widetilde{\prod}\right)\notag\\
 &\leq C_q\sum_{\pnu\in\Lambda}{\rm width}\left(\widetilde{\prod}\right)\notag\\
 &\leq C_q\sum_{\pnu\in\Lambda}d(\Lambda)=C_q|\Lambda|d(\Lambda)\label{eq:prooftensorizedLegendrewidth2sigq}
\end{align}
using \eqref{eq:sigqsparseconcatwidth}  and \eqref{sigmaqmultinnunmberapproximationwidth} for the width of the multiplication network $\widetilde{\prod}$.
Combining \eqref{eq:prooftensorizedLegendrewidth1sigq} and \eqref{eq:prooftensorizedLegendrewidth2sigq} gives 
\begin{align*}
{\rm width}\left(f_{\Lambda}\right)\leq C_q|\Lambda|d(\Lambda),
\end{align*}
where $|T|\leq |\Lambda|d(\Lambda)$ was used.

To estimate ${\rm size}(f_{\Lambda})$, we use \eqref{eq:sigqsparseconcatsize} and find ${\rm size}(f_{\Lambda})\leq C_q({\rm size}(f_{\Lambda}^{(1)})+{\rm size}(f_{\Lambda}^{(2)}))$. We calculate
\begin{align}
    {\rm size}\left(f_{\Lambda}^{(2)}\right)&={\rm size}\left(\left\{\Id_{\IR}\circ \tilde{L}_{\nu_j}(y_j)\right\}_{(j,\nu_j)\in T}\right)\notag\\
    &=\sum_{(j,\nu_j)\in T}{\rm size}\left(\Id_{\IR}\circ \tilde{L}_{\nu_j}(y_j)\right)\notag\\
    &\leq C_qm(\Lambda)d(\Lambda)\max_{(j,\nu_j)\in T}\left({\rm size}\left(\Id_{\IR}\right)+{\rm size}\left(\tilde{L}_{\nu_j}(y_j)\right)\right)\notag\\
    &\leq C_qm(\Lambda)d(\Lambda)\max_{(j,\nu_j)\in T}\,{\rm size}\left(\tilde{L}_{\nu_j}(y_j)\right)\notag\\
    &\leq C_qd(\Lambda)m(\Lambda)^2.\label{eq:prooftensorizedLegendresize1sigq}
\end{align}
In \eqref{eq:prooftensorizedLegendresize1sigq} we used $|T|\leq m(\Lambda)d$ at the first inequality and 
\begin{align}
\label{eq:prooftensorizedLegendresizeextrasigq}
{\rm size}\left(\Id_{\IR}\right)\leq C_q\max_{(j,\nu_j)\in T}{\rm depth}\left(\tilde{L}_{\nu_j}(y_j)\right)\leq C_q\max_{(j,\nu_j)\in T}{\rm size}\left(\tilde{L}_{\nu_j}(y_j)\right),
\end{align}
which follows from \eqref{eq:sigmarqemulationidentitysize}. At the third inequality in \eqref{eq:prooftensorizedLegendresize1sigq} we used the size bound for the univariate Legendre polynomials from \eqref{eq:LegdnresigmaqapproxNNNsize}.

For ${\rm size}(f_{\Lambda}^{(1)})$ it holds
\begin{align}
{\rm size}\left(f_{\Lambda}^{(1)}\right)&=\sum_{\pnu\in\Lambda}{\rm size}\left(\Id_{\IR}\circ \widetilde{\prod}\right)\notag\\
&\leq C_q\sum_{\pnu\in\Lambda}\left({\rm size}(\Id_{\IR})+\left(\widetilde{\prod}\right)\right)\notag\\
&\leq C_q|\Lambda|\max_{\pnu\in\Lambda}\,{\rm size}\left(\widetilde{\prod}\right)\notag\\
&\leq C_q|\Lambda|\max_{\pnu\in\Lambda}\,|\supp\,\pnu|\notag\\
&\leq C_q|\Lambda|d(\Lambda).
\label{eq:prooftensorizedLegendresize2sigq}
\end{align}
In \eqref{eq:prooftensorizedLegendresize2sigq} we used the size bound for $\tilde{\prod}$ from \eqref{sigmaqmultinnunmberapproximationsize} and the argument from \eqref{eq:prooftensorizedLegendresizeextrasigq}. Combining \eqref{eq:prooftensorizedLegendresize1sigq} and \eqref{eq:prooftensorizedLegendresize2sigq} shows the size bound for $f_{\Lambda}$.

The network $f_{\Lambda}$ consists of sparse concatinations and parallelizations of the networks $\tilde{\prod}$ and $\tilde{L}_j$.
Because we have
${\rm mpar}(\tilde{\prod})\leq C_q$ and ${\rm mpar}(\tilde{L}_j)\leq C_q$, the NN calculus rules \eqref{eq:sigqsparseconcatweightbound} and \eqref{eq:parallelization2NNmaximumweights} yield ${\rm mpar}(f_{\Lambda})\leq C_q$. This finishes the proof.
\end{proof}
\subsection{Proof of Theorem \ref{th:approximationNN}}
\label{app:proofapproximationerror}
The following two theorems are similar to \cite[Theorem 5]{Herrmann.2024} and
will be required for the proof of
Theorem \ref {th:approximationNN}.
\begin{theorem}
    \label{th:aprroximationerrorLinfinity}
     For $N,q\in\IN$, consider the sparse FrameNet class $\GNNSP$. Let Assumption \ref{assump:holomorphicextension} be satisfied with $r>1$ and $t>0$. Fix $\tau>0$ (arbitrary small). Then there exists a constant $C>0$ independent of $N$, such that there exists $\Gamma_N\in\GNNSP$ with
    \begin{equation}
        \label{eq:approximationerrrorresultnew}
    \sup_{a\in C_R^r(\X)}\left\lVert\Gamma_N(a)-\G_0(a)\right\rVert_{\Y}\leq C N^{-\min\{r-1,t\}+\tau}.
    \end{equation}
    \end{theorem}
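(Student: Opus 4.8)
The plan is to reduce the operator approximation error to a polynomial approximation problem in the coefficient space, following the general strategy of \cite[Theorem 5]{Herrmann.2024} but tracking the additional constraint that all network weights remain bounded. First I would use the encoder–decoder decomposition \eqref{eq:Gg}: writing $G_0 = \D_\Y \circ g_0 \circ \E_\X$ with $g_0 = F_\Y \circ G_0 \circ \sigma_R^r$ acting on $U = [-1,1]^\IN$ (after the rescaling $S_r$), it suffices to approximate the coefficient map $g_0 \colon U \to \ell^2(\IN)$ by a ReLU (resp. RePU) network $g$ with controlled depth/width/size and $\mathrm{mpar}(g) \le 1$, in the sense that the truncated/weighted $\ell^2$-error is $O(N^{-\min\{r-1,t\}+\tau})$. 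The holomorphy of $G_0$ (Assumption \ref{assump:holomorphicextension}) transfers, via the boundedness of $\E_\X$, $\D_\Y$, $\sigma_R^r$ and $S_r^{-1}$, to holomorphy of $g_0$ on a polyellipse-type neighborhood of $U$ in $\ell^2_{\IC}$; this yields, by the standard Taylor/Legendre coefficient estimates for holomorphic maps (cf. \cite{CDS11,CohenDeVore,Zech.2018} and the theorem referenced as Theorem \ref{th:algebraicboundonlegendrecoefficients} in the excerpt), a finite downward-closed multiindex set $\Lambda_N \subset \F$ with $|\Lambda_N| \lesssim N$ and a partial Legendre sum $\sum_{\Bnu \in \Lambda_N} c_\Bnu L_\Bnu$ whose error to $g_0$ is $O(N^{-\min\{r-1,t\}+\tau})$ in the relevant norm — here one uses the decay of the Legendre coefficients $\|c_\Bnu\|_{\Y_{\IC}^t}$ and the $r$-weighted summability implied by the domain $C_R^r(\X)$, together with the $\sqrt{2j+1}$ growth bound \eqref{eq:LinfLegendreunivarainte} on $L_\nu$ to control $L^\infty(U)$ errors.

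The second main step is the network realization of this truncated Legendre expansion with bounded weights. For this I would invoke Proposition \ref{prop:tensorlegendreapproximation}: it produces a $\sigma_1$-network $f_{\Lambda_N,\delta}$ with $\mathrm{mpar} \le 1$ whose outputs $\tilde L_{\Bnu,\delta}$ are uniformly $\delta$-close to $L_\Bnu$ on $U$, with depth $O(\mathrm{polylog})$ and size $O(|\Lambda_N| \cdot \mathrm{polylog})$ once $d(\Lambda_N)$, $m(\Lambda_N)$ are logarithmic in $N$ (which follows from the downward-closedness and cardinality bound on $\Lambda_N$, cf. \cite[Lemma 1.4.15]{Zech.2018} and the $C_L$-discussion after Theorem \ref{th:RgnG0boundWNREPU}). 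One then forms a final linear layer with the Legendre coefficients $c_{\Bnu,j}$ as weights — but these are generally not bounded by $1$, so a rescaling/normalization trick is needed: absorb the coefficient magnitudes into an extra scalar-multiplication sublayer (Definition \ref{def:scalarmultiplicatoinsimaq}) or split large weights across several identity-padded layers, exactly the device used to keep $\mathrm{mpar} \le M$ with $M$ an $N$-independent constant; this contributes only logarithmically to depth/size. Choosing $\delta = \delta_N$ polynomially small in $N$ (so that the aggregate $\delta_N$-errors summed against the $\ell^1$ of the coefficients are dominated by the truncation error) keeps depth and size within the budget \eqref{eq:Framenetconstants} with suitable constants $C_L$, $C_p$, $C_s$. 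Composing $\D_\Y \circ g \circ S_r \circ \E_\X$ then gives an element $\Gamma_N \in \GNNSP$, and the triangle inequality — truncation error $+$ network realization error $+$ decoder/encoder stability constants $\Lambda_{\pmb\Psi_\Y}$, $\lambda_{\pmb\Psi_\X}^{-1}$ — yields \eqref{eq:approximationerrrorresultnew}.

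The hardest part is the careful bookkeeping in the second step: ensuring simultaneously that (i) the multiindex set $\Lambda_N$ can be chosen downward-closed with $|\Lambda_N| \lesssim N$ \emph{and} with $d(\Lambda_N), m(\Lambda_N) = O(\log N)$ (so that the polylogarithmic factors in Proposition \ref{prop:tensorlegendreapproximation} are genuinely absorbed into the $N^\tau$ slack), (ii) the coefficient normalization does not blow up the weight bound or the size beyond $O(N \log N) \le \mathrm{size}_N$ for appropriate $C_s$, and (iii) the two error sources — Legendre truncation $N^{-\min\{r-1,t\}+\tau/2}$ and network emulation $\delta_N$ amplified by $\|(c_\Bnu)\|$-type factors — are balanced so the final exponent is exactly $-\min\{r-1,t\}+\tau$. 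The holomorphy-to-coefficient-decay step is essentially quoted from \cite{Herrmann.2024,Zech.2018}, so the novelty and the real work lie in threading the bounded-weight constraint through the entire construction without degrading the rate; this is where I would spend most of the proof, and it is also where the difference between the $L^\infty$ rate $\min\{r-1,t\}$ here and the $L^2$ rate $\min\{r-\tfrac12,t\}$ of Theorem \ref{th:approximationNN}\ref{item:Ltwoapprox} enters, via whether one controls $\sup_{\py\in U}|L_\Bnu(\py)|$ (costing the extra half-power through \eqref{eq:LinfLegendreunivarainte}) or only $\|L_\Bnu\|_{L^2(\pi)} = 1$.
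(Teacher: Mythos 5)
Your proposal follows essentially the same route as the paper: pull $G_0$ back to $u = G_0\circ\sigma_R^r$ on $U = [-1,1]^\IN$, expand in tensorized Legendre polynomials using Theorem~\ref{th:algebraicboundonlegendrecoefficients}, realize the truncated expansion with the bounded-weight network from Proposition~\ref{prop:tensorlegendreapproximation} plus scalar-multiplication sublayers (Definition~\ref{def:scalarmultiplicatoinsimaq}) and a summation layer, and then rebalance the log factors against $N^\tau$. Your diagnosis of the $L^\infty$ vs.\ $L^2$ rate gap (controlling $\|L_\Bnu\|_\infty\le\omega_\Bnu^{1/2}$ rather than $\|L_\Bnu\|_{L^2(\pi)}=1$, i.e.\ summing in $\ell^1$ rather than $\ell^2$ over the multiindex index) is exactly right, and the scalar-multiplication trick is what the paper uses in~\eqref{eq:gntildeNN}.

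The one genuine gap is in the truncation step. You write a single-index partial sum $\sum_{\Bnu\in\Lambda_N} c_\Bnu L_\Bnu$ with $c_\Bnu\in\ell^2(\IN)$, but this object has infinitely many output coordinates $j$ and cannot be realized by a network with $O(N)$ nonzero parameters. The construction has to truncate the output index $j$ as well, and crucially the truncation levels must be allocated \emph{per multiindex}: the paper uses Lemma~\ref{eq:summabilitylemma}\ref{item:doublesum1} to produce a monotone $(m_i)_{i}$ with $\sum_i m_i\le \tilde N$ such that
\[
  \sum_{i\in\IN}\omega_{\Bnu_i}^{1/2}\Bigl(\sum_{j>m_i}c_{\Bnu_i,j}^2\Bigr)^{1/2}\lesssim \tilde N^{-\min\{r-1,t\}+\tilde\tau},
\]
and then only the $\{(i,j):j\le m_i\}$ entries (at most $\tilde N$ of them) enter the network~\eqref{eq:definegntilde}. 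It is exactly this coupled $(i,j)$-allocation — not merely a choice of $\Lambda_N$ with $|\Lambda_N|\lesssim N$ followed by keeping the full vector $c_\Bnu$ — that yields the stated exponent; without it the proof would need a separate mechanism to keep the size budget finite while preserving the rate. The rest of your outline (polylog depth/size from Proposition~\ref{prop:tensorlegendreapproximation} given $m(\Lambda_N)=O(\log N)$, $d(\Lambda_N)=o(\log N)$; the choice $\rho\sim\tilde N^{-\min\{r-1/2,t\}}$ for the emulation error; absorbing the coefficient magnitudes into an $\mathrm{mpar}\le M$ bound with $M$ independent of $N$; verifying the $\mathrm{mran}\le B$ constraint) is sound and matches the paper.
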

\begin{theorem}
    \label{th:aprroximationerrorl2}
    Consider the setting of Theorem \ref{th:aprroximationerrorLinfinity}. Let $\pmb{\Psi}_{\X}$ be a Riesz basis. Additionally, let $\gamma$ be as in \eqref{eq:gamma}. Fix $\tau>0$ (arbitrary small). Then there exists a constant $C>0$ independent of $N$, such that there exists $\Gamma_N\in\GNNSP$ with
    \begin{equation}
        \label{eq:approximationerrrorresultnew2}
    \left\lVert\Gamma_N-\G_0\right\rVert_{L^2(C_R^r(\X),\gamma;\Y)}\leq C N^{-\min\left\{r-\frac{1}{2},t\right\}+\tau}.
    \end{equation}
    \end{theorem}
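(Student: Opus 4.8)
The plan is to reduce the $L^2$-approximation bound of Theorem \ref{th:aprroximationerrorl2} to the $L^\infty$-bound on the smaller cube $C_R^r(\X)$ established in Theorem \ref{th:aprroximationerrorLinfinity}, but with a \emph{gained half-power} of $N$, by exploiting the fact that under the measure $\gamma=(\sigma_R^r)_\sharp\pi$ and for a Riesz basis $\pmb{\Psi}_\X$ the support of $\gamma$ equals $\sigma_R^r(U)=C_R^r(\X)$ and the Legendre coefficients of $G_0\circ\sigma_R^r$ are well-behaved. Concretely, I would pull everything back to the parameter domain $U=[-1,1]^\IN$: write $\mathcal G := G_0\circ\sigma_R^r:U\to\Y$, which by Assumption \ref{assump:holomorphicextension} is holomorphic on a complex neighbourhood of $U$ with values in $\Y_\IC^t$, and similarly for the network surrogate $\Gamma_N\circ\sigma_R^r$. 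The key input is the algebraic summability of the Legendre coefficients of $\mathcal G$ referenced in the excerpt (Theorem \ref{th:algebraicboundonlegendrecoefficients}, used in \cite{Herrmann.2024}): writing $\mathcal G(\By)=\sum_{\Bnu\in\F} c_\Bnu L_\Bnu(\By)$ with $c_\Bnu\in\Y$, one has $\sum_\Bnu \|c_\Bnu\|_{\Y_t}^p<\infty$ for some $p<1$, plus a corresponding decay in the $\Y_t$-norm which encodes both the polynomial truncation and the frame-truncation error.

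The main step is then a \emph{best $N$-term} argument with a twist. For the $L^\infty$-bound one uses that $\sup_{\By\in U}|L_\Bnu(\By)|\le \prod_j\sqrt{2\nu_j+1}$ and that best $N$-term truncation of an $\ell^p$-summable sequence with weights of at most sub-exponential growth yields rate $N^{-(1/p-1)+\tau}$; this is exactly what Theorem \ref{th:aprroximationerrorLinfinity} delivers after one also accounts for (i) replacing exact $L_\Bnu$ by the $\sigma_1$-NN surrogates $\tilde L_{\Bnu,\delta}$ from Proposition \ref{prop:tensorlegendreapproximation} with $\delta$ chosen polynomially small in $N$, and (ii) the decoder/frame-truncation error governed by the $\theta_j^{-2t}$ weights. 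For the $L^2(\gamma)$-bound, the same truncation of the \emph{same} index set $\Lambda_N$ is used, but now the error is measured in $L^2(U,\pi;\Y)\cong \ell^2$ of the coefficient tail, which by orthonormality of $\{L_\Bnu\}$ in $L^2(U,\pi)$ equals $\big(\sum_{\Bnu\notin\Lambda_N}\|c_\Bnu\|_\Y^2\big)^{1/2}$ — no $\sqrt{2\nu_j+1}$ factors appear. Best $N$-term in $\ell^2$ of an $\ell^p$-sequence gives rate $N^{-(1/p-1/2)}$, i.e.\ precisely the extra half-power, and one checks $1/p-1/2 = \min\{r-\tfrac12,t\}$ up to $\tau$ given that $1/p-1 = \min\{r-1,t\}$ up to $\tau$ from the approximation exponents in \cite{Herrmann.2024}. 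The NN-approximation error of $\tilde L_{\Bnu,\delta}$ and the frame-truncation error are again made negligible by taking $\delta\lesssim N^{-c}$ and by the $\theta_j^{-t}$ decay, so that the network parameters $\mathrm{depth}_N,\mathrm{width}_N,\mathrm{size}_N,M$ in \eqref{eq:architectures} suffice for $\Gamma_N\in\GNNSP$ — this part is verbatim the bookkeeping from the proof of Theorem \ref{th:aprroximationerrorLinfinity}, only with the sharper tail bound plugged in.

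Assembling: I would (1) invoke the Legendre expansion and its $\ell^p$-summability with the $\Y_t$-weighted decay (citing the relevant theorem of \cite{Herrmann.2024}); (2) choose $\Lambda_N$ as the index set of the $\lceil c N\rceil$ largest coefficients $\|c_\Bnu\|_\Y$, noting $d(\Lambda_N),m(\Lambda_N)=O(\log N)$ by the downward-closedness / summability structure; (3) define $\Gamma_N := \D_\Y\circ g_N\circ S_r\circ \E_\X$ where $g_N$ assembles $\sum_{\Bnu\in\Lambda_N} (c_\Bnu)_k \tilde L_{\Bnu,\delta_N}$ per output coordinate $k\le\mathrm{width}_N$, with $\delta_N\simeq N^{-\kappa}$, and verify via Propositions \ref{prop:tensorlegendreapproximation} and the NN-calculus in Appendix \ref{app:NNoperations} that $g_N\in\pmb g_{\mathrm{FN}}(\sigma_1,\mathrm{depth}_N,\mathrm{width}_N,\mathrm{size}_N,M,B)$; (4) split $\|\Gamma_N-G_0\|_{L^2(\gamma)}\le$ (coefficient-tail term) $+$ (NN-emulation term) $+$ (decoder/frame-truncation term) and bound each by $CN^{-\min\{r-1/2,t\}+\tau}$, the first via $\ell^2$ best $N$-term, the latter two by the choice of $\delta_N$ and the $\theta_j^{-t}$ weights. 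The main obstacle I anticipate is step (4)'s first term: one must be careful that the switch from the $L^\infty$ estimate to the $L^2(\gamma)$ estimate genuinely removes the $\prod_j\sqrt{2\nu_j+1}$ weights — this requires that $\gamma$ is \emph{exactly} the pushforward of the uniform product measure $\pi$ (so Parseval in $L^2(U,\pi)$ applies) and that $\pmb{\Psi}_\X$ is a Riesz basis so that $\sigma_R^r$ is a bijection onto $C_R^r(\X)$ with $L^2(\gamma)$-norm comparable to $\ell^2$ of the coefficients; both hypotheses are in the statement, so the delicacy is purely in writing the change-of-variables and the Riesz-basis norm equivalence cleanly rather than in any genuinely hard estimate.
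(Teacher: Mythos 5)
Your high-level plan matches the paper's proof: pull everything back to $U$ via $\sigma_R^r$ (using the Riesz-basis property to identify $C_R^r(\X)=\sigma_R^r(U)$, $\E_\X\circ\sigma_R^r=S_r^{-1}$, and $\gamma=(\sigma_R^r)_\sharp\pi$), expand $G_0\circ\sigma_R^r$ in tensorized Legendre polynomials, truncate, emulate the retained polynomials by $\sigma_1$-networks, and exploit that the $L^2(U,\pi)$-tail of the $\Bnu$-expansion has $\ell^2$ rather than $\ell^1$ structure. That last observation is exactly where the extra half-power comes from, and you have that right. However, there are two substantive issues with how you turn this into a proof.

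First, your explicit construction in steps (2)--(3) does not fit inside the sparse class $\GNNSP$. You propose to keep $|\Lambda_N|=\lceil cN\rceil$ multi-indices $\Bnu$ and, for \emph{each} of them, store all coefficients $(c_\Bnu)_k$ for $k\le{\rm width}_N=O(N)$. That is an $O(N^2)$ coefficient matrix, whereas ${\rm size}_N=O(N)$. The paper avoids this by a triangular, not rectangular, allocation: it fixes an enumeration $(\Bnu_i)_{i\in\IN}$ of $\F$ and keeps, for each $i$, only the first $m_i$ decoder coordinates $c_{\Bnu_i,j}$, $j\le m_i$, with $\sum_i m_i\le\tilde N$. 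This allocation is precisely what Lemma \ref{eq:summabilitylemma} delivers, and the statement you need is its part \ref{item:doublesum2}:
\begin{equation}
\left(\sum_{i\in\IN}\sum_{j>m_i}c_{i,j}^2\right)^{1/2}\lesssim \tilde N^{-\min\{\alpha-\tfrac12,\beta\}+\tau}
\end{equation}
(versus $\sum_i(\sum_{j>m_i}c_{i,j}^2)^{1/2}\lesssim\tilde N^{-\min\{\alpha-1,\beta\}+\tau}$ from part \ref{item:doublesum1}, used for Theorem \ref{th:aprroximationerrorLinfinity}). Without a lemma of this type — either in the $m_i$-allocation form or a double-index best-$N$-term bound over $(\Bnu,j)$ — a ``coefficient-tail + frame-truncation'' split with independent cutoffs $K$ in $\Bnu$ and $J$ in $j$ under a product budget $KJ\lesssim N$ only achieves the harmonic-mean exponent $\frac{(r-1/2)\,t}{(r-1/2)+t}$, which is strictly worse than $\min\{r-\tfrac12,t\}$. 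Your line ``this part is verbatim the bookkeeping from the proof of Theorem \ref{th:aprroximationerrorLinfinity}'' would actually save you here, since that bookkeeping \emph{does} use the $m_i$-allocation; but it contradicts the construction you wrote out in step (3), and you never name Lemma \ref{eq:summabilitylemma}, which is the crux.

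Second, your emphasis on orthonormality of $\{L_\Bnu\}$ in $L^2(U,\pi)$ removing the $\prod_j\sqrt{2\nu_j+1}$ weights is a red herring. The paper does \emph{not} invoke orthonormality to drop the $\omega_\Bnu$-factor: in its $L^2$ estimate it still bounds $\|L_{\Bnu_i}\|_{\infty,\pi}^2\le\omega_{\Bnu_i}$ and lets the subexponentially growing $\omega_\Bnu$ be absorbed into the $\tau$-slack via Proposition \ref{prop:cnujbound} (where $\omega_\Bnu^\tau$ already multiplies the $\Y^{t'}$-norm of the Legendre coefficient). The gain comes only from the $\ell^2$-versus-$\ell^1$ tail structure and the allocation. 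Relatedly, the statement ``$1/p-1/2=\min\{r-\tfrac12,t\}$ up to $\tau$'' is incorrect: the $\ell^p$-summability of $(a_\Bnu)$ gives $1/p\to r^-$, so the single-index $\Bnu$-Stechkin only controls the term $r-\tfrac12$; the $t$-term arises separately from the $\theta_j^{-t}$ decay in the decoder coordinates $j$, and the $\min$ is produced by the joint $m_i$-allocation, not by any one-dimensional Stechkin bound.
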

We first show that Theorems \ref{th:aprroximationerrorLinfinity} and \ref{th:aprroximationerrorl2} imply Theorem \ref{th:approximationNN}.
\begin{proof}[Proof of Theorem \ref{th:approximationNN}]
First consider the setting of Theorem \ref{th:aprroximationerrorLinfinity}. Let $\tau>0$. Then there exists a constant $C$ independent of $N$ and a FrameNet $\Gamma_N\in\GNNSP$ such that for all $N\in\IN$
\begin{align}
    \label{eq:T11firstequation}
    \left\lVert \Gamma_N-\G_0\right\rVert_{\infty,\supp(\gamma)}^2\leq
    \sup_{a\in C_R^r(\X)}\left\lVert \Gamma_N(a)-\G_0(a)\right\rVert_{\Y}^2
      \leq C N^{-2\min\{r-1,t\}+\tau},
\end{align}
where we used \eqref{eq:approximationerrrorresultnew} with $\tau/2$ and $\supp(\gamma)\subseteq C_R^r(\X)$ by Assumption \ref{assump:holomorphicextension}. 

Now consider the setting of Theorem \ref{th:aprroximationerrorl2}. Let $\tau>0$. Then there exists a constant $C$ independent of $N$ and a FrameNet $\GNNSP$ with
\begin{align}
    \label{eq:T11firstequation2}
    \left\lVert \Gamma_N-\G_0\right\rVert_{L^2(\gamma)}^2\leq \left\lVert \Gamma_N-\G_0\right\rVert_{L^2(C_R^r(\X),\gamma;\Y)}^2
      \leq\,C N^{-2\min\left\{r-\frac{1}{2},t\right\}+\tau},
\end{align}
where we used $\supp(\gamma)\subset C_R^r(\X)$ (Assumption \ref{assump:holomorphicextension}) and \eqref{eq:approximationerrrorresultnew2} with $\tau/2$. 
\end{proof}

We are left to prove Theorems \ref{th:aprroximationerrorLinfinity} and \ref{th:aprroximationerrorl2}. We need some auxiliary results.
\subsubsection*{Auxiliary Results}
For $r>1$, $R>0$, $U=[-1,1]^{\IN}$ and $\sigma_R^r$ form \eqref{eq:sigmarR} we define
\begin{equation}
    \label{eq:u}
    u:U\to\Y,\quad u(\py)\coloneqq(\G_0\circ\sigma_R^r)(\py).
\end{equation}
For the proofs of Theorems \ref{th:aprroximationerrorLinfinity} and \ref{th:aprroximationerrorl2} we do a $\Y$-valued tensorized Legendre expansion of $u$ in the frame $(\eta_j L_{\pnu}(\py))_{j,\pnu}$ of $L^2(U,\pi;\Y)$, which reads
\begin{equation}
\label{eq:Legednreexpansionu}
u(\py)=\G_0(\sigma_R^r(\py))=\sum_{j\in\IN}\sum_{\pnu\in\F}c_{\pnu,j}\eta_jL_{\pnu}(\py)
\end{equation}
with Legendre coefficients
\begin{equation}
\label{eq:legendrecoefficnets}
c_{\pnu,j}\coloneqq\int_UL_{\pnu}(\py)\left\langle u(\py),\tilde{\eta}_j\right\rangle_{\Y}\dd\pi(\py).
\end{equation}
Our aim is to construct the network $\Gamma_N$ out of the tensorized Legendre polynomials with the ``most important'' contributions to the expansion. This contribution is quantified via the Legendre coefficients $c_{\pnu,j}$ in  \eqref{eq:legendrecoefficnets}. We therefore have to examine bounds on $c_{\pnu,j}$ and analyze their respective structure. Therefore consider the following order relation on multi-indices in $\F$ from \eqref{eq:multiindices}. For $\pmu,\pnu\in\F$ we write $\pmu\leq\pnu$ if and only if $\mu_j\leq\nu_j$ for all $j\in\IN$. We call a set $\Lambda\subset\F$ \emph{downward closed} if and only if $\pnu\in\F$ implies $\pmu\in\F$ for all $\pmu\leq\pnu$. Furthermore, for $\pnu\in\F$, define 
\[\omega_{\pnu}\coloneqq\prod_{j=1}^\infty(1+2\nu_j).\]
The following theorem is a special case of \cite[Theorem 2.2.10]{Zech.2018}. The formulation is similar to \cite[Theorem 4]{Herrmann.2024}.
\begin{theorem}
\label{th:algebraicboundonlegendrecoefficients}
Let Assumption \ref{assump:holomorphicextension} be satisfied with $r>1$ and $t>0$. Fix $\tau>0$, $p\in(\frac{1}{r},1]$ and $t'\in[0,t]$. Consider $\F$ from \eqref{eq:multiindices},
and let $\pi=\otimes_{j\in\N}\frac{\lambda}{2}$ be the infinite product (probability) measure on $U=[-1,1]^\N$, where $\lambda$ denotes the Lebesgue measure on $[-1,1]$.
Then there exists $C>0$ and a sequence $(a_{\pnu})_{\pmb{\nu}\in\F}\in l^p(\F)$ of positive numbers such that
    \begin{enumerate}[label=(\roman*)]
        \item\label{item:legendredecay} for each $\pnu\in\F$
        \begin{equation*}
            \omega_{\pnu}^\tau\left\lVert\int_U L_{\pnu}(\py)u(\py)\dd\pi(\py)\right\rVert_{\Y^{t'}}\leq Ca_{\pnu},       
            \end{equation*}
        \item there exists an enumeration $(\pmb{\nu_i})_{i\in\IN}$ of $\F$ such that $(a_{\pmb{\nu_i}})_{i\in\IN}$ is monotonically decreasing, the set $\Lambda_N\coloneqq\{\pmb{\nu_i}:\,i\leq N\}\subseteq\F$ is downward closed  for each $N\in\IN$, and additionally
        \begin{align}
            \label{eq:1normnu}
            m(\Lambda_N)&\coloneqq\max_{i=1,\dots, N}|\pmb{\nu_i}|=\O\left(\log(|\Lambda_N|)\right),\\
         d(\Lambda_N)&\coloneqq\max_{i=1,\dots,N}|\supp\pmb{\nu_i}|=o\left(\log(|\Lambda_N|)\right)\label{eq:suppnu}
        \end{align}
        for $N\to\infty$,
        \item the following expansion holds with absolute and uniform convergence:
        \begin{equation*}
            \forall\py\in U:\qquad u(\py)=\sum_{\pnu\in\F}L_{\pnu}(\py)\int_U L_{\pnu}(\px)u(\px)\dd\pi(\px)\in\Y^{t'}.
        \end{equation*}
    \end{enumerate}
\end{theorem}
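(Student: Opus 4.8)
\textbf{Proof proposal for Theorem \ref{th:algebraicboundonlegendrecoefficients}.}

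The plan is to obtain this statement as a packaged consequence of the parametric-holomorphy machinery from \cite{Zech.2018}, transported through the diffeomorphism $\sigma_R^r:U\to\X$. First I would observe that Assumption \ref{assump:holomorphicextension} gives a holomorphic extension of $G_0$ to an open neighbourhood $O_\IC\subset\X_\IC$ of $\sigma_R^r(U)$ with values in $\Y_\IC^t$, bounded by $C_{G_0}$. Composing with the (affine-linear, hence entire) map $\sigma_R^r$ extended to a complex strip, one gets that $u=G_0\circ\sigma_R^r:U\to\Y^{t'}$ (for any $t'\in[0,t]$, since $\Y^t\hookrightarrow\Y^{t'}$ continuously) admits a holomorphic extension to a poly-ellipse / Bernstein-type polydisc neighbourhood of $U$ in $\IC^\IN$; the key point is to identify the sizes $\rho_j>1$ of the admissible ellipses in each coordinate direction. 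Because $\sigma_R^r(\py)=R\sum_j \theta_j^r y_j\psi_j$ and the frame $(\psi_j)$ is bounded, enlarging $y_j$ to a complex ellipse of semi-axis sum $\rho_j$ perturbs the argument of $G_0$ by roughly $R\theta_j^r(\rho_j-1)$ in $\X_\IC$; since $(\theta_j^r)\in\ell^p$-type decay holds for $p>1/r$ (this uses $\theta^{1+\eps}\in\ell^1$ for all $\eps$ from Definition \ref{def:smoothnessscales} together with $r>1$), one can choose $\rho_j-1\sim \theta_j^{-r}\cdot(\text{small summable sequence})$, so that the perturbation stays inside $O_\IC$ for all $\py$ in the polyellipse while simultaneously $(\rho_j-1)^{-1}$ is $p$-summable. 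This is precisely the $(\bsb,p,\eps)$-holomorphy / $\ell^p$-summable poly-radii setting of \cite[Section 2.2]{Zech.2018}.

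Granting this, step two invokes the Legendre-coefficient bound: for a function holomorphic on a polyellipse with semi-axis sums $\rho_j$, the $\Y^{t'}$-valued Legendre coefficients $c_{\bsnu}=\int_U L_{\bsnu}u\,d\pi$ satisfy $\|c_{\bsnu}\|_{\Y^{t'}}\lesssim \prod_j (1+2\nu_j)\,\rho_j^{-\nu_j}$ (the factor $\prod(1+2\nu_j)=\omega_{\bsnu}$ coming from the $L^\infty$-normalisation \eqref{eq:LinfLegendreunivarainte} of the Legendre polynomials, which also explains why the $\omega_{\bsnu}^\tau$ weight on the left-hand side is harmless — it is absorbed by choosing slightly smaller $\rho_j$ and costs only a factor in the constant). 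Setting $a_{\bsnu}:=\omega_{\bsnu}^{\tau}\cdot\omega_{\bsnu}\cdot\prod_j\rho_j^{-\nu_j}$ (up to the constant $C$), Stechkin-type / summability arguments from \cite[Chapter 2]{Zech.2018} show $(a_{\bsnu})\in\ell^p(\F)$ for any $p>1/r$, and part \ref{item:legendredecay} follows directly. For part (iii), absolute and uniform convergence of the Legendre expansion on $U$ is then immediate from $\sum_{\bsnu}\|c_{\bsnu}\|_{\Y^{t'}}\sup_U|L_{\bsnu}|\lesssim\sum_{\bsnu}a_{\bsnu}<\infty$ (Weierstrass $M$-test), and the limit equals $u$ since Legendre polynomials form an orthonormal basis of $L^2(U,\pi)$.

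The final step — part (ii), the structure of the downward-closed sets $\Lambda_N$ — is the one I expect to require the most care, but it is again a known consequence: reorder $\F$ so that $(a_{\bsnu_i})_i$ is non-increasing; since one may assume $a_{\bsnu}$ is itself of the monotone envelope form $\prod_j b_j^{\nu_j}$ with $b_j<1$ decreasing and $(b_j)$ close to $\ell^p$, the level sets $\{a_{\bsnu}\ge t\}$ are automatically downward closed, so the first $N$ indices can be taken downward closed. The asymptotics \eqref{eq:1normnu}, \eqref{eq:suppnu} — namely $m(\Lambda_N)=\O(\log|\Lambda_N|)$ and $d(\Lambda_N)=o(\log|\Lambda_N|)$ — follow from counting multi-indices with $\prod_j b_j^{\nu_j}\ge t$: the support size is controlled because $b_j\to 0$ forces only $o(\log(1/t))$ coordinates to be active, and the total order is controlled because each active $\nu_j$ contributes a multiplicative decay. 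This is exactly \cite[Theorem 2.2.10]{Zech.2018}, and \cite[Lemma 1.4.15]{Zech.2018} gives the explicit constants. So the proof is essentially a verification that Assumption \ref{assump:holomorphicextension} plus the coordinate-weight decay of $(\theta_j)$ puts us in the hypotheses of that theorem; the only genuinely new bookkeeping is tracking the composition with $\sigma_R^r$ and the passage from $\Y_\IC^t$ to $\Y^{t'}$, which I would carry out first.
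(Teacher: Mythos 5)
The paper does not give a proof of this theorem: it states it as a direct citation, ``a special case of \cite[Theorem 2.2.10]{Zech.2018}'', with the formulation modelled on \cite[Theorem 4]{Herrmann.2024}. What you have written is, in effect, a sketch of how that cited theorem is itself proved, transported through the affine map $\sigma_R^r$, and the overall route is the right one: check that Assumption~\ref{assump:holomorphicextension} together with the decay of $(\theta_j)$ places $u=G_0\circ\sigma_R^r$ in the anisotropic $(\bsb,p,\eps)$-holomorphy class of \cite{Zech.2018}, then invoke the polyellipse Legendre-coefficient estimate, the $\omega_{\pnu}$-factor from the $L^\infty$-normalisation \eqref{eq:LinfLegendreunivarainte}, and the combinatorial structure of the resulting weight sequence to get (i)--(iii). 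Your observation that $r>1$ and $\theta^{1+\eps}\in\ell^1$ give the needed $\ell^p$-summability of the coordinate radii for $p>1/r$ is correct.

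One place where the sketch is loose enough to be worth flagging is part (ii). You first set $a_{\pnu}:=\omega_{\pnu}^{\tau+1}\prod_j\rho_j^{-\nu_j}$ with \emph{fixed} polyradii $\rho_j$, and then, to get downward-closedness of $\Lambda_N$, assert that ``one may assume $a_{\pnu}$ is itself of the monotone envelope form $\prod_j b_j^{\nu_j}$.'' These two descriptions are not compatible: $\omega_{\pnu}=\prod_j(1+2\nu_j)$ is polynomially increasing in each $\nu_j$, so $a_{\pnu}$ as you first define it need not be coordinatewise non-increasing, and the superlevel sets $\{a_{\pnu}\ge t\}$ need not be downward closed. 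In \cite{Zech.2018} this is handled by not fixing the $\rho_j$ at all: $a_{\pnu}$ is defined by optimizing the polyellipse bound over admissible $(\rho_j)$ for each $\pnu$ (subject to a constraint of the form $\sum b_j\rho_j\le\delta$), which automatically yields a coordinatewise-monotone sequence, and the asymptotics $m(\Lambda_N)=\O(\log|\Lambda_N|)$, $d(\Lambda_N)=o(\log|\Lambda_N|)$ are then read off from the structure of this optimizer. Since the paper imports all of this wholesale by citation, there is nothing to re-derive here; but in a from-scratch argument this minimax construction of $a_{\pnu}$, not the polyellipse estimate itself, is where the real work is, and your sketch glides over it.
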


The following proposition reformulates Theorem \ref{th:algebraicboundonlegendrecoefficients} \ref{item:legendredecay} into a bound for $c_{\pnu,j}$. It was shown in \cite[Proposition 2]{Herrmann.2024}. Recall that $\theta_j$ denote the weights to define the spaces $\Y^{t'}$, $t'>0$, see Definition \ref{def:smoothnessscales}.
\begin{proposition}[{\cite[Proposition 2]{Herrmann.2024}}]
\label{prop:cnujbound}
Consider the setting of Theorem \ref{th:algebraicboundonlegendrecoefficients}. Then for each $\pnu\in\F$ 
\begin{equation}
\label{eq:cnujbound}
\omega_{\pnu}^{2\tau}\sum_{j\in\IN}\theta_j^{-2t'}c_{\pnu,j}^2\leq C^2 a_{\pnu}^2.
\end{equation}
\end{proposition}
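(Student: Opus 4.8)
\textbf{Proof plan for Proposition \ref{prop:cnujbound}.}

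The plan is to connect the two $\Y^{t'}$-type quantities appearing in Theorem \ref{th:algebraicboundonlegendrecoefficients} \ref{item:legendredecay} and in the statement of the Proposition, namely $\lVert\int_U L_{\pnu}(\py)u(\py)\dd\pi(\py)\rVert_{\Y^{t'}}$ and $\sum_{j\in\IN}\theta_j^{-2t'}c_{\pnu,j}^2$. First I would observe that, by definition of the Legendre coefficients in \eqref{eq:legendrecoefficnets} and the (dual) frame expansion of an element of $\Y$, one has
\begin{equation*}
  \int_U L_{\pnu}(\py)u(\py)\dd\pi(\py) = \sum_{j\in\IN} c_{\pnu,j}\,\eta_j \in \Y,
\end{equation*}
since $c_{\pnu,j}=\langle \int_U L_{\pnu}(\py)u(\py)\dd\pi(\py),\tilde\eta_j\rangle_{\Y}$ is precisely the $j$-th frame coefficient of the vector-valued integral $w_{\pnu}:=\int_U L_{\pnu}(\py)u(\py)\dd\pi(\py)$ with respect to $\pmb{\Psi}_{\Y}$. (That $w_{\pnu}$ is a well-defined element of $\Y^{t'}$, not merely $\Y$, is supplied by Theorem \ref{th:algebraicboundonlegendrecoefficients}: the integrand lies in $\Y^{t'}$ and the Bochner integral is bounded in the $\Y^{t'}$-norm.) The interchange of sum and integral is justified because $(\tilde\eta_j)_j$ is a fixed frame and $\langle\cdot,\tilde\eta_j\rangle_{\Y}$ is a bounded linear functional.

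Next I would recall Definition \ref{def:smoothnessscales}, which gives $\lVert y\rVert_{\Y^{t'}}^2 = \sum_{j\in\IN}\langle y,\tilde\eta_j\rangle_{\Y}^2\,\theta_j^{-2t'}$. Applying this with $y=w_{\pnu}$ and using $\langle w_{\pnu},\tilde\eta_j\rangle_{\Y}=c_{\pnu,j}$ yields the identity
\begin{equation*}
  \Big\lVert\int_U L_{\pnu}(\py)u(\py)\dd\pi(\py)\Big\rVert_{\Y^{t'}}^2 = \sum_{j\in\IN}\theta_j^{-2t'}c_{\pnu,j}^2.
\end{equation*}
Multiplying by $\omega_{\pnu}^{2\tau}$ and inserting the bound from Theorem \ref{th:algebraicboundonlegendrecoefficients} \ref{item:legendredecay} (squared) then gives $\omega_{\pnu}^{2\tau}\sum_{j\in\IN}\theta_j^{-2t'}c_{\pnu,j}^2\le C^2 a_{\pnu}^2$, which is \eqref{eq:cnujbound}. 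The sequence $(a_{\pnu})_{\pnu\in\F}\in\ell^p(\F)$ and the constant $C$ are those provided by Theorem \ref{th:algebraicboundonlegendrecoefficients}, so nothing new needs to be constructed.

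The only genuinely delicate point — and thus the main obstacle — is the measure-theoretic bookkeeping that turns the $\Y^{t'}$-valued Bochner integral into the frame-coefficient identity: one must check that $u$ is strongly measurable as a $\Y^{t'}$-valued map, that $\py\mapsto\lVert u(\py)\rVert_{\Y^{t'}}$ is $\pi$-integrable (both follow from the holomorphy and uniform bound in Assumption \ref{assump:holomorphicextension}(a), which in particular gives $\sup_{\py\in U}\lVert u(\py)\rVert_{\Y^{t'}}\le C_{G_0}$), and that the bounded operators $\langle\cdot,\tilde\eta_j\rangle_{\Y}$ and the weighting by $\theta_j^{-t'}$ commute with the integral — the latter being an instance of Hille's theorem for closed operators together with the fact that the $\Y^{t'}$-norm is computed coordinatewise in the frame. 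Since all of this is standard once the finiteness of the relevant integrals is established (and it is, by the $L^\infty$-bound on $u$ in $\Y^{t'}$), the argument is essentially a two-line computation wrapped in routine justifications; I would state the Bochner-integrability once and then carry out the interchange without belaboring it.
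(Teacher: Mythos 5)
Your proposal is correct and is, essentially, the only available argument: the paper itself imports this as \cite[Proposition 2]{Herrmann.2024} without re-proving it, but unwinding the definitions gives exactly your two-line computation. The crux is the identity $\langle w_{\pnu},\tilde\eta_j\rangle_{\Y}=c_{\pnu,j}$ with $w_{\pnu}:=\int_U L_{\pnu}(\py)u(\py)\dd\pi(\py)$, combined with Definition \ref{def:smoothnessscales} which reads $\lVert w_{\pnu}\rVert_{\Y^{t'}}^2=\sum_{j}\langle w_{\pnu},\tilde\eta_j\rangle_{\Y}^2\theta_j^{-2t'}$; squaring Theorem \ref{th:algebraicboundonlegendrecoefficients} \ref{item:legendredecay} then finishes the argument. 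One small remark: the frame reconstruction formula $w_{\pnu}=\sum_j c_{\pnu,j}\eta_j$ that you mention in passing is correct (since $\tilde{\pmb\Psi}_{\Y}$ is the canonical dual frame) but unnecessary; the $\Y^{t'}$-norm in Definition \ref{def:smoothnessscales} is computed directly from the dual-frame coefficients, so the synthesis step can be omitted. The Bochner-integrability bookkeeping you flag is indeed routine given $\sup_{\py\in U}\lVert u(\py)\rVert_{\Y^{t'}}\le C_{G_0}$ from Assumption \ref{assump:holomorphicextension}.
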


Proposition \ref{prop:cnujbound} gives decay of the coefficients $c_{\pnu,j}$ in both $j$ and $\pnu$. Since $\theta_j=\O(j^{-1+\tau}) $ for all $\tau>0$ we have $c_{\pnu,j}^2=\O(j^{-1-2t'+\tilde{\tau}})$ for $\tilde{\tau}<2\tau t'$ and every $\pnu\in\Lambda_N$.  Furthermore, since $(a_{\pnu})_{\pmb{\nu}\in\F}\in l^p(\F)$ the Legendre coefficients $c_{\pnu,j}$ decay algebraically in $\pnu$. We continue with a technical lemma, which was shown in \cite[Lemma 4]{Herrmann.2024}.

\begin{lemma}[{\cite[Lemma 4]{Herrmann.2024}}]
\label{eq:summabilitylemma}
Let $\alpha>1$, $\beta>0$ and assume two sequences $(a_i)_{i\in\IN}$ and $(d_j)_{j\in\IN}$ in $\IR$ with $a_i\lesssim i^{-\alpha}$ and $d_j\lesssim j^{-\beta}$ for all $i,j\in\IN$. Additionally assume that $(d_j)_{j\in\IN}$is monotonically decreasing. Suppose that there exists a constant $C<\infty$ such that the sequence $(c_{i,j})_{i,j\in\IN}$ satisfies
\begin{equation}
    \label{eq:combinedsummability}
    \forall i\in\IN:\quad \sum_{j\in\IN}c_{i,j}^2d_j^{-2}\leq C^2a_i^2.
\end{equation}
Then for every $\tau>0$
\begin{enumerate}[label=(\roman*)]
    \item\label{item:doublesum1} for all $N\in\IN$ there exists $(m_i)_{i\in\IN}\subseteq\IN_0^{\IN}$ monotonically decreasing such that $\sum_{i\in\IN} m_i\leq N$ and 
    \begin{equation}
        \label{eq:midecrease1}
        \sum_{i\in\IN}\left(\sum_{j>m_i}c_{i,j}^2\right)^{\frac{1}{2}}\lesssim N^{-\min\{\alpha-1,\beta\}+\tau},
    \end{equation}
    \item\label{item:doublesum2} for all $N\in\IN$ there exists $(m_i)_{i\in\IN}\subseteq\IN_0^{\IN}$ monotonically decreasing such that $\sum_{i\in\IN} m_i\leq N$ and 
        \begin{equation}
        \label{eq:midecrease2}
        \left(\sum_{i\in\IN}\sum_{j>m_i}c_{i,j}^2\right)^{\frac{1}{2}}\lesssim N^{-\min\left\{\alpha-\frac{1}{2},\beta\right\}+\tau}.
    \end{equation}
\end{enumerate}
\end{lemma}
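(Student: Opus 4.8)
The plan is to prove \eqref{eq:midecrease1} and \eqref{eq:midecrease2} by an explicit construction of the budget allocation $(m_i)_{i\in\IN}$, exploiting the two summability assumptions: $a_i\lesssim i^{-\alpha}$ with $\alpha>1$ (which gives $(a_i)\in\ell^p$ for $p>1/\alpha$), $d_j\lesssim j^{-\beta}$, and the coupled bound \eqref{eq:combinedsummability}. First I would observe that \eqref{eq:combinedsummability} together with the monotone decay of $(d_j)$ yields, for each fixed $i$ and each truncation level $m\in\IN_0$,
\begin{equation*}
  \sum_{j>m}c_{i,j}^2 = \sum_{j>m}c_{i,j}^2d_j^{-2}d_j^2 \le d_{m+1}^2\sum_{j>m}c_{i,j}^2d_j^{-2}\lesssim d_{m+1}^2 a_i^2 \lesssim (m+1)^{-2\beta}a_i^2,
\end{equation*}
so that $\bigl(\sum_{j>m_i}c_{i,j}^2\bigr)^{1/2}\lesssim (m_i+1)^{-\beta}a_i$ (with the convention that the bound is $a_i$ when $m_i=0$, since trivially $\sum_{j}c_{i,j}^2\le \sum_j c_{i,j}^2 d_j^{-2}d_1^2\lesssim a_i^2$). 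This reduces both claims to choosing a monotonically decreasing integer sequence with $\sum_i m_i\le N$ that makes $\sum_i (m_i+1)^{-\beta}a_i$ respectively $\bigl(\sum_i (m_i+1)^{-2\beta}a_i^2\bigr)^{1/2}$ as small as possible.

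For the allocation I would set, with a parameter $\varepsilon\in(0,1)$ to be optimized, $m_i := \lfloor (N\, i^{-1-\varepsilon})\wedge \text{something} \rfloor$ — more precisely a Lagrange-type choice balancing the marginal gain of an extra unit of budget against its cost. Concretely, for \ref{item:doublesum1} one picks $m_i \simeq N^{\zeta} i^{-\eta}$ for suitable exponents so that $\sum_i m_i \lesssim N$ (which forces $\eta>1$, using $\alpha>1$ to absorb the tail of $a_i$), and then bounds
\begin{equation*}
  \sum_{i\in\IN}\Bigl(\sum_{j>m_i}c_{i,j}^2\Bigr)^{1/2} \lesssim \sum_{i: m_i\ge 1} m_i^{-\beta} i^{-\alpha} + \sum_{i: m_i=0} i^{-\alpha},
\end{equation*}
splitting at the threshold $i_0$ where the budget runs out, i.e. $i_0 \simeq N^{\zeta/\eta}$. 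The first sum is a convergent power series in $i$ once the exponents are arranged, contributing $\lesssim N^{-\beta\zeta + (\text{correction})}$, and the tail sum $\sum_{i>i_0} i^{-\alpha} \lesssim i_0^{-(\alpha-1)} \simeq N^{-(\alpha-1)\zeta/\eta}$. Optimizing over the free exponents (sending $\varepsilon\to 0$, which is where the $+\tau$ loss enters) balances the two regimes and produces the exponent $\min\{\alpha-1,\beta\}$. For \ref{item:doublesum2} the same allocation is used but one estimates the $\ell^2$-type quantity $\bigl(\sum_i m_i^{-2\beta} i^{-2\alpha} + \sum_{i>i_0} i^{-2\alpha}\bigr)^{1/2}$; here the tail gives $\lesssim i_0^{-(\alpha-1/2)}$ because $\sum_{i>i_0} i^{-2\alpha}\lesssim i_0^{1-2\alpha}$, which upon the same optimization yields $\min\{\alpha-\tfrac12,\beta\}$ — the gain of the extra $\tfrac12$ is exactly the usual $\ell^1$-versus-$\ell^2$ improvement.

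I would make the monotonicity of $(m_i)_{i\in\IN}$ rigorous by defining $m_i$ as the floor of an explicitly decreasing function of $i$ (e.g. $m_i=\lfloor c N i^{-1-\varepsilon}\rfloor$ truncated appropriately), which is automatically non-increasing; the $\varepsilon$-surplus in the exponent $1+\varepsilon$ guarantees $\sum_i m_i\le c N\sum_i i^{-1-\varepsilon} = O(N)$ after absorbing the constant. The main obstacle, and the part that needs genuine care rather than bookkeeping, is the joint optimization of the exponents: one must verify that the summability conditions $\alpha>1$, $p>1/\alpha$, the bound $d(\Lambda_N)$-style constraints are not needed here, but that every power series invoked actually converges (which pins down inequalities between the exponents $\zeta,\eta,\varepsilon$), and that the two competing terms can be simultaneously driven to $N^{-\min\{\alpha-1,\beta\}+\tau}$ only in the limit $\varepsilon\downarrow 0$, which is precisely why the arbitrarily small loss $\tau$ is unavoidable. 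Everything else — the reduction via \eqref{eq:combinedsummability}, the tail estimates, the monotonicity — is routine. This is exactly the argument used for \cite[Lemma 4]{Herrmann.2024}, which the statement explicitly cites, so I would follow that proof essentially verbatim.
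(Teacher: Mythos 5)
Your reduction $\sum_{j>m}c_{i,j}^2\le d_{m+1}^2\sum_{j>m}c_{i,j}^2d_j^{-2}\lesssim (m+1)^{-2\beta}a_i^2$ followed by the allocation $m_i=\lfloor cNi^{-1-\varepsilon}\rfloor$, the split at the cutoff index $i_0\simeq N^{1/(1+\varepsilon)}$, and the $\ell^1$-versus-$\ell^2$ tail estimates ($i_0^{-(\alpha-1)}$ versus $i_0^{1/2-\alpha}$) is correct and is essentially the argument of the cited \cite[Lemma 4]{Herrmann.2024}; the present paper does not reprove the lemma but simply invokes that reference, so there is nothing to contrast. The only thing I would tighten is to write out the two power-series regimes explicitly (convergent when $(1+\varepsilon)\beta-\alpha<-1$, dominated by the $i_0$-term otherwise), which confirms that both cases land at $N^{-\min\{\alpha-1,\beta\}+\tau}$, respectively $N^{-\min\{\alpha-1/2,\beta\}+\tau}$, as $\varepsilon\downarrow 0$.
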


In the following, we use Lemma \ref{eq:summabilitylemma} to get a decay property for the Legendre coefficients $c_{\pmb{\nu_i},j}$ with the enumeration $\pmb{\nu_i}$ of $\Lambda_N$ from Theorem \ref{th:algebraicboundonlegendrecoefficients}. The sequence $\pmb{m}=(m_i)_{i\in\IN}$ quantifies which coefficients of the Legendre expansion are ``important'' and are therefore used to define the surrogate $\Gamma_N$.

We first show that Theorem \ref{th:algebraicboundonlegendrecoefficients} yields sufficient decay on the Legendre coefficients $c_{\pmb{\nu_i},j}$ s.t.~the assumptions of Lemma \ref{eq:summabilitylemma} are satisfied.
\begin{lemma}\label{lem:decay}
    Consider the setting of Theorem \ref{th:algebraicboundonlegendrecoefficients}. Let $\tilde{\tau}>0$ such that $1/p>r-\tilde{\tau}/2$. Then the assumptions of Lemma \ref{eq:summabilitylemma} are fulfilled for $\alpha=r-\tilde{\tau}/2$, $\beta=t-\tilde{\tau}/2$, $a_i=a_{\pmb{\nu_i}}$,  $d_j=\theta_j^{t'}$ and $c_{i,j}=\omega_{\pmb{\nu_i}}^{1/2}c_{\pmb{\nu_i},j}$ for $i,j\in\IN$.
\end{lemma}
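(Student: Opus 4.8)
The plan is to verify that the four requirements of Lemma~\ref{eq:summabilitylemma} hold for the stated parameter choices $\alpha = r - \tilde\tau/2$, $\beta = t - \tilde\tau/2$, $a_i = a_{\pmb{\nu_i}}$, $d_j = \theta_j^{t'}$, and $c_{i,j} = \omega_{\pmb{\nu_i}}^{1/2} c_{\pmb{\nu_i},j}$. These requirements are: (1) $\alpha > 1$; (2) the algebraic decay $a_i \lesssim i^{-\alpha}$; (3) the algebraic decay $d_j \lesssim j^{-\beta}$, with $(d_j)$ monotonically decreasing; and (4) the combined summability bound $\sum_{j\in\IN} c_{i,j}^2 d_j^{-2} \le C^2 a_i^2$ for all $i$.

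First I would address (4), since this is essentially immediate from Proposition~\ref{prop:cnujbound}: substituting the definitions gives $\sum_j c_{i,j}^2 d_j^{-2} = \omega_{\pmb{\nu_i}} \sum_j \theta_j^{-2t'} c_{\pmb{\nu_i},j}^2 \le \omega_{\pmb{\nu_i}} \cdot C^2 \omega_{\pmb{\nu_i}}^{-2\tau} a_{\pmb{\nu_i}}^2$, so I should choose $\tau$ in the application of Theorem~\ref{th:algebraicboundonlegendrecoefficients}/Proposition~\ref{prop:cnujbound} to be $\tau = 1/2$, which makes $\omega_{\pmb{\nu_i}}^{1-2\tau} = 1$ and yields exactly $\sum_j c_{i,j}^2 d_j^{-2} \le C^2 a_{\pmb{\nu_i}}^2 = C^2 a_i^2$. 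Next, (3): recall $\theta_j$ is a strictly positive, monotonically decreasing sequence with $\pmb\theta^{1+\eps} \in \ell^1$ for all $\eps > 0$ (Definition~\ref{def:smoothnessscales}), hence $\theta_j = \O(j^{-1+\tilde\tau'})$ for any $\tilde\tau' > 0$; then $d_j = \theta_j^{t'} = \O(j^{-t'(1-\tilde\tau')})$, and since we are allowed to take $t' \in [0,t]$, choosing $t'$ close enough to $t$ and $\tilde\tau'$ small enough gives $d_j \lesssim j^{-(t - \tilde\tau/2)} = j^{-\beta}$; monotone decrease of $(d_j)$ follows from monotone decrease of $(\theta_j)$. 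For (2): the sequence $(a_{\pmb\nu})_{\pmb\nu \in \F} \in \ell^p(\F)$ from Theorem~\ref{th:algebraicboundonlegendrecoefficients} is, under the enumeration $(\pmb{\nu_i})_i$, monotonically decreasing, and a monotonically decreasing $\ell^p$ sequence satisfies $a_i^p \cdot i \le \sum_{k\le i} a_k^p \le \|a\|_{\ell^p}^p$, whence $a_i \le \|a\|_{\ell^p} i^{-1/p}$; since $p \in (\tfrac1r, 1]$ was chosen with $1/p > r - \tilde\tau/2 = \alpha$, this gives $a_i \lesssim i^{-1/p} \le i^{-\alpha}$ (using $i^{-1/p} \le i^{-\alpha}$ for $i \ge 1$). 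Finally (1): $\alpha = r - \tilde\tau/2 > 1$ holds provided $\tilde\tau$ is chosen small enough, since $r > 1$ by Assumption~\ref{assump:holomorphicextension}; I would also note that one needs $p \in (1/r, 1]$ to be a valid nonempty interval, which is guaranteed by $r > 1$, and the constraint $1/p > r - \tilde\tau/2$ is then satisfiable by taking $p$ close to $1/r$ and $\tilde\tau$ small.

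The only mild subtlety — and the step I would be most careful about — is the interplay of the three small parameters $\tilde\tau$, $t'$, $\tilde\tau'$, and the choice $\tau = 1/2$ inside Theorem~\ref{th:algebraicboundonlegendrecoefficients}: one must check that Theorem~\ref{th:algebraicboundonlegendrecoefficients} is indeed applicable with this particular $\tau$ and with some $t' \in [0,t]$ and $p \in (1/r, 1]$ simultaneously, and that the resulting exponents line up so that both $\alpha > 1$ and $\beta > 0$ (the latter needs $t' - \tilde\tau'$-type margins to beat $t - \tilde\tau/2 > 0$, which requires $\tilde\tau < 2t$). Concretely I would fix $\tilde\tau \in (0, 2\min\{r-1, t\})$ first, then pick $p \in (1/r, 1]$ with $1/p > r - \tilde\tau/2$ (possible since $r - \tilde\tau/2 < r$), then pick $t' \in [0,t]$ and the decay exponent slack $\tilde\tau'$ for $\theta_j$ small enough that $t'(1 - \tilde\tau') \ge t - \tilde\tau/2$, and finally invoke Theorem~\ref{th:algebraicboundonlegendrecoefficients} and Proposition~\ref{prop:cnujbound} with these $\tau = 1/2$, $p$, $t'$. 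Everything else is a direct substitution, so the lemma follows.
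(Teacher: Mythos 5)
Your proof is correct and follows essentially the same route as the paper. You invoke Proposition~\ref{prop:cnujbound} with $\tau=1/2$ to cancel the $\omega_{\pmb{\nu}}$ factor (exactly as the paper does), deduce $a_i\lesssim i^{-1/p}$ from monotonicity and $\ell^p$-summability, and obtain the decay of $d_j=\theta_j^{t'}$ from $\pmb{\theta}^{1+\eps}\in\ell^1(\IN)$ plus monotonicity. The only cosmetic difference is in the $d_j$ step: the paper notes directly that $(\theta_j^{t'})_j\in\ell^{1/(t-\tilde\tau/2)}$ and applies the same tail-bound argument, whereas you first bound $\theta_j=\O(j^{-1+\tilde\tau'})$ and then raise to the power $t'$; these are equivalent. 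Your final paragraph, making the interplay of the small parameters $\tilde\tau$, $t'$, $\tilde\tau'$ explicit and noting the constraints $\tilde\tau<2\min\{r-1,t\}$ for $\alpha>1$ and $\beta>0$, is actually a bit more careful than the paper's terse proof, which leaves the admissibility of $\tau=1/2$, $t'$, $p$ implicit.
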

\begin{proof}
Proposition \ref{prop:cnujbound} with $\tau=\frac{1}{2}$ gives 
\begin{equation}
\label{eq:firstsummation}
    \omega_{\pmb{\nu_i}}^{\frac{1}{2}}\left(\sum_{j\in\IN}\theta_j^{-2t'}c_{\pmb{\nu_i},j}^2\right)^{\frac{1}{2}}=\O(a_{\pmb{\nu_i}})=\O\left(i^{-r+\frac{\tilde{\tau}}{2}}\right).
\end{equation}
The last equality in \eqref{eq:firstsummation} holds because $ia_{\pmb{\nu_i}}^p\leq\sum_{j\in\IN}a_{\pmb{\nu_j}}^p<\infty$ (since $a_{\pmb{\nu_i}}$ is monotonically decreasing) implies $a_{\pmb{\nu_i}}=\O(i^{-1/p})=\O(i^{-r+\tilde{\tau}/2})$.
Since $(\theta_j^{t'})_{j\in\IN}\in l^{1/(t-\tilde{\tau}/2)}$ (see Definition \ref{def:smoothnessscales}) it holds
\begin{equation}
\label{eq:secondsummation}
    \theta_j^{t'}=\O(j^{-t+\tilde{\tau}/2})
\end{equation}
with the same argument.
    \end{proof}
\subsubsection*{Proofs of Theorems \ref{th:aprroximationerrorLinfinity} and \ref{th:aprroximationerrorl2}}
The proof of Theorems \ref{th:aprroximationerrorLinfinity} and \ref{th:aprroximationerrorl2} is similar to \cite[Sections 4.2-4.4, Proofs of Theorems 1,2 and 5]{Herrmann.2024}.
\begin{proof}[Proof of Theorem \ref{th:aprroximationerrorLinfinity}]
Let $(a_{\pnu})_{\pnu\in\F}$ be the enumeration $(\pmb{\nu_i})_{i\in\IN}$ from Theorem \ref{th:algebraicboundonlegendrecoefficients}, where we use the case $\tau=\frac{1}{2}$. Therefore $(a_{\pmb{\nu_i}})_{i\in\IN}$ is monotonically decreasing and belongs to $l^p$ with $p\in(\frac{1}{r},1]$. We further fix $\tilde{\tau} >0$ and demand $\frac{1}{p}> r-\frac{\tilde{\tau}}{2}$. Fix $\tilde{N}\in\IN$ and set $\Lambda_{\tilde{N}}\coloneqq\{\pmb{\nu_j}:\,j\leq \tilde{N}\}\subset\F$, which is downward closed by Theorem \ref{th:algebraicboundonlegendrecoefficients}. Now we approximate the tensorized Legendre polynomials $L_{\pnu}$ on the index set $\Lambda_{\tilde{N}}$. Let $\rho\in (0,\frac{1}{2})$. In the ReLU case, Proposition \ref{prop:tensorlegendreapproximation} gives a NN $f_{\Lambda_{\tilde{N}},\rho}$ with outputs $\{\tilde{L}_{\pnu,\rho}\}_{\pnu\in\Lambda_{\tilde{N}}}$ s.t.
\begin{equation*}
    \sup_{\py\in U}\max_{\pnu\in\Lambda_{\tilde{N}}}\left|L_{\pnu}(\py)-\tilde{L}_{\pnu,\rho}(\py)\right|\leq\rho.
\end{equation*}
Using $|\Lambda_{\tilde{N}}|=\tilde{N}$, \eqref{eq:1normnu} and \eqref{eq:suppnu}, it holds for $\tilde{N}\geq 2$
\begin{align*}
    {\rm depth}(f_{\Lambda_{\tilde{N}},\rho})&=\O\left(\log(\tilde{N})^2\log(\log(\tilde{N}))^2+\log(\tilde{N})\log\left(\rho^{-1}\right)\right),\\
    {\rm width}(f_{\Lambda_{\tilde{N}},\rho})&=\O(\tilde{N}\log(\tilde{N})),
 \\
    {\rm size}(f_{\Lambda_{\tilde{N}},\rho})&=\O\left(\tilde{N}\log(\tilde{N})^2\log(\log(\tilde{N}))+\tilde{N}\log(\tilde{N})\log\left(\rho^{-1}\right)\right) ,\\
    {\rm mpar}(f_{\Lambda_{\tilde{N}},\rho})&=1.
\end{align*}
The constants hidden in $\O(\,.\,)$ are independent of $\tilde{N}$ and $\rho$. For $\tilde{N}\in\IN$, set the accuracy $\rho\coloneqq \tilde{N}^{-\min\{r-\frac{1}{2},t\}}$.
Then it holds
\begin{align*}
    {\rm depth}(f_{\Lambda_{\tilde{N}},\rho})&=\O\left(\log(\tilde{N})^2\log(\log(\tilde{N}))^2\right),
  \\
    {\rm size}(f_{\Lambda_{\tilde{N}},\rho})&=\O\left(\tilde{N}\log(\tilde{N})^2\log(\log(\tilde{N}))\right).
\end{align*}
Proposition \ref{prop:tensorlegendreapproximationsigq} shows that the ReLU bounds also hold for the RePU-case.

By Lemma \ref{lem:decay} the assumptions of Lemma \ref{eq:summabilitylemma} are satisfied.  Applying Lemma \ref{eq:summabilitylemma} \ref{item:doublesum1} with $\alpha\coloneqq r-\tilde{\tau}/2$ and $\beta\coloneqq t-\tilde{\tau}/2$ gives a sequence $(m_i)_{i\in\IN}\subset\IN_0^{\IN}$ such that $\sum_{i\in\IN}m_i\leq \tilde{N}$ and 
\begin{equation}
    \label{eq:thirdsummation}
    \sum_{i\in\IN}\omega_{\pmb{\nu_i}}^{\frac{1}{2}}\left(\sum_{j>m_i}c_{\pmb{\nu_i},j}^2\right)^{\frac{1}{2}}\leq C \tilde{N}^{-\min\{r-1,t\}+\tilde{\tau}}.
\end{equation}
We now define 
\begin{equation}
\label{eq:definegntilde}
\left(\tilde{\gamma}_{{\tilde{N}},j}\right)\coloneqq \sum_{\{i\in\IN: m_i\geq j\}}\tilde{L}_{\pmb{\nu_i},\rho}(\py)c_{\pmb{\nu_i},j}
\end{equation}
for $j\in\IN$, where empty sums are set to zero.
Recall the uniform distribution $\pi$ on $U=[-1,1]^{\IN}$ (Example \ref{ex:pi}). With $\tilde{\gamma}_{\tilde{N}}=(\tilde{\gamma}_{\tilde{N},j})_{j\in\IN}$ it holds
\begin{align}
    &\lVert\G_0\circ\sigma_R^r(\py)-\D_{\Y}\circ\tilde{\gamma}_{\tilde{N}}(\py)\rVert_{\Y}=\left\lVert
    \sum_{i,j\in\IN}c_{\pmb{\nu_i},j}L_{\pmb{\nu_i}}(\py)\eta_j-\sum_{i\in\IN}\sum_{j\leq m_i}c_{\pmb{\nu_i},j}\tilde{L}_{\pmb{\nu_i},\rho}(\py)\eta_j\right\rVert_{\Y}\notag\\
   &\leq\left\lVert\sum_{i\in\IN}L_{\pmb{\nu_i}}(\py)\sum_{j>m_i}c_{\pmb{\nu_i},j}\eta_j\right\rVert_{\Y}+\left\lVert\sum_{i\in\IN}(L_{\pmb{\nu_i}}(\py)-\tilde{L}_{\pmb{\nu_i},\rho}(\py))\sum_{j\leq m_i}c_{\pmb{\nu_i},j}\eta_j\right\rVert_{\Y}\notag\\
    &\leq \Lambda_{\pmb{\Psi}_{\Y}}\sum_{i\in\IN}\underbrace{\lVert L_{\pmb{\nu_i}}\rVert_{\infty,\pi}}_{\leq\omega_{\pmb{\nu_i}}^{\frac{1}{2}}}\left(\sum_{j>m_i}c_{\pmb{\nu_i},j}^2\right)^{\frac{1}{2}}+\Lambda_{\pmb{\Psi}_{\Y}}\rho\sum_{i\in\IN}\left(\sum_{j\leq m_i}c_{\pmb{\nu_i},j}^2\right)^{\frac{1}{2}}\notag\\
    &\leq \tilde{C}\Lambda_{\pmb{\Psi}_{\Y}}\tilde{N}^{-\min\{r-1,t\}+\tilde{\tau}}+\tilde{C}\Lambda_{\pmb{\Psi}_{\Y}}\rho\leq \tilde{C} \tilde{N}^{-\min\{r-1,t\}+\tilde{\tau}}\label{eq:proofapproximationerrorfinalequation}
\end{align}
for all $\py\in U$.
In \eqref{eq:proofapproximationerrorfinalequation} we used the definition of $\D_{\Y}$, \eqref{eq:definegntilde} and \eqref{eq:Legednreexpansionu} at the first equality. Furthermore, we used \eqref{eq:thirdsummation}, the definition of $\rho$ and 
\begin{align}
    \sum_{i\in\IN}\left(\sum_{j\leq m_i}c_{\pmb{\nu_i},j}^2\right)^{\frac{1}{2}}&\leq  \sum_{i\in\IN}\left(\sum_{j\in\IN}c_{\pmb{\nu_i},j}^2\right)^{\frac{1}{2}}\notag\\
    &\leq \tilde{C} \sum_{i\in\IN}\omega_{\pmb{\nu_i}}^{\frac{1}{2}}\left(\sum_{j\in\IN}\theta_j^{-2t'}c_{\pmb{\nu_i},j}^2\right)^{\frac{1}{2}}\notag\\
    &\leq \tilde{C} \sum_{i\in\IN}a_{\pmb{\nu_i}}\leq \tilde{C}\sum_{i\in\IN}i^{-{r+\tilde{\tau}/2}}\leq \tilde{C}\label{eq:cnuijbaschätzung}
\end{align}
at the second-to-last inequality.
We changed the constants $\tilde{C}$ from line to line in \eqref{eq:proofapproximationerrorfinalequation} and \eqref{eq:cnuijbaschätzung}. 
The last line of \eqref{eq:proofapproximationerrorfinalequation} shows why the RePU-case does not improve the approximation property qualitatively. In the RePU-case, Proposition \ref{prop:tensorlegendreapproximationsigq} gives a $\sigma_q$-NN $f_{\Lambda}$ exactly realizing the tensorized Legendre polynomials, i.e.~the case $\rho=0$ from above. Therefore the second summand in the last line of \eqref{eq:proofapproximationerrorfinalequation} vanishes. This does not improve the approximation rate due to the first summand. This part depends on the summability properties of the Legendre coefficients $c_{\pmb{\nu_i},j}$ following Assumption \ref{assump:holomorphicextension} and is therefore independent of the activation function $\sigma$.

Now we argue similar to \cite[Proof of Theorem 1]{Herrmann.2024}. Consider the scaling $S_r$ from \eqref{eq:scaling}. 
It holds
\begin{equation}
\label{eq:scalingproperty}
     S_r\circ\E_{\X}(a)\in U\qquad\forall a \in C_R^r(\X),
\end{equation}
because of \eqref{eq:CrR} and \eqref{eq:scaling}.
We define $\tilde{\Gamma}_{\tilde{N}}\coloneqq\D_{\Y}\circ \tilde{\gamma}_{\tilde{N}}\circ S_r\circ\E_\X$ and calculate
\begin{align}
    \label{eq:prooapproximationscalingargument}
    &\sup_{a\in C_R^r(\X)}\left\lVert \G_0(a)-\tilde{\Gamma}_{\tilde{N}}(a)\right\rVert_{\Y}=\sup_{a\in C_R^r(\X)}\left\lVert \G_0(a)-\D_{\Y}\circ \tilde{\gamma}_{\tilde{N}}\circ  S_r\circ\E_{\X}(a)\right\rVert_{\Y}\notag\\
=&\sup_{\{\py\in U:\,\sigma_R^r(\py)\in C_R^r(\X)\}}\left\lVert\G_0\circ\sigma_R^r(\py)-\D_{\Y}\circ\tilde{\gamma}_{\tilde{N}}\circ  S_r\circ\E_{\X}\circ\sigma_R^r(\py)\right\rVert_{\Y}\notag\\
\leq &\sup_{\py\in U}\left\lVert\G_0\circ\sigma_R^r(\py)-\D_{\Y}\circ\tilde{\gamma}_{\tilde{N}}(\py)\right\rVert_{\Y}\leq \tilde{C}\tilde{N}^{-\min\{r-1,t\}+\tilde{\tau}},
\end{align}
where we used \eqref{eq:scalingproperty} and \eqref{eq:proofapproximationerrorfinalequation}.

In order to finish the proof of Theorem \ref{th:aprroximationerrorLinfinity}, we relate $\tilde{N}$ to $N$ and show $\Gamma_{N}\coloneqq\tilde{\Gamma}_{\tilde N}\in\GNNSP$, i.e.~we show that the approximation networks we constructed have the desired sparse structure. We simultaneously prove the ReLU- and RePU-case.

In order to analyse the NNs $\tilde{\gamma}_{\tilde{N}}$ from \eqref{eq:definegntilde}, we specify its structure.
We set $n_j=|\{m_i\geq j\}|$ and define
\begin{align}
\label{eq:gntildeNN}
    \tilde{\gamma}_{\tilde{N}}=\left(\left\{\Sigma_{n_j}\left(\left\{\Id_{\IR}\circ SM_{c_{\pmb{\nu_i},j}}\circ \tilde{L}_{\pmb{\nu_i},\rho}\right\}_{m_i\geq j}\right)\right\}_{j\in\IN}\right).
\end{align}
The round brackets in \eqref{eq:gntildeNN} denote a parallelization. The networks $SM_{c_{\pmb{\nu_i},j}}$ denote the scalar multiplication networks from Definition \ref{def:scalarmultiplicatoinsimaq}. Furthermore, $\Sigma_{n_j}$ denotes the summation network from Definition \ref{def:summationnetworks} and we use the identity networks $\Id_{\IR}$ from Lemma \ref{def:Identityemulationsigma1} or \ref{def:Identityemulationsigmaq} to synchronize the depth. Using the respective bounds for the summation and scalar multiplication networks and the NN calculus for parallelization and sparse concatenation we get
\begin{align}
    {\rm depth}(\tilde{\gamma}_{\tilde{N}})&\leq 2+\max_{i,j\in\IN^2,\,m_i\geq j}\left({\rm depth}\left(\tilde{L}_{\pmb{\nu_i},\rho}\right)+{\rm depth}\left(SM_{c_{\pmb{\nu_i},j}}\right)\right)+\max_{j\in\IN}{\rm depth}\left(\Sigma_{n_j}\right)\notag\\
    &\leq 3+\O(\log(\tilde{N})^2\log(\log(\tilde{N})))+\max_{i,j\in\IN^2,\,m_i\geq j}C_q\log\left(|c_{\pmb{\nu_i},j}|\right)+0\label{eq:gntildedepth}\notag\\
    &=\O(\log(\tilde{N})^2\log(\log(\tilde{N}))),\\
    {\rm width}(\tilde{\gamma}_{\tilde{N}})&\leq\max\bigggl\{\sum_{j\in\IN}\sum_{m_i\geq j}{\rm width}\left(\tilde{L}_{\pmb{\nu_i},\rho}\right),\sum_{j\in\IN}\sum_{m_i\geq j}{\rm width}\left(SM_{c_{\pmb{\nu_i},j}}\right),\notag\\
    &\qquad\sum_{j\in\IN}\sum_{m_i\geq j}{\rm width}\left(\Id_{\IR}\right),\sum_{j\in\IN} {\rm width}\left(\Sigma_{n_j}\right)\bigggr\}\notag\\
    &\leq C_q\max\left\{O(\tilde{N}\log(\tilde{N})),\sum_{j\in\IN}\sum_{m_i\geq j}C_q,\sum_{j\in\IN}n_j\right\}
    =O(\tilde{N}\log(\tilde{N}))\label{eq:gntildewidth},\\
    {\rm size}(\tilde{\gamma}_{\tilde{N}})&\leq C_q\sum_{j\in\IN}\sum_{m_i\geq j}\left({\rm size}\left(\tilde{L}_{\pmb{\nu_i},\rho}\right)+{\rm size}\left(SM_{c_{\pmb{\nu_i},j}}\right)+{\rm size}\left(\Id_{\IR}\right)\right)+C_q\sum_{j\in\IN}{\rm size}\left(\Sigma_{n_j}\right)\notag\\
    &=\O\left(\tilde{N}\log(\tilde{N})^2\log(\log(\tilde{N}))\right)+
    \sum_{j\in\IN}\left[\left(\,\sum_{m_i\geq j}\,C_q\log\left(|c_{\pmb{\nu_i},j}|\right)\right)+C_qn_j\right]\notag\\
    &\leq \O\left(\tilde{N}\log(\tilde{N})^2\log(\log(\tilde{N}))\right)+C_q\sum_{j\in\IN}\sum_{m_i\geq j}1\notag\\
    &=\O\left(\tilde{N}\log(\tilde{N})^2\log(\log(\tilde{N}))\right)\label{eq:gntildesize},\\
     {\rm mpar}(\tilde{\gamma}_{\tilde{N}})&\leq C_q. \label{eq:gntildeweightbound}
\end{align}
In \eqref{eq:gntildedepth}--\eqref{eq:gntildesize} we used $\log\left(|c_{\pmb{\nu_i},j}|\right)\leq C_q$ for all $i,j\in\IN$ independent of $n$. Furthermore, we used
\begin{equation*}
\sum_{j\in\IN}n_j=\sum_{j\in\IN}\sum_{m_i\geq j}1=\sum_{i\in\IN}\sum_{j\leq m_i}1=\sum_{i\in\IN}m_i\leq \tilde{N}.
\end{equation*}
To get rid of the logarithmic terms, we define $N=N(\tilde{N})\coloneqq \max\{1,\tilde{N}\log(\tilde{N})^3\}$ and obtain a NN $\gamma_N=\tilde{\gamma}_{\tilde N}$ with 
\begin{align}
    {\rm depth}(\gamma_N)&=\O(\log(N)),\notag\\
    {\rm width}(\gamma_N)&=\O(N),\notag\\
    {\rm size}(\gamma_N)&=N,\notag\\
    {\rm mpar}(\gamma_N)&\leq C_q\label{eq:gshatbounds}
\end{align}
and error less than 
\begin{align}\label{eq:error2}
\tilde{C}\tilde{N}^{-\min\{r-1,t\}+\tilde{\tau}}=\tilde{C}\tilde{N}^{-\kappa}
\leq \tilde{C}(3\kappa/\tilde{\tau})^{3\kappa} N^{-\kappa+\tilde{\tau}}\coloneqq CN^{-\min\{r-1,t\}+\tau}.
\end{align}
Per definition ${\rm depth}(\gamma_N)=\O(\log(N))$ and ${\rm width}(\gamma_N)=\O(N)$ yields constants $\tilde{C}_L$, $\tilde{C}_p$ and $N_1,N_2\in\IN$ s.t.
\begin{align*}
    {\rm depth}(\gamma_N)&\leq \tilde{C}_L\max\{1,\log(N)\},\qquad N\geq N_1,\\
    {\rm width}(\gamma_N)&\leq \tilde{C}_p N,\qquad N\geq N_2.
\end{align*}
Setting $C_L=\max\{\tilde{C}_L,\max_{N=2,\dots,N_1-1} {\rm depth}(\gamma_N)/\log(2)\}$ and

$C_p=\max\{\tilde{C}_p,\max_{N=1,\dots,N_2-1}{\rm width}(\gamma_N)\}$ shows
\begin{align*}
    {\rm depth}(\gamma_N)\leq C_L\log(N),\qquad N\in\IN,\\
    {\rm width}(\gamma_N)\leq C_p N,\qquad N\in\IN.
\end{align*}
In order to show $\Gamma_N\coloneqq \D_{\Y}\circ \gamma_{N}\circ  S_r\circ\E_{\X}\in\GNNSP$, we are left to show that the maximum Euclidean norm $\|\cdot\|_2$ of $\gamma_N$ in $U$ is independent of $N$. It holds for all $\py\in U$ that $\D_{\X}\circ S_r^{-1}(\py)\in C_R^r(\X)$. We get 
\begin{align}
     \label{eq:l2normgammas}
     \sup_{\py\in U}\,\lVert\gamma_N(\py)\rVert_{2}&=\sup_{\py\in U}\,\lVert\E_{\Y}\circ\Gamma_N\circ\D_{\X}\circ S_r^{-1}(\py)\rVert_{2}\notag \\
     &\leq\Lambda_{\pmb{\Psi}_{\Y}}\,\sup_{a\in C_R^r(\X)}\,\lVert\Gamma_N(a)\rVert_{\Y}\notag\\
    &=\Lambda_{\pmb{\Psi}_{\Y}}\,\sup_{a\in C_R^r(\X)}\,\lVert\Gamma_N(a)-\G_0(a)+\G_0(a)\rVert_{\Y}\notag\\
     &\leq \Lambda_{\pmb{\Psi}_{\Y}}\,\sup_{a\in C_R^r(\X)}\,\left(\lVert\Gamma_N(a)-\G_0(a)\rVert_{\Y}+c\lVert\G_0(a)\rVert_{\Y^t}\right)\notag\\
     &\leq  \Lambda_{\pmb{\Psi}_{\Y}}(C+cC_{G_0})\eqqcolon B,
\end{align}
where $\Lambda_{\pmb{\Psi}_{\Y}}$ denotes the upper frame bound of $\pmb{\Psi}_{\Y}$ and $c=\theta_0^t$, see Definition \ref{def:smoothnessscales}.
In \eqref{eq:l2normgammas} we used Assumption \ref{assump:holomorphicextension} and the approximation error from \eqref{eq:error2}.
Thus $\Gamma_N\in\GNNSP$ for all $N\in\IN$, where we set $C_s=1$. Using $\supp(\gamma)\subset C_R^r(\X)$ (Assumption \ref{assump:holomorphicextension}) in \eqref{eq:prooapproximationscalingargument}  finalizes the proof of Theorem \ref{th:aprroximationerrorLinfinity}. 
\end{proof}

\begin{proof}[Proof of Theorem \ref{th:aprroximationerrorl2}] 
By Lemma \ref{lem:decay} the assumptions of Lemma \ref{eq:summabilitylemma} are satisfied.  Applying Lemma \ref{eq:summabilitylemma} \ref{item:doublesum2} with $\alpha\coloneqq r-\tilde{\tau}/2$ and $\beta\coloneqq t-\tilde{\tau}/2$ gives a sequence $(m_i)_{i\in\IN}\subset\IN_0^{\IN}$ such that $\sum_{i\in\IN}m_i\leq \tilde{N}$ and 
\begin{equation}
    \label{eq:thirdsummation2}
   \left( \sum_{i\in\IN}\omega_{\pnu_i}\sum_{j>m_i}c_{\pmb{\nu_i},j}^2\right)^{\frac{1}{2}}\leq \tilde C \tilde{N}^{-\min\left\{r-\frac{1}{2},t\right\}+\tilde{\tau}}.
\end{equation}
Define  $\tilde{\gamma}_{\tilde{N}}=(\tilde{\gamma}_{\tilde{N},j})_{j\in\IN}$ for all $y\in U$ with $\tilde{\gamma}_{\tilde{N},j}$ as in \eqref{eq:definegntilde}. Then it holds
\begin{align}
    &\lVert\G_0\circ\sigma_R^r-\D_{\Y}\circ\tilde{\gamma}_{\tilde{N}}\rVert_{L^2(U,\pi;\Y)}\leq\left\lVert
    \sum_{i\in\IN}\sum_{j>m_i}c_{\pmb{\nu_i},j}L_{\pmb{\nu_i}}\eta_j\right\rVert_{L^2(U,\pi;\Y)}\notag\\
    &\quad+\left\lVert\sum_{i\in\IN}\sum_{j\leq m_i}c_{\pmb{\nu_i},j}\eta_j\left(L_{\pmb{\nu_i}}-\tilde{L}_{\pmb{\nu_i},\rho}\right)\right\rVert_{L^2(U,\pi;\Y)}\notag\\
    &\leq\Lambda_{\pmb{\Psi}_{\Y}}\left(\sum_{i\in\IN}\underbrace{\lVert L_{\pmb{\nu_i}}\rVert_{\infty,\pi}^2}_{\leq\omega_{\pmb{\nu_i}}}\sum_{j>m_i}c_{\pmb{\nu_i},j}^2\right)^{\frac{1}{2}}+\Lambda_{\pmb{\Psi}_{\Y}}\rho\left(\sum_{i\in\IN}\sum_{j\leq m_i}c_{\pmb{\nu_i},j}^2\right)^{\frac{1}{2}}\notag\\
    &\leq \tilde{C}\Lambda_{\pmb{\Psi}_{\Y}}\tilde{N}^{-\min\{r-\frac{1}{2},t\}+\tilde{\tau}}+\tilde{C}\Lambda_{\pmb{\Psi}_{\Y}}\rho\leq \tilde{C} \tilde{N}^{-\min\{r-\frac{1}{2},t\}+\tilde{\tau}}.\label{eq:proofapproximationerrorfinalequation2}
\end{align}
In \eqref{eq:proofapproximationerrorfinalequation2} we used the definition of $\D_{\Y}$, \eqref{eq:definegntilde} and \eqref{eq:Legednreexpansionu} at the first inequality. Additionally we used that $(L_{\pnu}\eta_j)_{\pnu,j}$ is a frame of $L^2(U,\pi;\Y)$ at the second inequality. Finally we used \eqref{eq:thirdsummation2}, the definition of $\rho$ and an argument similar to \eqref{eq:cnuijbaschätzung}
at the second-to-last inequality. Note that again we changed the constants $\tilde{C}$ from line to line in \eqref{eq:proofapproximationerrorfinalequation2}.

Since $\pmb{\Psi}_{\X}$ is a Riesz basis, we have (see Section \ref{sec:decoderencoder} and \eqref{eq:scaling})
\begin{align}
\label{eq:L2arguments}
    C_R^r(\X)=\left\{\sigma_R^r(\py),\,\py\in U\right\}\quad{\rm and}\quad \E_{\X}\circ\sigma_R^r(\py)=S_r^{-1}(\py).
\end{align}
With $\tilde{\Gamma}_{\tilde{N}}\coloneqq\D_{\Y}\circ \tilde{\gamma}_{\tilde{N}}\circ  S_r\circ\E_{\X}$ we calculate
\begin{align*}
 \left\lVert\tilde{\Gamma}_{\tilde{N}}-\G_0\right\rVert_{L^2(C_R^r(\X),(\sigma_R^r)_{\#}\pi;\Y)}&=
 \left\lVert\D_{\Y}\circ \tilde{\gamma}_{\tilde{N}}\circ  S_r\circ\E_{\X}-\G_0\right\rVert_{L^2(C_R^r(\X),(\sigma_R^r)_{\#}\pi;\Y)}\notag\\
      &=\left\lVert\D_{\Y}\circ \tilde{\gamma}_{\tilde{N}}\circ  S_r\circ\E_{\X}\circ\sigma_R^r-\G_0\circ\sigma_R^r\right\rVert_{L^2(U,\pi;\Y)}\notag\\
      &=\left\lVert\D_{\Y}\circ \tilde{\gamma}_{\tilde{N}}-\G_0\circ\sigma_R^r\right\rVert_{L^2(U,\pi;\Y)}
      \leq\tilde{C} \tilde{N}^{-\min\left\{r-\frac{1}{2},t\right\}+\tau},
\end{align*}
where we used \eqref{eq:L2arguments} and \eqref{eq:proofapproximationerrorfinalequation2}. Defining $N=N(\tilde{N})\coloneqq \max\{1,\tilde{N}\log(\tilde{N})^3\}$ we can proceed similar to the proof of Theorem \ref{th:aprroximationerrorLinfinity} from \eqref{eq:prooapproximationscalingargument} on. The reason this works is that the NNs $\tilde{\gamma}_{\tilde N}$ are defined in the same way in the $L^2$- and the $L^{\infty}$-case (only the sequence $\pmb{m}$ changes, but not its properties). This shows $\Gamma_N\coloneqq \tilde{\Gamma}_{\tilde N}\in\GNNSP$ for all $N\in\IN$ and thus finishes the proof of Theorem \ref{th:aprroximationerrorl2}.\end{proof}

\subsection{Proof of Lemma \ref{lem:entropybound}}\label{app:proofentropy}
The arguments in the following proof are based on entropy bounds for feedforward neural network classes, first established in \cite[Proof of Lemma 5]{SH20}. 

Define the supremum norm
$\|\cdot\|_{\infty,\infty}$ on $\pg_{{\rm FN}}$ as
\begin{align}\label{eq:NNinftynorm}
   \lVert g\rVert_{\infty,\infty} \coloneqq \sup_{\py\in\IR^{p_0}}\,\lVert g(\py)\rVert_{\infty},\quad g\in\pg_{{\rm FN}},
\end{align}
where $\|\cdot\|_\infty$ denotes the maximum norm in $\IR^n$.
Then \cite[Proposition 3.5]{Petersen.2021} shows that $(\pg_{{\rm FN}},\lVert \cdot \rVert_{\infty,\infty})$ is compact. Since the map $i:\,\pg_{\rm FN}\to\pG_{\rm FN} $, $g\to G=\D_{\Y}\circ g\circ \E_{\X}$ is linear, also $(\pG_{{\rm FN}},\lVert \cdot \rVert_{\infty,\supp(\gamma)})$ and hence $(\pG_{{\rm FN}},\lVert \cdot \rVert_{n})$ is compact.
We now show the entropy bounds for $\pG_{\rm FN}$. 

  {\bf Step 1.} Recall ${\rm depth}(g)\leq L$,
${\rm depth}(g)\leq p$,
${\rm size}(g)\leq s$ and ${\rm mpar}(g)\leq M$ for $g\in\pg_{\rm FN}$.
We first estimate the entropy $H(\pG_{{\rm FN}},\|\cdot\|_{\infty,\supp(\gamma)},\delta)$ against the respective entropy of $\pg_{{\rm FN}}$. For $G,G'\in\pG_{{\rm FN}}$ and $g,g'\in \pG_{{\rm FN}}$ with $G=\D_{\Y}\circ g\circ S_r\circ\E_{\X}$, $G'=\D_{\Y}\circ g'\circ S_r\circ\E_{\X}$, it holds
\begin{align}
    \lVert G-G'\rVert_{\infty,\sigma_R^r(U)}&=\sup_{x\in\sigma_R^r(U)}\lVert \D_{\Y}\circ g\circ S_r\circ\E_{\X}(x)-\D_{\Y}\circ g'\circ S_r\circ\E_{\X}(x)\rVert_{\Y}\notag\\
    &\leq \Lambda_{\pmb{\Psi}_{\Y}}\sup_{\py\in U}\lVert g(\py)-g'(\py)\rVert_{2}\leq\Lambda_{\pmb{\Psi}_{\Y}} \sqrt p\lVert g-g'\rVert_{\infty,\infty}, \label{eq:entropylemma1}
\end{align}
where we used $\sigma_R^r=\D_{\X}\circ S_r^{-1}$
and $\|\cdot\|_{\infty,\infty} $ from \eqref{eq:NNinftynorm}.
Furthermore, we used $\lVert g(u)\rVert_{2}\leq \sqrt{p}\lVert g\rVert_{\infty}$ for all $g\in\pg_{{\rm FN}}$, since NNs $g\in\pg_{{\rm FN}}$ have ${\rm width}(g)\leq p$.
Then \eqref{eq:entropylemma1} yields
\begin{align}\label{eq:gNNGNNrelation}
H(\pG_{{\rm FN}},\|\cdot\|_{\infty,\sigma_R^r(U)},\delta)\leq H\left(\pg_{{\rm FN}},\|\cdot\|_{\infty,\infty} ,\frac{\delta}{\Lambda_{\pmb{\Psi}_{\Y}}\sqrt p}\right).
\end{align}

{\bf Step 2.} It remains to bound
  $H(\pg_{{\rm FN}},\|\cdot\|_{\infty,\infty} ,\delta)=\log(N(\pg_{{\rm FN}},\|\cdot\|_{\infty,\infty} ,\delta))$.
To this end we follow the proof and notation of \cite[Lemma 5]{SH20}.
For $l=1,\dots L+1$, define the matrices $W_l=(w_{i,j}^l)_{i,j}\in \R^{p_{l-1}\times p_{l}}$ and the vectors $B_l=(b_j^l)_j\in \R^{p_{l}}$.
Furthermore, define
\begin{align}\label{eq:sigmabl}
  \sigma^{B_l}&: \IR^{p_l}\to\IR^{p_l},\quad \sigma^{B_l}(x)=\sigma_1(x+B_l)=\max\{0,x+B_l\},\quad l=1,\dots L,\notag\\
\sigma^{B_{L+1}}&: \IR^{p_{L+1}}\to\IR^{p_{L+1}},\quad \sigma^{B_{L+1}}(x)=x+B_{L+1}.
\end{align}
 Then we can write a NN $g\in\pg_{{\rm FN}}$ as a functional composition of $\sigma^{B_l}$ and $W_l$, i.e.
\begin{align*}
    g:\IR^{p_0}\to\IR^{p_{L+1}},\quad g(x)=\sigma^{B_{L+1}}W_{L+1}\sigma^{B_l}\dots W_2\sigma^{B_1}W_1x.
\end{align*}
For $k\in\{1,\dots,L+1\}$ we define the functions
\begin{align}\label{eq:Akp}
    A_k^+g:\IR^{p_0}\to\IR^{p_k},\quad A_k^+g(x)&=\sigma^{B_{k}}W_k\dots\sigma^{B_1}W_1x,\\
    A_k^-g:\IR^{p_{k-1}}\to\IR^{p_{L+1}},\quad A_k^-g(x)&=\sigma^{B_{L+1}}W_{L+1}\dots\sigma^{B_k}W_k x. \label{eq:Akm}
\end{align}
Furthermore, set $A_0^+g=\Id_{\IR^{p_0}}$ and $A_{L+2}^-g=\Id_{\IR^{p_{L+1}}}$.
  For all $1\le l\le L+1$ holds
  \begin{align*}
    \norm[\infty]{\sigma^{B_l}(x)}&\le \norm[\infty]{x}+M\\
    \norm[\infty]{W^l(x)}&\le \norm[\infty]{W^l}\norm[\infty]{x}\le Mp\norm[\infty]{x}.
  \end{align*}
  We claim that for $k\in\{1,\dots,L+1\}$
  \begin{equation*}
    \sup_{x\in [-1,1]^{p_0}}  \lVert \Akp g(x)\rVert_{\infty}\le (M(p+1))^{k}
  \end{equation*}
  and proceed by induction. The case $k=0$ is trivial. To go from $k-1$ to $k$ we compute
  \begin{align}\label{entropyinduct}
    \sup_{x\in [-1,1]^{p_0}}\norm[\infty]{A_k^+x}
    &=\sup_{x\in [-1,1]^{p_0}}\norm[\infty]{\sigma^{B_k}W_k(\sigma^{B_{k-1}}W_{k-1}\cdots \sigma^{B_1}W_1 x)}\notag\\
    &\le\sup_{x\in [-(M(p+1))^{k-1},(M(p+1))^{k-1}]^{p_{k-1}}}\norm[\infty]{\sigma^{B_k}W^k x}\notag\\
    &\le (Mp(M(p+1))^{k-1}+M)\le (M(p+1))^k,
  \end{align}
  as claimed.

Moreover, for $l=1,\dots,L+1$, $W_l:(\R^{p_{l-1}},\norm[\infty]{\cdot})\to(\R^{p_{l}},\norm[\infty]{\cdot})$ is Lipschitz
with constant $Mp$ and $\sigma^{B_l}:(\R^{p_{l}},\norm[\infty]{\cdot})\to(\R^{p_{l}},\norm[\infty]{\cdot})$ is Lipschitz with constant $1$. Thus we can estimate the Lipschitz constant of $\Akm g$ for $k=1,\dots,L+1$. It holds
\begin{align}\label{eq:ReLULipschitz}
\left\lVert\Akm g(x)-\Akm g(y)\right\rVert_{\infty}&=\left\lVert \sigma^{B_{L+1}}W_{L+1}\dots \sigmabk W_kx-\sigma^{B_{L+1}}W_{L+1}\dots \sigmabk W_ky\right\rVert_{\infty}\notag\\
&\leq Mp\left\lVert \sigma^{B_{L}}W_{L}\dots  \sigmabk W_{k}x-\sigma^{B_{L}}W_{L}\dots \sigmabk  W_{k}y\right\rVert_{\infty}\notag\\
&\leq\dots\leq \left(Mp\right)^{L+2-k}\lVert x-y\rVert_{\infty}\quad{\rm for }\,\, x,y\in\IR^{p_{k-1}}.
\end{align}
Now let $g,g^*\in\pg_{{\rm FN}}$ be two NN such that $|w_{i,j}^l-w_{i,j}^{l,*}|<\eps$ and $|b_i^l-b_i^{l,*}|<\eps$ for all $i\le p_{l+1}$, $j\le p_l$, $l\le L+1$. 
Then 
\begin{align}\label{eq:ffstarReLU}
    \left\lVert g-g^*\right\rVert_{{\infty,\infty}}&\leq\sum_{k=1}^{L+1}\left\lVert A_{k+1}^- g\sigmabk W_k A_{k-1}^+ g^*-A_{k+1}^- g\sigma^{B_k^*} W_{k}^*A_{k-1}^+ g^*\right\rVert_{{\infty,\infty}}\notag\\
    &\leq \sum_{k=1}^{L+1} \left(Mp\right)^{L+1-k}\left\lVert \sigmabk W_k A_{k-1}^+ g^*-\sigma^{B_k^*} W_k^*A_{k-1}^+ g^*\right\rVert_{{\infty,\infty}}\notag\\
     &\leq \sum_{k=1}^{L+1} \left(Mp\right)^{L+1-k}\left(\left\lVert(W_k-W_k^*) A_{k-1}^+ g^*\right\rVert_{{\infty,\infty}}+\lVert B_k-B_k^*\rVert_{\infty}\right)\notag\\
    &\leq\eps\sum_{k=1}^{L+1} \left(Mp\right)^{L+1-k}\left(pM^{k-1}(p+1)^{k-1}+1\right)\notag\\
    & <\eps(L+1)M^{L}(p+1)^{L+1},
\end{align}
where we used \eqref{entropyinduct}, \eqref{eq:ReLULipschitz} and $M\geq 1$.
 The total number of weight and biases is less than $(L+1)(p^2+p)$. Therefore there are at most
\begin{align*}
    \begin{pmatrix}
    (L+1)(p^2+p)\\
    s\\
\end{pmatrix}\leq ((L+1)(p^2+p))^s
\end{align*}
combinations to pick $s$ nonzero parameters. Since all parameters are bounded by $M$, we choose $\eps=\delta/((L+1)M^L(p+1)^{L+1})$ and obtain the covering bound for all $\delta>0$
\begin{align}\label{eq:ReLUprooffinal}   N(\pg_{{\rm FN}},\|\cdot\|_{\infty,\infty},\delta)&\leq\max\left\{1,\sum_{s^*=1}^s\left(2M\epsilon^{-1}(L+1)(p^2+p)\right)^{s^*}\right\}\notag\\
&\leq \max\left\{1,\sum_{s^*=1}^s\left(2\delta^{-1}(L+1)M^{L+1}(p+1)^{L+1}(L+1)(p^2+p)\right)^{s^*}\right\}\notag\\
&\leq\max\left\{1,\sum_{s^*=1}^s\left(2\delta^{-1}(L+1)^2M^{L+1}(p+1)^{L+3}\right)^{s^*}\right\}\notag\\
&\leq\left(2^{L+6}L^2M^{L+1}p^{L+3}\max\left\{1,\delta^{-1}\right\}\right)^{s+1},
\end{align}
where we used $L\geq 1$ and $p\geq 1$ at the last inequality. Eqs.\ \eqref{eq:ReLUprooffinal} and \eqref{eq:gNNGNNrelation} show \eqref{eq:Hsig1general}.

Applying \eqref{eq:Hsig1general} to the sparse FrameNet class $\GNNSP[\sigma_1]$ gives  
\begin{align}\label{eq:entropyconstraint}
    &H\bigl(\GNNSP[\sigma_1],\|\cdot\|_{\infty,\sigma_R^r(U)},\delta\bigr)\notag\\
    \leq\,\,& ({\rm size}_N+1)\log\left(2^{{\rm depth}_N+6}\Lambda_{\pmb{\Psi}_{\Y}}{\rm depth}_N^2M^{{\rm depth}_N+1}{\rm width}_N^{{\rm depth}_N+4}\max\left\{1,\delta^{-1}\right\}\right)\notag\\
    \leq\,\,& (C_sN+1)\notag\\
    &\,\,\times \log\left(2^{C_L\log(N)+6}\Lambda_{\pmb{\Psi}_{\Y}}(C_L\log(N))^2M^{C_L\log(N)+1}(C_p N)^{C_L\log(N)+4}\max\left\{1,\delta^{-1}\right\}\right)\notag\\
    \leq\,\,&C_H^\trm{SP} N\left(1+\log(N)^2+\log\left(\max\left\{1,\delta^{-1}\right\}\right)\right),\quad N\in\IN,\quad\delta>0,
\end{align}
where we defined
\begin{align*}
    C_H^\trm{SP}&=2C_s\biggl((C_L+6)\log(2)+\log(\Lambda_{\pmb{\Psi}_{\Y}})+C_L^2+\\
    &\qquad+(C_L+1)\log(M)+(C_L+4)(\log(C_p)+1)\biggr).
\end{align*}

Applying \eqref{eq:Hsig1general} to the fully connected FrameNet class $\GNNFC[\sigma_1]$ gives 
\begin{align}\label{eq:entropyconstraintFC}
    &H\bigl(\GNNFC[\sigma_1],\|\cdot\|_{\infty,\sigma_R^r(U)},\delta\bigr)\notag\\
    \leq\,\,& (s^{\trm{FC}}(N)+1)\log\left(2^{{\rm depth}_N+6}\Lambda_{\pmb{\Psi}_{\Y}}{\rm depth}_N^2M^{{\rm depth}_N+1}{\rm width}_N^{{\rm depth}_N+4}\max\left\{1,\delta^{-1}\right\}\right)\notag\\
    \leq\,\,& \left(\left({\rm depth}_N+1\right)\left({\rm width}_N^2+{\rm width}_N\right)+1\right)\notag\\
    &\quad\times\log\left(2^{C_L\log(N)+6}\Lambda_{\pmb{\Psi}_{\Y}}(C_L\log(N))^2M^{C_L\log(N)+1}(C_p N)^{C_L\log(N)+4}\max\left\{1,\delta^{-1}\right\}\right)\notag\\
    \leq\,\,& \left(\left(C_L\log(N)+1\right)\left(C_p^2N^2+C_pN\right)+1\right)\notag\\
    &\quad\times\log\left(2^{C_L\log(N)+6}\Lambda_{\pmb{\Psi}_{\Y}}(C_L\log(N))^2M^{C_L\log(N)+1}(C_p N)^{C_L\log(N)+4}\max\left\{1,\delta^{-1}\right\}\right)\notag\\ 
    \leq\,\,&C_H^\trm{FC} N^2\left(1+\log(N)^3+\log\left(\max\left\{1,\delta^{-1}\right\}\right)\right),\quad N\in\IN,
\end{align}
where we defined 
\begin{align*}
C_H^\trm{FC}&=8C_LC_p^2\biggl((C_L+6)\log(2)+\log(\Lambda_{\pmb{\Psi}_{\Y}})+C_L^2+\\
    &\qquad+(C_L+1)\log(M)+(C_L+4)(\log(C_p)+1)\biggr).
\end{align*}
Equations \eqref{eq:entropyconstraint} and \eqref{eq:entropyconstraintFC} finish the proof of Lemma \ref{lem:entropybound}.

\subsection{Proof of Lemma \ref{lem:entropyboundRePU}}\label{app:proofentropyRePU}
The following proof is a modification of \cite[Proof of Lemma 5]{SH20} to the case where the activation function is not globally, but only locally Lipschitz continuous. The compactness of $(\pG,\|\cdot\|_{\infty,\supp(\gamma)})$ follows similarly to the ReLU case since \cite[Proposition 3.5]{Petersen.2021} holds for any continuous activation function.

Let $q\in\IN$, $q\geq 2$ and let $\sigma_q:\IR\to\IR$, $\sigma_q(x)=\max\{0,x\}^q$ denote the RePU activation function. Recall ${\rm depth}(g)\leq L$,
${\rm depth}(g)\leq p$,
${\rm size}(g)\leq s$ and ${\rm mpar}(g)\leq M$ for $g\in\pg_{\rm FN}$. We argue analogously to the ReLU-case in Lemma \ref{lem:entropybound} and bound the entropy of the NN class $\pg_{{\rm FN}}(\sigma_q,L,p,s,M,B)$. Recall the definitions of $\sigma^{B_l}$, $\Akp g$ and $\Akm g$ from \eqref{eq:sigmabl}-\eqref{eq:Akm}. Similar to \eqref{entropyinduct} it holds that
\begin{align}\label{eq:RePUinfinityboiund}
    \lVert \Akp g\rVert_{{\infty,\infty}}&=\sup_{x\in [-1,1]^{p_0}}  \lVert \Akp g(x)\rVert_{\infty}\notag\\
   &\leq\sup_{x\in [-M(p+1),M(p+1)]^{p_1}}\left\lVert\sigmabk W_k\sigma^{B_{k-1}}\dots W_2\sigma_{q}x\right\rVert_\infty\notag\\
    &\leq\sup_{x\in [-M^q(p+1)^q,M^q(p+1)^q]^{p_1}}\left\lVert\sigmabk W_k\sigma^{B_{k-1}}\dots W_2x\right\rVert_\infty\notag\\
&\leq\dots\leq \left(M(p+1))\right)^{\sum_{j=1}^k q^j}\leq \left(M(p+1)\right)^{q^{k+1}},
\end{align}
where we used $M\geq 1$.

In the RePU-case, $\Akm g$ is only locally Lipschitz: Since $|\sigma_q(x)'|\le q|x|^{q-1}$ it holds
  \begin{equation*}
    |\sigma_q(x)-\sigma_q(y)|\le q\max\{|x|,|y|\}^{q-1}|x-y|\qquad\forall x,y\in\R.
  \end{equation*}

Therefore for $k=1,\dots,L+1$ and $x,y\in\IR^{p_{k-1}}$, $\lVert x\rVert_{\infty}, \lVert y\rVert_{\infty}\leq C$, we get
\begin{align*}
&\left\lVert\Akm g(x)-\Akm g(y)\right\rVert_{\infty}\notag\\
=\,\,&\left\lVert \sigma^{B_{L+1}}W_{L+1}\sigma^{B_L}\dots  W_kx-\sigma^{B_{L+1}}W_{L+1}\sigma^{B_L}\dots  W_ky\right\rVert_{\infty}\notag\\
\leq\,\,& Mp\left\lVert \sigma^{B_L}W_L\sigma^{B_{L-1}}\dots  W_kx-\sigma^{B_L}W_L\sigma^{B_{L-1}}\dots  W_ky\right\rVert_{\infty}\notag\\
\leq\,\,& Mpq\left(\sup_{\lVert x\rVert_{\infty}\leq C}\left\lVert W_{L}\sigma^{B_{L-1}}\dots  W_kx\right\rVert_{\infty}\right)^{q-1}\notag\\
&\qquad \,\times\left\lVert W_{L}\sigma^{B_{L-1}}\dots W_kx-W_L\sigma^{B_{L-1}}\dots W_ky\right\rVert_{\infty}\notag\\
\leq\,\,& (Mpq)^{L+2-k}\left(\sup_{\lVert x\rVert_{\infty}\leq C}\left\lVert W_{L}\sigma^{B_{L-1}}\dots W_kx\right\rVert_{\infty}\right)^{q-1}\notag\\
&\,\,\,  \times\left(\sup_{\lVert x\rVert_{\infty}\leq C}\left\lVert W_{L-1}\sigma^{B_{L-2}}\dots  W_kx\right\rVert_{\infty}\right)^{q-1}\times \dots \times\left(\sup_{\lVert x\rVert_{\infty}\leq C}\left\lVert W_kx\right\rVert_{\infty}\right)^{q-1} \lVert x-y\rVert_{\infty}.
\end{align*}
Using 
\begin{align*}
    \sup_{\lVert x\rVert_{\infty}\leq C}\left\lVert W_{j}\sigma^{B_{j-1}}\dots W_kx\right\rVert_{\infty}&\leq     \sup_{\lVert x\rVert_{\infty}\leq \left(M(p+1)C\right)^q}\left\lVert W_{j}\sigma^{B_{j-1}}\dots W_{k+1}x\right\rVert_{\infty}\\
    &\leq \sup_{\lVert x\rVert_{\infty}\leq\left(M(p+1)\right)^{q+q^2}C^{q^2}}\left\lVert W_{j}\sigma^{B_{j-1}}\dots W_{k+2}x\right\rVert_{\infty}\\
    &\leq \dots\leq \sup_{\lVert x\rVert_{\infty}\leq\left(M(p+1)\right)^{\sum_{l=1}^{j-k}q^l}C^{q^{j-k}}}\left\lVert W_{j}x\right\rVert_{\infty}\\
    &\leq\left(M(p+1)\right)^{\sum_{l=0}^{j-k}q^l} C^{q^{j-k}}\leq \left(M(p+1)C\right)^{q^{j-k+1}}
\end{align*}
for $j=k,\dots,L$ and $C,M\geq 1$, we get
\begin{align}\label{eq:RePULipschitz}
    \left\lVert\Akm g(x)-\Akm g(y)\right\rVert_{\infty}&\leq (Mpq)^{L+2-k}\prod_{j=k}^{L}\left(M(p+1)\hat C\right)^{q^{j-k+1}}\lVert x-y\rVert_{\infty}\notag\\
    &\leq(Mpq)^{L+2-k}\left(M(p+1)\hat C\right)^{q^{L+2-k}}\lVert x-y\rVert_{\infty},\quad x,y\in\IR^{p_{k-1}}.
\end{align}
 Now we proceed similar to \eqref{eq:ffstarReLU}. Let $g,g^*\in\pg_{{\rm FN}}$ be two NN such that $|w_{i,j}^l-w_{i,j}^{l,*}|<\eps$ and $|b_i^l-b_i^{l,*}|<\eps$ for all $i\le p_{l+1}$, $j\le p_l$, $l\le L+1$.  Then with $A_0^+g=\Id_{\IR^{p_0}}$ and $A_{L+2}^-g=\Id_{\IR^{p_{L+1}}}$ we estimate 
 \begin{align}\label{eq:ffstarRePU}
    &\left\lVert g-g^*\right\rVert_{{\infty,\infty}}\notag\\
    \leq\,\,&\sum_{k=1}^{L+1}\left\lVert A_{k+1}^- g\sigmabk W_k A_{k-1}^+ g^*-A_{k+1}^- g\sigma^{B_k^*} W_k^*A_{k-1}^+ g^*\right\rVert_{{\infty,\infty}}\notag\\
   \leq\,\,&\sum_{k=1}^{L+1} \left(Mpq\right)^{L+1-k}\left(M(p+1)\left(M(p+1)\right)^{q^{k+1}}\right)^{q^{L+1-k}}\notag\\
   &\qquad \left\lVert\sigmabk W_k A_{k-1}^+ g^*-\sigma^{B_k^*} W_k^*A_{k-1}^+ g^*\right\rVert_{{\infty,\infty}}\notag\\
    \leq\,\,& \sum_{k=1}^{L+1}  \left(Mpq\right)^{L+1-k}\left(M(p+1)\left(M(p+1)\right)^{q^{k+1}}\right)^{q^{L+1-k}}\notag\\
    &\qquad \left(\left\lVert(W_k-W_k^*) A_{k-1}^+ g^*\right\rVert_{{\infty,\infty}}+\lVert B_k-B_k^*\rVert_{{\infty,\infty}}\right)q\left(M(p+1)\left\lVert A_{k-1}^+ g^*\right\rVert_{{\infty,\infty}}\right)^{q-1}\notag\\
    \leq\,\,& 2\eps\sum_{k=1}^{L+1}  \left(Mpq\right)^{L+2-k}\left(M(p+1)\left(M(p+1)\right)^{q^{k+1}}\right)^{q^{L+1-k}}\left\lVert A_{k-1}^+ g^*\right\rVert_{{\infty,\infty}}\notag\\
    &\qquad \left(M(p+1)\left\lVert A_{k-1}^+ g^*\right\rVert_{{\infty,\infty}}\right)^{q-1}\notag\\
    \leq\,\,& 2\eps(L+1)  \left(M(p+1)q\right)^{L+q}\left(M(p+1)\left(M(p+1)\right)^{q^{L+2}}\right)^{q^{L}}\left(\left(M(p+1)\right)^{q^{L+1}}\right)^q\notag\\
    <\,\,&  \eps L q^{L+q}\left(2pM\right)^{4q^{2L+2}}\left(2M\sqrt{p}(p^2+p)(L+1)\right)^{-1}.
\end{align}
In \eqref{eq:ffstarRePU} we used the Lipschitz bound \eqref{eq:RePULipschitz} with 
\begin{align*} C=\max\left\{1,\lVert\sigmabk W_k A_{k-1}^+ g^*\rVert_{{\infty,\infty}},\lVert\sigma^{B_k^*} W_k^* A_{k-1}^+ g^*\rVert_{{\infty,\infty}} \right\}\leq  \left(M(p+1)\right)^{q^{k+1}},
\end{align*}
and $p\geq1$, $L\geq 1$, $q\geq 2$ at the last inequality.

As in the proof of Lemma \ref{lem:entropybound}, there are 
\begin{align*}
    \begin{pmatrix}
    (L+1)(p^2+p)\\
    s\\
\end{pmatrix}\leq ((L+1)(p^2+p))^s
\end{align*}
combinations to pick $s$ nonzero weights and biases. Since all parameters are bounded by $M$, we choose 
\begin{align*}
     \eps=\frac{2M\sqrt{p}(p^2+p)(L+1)\delta}{L q^{L+q}\left(2pM\right)^{4q^{2L+2}}}
\end{align*}
and obtain the covering bound
\begin{align}  \label{eq:Entropyrepuprooffinal}
N(\pg_{{\rm FN}},\|\cdot\|_{\infty,\infty},\delta)&\leq\max\left\{1,\sum_{s^*=1}^{s}\left(2M\epsilon^{-1}(L+1)(p^2+p)\right)^{s^*}\right\}\notag\\
&\leq\max\left\{1,\sum_{s^*=1}^{s}\left(L q^{L+q}\left(2pM\right)^{4q^{2L+2}}(\sqrt p\delta)^{-1}\right)^{s^*}\right\}\notag\\
&\leq\left(L q^{L+q}\left(2pM\right)^{4q^{2L+2}}\sqrt p^{-1}\max\left\{1,\delta^{-1}\right\}\right)^{s+1}.
\end{align}
Eqs.~\eqref{eq:Entropyrepuprooffinal} and \eqref{eq:gNNGNNrelation} show \eqref{eq:Hsigqgeneral}.

Applying \eqref{eq:Hsigqgeneral} to the sparse FrameNet class $\GNNSP[\sigma_q]$ gives  
 \begin{align}\label{eq:entropyconstraintsigq}
    &H\bigl(\GNNSP,\|\cdot\|_{\infty,\sigma_R^r(U)},\delta\bigr)\notag\\
    \leq\,\,& (s^{SP}(N)+1)\log\left(\Lambda_{\pmb{\Psi}_{\Y}} {\rm depth}_N q^{{\rm depth}_N+q}\left(2{\rm width}_NM\right)^{4q^{2{\rm depth}_N+2}}\max\left\{1,\delta^{-1}\right\}\right)\notag\\
    \leq\,\,& (C_sN+1)\log\left(\Lambda_{\pmb{\Psi}_{\Y}}C_L\log(N)  q^{C_L\log(N)+q}\left(2C_pNM\right)^{4q^{2C_L\log(N)+2}}\max\left\{1,\delta^{-1}\right\}\right)\notag\\
    \leq\,\,&C_H^\trm{SP}N^{1+2C_L\log(q)}\left(1+\log(N)+\log\left(\max\left\{1,\delta^{-1}\right\}\right)\right),
\end{align}
where we set 
\begin{align*}
C_H^\trm{SP}&=2C_s\left(\log(\Lambda_{\pmb{\Psi}_{\Y}})+C_L+(C_L+q)\log(q)+4q^2\left(\log(2C_pM)+1\right)\right).
\end{align*}

Applying \eqref{eq:Hsigqgeneral} to the fully connected FrameNet class $\GNNFC[\sigma_q]$ gives the entropy bound
 \begin{align}\label{eq:entropyconstraintFCsigq}
    &H\bigl(\GNNFC,\|\cdot\|_{\infty,\sigma_R^r(U)},\delta\bigr)\notag\\
   &\qquad
       \leq (s^{FC}(N)+1)\log\left(\Lambda_{\pmb{\Psi}_{\Y}}{\rm depth}_N  q^{{\rm depth}_N+q}\left(2{\rm width}_NM\right)^{4q^{2{\rm depth}_N+2}}\max\left\{1,\delta^{-1}\right\}\right)\notag\\
    &\qquad \leq \left(\left({\rm depth}_N+1\right)\left({\rm width}_N^2+{\rm width}_N\right)+1\right)\notag\\
    &\qquad \quad\times\log\left(\Lambda_{\pmb{\Psi}_{\Y}} {\rm depth}_N q^{{\rm depth}_N+q}\left(2{\rm width}_NM\right)^{4q^{2{\rm depth}_N+2}}\max\left\{1,\delta^{-1}\right\}\right)\notag\\
&\qquad \leq\left(\left(C_L\log(N)+1\right)\left(C_p^2N^2+C_pN\right)+1\right)\notag\\
    &\qquad\quad\times\log\left(\Lambda_{\pmb{\Psi}_{\Y}}C_L\log(N)  q^{C_L\log(N)+q}\left(2C_pNM\right)^{4q^{2C_L\log(N)+2}}\max\left\{1,\delta^{-1}\right\}\right)\notag\\
   &\qquad\leq C_H^\trm{FC}N^{2+2C_L\log(q)}\left(1+\log(N)^2+\log\left(\max\left\{1,\delta^{-1}\right\}\right)\right),
 \end{align}
 where we set 
\begin{align*} C_H^\trm{FC}=&8C_sC_p^2\biggl(\log(\Lambda_{\pmb{\Psi}_{\Y}})+C_L
+(C_L+q)\log(q)+4q^2\left(\log(2C_pM)+1\right)\biggr).
\end{align*}
Equations \eqref{eq:entropyconstraintsigq} and \eqref{eq:entropyconstraintFCsigq} finish the proof of Lemma \ref{lem:entropyboundRePU}.

\section{Proofs of Section \ref{sec:applications}}
\subsection{Proof of Theorem \ref{thm:torus}}\label{app:prooftorus}

In \cite[Proof of Proposition 3, Step
  1]{Herrmann.2024}, the holomorphy in Assumption
  \ref{assump:holomorphicextension} is verified for $\X$, $\Y$ in
  \eqref{eq:cXscYt_torus} with $r_0>d/2$ and $t\in[0,(1+r_0-d/2-t_0)/d)$. Moreover, $\gamma=(\sigma_R^r)_{\#}\pi$ in particular shows $\supp(\gamma)\subseteq C_R^r(\X)$ and hence verifies the second part of Assumption \ref{assump:holomorphicextension}. Substituting $\mathfrak{s}=r_0+rd$, i.e.~$r=\frac{\mathfrak{s}-r_0}{d}$,
and taking $t=(1+r_0-d/2-t_0)/d-\tau$ with some small $\tau$, Theorem \ref{th:RgnG0boundWN} \ref{item:sparseReLU} then gives 
 \begin{align*}
   \IE_{\G_0}[\lVert \hat{G}_n-\G_0\rVert_{L^2(\gamma)}^2]   
   &\leq C n^{-\frac{\kappa}{\kappa+1}+\tau},
 \end{align*}
 where
 \begin{align*}
\kappa=2\min\biggl\{\frac{\mathfrak{s}-r_0}{d}-\frac{1}{2},\frac{1+r_0-\frac{d}{2}-t_0}{d}\biggr\}-\tau
 \end{align*}
 for all $r_0>d/2$ and $t_0\in[0,1]$.

 From here on the proof is essentially the same as \cite[Proof of Proposition 3, Step 2]{Herrmann.2024}; the only difference is that while \cite{Herrmann.2024} uses the uniform bound in Theorem \ref{th:approximationNN} \ref{item:Linfapprox}, we require the $L^2$-bound in Theorem \ref{th:approximationNN} \ref{item:Ltwoapprox}. For completeness, we repeat the argument. 
The constraint $r>1$ implies $\mathfrak{s}>r_0+d$ on
$\mathfrak{s}$. We now choose $r_0>\frac{d}{2}$ in order to maximize the convergence rate.
Solving
\begin{equation*}
  \frac{\mathfrak{s}-r_0}{d}-\frac{1}{2} = \frac{1+r_0-\frac{d}{2}-t_0}{d}
\end{equation*}
for $r_0$ gives
\begin{equation}\label{eq:s0}
  r_0 = \frac{\mathfrak{s}+t_0-1}{2}.
\end{equation}
The constraint $r_0>\frac{d}{2}$ implies the constraint
$\mathfrak{s}>d+1-t_0$.

We look at two cases separately. First, if
$\mathfrak{s}\in (\frac{3d}{2},2d+1-t_0]$, we set
$r_0:=\frac{d}{2}+\tau_2$, where we choose $\tau_2>0$ 
s.t.~$\tau_2<\mathfrak{s}-3d/2$ which guarantees $\mathfrak{s}>r_0+d$. For $\tau<\tau_2/d$, we obtain the convergence rate
\begin{equation*}
  \kappa=2\min\Big\{\frac{\mathfrak{s}-\frac{d}{2}-\tau_2}{d}-\frac{1}{2},\frac{1+\frac{d}{2}+\tau_2-\frac{d}{2}-t_0}{d}\Big\}-\tau
  \geq 2\min\Big\{\frac{\mathfrak{s}}{d}-1-\frac{2\tau_2}{d},\frac{1-t_0}{d}\Big\}.
\end{equation*}

In the case $\mathfrak{s}>2d+1-t_0$, define $r_0$ as in \eqref{eq:s0}.
The constraint $\mathfrak{s}>r_0+d$ amounts to
\begin{equation*}
  \mathfrak{s}>\frac{\mathfrak{s}+t_0-1}{2}+d
  \qquad\Leftrightarrow\qquad
  \mathfrak{s}>2d+t_0-1,
\end{equation*}
which holds since $\mathfrak{s}> 2d+1-t_0
\ge 2d+t_0-1$ for all $t_0\in [0,1]$.
In this case we get the convergence rate
\begin{equation*}
  \kappa=2\frac{\mathfrak{s}-r_0}{d}-1-\tau = \frac{\mathfrak{s}+1-t_0}{d}-1-\tau.
\end{equation*}
Choosing $\tau_1>8\tau_2/d>8\tau$ shows \eqref{eq:Rtorus} and finishes the proof of Theorem \ref{thm:torus}.
\end{appendices}
\end{document}